\subjclass[2000]{Primary 53A10; Secondary 53A35, 53C50.}
\title[Maximal surfaces with arbitrary genus]{%
  New maximal surfaces in Minkowski 3-space
\\
  with arbitrary genus
\\
  and their cousins in de Sitter 3-space
}
\date{October 15, 2009}
\theoremstyle{plain}
 \newtheorem{claim}{Claim}
 \newtheorem{theorem}{Theorem}[section]
 \newtheorem*{theorem*}{Theorem}
 \newtheorem{introtheorem}{Theorem}
 \newtheorem{problem}{Problem}
 \newtheorem*{lemma*}{Lemma}
 \newtheorem{proposition}[theorem]{Proposition}
 \newtheorem{fact}[theorem]{Fact}
 \newtheorem*{fact*}{Fact}
 \newtheorem{lemma}[theorem]{Lemma}
 \newtheorem{corollary}[theorem]{Corollary}
 \theoremstyle{remark}
 \newtheorem{introdefinition}{Definition}
 \newtheorem{definition}[theorem]{Definition}
 \newtheorem{remark}[theorem]{Remark}
 \newtheorem*{remark*}{Remark}
 \newtheorem*{problem*}{Problem}
 \newtheorem*{acknowledgements}{Acknowledgements}
 \newtheorem{introexample}{Example}
\numberwithin{equation}{section}
\newcommand{\Z}{\boldsymbol{Z}}
\newcommand{\R}{\boldsymbol{R}}
\newcommand{\C}{\boldsymbol{C}}
\newcommand{\Herm}{\operatorname{Herm}}
\newcommand{\SL}{\operatorname{SL}}
\renewcommand{\sl}{\operatorname{\mathfrak{sl}}}
\newcommand{\PSL}{\operatorname{PSL}}
\newcommand{\SU}{\operatorname{SU}}
\newcommand{\trace}{\operatorname{trace}}
\newcommand{\Ord}{\operatornamewithlimits{Ord}}
\newcommand{\Res}{\operatornamewithlimits{Res}}
\newcommand{\id}{e_0}
\newcommand{\identity}{\operatorname{id}}
\newcommand{\inner}[2]{\left\langle{#1},{#2}\right\rangle}
\renewcommand{\Re}{\operatorname{Re}}
\renewcommand{\Im}{\operatorname{Im}}
\renewcommand{\phi}{\varphi}
\renewcommand{\epsilon}{\varepsilon}
\newcommand{\cmcone}{\mbox{CMC-$1$}}
\def\transpose#1{{#1}^T}
\newcommand{\secondff}{\operatorname{I\!I}}
\author[S. Fujimori]{Shoichi Fujimori}
\address{%
   Department of Mathematics\\
   Fukuoka University of Education\\
   Munakata, Fukuoka 811-4192\\
   Japan
}
\email{fujimori@fukuoka-edu.ac.jp}
\author[W. Rossman]{Wayne Rossman}
\address{%
   Department of Mathematics\\
   Faculty of Science\\
   Kobe University\\
   Rokko, Kobe 657-8501\\
   Japan
}
\email{wayne@math.kobe-u.ac.jp}
\author[M. Umehara]{Masaaki Umehara}
\address{%
   Department of Mathematics\\
   Graduate School of Science\\
   Osaka University\\
   Toyonaka, Osaka 560-0043\\
   Japan
}
\email{umehara@math.sci.osaka-u.ac.jp}
\author[K. Yamada]{Kotaro Yamada}
\address{%
   Department of Mathematics\\
   Tokyo Institute of Technology\\
   O-okayama, Meguro, Tokyo 152-8551\\
   Japan
}
\email{kotaro@math.titech.ac.jp}
\author[S.-D. Yang]{Seong-Deog Yang}
\address{%
   Department of Mathematics\\
   Korea University\\
   Seoul 136-701\\
   Korea
}
\email{sdyang@korea.ac.kr}
\dedicatory{Dedicated to the memory of Professor Katsumi Nomizu}
\keywords{%
 maximal surfaces,
 CMC-1 surfaces in de Sitter space,
 singularities.
}
\begin{document}
\begin{abstract}
 Until now, the only known maximal surfaces in Minkowski
 $3$-space of finite topology with compact singular set and without
 branch points were either genus zero or genus one,
 or came from a correspondence with minimal surfaces
 in Euclidean $3$-space 
 given by the third and fourth authors in a previous paper.
 In this paper, we discuss singularities and several global properties
 of maximal surfaces, and give explicit examples of such surfaces of
 arbitrary genus.
 When the genus is one, our examples are embedded outside a compact set.
 Moreover, we deform such examples to \cmcone{} faces 
 (mean curvature one surfaces with admissible singularities
 in de Sitter $3$-space) and obtain ``cousins'' of 
 those maximal surfaces.
 
 Cone-like singular points on maximal surfaces are very important,
 although they are not stable under perturbations of maximal surfaces.
 It is interesting to ask if cone-like  singular points can appear on 
 a maximal surface having other kinds of singularities.
 Until now, no such examples were known.
 We also construct a family of complete maximal surfaces with two
 complete ends and with both cone-like singular points and cuspidal
 edges.
\end{abstract}
\maketitle

\section*{Introduction}
\begingroup
\renewcommand{\theequation}{\arabic{equation}}
Maximal surfaces in the Minkowski $3$-space $\R^3_1$ arise as
solutions of the variational problem of locally maximizing the area
among spacelike surfaces. 
By definition, they have everywhere vanishing mean curvature. 
Like the case of minimal surfaces in Euclidean $3$-space, maximal
surfaces possess a Weierstrass-type representation formula \cite{K}.

The most significant difference between minimal and maximal surfaces is the
fact that the only complete spacelike maximal surfaces are planes
\cite{C,CY},
which is probably the main reason why people have not paid much attention
to maximal surfaces.
If we allow some sorts of singular points for maximal surfaces, 
however, the situation changes. 
Osamu Kobayshi \cite{K2} investigated cone-like singular points on
maximal surfaces. 
After that, many interesting examples with cone-like singular points have
been found and studied by F. J. L\'opez, R. L\'opez, and Souam 
\cite{LLS}, 
Fern\'andez and F. J. L\'opez \cite{FL},
and Fern\'andez, F. J. L\'opez and Souam \cite{FLS},
Fern\'andez \cite{Fer} and others.

On the other hand, for the study of more general singularities,
Estudillo and Romero \cite{ER} initially defined a class of maximal
surfaces with singular points of more general type, 
and investigated criteria for such surfaces to be planes.
Recently, Imaizumi  \cite{Imaizumi} studied the asymptotic behavior of
maximal surfaces, and Imaizumi-Kato \cite{IK} gave a classification of
maximal surfaces of genus zero with at most three embedded ends.
In \cite{UY}, the third and forth authors showed that if admissible
singular points are included, then there is an interesting class of
objects called {\it maxfaces}.
In fact, the three surface classes 
\begin{itemize}
 \item non-branched generalized maximal surfaces in the sense of \cite{ER},
 \item non-branched generalized maximal maps in the sense of
       \cite{IK}, and
 \item maxfaces in the sense of \cite{UY}
\end{itemize}
are all the same class of maximal surfaces.
So in this paper, we shall call such a class of surfaces {\it maxfaces}.
\begin{figure}
\begin{center}
\begin{tabular}{c@{\hspace{4em}}c}
 \includegraphics[width=.30\linewidth]{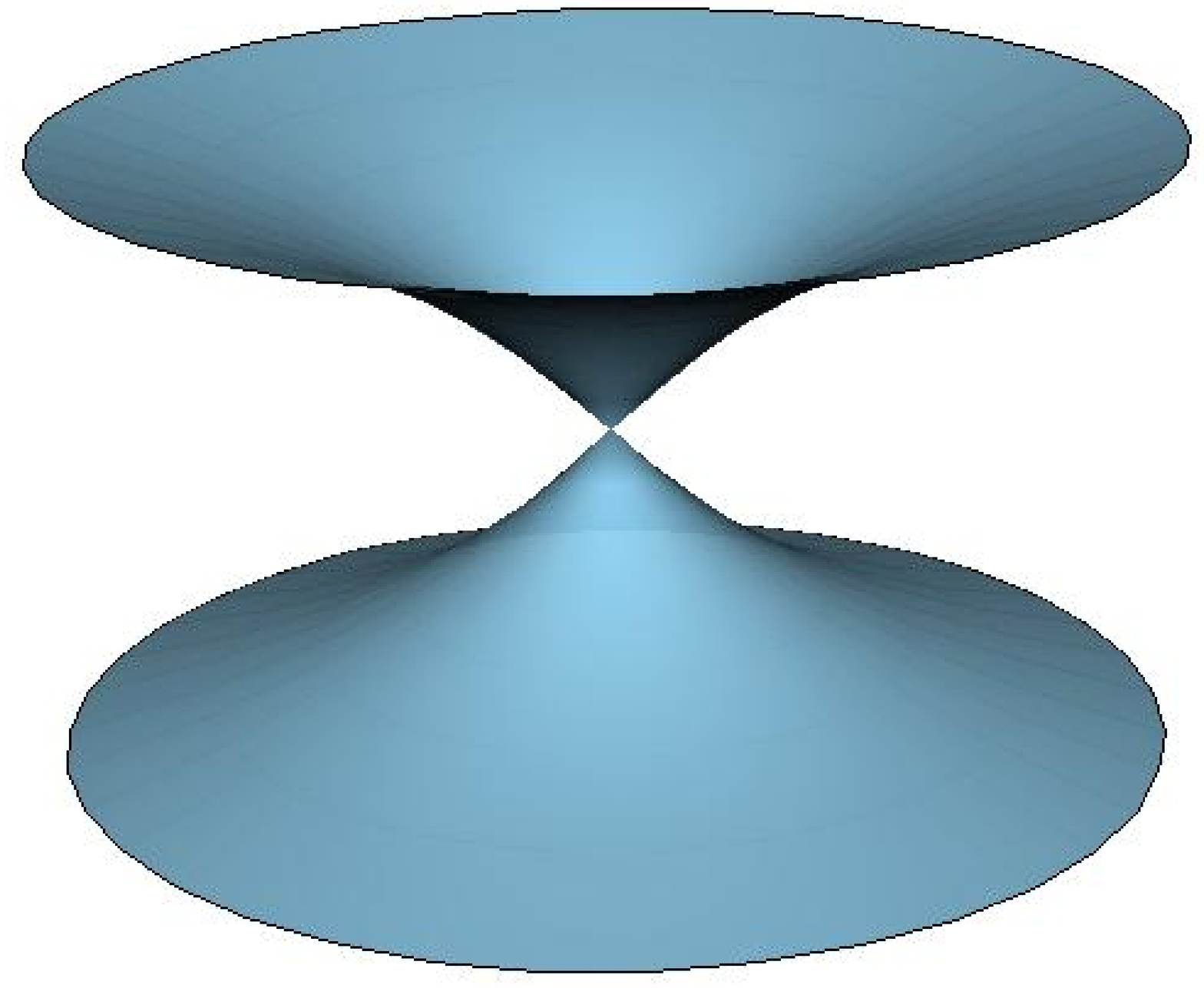} &
 \includegraphics[width=.20\linewidth]{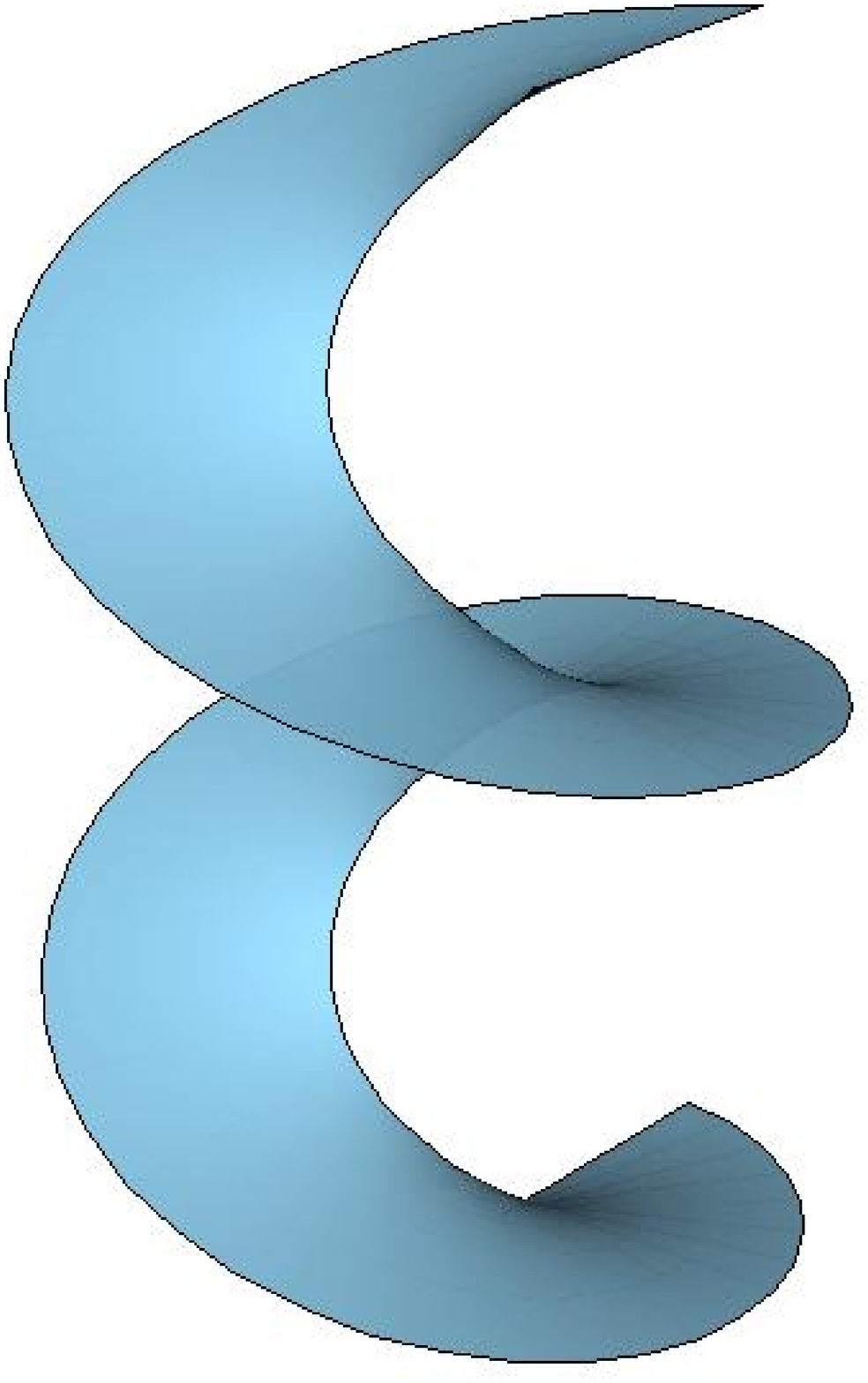} \\
 the Lorentzian catenoid, & 
 the Lorentzian helicoid, \\
 $(G,\eta)=(z,dz/z^2)$
 on $\C\setminus\{0\}$ &
 $(G,\eta)=(z,i\,dz/z^2)$\\
 &
 on the universal cover of $\C\setminus\{0\}$
\end{tabular}
\caption{%
  The duality between cone-like singular points
  and fold singular points.
  The pair $(G,\eta)$ denotes the Weierstrass 
  data, see Section~\ref{sec:prelim}.}
\label{fg:cat}
\end{center}
\end{figure}
\begin{figure}
\begin{center}
 \includegraphics[width=0.2\textwidth]{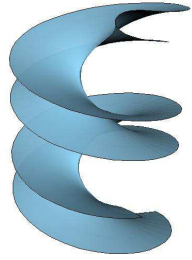}
\end{center}
\caption{%
 An associated surface of the Lorentzian helicoid,
 with Weierstrass data $(G,\eta)=(z,e^{\pi i/4}z^{-2}dz)$.}
\label{fig:ass-helicoid}
\end{figure}
Maxfaces are spacelike at their regular points, 
but the limiting tangent plane 
(that is, the Lorentzian orthogonal complement of the normal
vector)
at each singular point contains a lightlike direction.
For the global study of maximal surfaces, the following
terminology given in \cite{UY} is useful:
\begin{introdefinition}\label{def:complete}
 A maxface (or more generally, a generalized maximal surface) 
 $f\colon{}M$ $\to \R^3_1$ is called  {\it complete\/}
 if there exists a symmetric $2$-tensor $T$ 
 which vanishes outside a compact set in $M$, such that
 $ds^2+T$ is a complete Riemannian metric on $M$,
 where $ds^2$ is the induced metric by $f$.
 If $f$ is complete, the set of singular points is
 compact in $M$.
 On the other hand, a maxface is called {\it weakly complete\/}
 (in the sense of \cite{UY}), 
 if its null holomorphic lift into $\C^3$ (see Section~\ref{sec:prelim}) 
 has complete induced metric with respect
 to the canonical Hermitian metric on $\C^3$.
\end{introdefinition}

\begin{figure}
\begin{center}
\begin{tabular}{cc}
 \includegraphics[width=.30\linewidth]{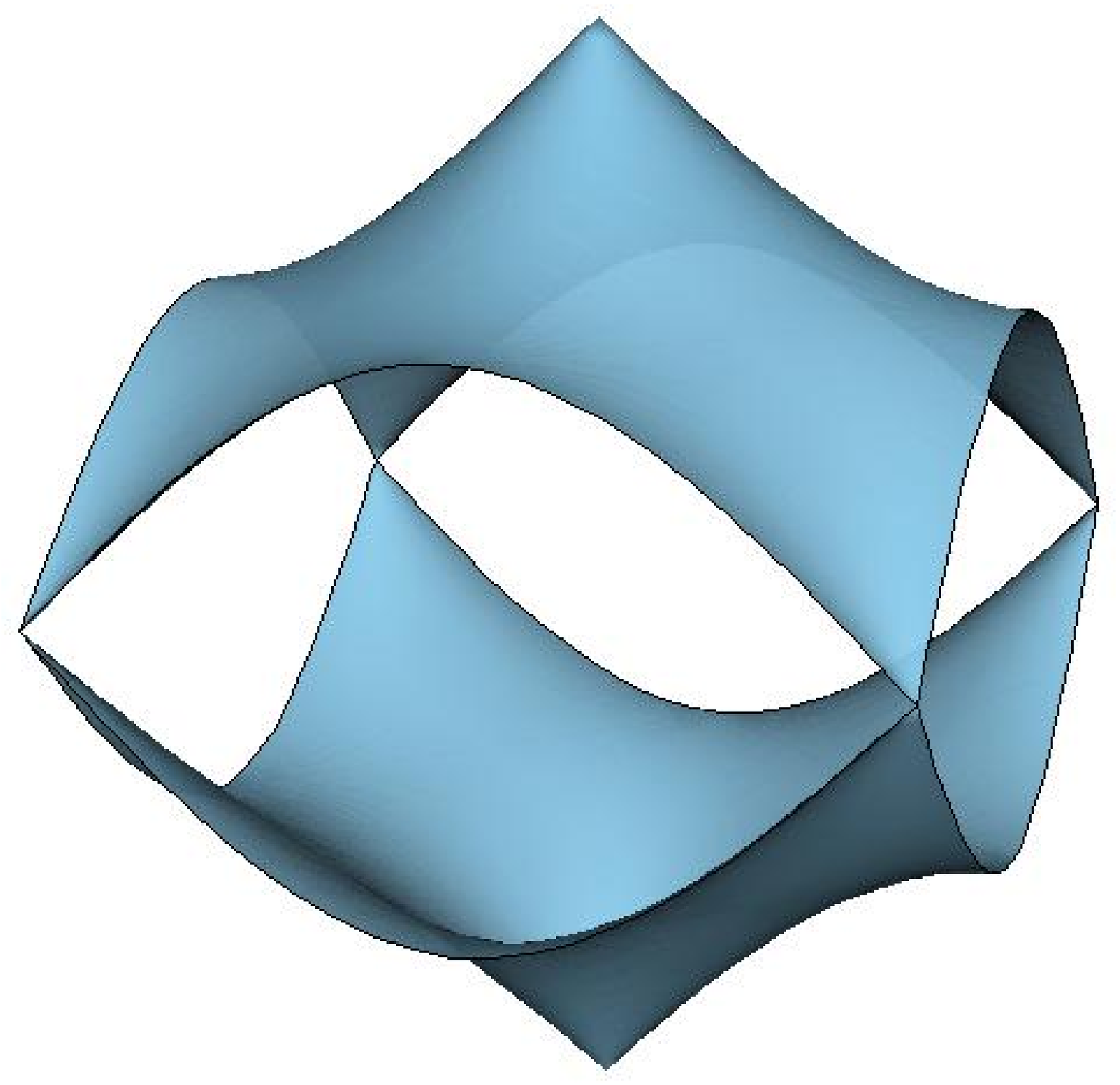} &
 \includegraphics[width=.55\linewidth]{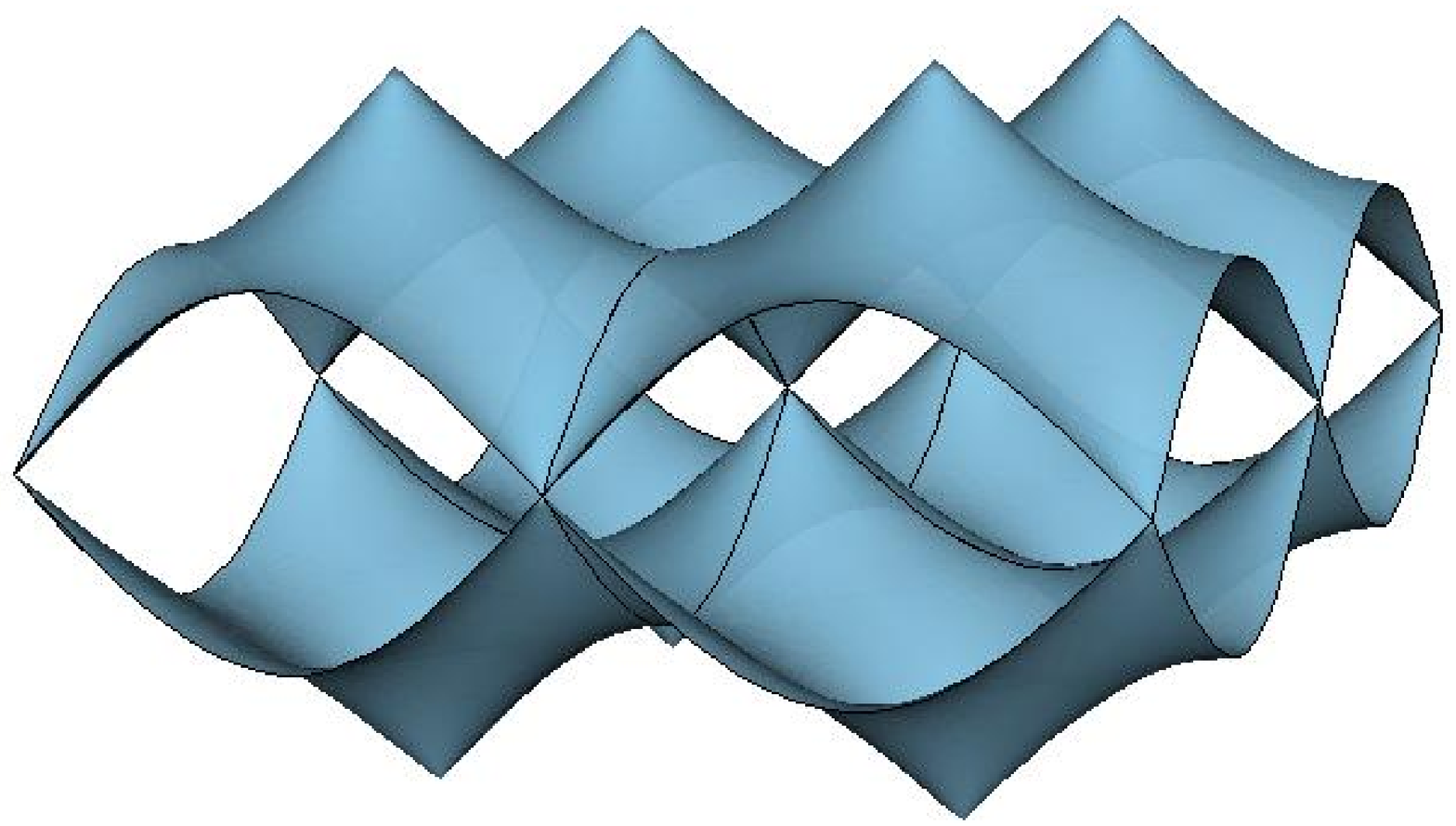} \\
\end{tabular}
\end{center}
\caption{%
   A weakly complete triply-periodic maxface with
   cone-like singular points corresponding to the Schwarz-P surface;
   see \cite{FRUYY3} for details.}
\label{fig:surface_P}
\end{figure}
\begin{figure}
\begin{center}
\begin{tabular}{cc}
 \includegraphics[width=.30\linewidth]{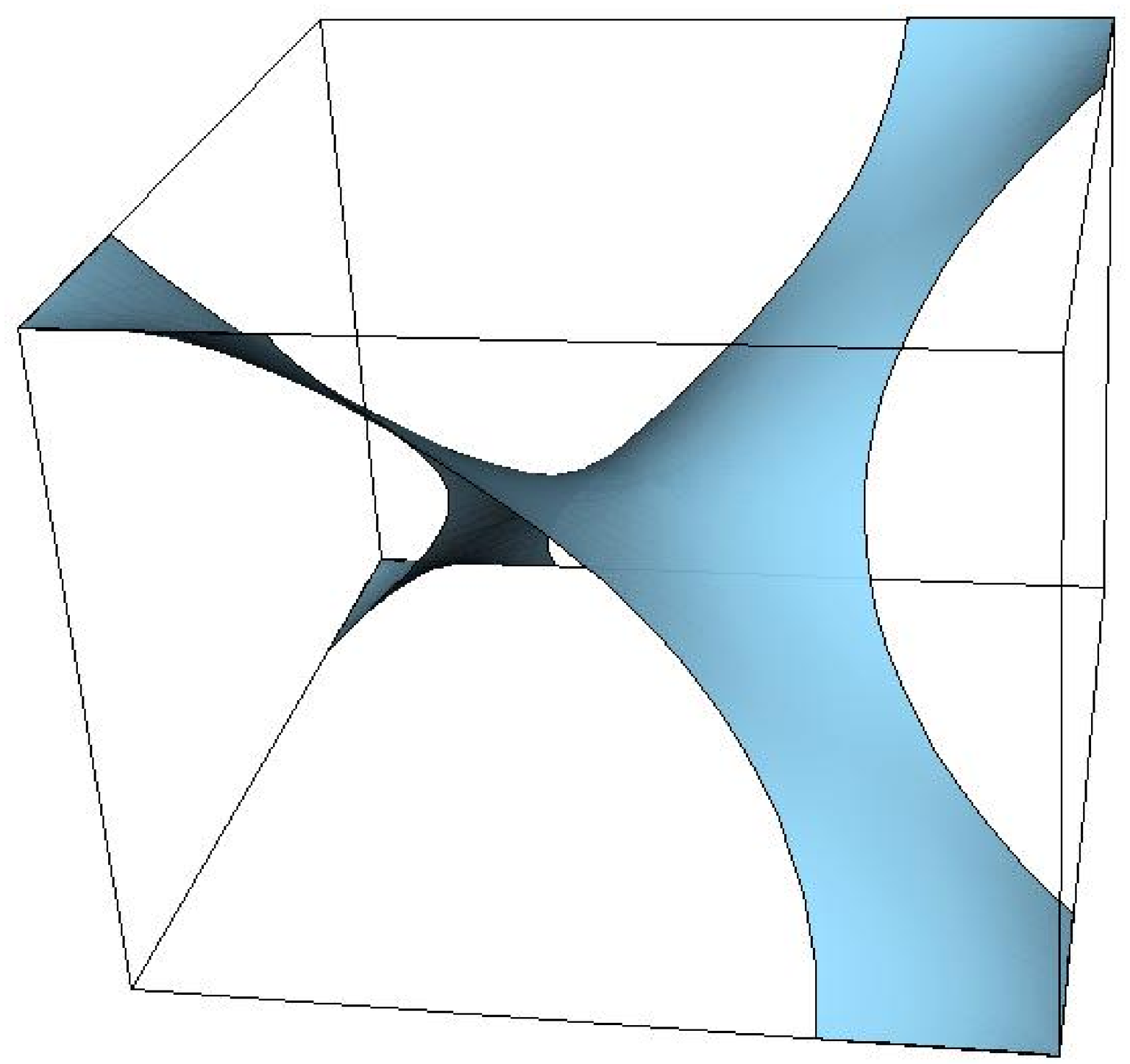} &
 \includegraphics[width=.55\linewidth]{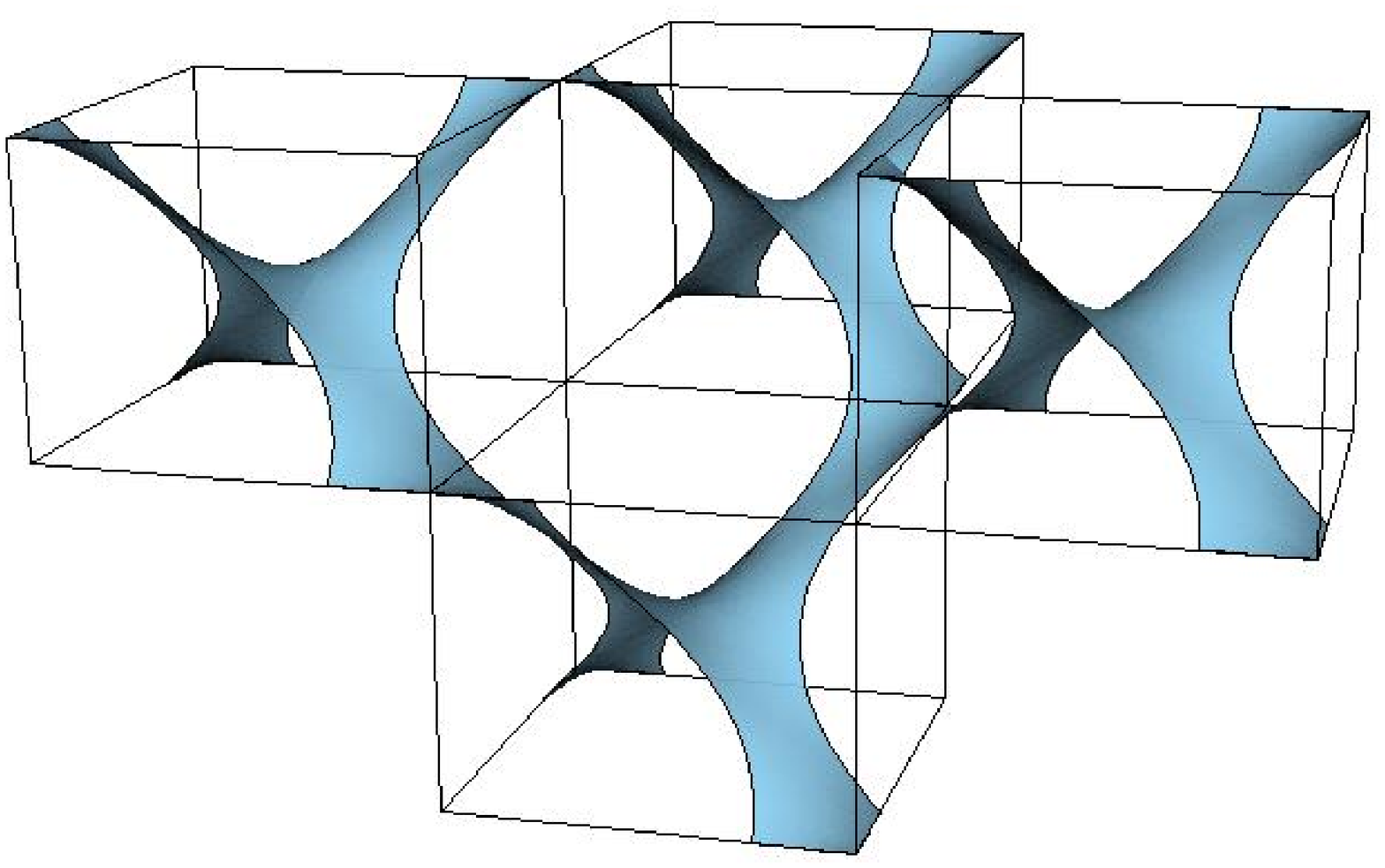} \\
\end{tabular}
\end{center}
\caption{%
   A weakly complete triply-periodic maxfaces with
   fold singular points corresponding to the Schwarz-D surface;
   see \cite{FRUYY3} for details.}
\label{fig:surface_D}
\end{figure}

As shown in \cite[Lemma 4.3]{UY},
completeness implies weak completeness.
Conversely, {\em a weakly complete maxface is complete if and only if the
singular set is compact and each end is conformally equivalent to 
a punctured disc} (see \cite{UY2}).
A typical well-known complete maxface is the Lorentzian catenoid
(see \cite{K}, see also \cite{ACM}
in which it is called the {\em Lorentzian elliptic catenoid\,};
Figure~\ref{fg:cat}, left),
which has a {\em cone-like singular point\/}
(see Section~\ref{sec:cone} for the definition).
Like minimal surfaces in Euclidean $3$-space $\R^3$,
maxfaces have conjugate surfaces.
A cone-like singular  point on a maxface corresponds to a fold singular
point on its conjugate maxface, in general.
For the proof of this and the definition of fold singular points, see
\cite{KY2}.
The Lorentzian helicoid 
(see \cite{K}, see also \cite{ACM}; Figure~\ref{fg:cat}, right) 
is weakly complete (but not complete)
and is the conjugate maxface of the Lorentzian catenoid, 
whose image consists of two surfaces with `boundary' 
in $\R^3_1$. 
The boundary (that is, the singular set) is a helix, and
each interior image point on the surface has two inverse images.
Namely, the Lorentzian helicoid can be regarded as a fold along a
helix. 

As shown in \cite{FSUY},
generic singular points of maxfaces are cuspidal edges, swallowtails 
and cuspidal cross caps.
Thus, cone-like singular points and folds are non-generic.
For example, one can consider an isometric deformation 
of Lorentzian helicoid corresponding to the family of Weierstrass data
$(z,e^{it}dz/z^2)$
($t\in [0,\pi/2]$).
Figure~\ref{fig:ass-helicoid} gives the maxface corresponding $t=\pi/4$,
whose singular points consist only of cuspidal edges.

However, they 
(i.e., cone-like singular points and folds)
are important singular points in the theory of maxfaces.
For example, fold singular points 
(i.e., the double surfaces in \cite{IK}) appear 
under a certain situation, see \cite[Proposition 7.7 and Page 581]{IK}.

It should be remarked that a complete maxface automatically has finite
total curvature outside of a compact set, and finite topology as well
(see \cite[Theorem 4.6 and Corollary 4.8]{UY}). 
This is a property that is crucially different from the case of
minimal surfaces in $\R^3$.  
So, interestingly, there are no complete periodic maxfaces although
there exist  compact maxfaces in a Lorentzian torus $\R^3_1/\Gamma$
for a suitable lattice 
(namely, triply-periodic weakly complete maxface in $\R^3_1$). 
In fact, the same Weierstrass data as for the Schwarz P-surface
and the Schwarz D-surface give  
such examples.
See Figures \ref{fig:surface_P} and \ref{fig:surface_D}.

In \cite[Theorem 4.11]{UY}, it was shown that an Osserman-type
inequality 
\begin{equation}\label{eq:oss}
    2\deg G \geq -\chi(M)+\text{(number of ends)}
\end{equation}
holds for the degree of the Gauss map of complete maxfaces
$f:M\to \R^3_1$, and equality holds if and only if all ends are properly
embedded. Here
\[
     G:M\longrightarrow S^2=\C\cup \{\infty\}
\]
is the Lorentzian Gauss map and $\deg G$ is its degree
as a map to 
the hyperbolic sphere $S^2$ considered as a compactification of the
hyperboloid in $\R^3_1$,
see \cite{UY} and \cite[Section 5]{KU}.
Since $G$ is meromorphic at each end of a complete maxface,
the left-hand side of \eqref{eq:oss} is finite
(see Fact~\ref{fact:complete}).

In \cite{FRUYY}, the authors showed that a similar 
Osserman-type inequality \eqref{eq:oss}
also holds for the hyperbolic Gauss maps $G$ of complete \cmcone{} faces
in de Sitter $3$-space
(for the definition of \cmcone{} faces, see
Section~\ref{sec:deform}).
In contrast to the case of maxfaces,
the hyperbolic Gauss map of a \cmcone{} face may have an essential
singularity at an end.

The Lorentzian catenoid satisfies equality in \eqref{eq:oss}. 
In \cite{UY}, several examples which attain equality in \eqref{eq:oss}
were given.
Recently, complete maxfaces with three embedded ends
were classified by Imaizumi and Kato \cite{IK}.
We mention here two new interesting phenomena on the shape of singular
points on maximal surfaces:
\begin{figure}
\begin{center}
\begin{tabular}{c@{}c}
 \includegraphics[width=.55\linewidth]{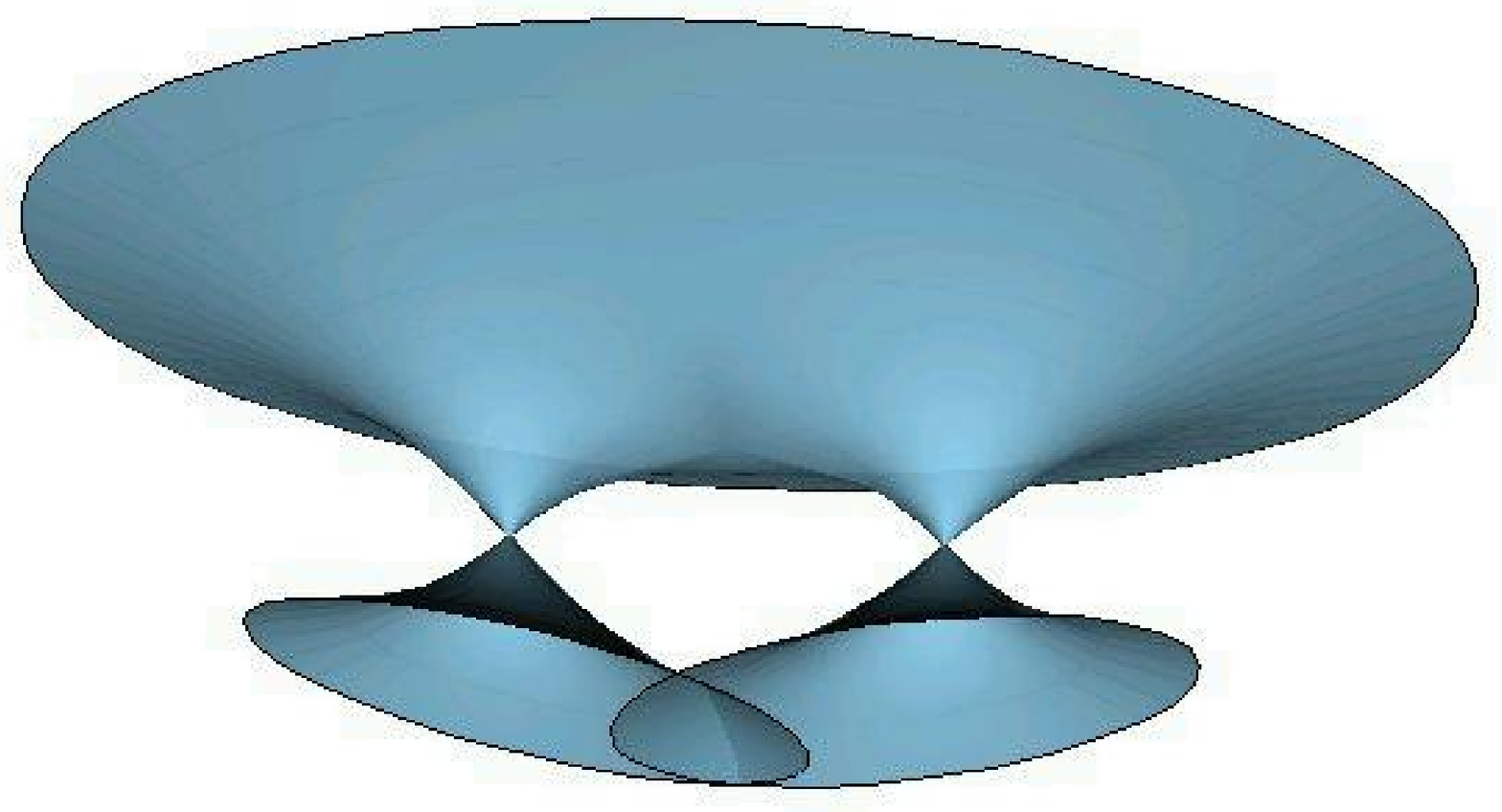} &
 \includegraphics[width=.40\linewidth]{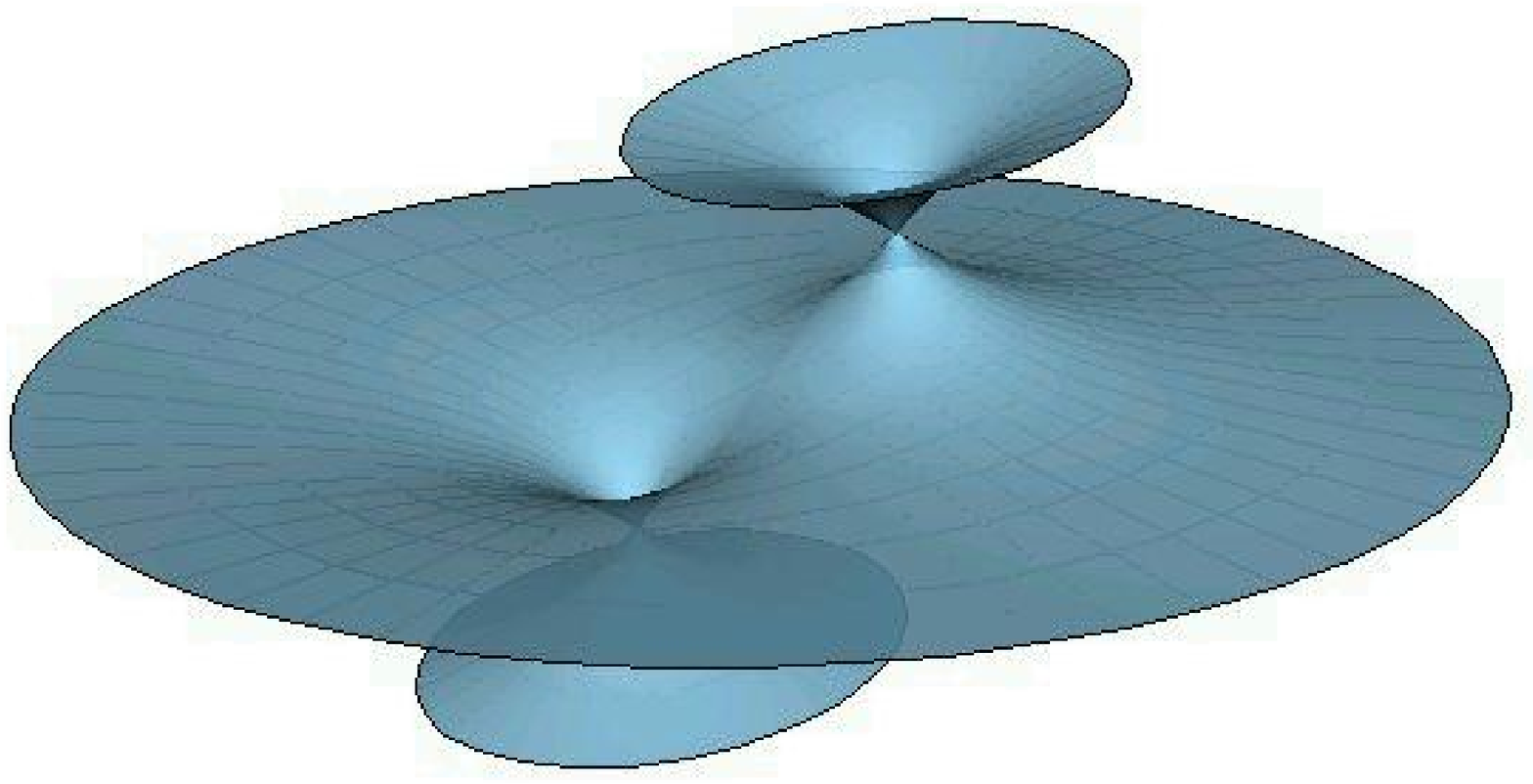} \\
 Trinoid in \eqref{eq:trinoid-1}  for $a=3.67$&
 Trinoid in \eqref{eq:trinoid-2}  for $c=0.1$\\[12pt]
 \includegraphics[width=.56\linewidth]{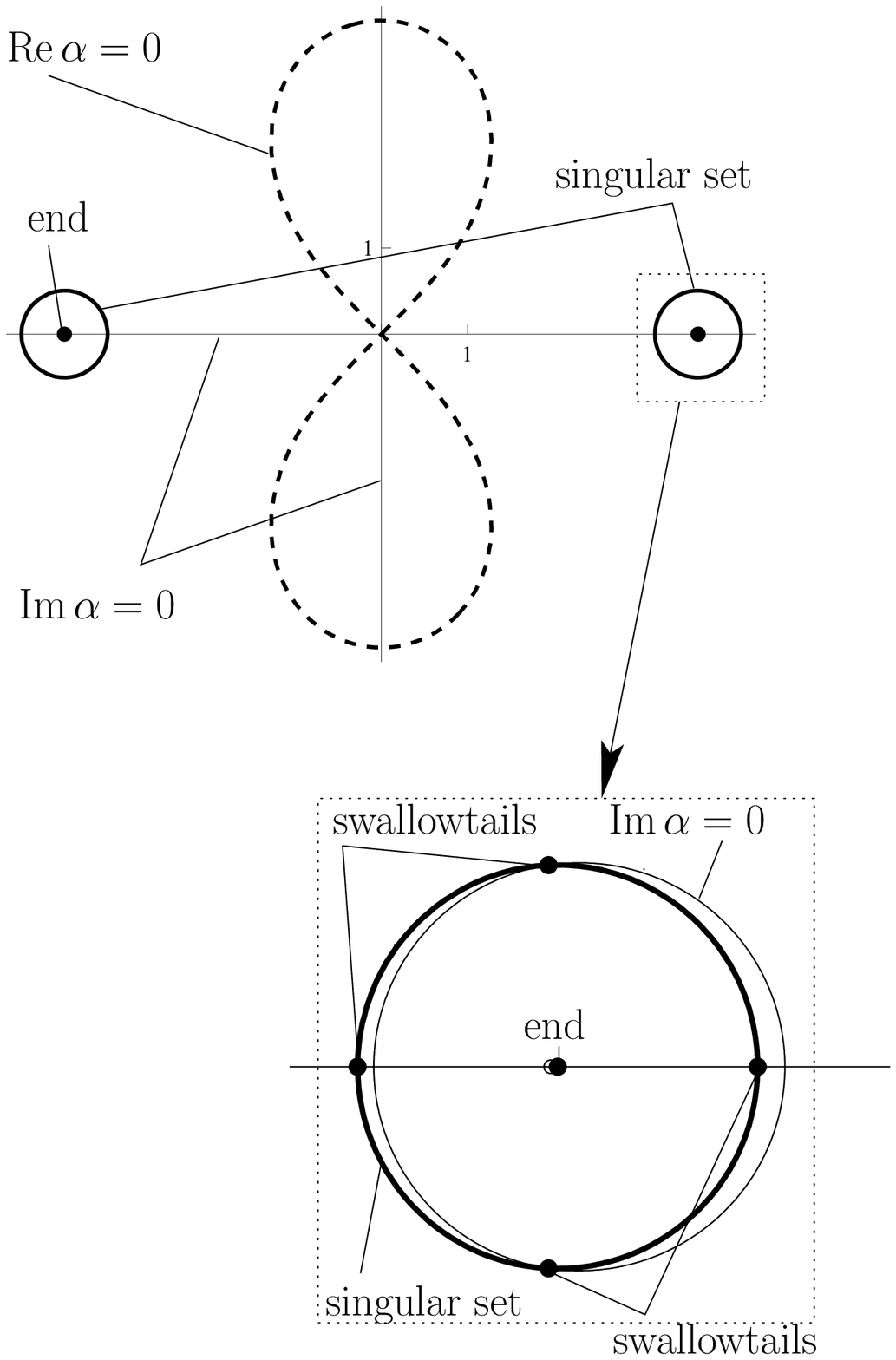}&
 \includegraphics[width=.41\linewidth]{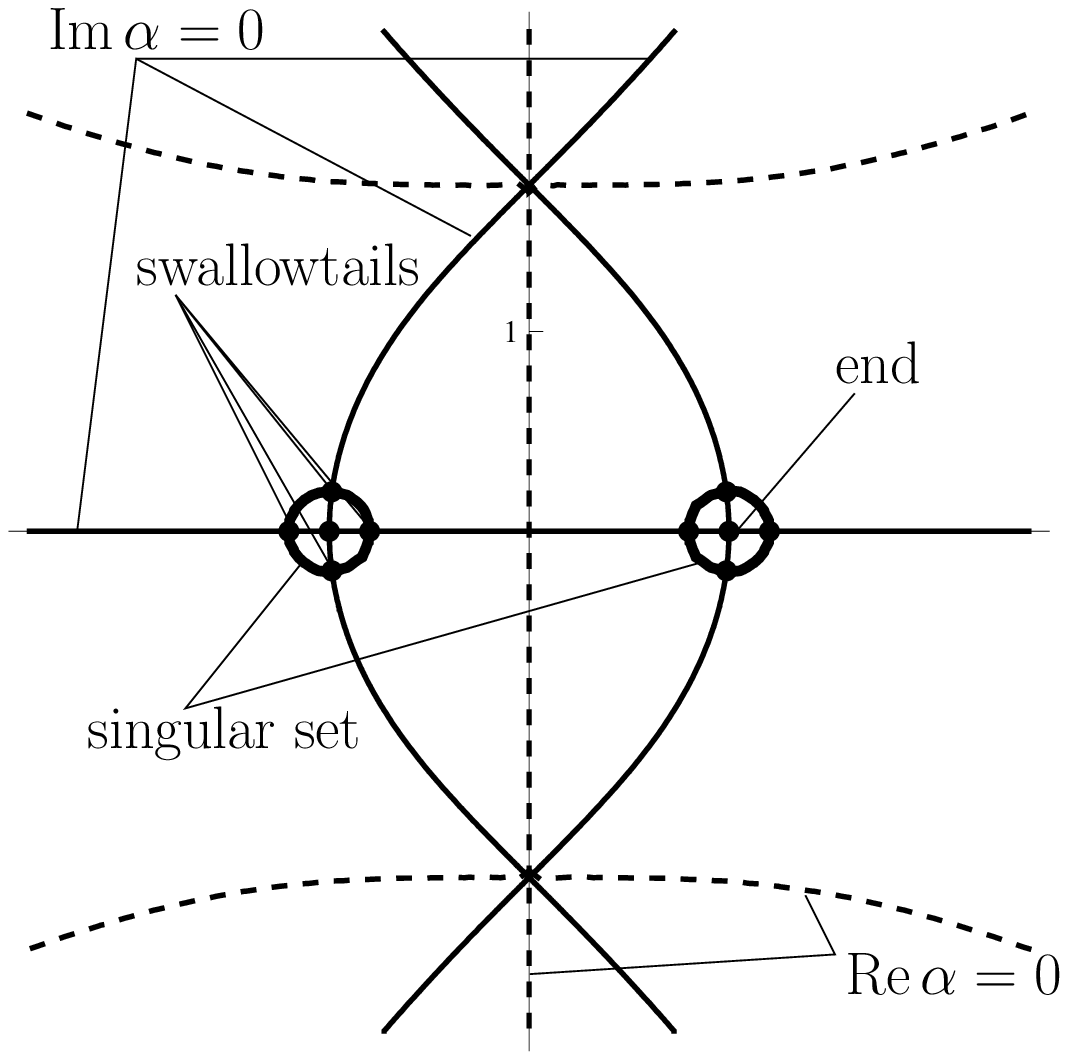}\\
 Singular set of \eqref{eq:trinoid-1} for $a=3.67$&
 Singular set of \eqref{eq:trinoid-2} for $c=0.1$\\
\end{tabular}\\[12pt]
 Both trinoids have eight swallowtails and cuspidal edges for
 singular points.
 No cuspidal cross caps appear.
 For the notations, see Section~\ref{sec:prelim}.
\end{center}
\caption{%
 The trinoids in Example \ref{ex:trinoid}.
}
\label{fig:trinoids}
\end{figure}
\begin{introexample}%
{\rm (Trinoids whose graphics seem to show cone-like
 singular points,
 although no cone-like singularities exist).}
\label{ex:trinoid}
 We set
 \[
    M=\C\setminus\{a,-a\} \qquad (a>1/2)
 \]
 and consider two Weierstrass data (see Section~\ref{sec:prelim})
 \begin{equation}\label{eq:trinoid-1}
   G:=\frac{b-z^2}z,\qquad \eta:=\frac{z^2dz}{(z^2-a^2)^2} \qquad
   \left(b:=-a^2+a\sqrt{4a^2-1}\right).
 \end{equation}
 Substituting these into \eqref{eq:maxface},
 we get a trinoid with three embedded ends at $z=\pm a,\infty$, 
 each of which is asymptotic to a Lorentzian catenoid.
 The left-hand figure in Figure~\ref{fig:trinoids} shows the image with
 $a=3.67$.

 On the other hand, set
 \begin{equation}\label{eq:trinoid-2}
   M=\C\setminus\{1,-1\},\qquad
   G=c\frac{z^2+3}{z^2-1},\quad \text{and}
   \quad
   \eta=\frac{dz}{c}\qquad (c>0, c\neq 1).
 \end{equation}
 Then we have another trinoid as in Figure~\ref{fig:trinoids}, right.
 The ends $1$, $-1$ are asymptotic to the Lorentzian catenoid,
 and the end $\infty$ to the plane.
 The figure shows the image for $c=0.1$.

 Since a maxface is symmetric with respect to a given cone-like
 singular point (cf.~\cite{IK} and \cite{KY2}), 
 neither of these two maxfaces admits any cone-like singular points.

 However, in Figure \ref{fig:trinoids}, 
 we can see two singular sets in each surface which look very much like
 cone-like singularities.
 The singular sets do, however, 
 consist of cuspidal edges and swallowtails (see
 Figure~\ref{fig:trinoids}).

 Similarly, there are four cuspidal cross caps on the  Enneper maxface
 (see \cite[Example 5.2]{UY}).
 However, it is difficult to recognize the crossing of two sheets
 on surfaces near these four singular points
 from computer graphics,
 since these two sheets are so close to each other.
 If a maxface admits a cone-like singular point
 (resp.\ a swallowtail),
 its conjugate surface admits fold singular points 
 (resp.\ a cuspidal cross cap)
 and vice versa (\cite{KY2} and \cite[Corollary 2.5]{FSUY};
 for the definition of cone-like singular points,
 see Definition~\ref{def:cone}).
 Thus, in computer graphics,
 the fact that a union of swallowtails sometimes look like cone-like
 singular points and the fact that cuspidal cross caps look like fold
 singular points seems to be a mutually dual phenomena. 
 The authors hope that one could establish a new theory for
 explaining this.
 In recent private conversations, 
 Shin Kato \cite{Ka} said that this phenomenon for trinoids
 as in Figure~\ref{fig:trinoids}
 seems to occur as a family of trinoids ``collapses'' to 
 Lorentzian catenoids (which have a cone-like singularity).
\end{introexample}
It is interesting to ask if there exist maxfaces having  
a cone-like singular point
and also having singular points which are not cone-like.
We give such an example:
\begin{introtheorem}\label{thm:cone}
 There exists a maxface $f\colon{}M\to\R^3_1$ 
 of genus $0$ with two complete ends
 whose singular set in $M$
 consists of cone-like singular points,
 cuspidal edges, swallowtails, and cuspidal cross caps.
 Here, the image of the cone-like singular points
 is a single point.
\end{introtheorem}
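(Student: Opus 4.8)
The plan is to produce $f$ by an explicit Weierstrass representation and then read off every singularity from the criteria recalled in Section~\ref{sec:prelim}, exploiting a symmetry of the surface to manufacture the cone-like point while keeping the remaining singularities generic. First I would take $M$ to be the Riemann sphere with the two points $0,\infty$ deleted, so that $M$ has genus $0$, has exactly two ends, and carries the anti-holomorphic involution $\iota(z)=1/\overline{z}$ whose fixed-point set is the circle $\gamma_{0}=\{|z|=1\}$. Then I would look for Weierstrass data $(G,\eta)$ which is, in spirit, the Lorentzian catenoid data $(z,\,dz/z^{2})$ with the Gauss map promoted to a rational map of degree $\ge 2$, subject to three requirements: (i) $G$ is self-inversive, $\iota^{*}G=1/\overline{G}$, and has a zero strictly inside the unit disc and a zero strictly outside it, so that $G$ is \emph{not} a Blaschke factor composed with a power; (ii) the zeros of $\eta$ lie exactly at the poles of $G$, to the order needed to cancel the poles of $|G|$ in the metric $ds^{2}=(1-|G|^{2})^{2}|\eta|^{2}$, so that $f$ has no branch points, while the poles of $\eta$ lie only at the two deleted points; (iii) after a suitable normalization of the constants, the $\R^{3}_{1}$-valued holomorphic $1$-form $\Phi$ attached to $(G,\eta)$ by \eqref{eq:maxface} satisfies $\iota^{*}\Phi=-\overline{\Phi}$, so that $f=\Re\int\Phi$ satisfies $f\circ\iota=-f$.

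Self-inversivity forces $|G|=1$ on $\gamma_{0}$, and the interlaced zeros (hence, again by self-inversivity, interlaced poles) of $G$ inside and outside the disc force the singular set $\Sigma=\{\,p\in M:|G(p)|=1\,\}$ to consist of several disjoint circles: $\gamma_{0}$ itself, at least one circle inside the disc, and its $\iota$-mirror outside. The period problem is then automatic: since $\gamma_{0}=\mathrm{Fix}(\iota)$ generates $H_{1}(M)$ and $\iota^{*}\Phi=-\overline{\Phi}$, one gets $\oint_{\gamma_{0}}\Phi=-\overline{\oint_{\gamma_{0}}\Phi}$, hence $\Re\oint_{\gamma_{0}}\Phi=0$, so $f$ descends to a map on $M$ with no further condition. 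Completeness I would check as for the catenoid: at each puncture the metric blows up because $\eta$ has a pole there while $|G|\neq1$, the Gauss map is meromorphic, and the end is conformally a punctured disc, so Fact~\ref{fact:complete} and the completeness criterion of \cite{UY} give a complete maxface with exactly two ends; in particular $\Sigma$ is compact.

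It remains to identify the singularities. Along $\gamma_{0}=\mathrm{Fix}(\iota)$ we have $f=f\circ\iota=-f$, hence $f\equiv0$ on the connected set $\gamma_{0}$; since $\gamma_{0}\subset\Sigma$ and $\eta$ does not vanish on it, this reproduces the local picture of the Lorentzian catenoid at its waist, and I would invoke Definition~\ref{def:cone} to conclude that every point of $\gamma_{0}$ is a cone-like singular point whose image is the single point $0$. On the other components of $\Sigma$ no symmetry constrains the surface, and I would apply the criteria of \cite{FSUY} recalled in Section~\ref{sec:prelim}: where the null direction is transverse to $\Sigma$ one has cuspidal edges; at the finitely many points where it is tangent to first order one checks the swallowtail nondegeneracy condition; and at the finitely many points where the cuspidal cross cap condition holds one has cuspidal cross caps. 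The residual freedom in $\eta$ — the placement of its further zeros, consistent with the constraints above — would be used to arrange that all three of these generic types actually occur.

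The main obstacle is precisely this coexistence. As the introduction stresses, a cone-like singular point is unstable under perturbation of the maxface, so the construction must build in a symmetry that traps exactly one component of $\Sigma$ in the non-generic ``null direction everywhere tangent'' locus without disturbing the others, and this symmetry must be just strong enough to collapse $f(\gamma_{0})$ to a point: a reflection of $\R^{3}_{1}$ in a plane would only confine $f(\gamma_{0})$ to that plane, whereas the central symmetry $f\circ\iota=-f$ forces $f(\gamma_{0})=\{0\}$, and yet it imposes no local constraint elsewhere on the surface. The remaining points — checking the nondegeneracy in Definition~\ref{def:cone} along $\gamma_{0}$, and checking that cuspidal cross caps (which are not automatic) genuinely appear on the generic circles — are routine once a concrete $(G,\eta)$ is written down and its singular circles are examined one at a time.
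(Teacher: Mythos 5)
Your symmetric ansatz is reasonable, and it is in fact the symmetry hidden in the paper's own example: the data \eqref{eq:cone-data} on $\C\cup\{\infty\}\setminus\{\pm1\}$ satisfies $G\circ\sigma=1/\overline{G}$ for the reflection $\sigma(z)=-\bar z$, whose fixed circle (the imaginary axis) is exactly the cone-like component, and your period and completeness arguments are fine as far as they go. But the theorem is an existence statement, and your proposal stops short of proving it: you never exhibit a pair $(G,\eta)$ satisfying (i)--(iii), and you explicitly defer the verification that swallowtails and cuspidal cross caps actually occur on the other singular circles (``the residual freedom in $\eta$ \dots would be used to arrange\dots''). Since the whole content of the theorem is the \emph{coexistence} of a cone-like component with cuspidal edges, swallowtails and cuspidal cross caps, this is the crux, not a routine afterthought; the paper closes it by writing down \eqref{eq:cone-data} explicitly, computing $\alpha=dG/(G^2\eta)$, and checking the criteria of Fact~\ref{fact:maxface-sing} together with Lemmas~\ref{lem:criterion} and \ref{lem:cone-like} on each of the three components of $\{|G|=1\}$.

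There is also a genuine flaw in your cone-like step. From $f\circ\iota=-f$ you correctly get $f\equiv0$ on $\gamma_0$, but (granting the non-degeneracy $dG\neq0$ along $\gamma_0$, which you also postpone) this only makes $\gamma_0$ a component of \emph{generalized} cone-like points in the sense of Definition~\ref{def:cone}; to be cone-like the image of a punctured neighbourhood must be embedded, which by Lemma~\ref{lem:cone-like} forces $G|_{\gamma_0}$ to be injective in addition to $\eta\neq0$ there. Your hypothesis (i) works against this: self-inversivity pairs each zero of $G$ inside the unit disc with a pole outside and vice versa, so if $G$ has degree $d$ with $a$ zeros inside the disc, then it has $d-a$ poles inside, and the winding number of $G|_{\gamma_0}$ is $2a-d$. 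With exactly one zero inside and one outside --- the minimal configuration you describe --- the winding is $0$, so $G|_{\gamma_0}$ cannot be injective and $\gamma_0$ fails to be cone-like; even to have winding $\pm1$ you need $d$ odd and an unbalanced count, and injectivity then still has to be argued (the paper does this by noting $\deg G=3$ while $\{|G|=1\}$ has three components, so $G$ is injective on each). This quantitative bookkeeping, like the occurrence of the generic singularities, is settled only by a concrete choice of $(G,\eta)$, which is precisely what your proposal omits.
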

The proof of the first part of this theorem is given in
Section~\ref{sec:cone}.

In $\R^3$, examples of complete embedded minimal surfaces of finite
total curvature of course are known --- 
for example, the plane, the catenoid, the Costa surface, 
the Costa-Hoffman-Meeks surface, etc.
As a related class of maxfaces, the authors
give here the following:
\begin{introdefinition}\label{def:embedded}
  A complete
  maxface is called {\it embedded\/} ({\em in the wider sense})
  if it is embedded outside of some compact set of $\R^3_1$.  
\end{introdefinition}
From here on out, when we use the word ``embedded'' for a maxface, 
we always mean ``embedded in the wider sense'' as in the above 
definition. 
By definition, an embedded maxface attains equality in \eqref{eq:oss}.
In a joint work \cite{KY} with Kim, the fifth author constructed
maximal surfaces of genus $k=1,2,3,\dots$, 
which are complete generalized maximal surfaces
in the sense of \cite{ER}
(i.e., they admit branch points when $k\geq 2$), and when $k=1$
it is  a complete embedded  maxface, see Figure~\ref{fg:k=1}.
We shall call this example the {\em Kim-Yang toroidal maxface}.
We remark that there exist no embedded minimal surface in $\R^3$ 
with two ends except the catenoid (cf.\ \cite{S}).

Until now, the only known examples of embedded complete maxfaces were
\begin{itemize}
 \item the spacelike plane (which is the only example of a complete 
       maximal surface without singular points),
 \item the Lorentzian catenoid,
 \item the Kim-Yang toroidal maxface.
\end{itemize}
Also, until now the only known complete positive-genus maxfaces 
were the Lorentz\-ian Chen-Gackstatter surface 
(given in \cite[Example 5.5]{UY}) and the Kim-Yang to\-roidal maxface.
In this article, we construct complete maxfaces with two ends and
arbitrary genus, which are embedded if the genus is equal to $1$.
(Theorem~\ref{thm:main}).

On the other hand, surfaces of constant mean curvature one (\cmcone{}
surfaces) in de Sitter $3$-space $S^3_1$ have similar properties to
maximal surfaces in $\R^3_1$ (cf.\ \cite{F} and \cite{FRUYY}).
Analogous to maxfaces, a notion of \cmcone{} faces in $S^3_1$, 
which is \cmcone{} surfaces with certain kinds of singular points, was
introduced in \cite{F}.
Related to this, the second, the third and forth authors introduced 
in \cite{RUY} a method to deform minimal surfaces in $\R^3$ to \cmcone{}
surfaces in hyperbolic $3$-space.
In this paper, we demonstrate that this method works for 
maxfaces in $\R^3_1$ and \cmcone{} faces in $S^3_1$ as well
(see Section~\ref{sec:deform} and the appendix).

In this article, we shall prove:
\begin{introtheorem}\label{thm:main}
 There exists an family of complete maxfaces $f_k$ for
 $k=1,2,3,\dots$ with two ends, 
 and of genus $k$ if $k$ is odd and genus $k/2$ if $k$ is even. 
 Moreover, $f_1$ and $f_2$ are embedded {\rm (}in the wider sense{\rm)}. 
 Furthermore, the number of swallowtails and the number of cuspidal
 cross caps are both equal to
 $4(k+1)$ if $k$ is odd and $2(k+1)$ if $k$ is even.
 In particular, $f_1$ and $f_2$ are both of
 genus one, but are not congruent.
 {\rm(}Compare Figures~\ref{fg:k=1} and \ref{fg:k=2}.{\rm)}

 Moreover, such maxfaces $f_k$ can be deformed to complete 
 \cmcone{} faces in $S^3_1$,
 which are embedded {\rm(}in the wider sense{\rm)} if $k=1$ or $2$.
\end{introtheorem}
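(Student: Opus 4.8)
The plan is to construct the maxfaces $f_k$ explicitly via the Weierstrass-type representation \eqref{eq:maxface}, starting from a hyperelliptic Riemann surface. First I would set $M_k$ to be the (compactified) hyperelliptic surface $w^2 = p(z)$ for a suitable real polynomial $p$ whose degree and root configuration are chosen so that the resulting genus is $k$ when $k$ is odd and $k/2$ when $k$ is even --- the discrepancy between these two cases will come from an extra symmetry present only in the even case, which forces the quotient surface to be the actual domain of the maxface. On $M_k$ I would prescribe a Lorentzian Gauss map $G$ and a holomorphic one-form $\eta$ of the form $\eta = (\text{rational in }z)\,dz/w$ or similar, with two poles placed at a conjugate pair of points (the two ends), and with the orders at the ends chosen so that each end is a complete embedded end asymptotic to a Lorentzian catenoid. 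The key constraint is the period problem: the one-forms $G^2\eta$, $\eta$, and $G\eta$ (the components of the holomorphic null immersion, cf.\ Section~\ref{sec:prelim}) must have purely imaginary periods over a basis of $H_1(M_k\setminus\{\text{ends}\};\Z)$. I would arrange this by imposing enough reflective symmetries --- a dihedral group of reflections together with the hyperelliptic involution --- so that all periods are automatically killed except possibly for one real parameter, which is then tuned (as in the classical Chen--Gackstatter and Karcher constructions) to close the remaining period. Completeness of the metric $ds^2 = (1+|G|^2)^2|\eta|^2$ and compactness of the singular set follow from checking the pole orders of $\eta$ and the behavior of $|G|=1$ near the ends; weak completeness of the null lift is automatic, and completeness then follows from Definition~\ref{def:complete} since each end is a punctured disc.

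Next I would count singularities. The singular set is $\{|G|=1\}$, a union of real-analytic curves, and the singularity type at each point is determined by the criteria of \cite{FSUY}: a point is a swallowtail when a certain holomorphic quantity built from $G$ and $\eta$ (essentially $d(\,\overline{G}\eta/\eta\,)$ paired appropriately, or the vanishing of the ``first'' singularity invariant together with nondegeneracy of the second) has a simple zero there, and a cuspidal cross cap when the analogous imaginary-part condition holds. Using the symmetry group to locate these zeros on the fixed-point curves and a residue/argument-principle count of the relevant meromorphic function on $M_k$, I expect exactly $4(k+1)$ zeros of each type in the odd case and $2(k+1)$ in the even case (the halving again coming from the extra involution). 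Embeddedness of $f_1$ and $f_2$ in the wider sense means the two ends, being embedded catenoidal ends with opposite logarithmic growth and disjoint axes, separate $\R^3_1$ outside a compact set; for genus one this is a finite check on the explicit surface, and I would exhibit it directly, as in Figures~\ref{fg:k=1} and \ref{fg:k=2}, noting that $f_1$ and $f_2$ cannot be congruent because their singular sets have different numbers of swallowtails ($8$ versus $6$).

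Finally, for the deformation to \cmcone{} faces in $S^3_1$: by the method of Section~\ref{sec:deform} (the analogue for maxfaces of the Ros\-sman--Umehara--Yamada deformation \cite{RUY}), one replaces the translation-invariant Weierstrass data by a one-parameter family of $\SU(1,1)$- (or $\SL(2,\C)$-)valued developing maps solving a Lax-type ODE with the same holomorphic data $(G,\eta)$; for small deformation parameter the period problem perturbs continuously, and since our periods were killed by symmetries that survive the deformation, the closed surfaces persist. Completeness and the singularity count are preserved by the same criteria (now in the de Sitter setting, cf.\ \cite{F}), and embeddedness for $k=1,2$ survives by openness of embeddedness outside a compact set. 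I expect the main obstacle to be the period problem in the construction step --- specifically, verifying that the chosen symmetry group really does reduce the periods to a single tunable real parameter and that a valid parameter value exists (the intermediate-value argument closing the last period), since the genus-dependent root configuration of $p(z)$ must be engineered precisely so that this works uniformly in $k$.
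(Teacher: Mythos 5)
Your outline reproduces the general architecture of the first half of the construction (a symmetric Riemann surface, Weierstrass data with two catenoidal-type ends, a period problem reduced by reflections to one free real parameter, singularity counting via the criteria of \cite{FSUY}, embeddedness from the behavior of the $x_0$-coordinate at the two ends), but it is a plan rather than a proof, and the surface you start from is not the one that works in the paper. The paper does not use a hyperelliptic curve $w^2=p(z)$: it uses the cyclic $(k+1)$-fold cover $w^{k+1}=z(z^2-1)^k$ with data $G=c\,w/z$, $\eta=dz/w$, so that each end has Gauss-map ramification $k$ (each end is asymptotic to the $k$-fold cover of the Lorentzian catenoid), and the genus halving for even $k=2m$ comes from the explicit involution $(z,w)\mapsto(-z,-w)$, under which the data descend to $W^{2m+1}=Z^{m+1}(Z-1)^{2m}$. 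You give no explicit $p$, no verification that genus, end ramification and period-killing symmetries can coexist on a double cover of the $z$-sphere, and the closing constant in the paper is not obtained by an intermediate-value argument but explicitly, $c_k=\sqrt{B_k/(2A_k)}$, after the symmetry lemmas reduce \eqref{eq:period-1} to the single loop $\gamma$. Likewise the counts $4(k+1)$ and $2(k+1)$ are only ``expected'' in your text; the paper computes $\alpha=(z^2+1)/(z^2-1)$ and $\beta=z^2-z^{-2}$ and locates exactly two swallowtails and two cuspidal cross caps per singular curve in the $z$-plane, then lifts through the $(k+1)$-fold cover. (Also note the induced metric is $(1-|G|^2)^2|\eta|^2$, not $(1+|G|^2)^2|\eta|^2$, which is the lift metric used for weak completeness.)

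The genuine gap is in the second half. You assert that the deformation to \cmcone{} faces in $S^3_1$ goes through because ``the period problem perturbs continuously'' and the symmetries ``survive the deformation,'' but the paper states precisely that this is where the method of \cite{RUY} breaks down: the period problem here is \emph{not} non-degenerate, so no soft perturbation or implicit-function argument applies. In the \cmcone{} setting the closing condition is not the vanishing of periods but that the monodromy of the solution $F$ of \eqref{eq:cmc-ode} lies in $\SU_{1,1}$; the reflections reduce this to three matrices $\tilde\rho^{}_{1,t},\tilde\rho^{}_{2,t},\tilde\rho^{}_{3,t}$, and while the first two can be normalized to $\id$ and $\sigma_2$ (Claims~\ref{claim:1}--\ref{claim:2}), the third has $r_1(0)=r_2(0)=0$, so the open condition $r_1r_2<0$ (equivalently $|q(t)|>1$) needed to conjugate it into $\SU_{1,1}$ is borderline at $t=0$. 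The paper must compute the exponents $\nu_0(t),\nu_\infty(t)$ of the secondary Gauss map at the ends from the Schwarzian equation \eqref{eq:schwarz}, differentiate the end monodromies (Lemma~\ref{lem:rho-der}), and show $\frac{d^2}{dt^2}\big|_{t=0}|q|^2=\frac{4(k+1)\pi}{k}\tan\frac{k\pi}{2k+2}>0$; without such a second-order computation your argument does not produce well-defined surfaces at all. Similarly, embeddedness of the \cmcone{} faces for $k=1,2$ is not an ``openness of embeddedness'' statement: the ends become elliptic ends tending to the \emph{same} point of the ideal boundary of $S^3_1$, and disjointness is proved from the asymptotic expansion of $F$ via Small's formula together with $\nu_0\neq\nu_\infty$. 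Finally, the theorem does not claim (and the paper does not prove) that the singularity count is preserved under the deformation, so that part of your plan is both unsupported and unnecessary.
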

We note that $f_1$ is the Kim-Yang toroidal maxface, but the $f_k$
are new examples for all $k \geq 2$.  
The proof of this theorem is given in Sections~\ref{sec:main} and
\ref{sec:deform}.
The method of construction is somewhat similar to that of \cite{KY}, but
one salient feature of our new  family is that it is free of
branch points for all $k=1,2,3,\dots$, whereas
the surfaces in the corresponding family of \cite{KY} have 
two branch points whenever $k \geq 2$.
However, there is still a scarcity of examples of complete maxfaces, 
especially embedded examples.

In light of the relationships between maxfaces and \cmcone{} faces,
we give a pair of open problems about finding new surfaces:
\begin{problem}
 Are there complete maxfaces 
 {\rm (}resp.\  complete \cmcone{} faces{\rm)} with embedded ends
 of genus greater than one in $\R^3_1$ $($or $S^3_1)$?
 Furthermore, could such examples actually be 
 embedded {\rm(}in the wider sense{\rm)}?
\end{problem}
The first and second authors \cite{FR} provided numerical evidence for the
existence of \cmcone{} faces in $S^3_1$ of higher genus with two
embedded ends.
\begin{problem}
 Are there complete maxfaces 
 {\rm(}resp.\  complete \cmcone{} faces{\rm)} with more than
 two ends of positive genus in $\R^3_1$  {\rm(}or $S^3_1${\rm)}?
\end{problem}

The present state of this field is such that it would be very beneficial
to have a larger collection of examples, as described in these open 
problems, for example.  
A number of types of maximal surfaces can be produced from a canonical
correspondence with minimal surfaces in Euclidean $3$-space given in
Section 5 of \cite{UY} 
(for example, this method applies to the minimal
 surfaces of arbitrary genus found in \cite{Sa}),
although this construction needs to solve a period problem.
(Correspondence of minimal surfaces and maximal surfaces was
 first introduced in \cite{C} for graphs.)

Non-orientable  complete maxfaces were recently found in  a joint work
\cite{FuL} of the first author with L\'opez.
(Unfortunately, the ends of these examples are not embedded.)
On the other hand, \cmcone{} faces in $S^3_1$  are all orientable 
(this fact is not trivial, since the surface admits singular points,
see \cite{KU}).
Weakly complete (but not complete) bounded maxfaces 
(resp.\ \cmcone{} faces)
with arbitrary genus were constructed in \cite{MUY1} and \cite{MUY2}.
\begin{figure}
\begin{center}
\begin{tabular}{cc}
 \includegraphics[width=.40\linewidth]{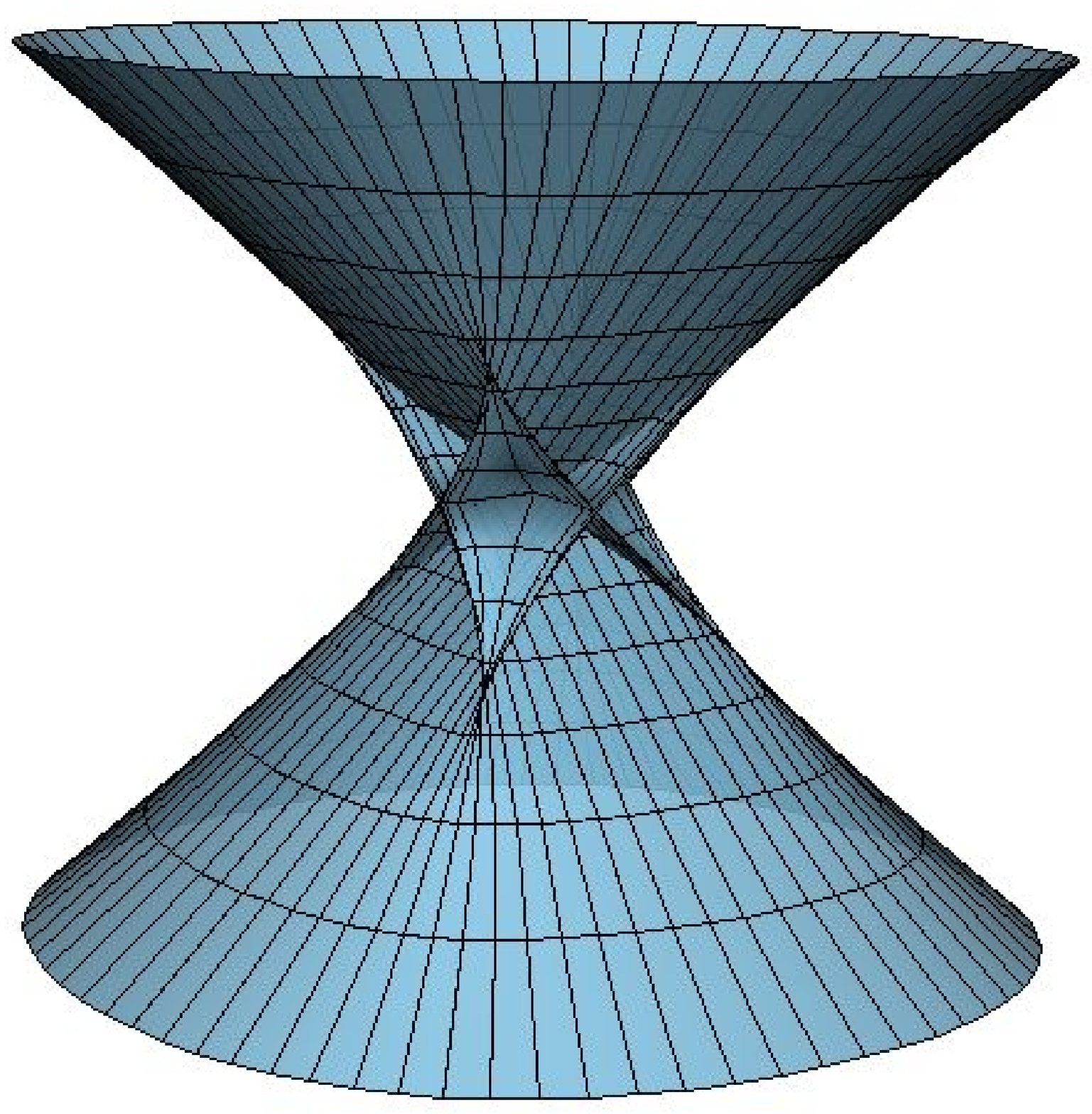} &
 \includegraphics[width=.40\linewidth]{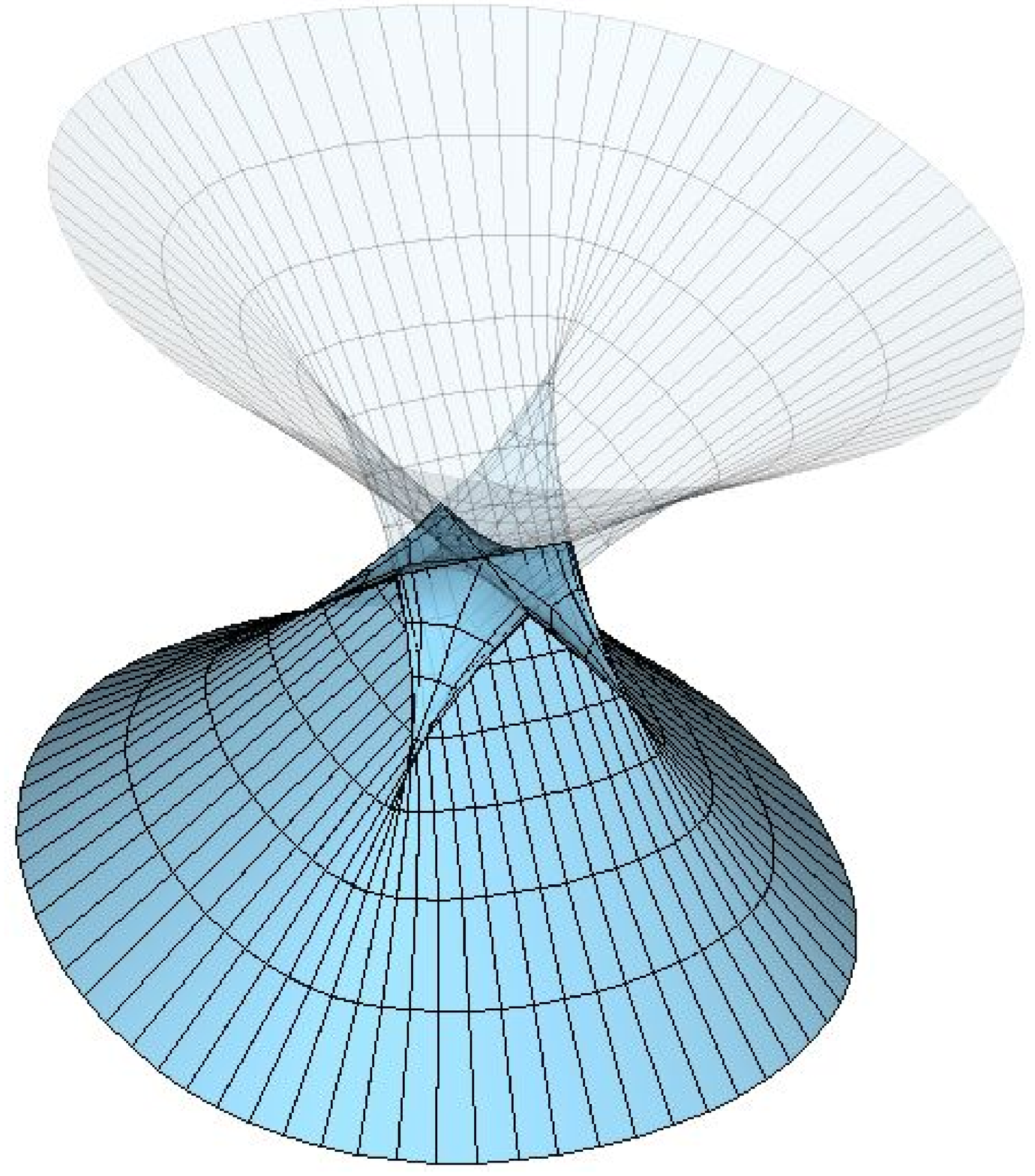} \\
\end{tabular}
\end{center}
\caption{%
 The example for $k=1$ (Kim-Yang toroidal maxface) and half of it.}
\label{fg:k=1}
\end{figure} 
\begin{acknowledgements}
     The authors thank Shin Kato for valuable conversations.
     The third and fourth authors also thank Francisco J.
     L\'opez for valuable conversations during their stay
     at Granada.
\end{acknowledgements}
\endgroup
\section{Preliminaries}\label{sec:prelim}
In this section, we review the Weierstrass-type representation 
formula for maxfaces (see \cite{K,UY}),
and criteria for singular points (see \cite{UY,FSUY}).

Throughout this paper, we denote by $\R^3_1$ the Minkowski
$3$-space with the inner product $\inner{~}{~}$ of
signature $(-,+,+)$.
\begin{fact}[{\cite[Theorem 2.6]{UY}}]\label{fact:weier}
 Let $M$ be a Riemann surface with a base point $o\in M$,
 and  $(G,\eta)$ a pair of a meromorphic function and
 a holomorphic $1$-form on $M$ such that
 \begin{equation}\label{eq:normalized}
   (1+|G|^2)^2|\eta|^2
 \end{equation}
 gives a {\rm(}positive definite{\rm)} Riemannian metric on $M$, 
 and $|G|$ is not identically $1$.
 Let 
 \begin{equation}\label{eq:weier-diff}
     \Phi := \bigl(-2G,1+G^2,i(1-G^2)\bigr)\eta
 \end{equation}
 and assume
 \begin{equation}\label{eq:period-1}
    \Re\oint_{\gamma_j} \Phi =0 \qquad (j=1,\dots,N),
 \end{equation}
 for loops $\{\gamma_j\}_{j=1}^N$ such that the set  $\{[\gamma_j]\}$ of
 homotopy classes generates the fundamental group $\pi_1(M)$ of $M$.
 Then 
 \begin{equation}\label{eq:maxface}
   f(p):=\Re\int_{o}^p\Phi=
         \Re\int_{o}^p \bigl(-2G, 1+G^2,i(1-G^2)\bigr)\eta
 \end{equation}
 is well-defined on $M$ and gives a maxface in $\R_1^3$.
 Moreover, any maxfaces are obtained in this manner.
 The induced metric $ds^2$ and the second fundamental form $\secondff$
 are expressed as
 \begin{equation}\label{eq:f-forms}
   ds^2 = \bigl(1-|G|^2\bigr)^2|\eta|^2
   \quad\text{and}\quad
   \secondff = Q + \overline Q,\qquad
   (Q=\eta\,dG),
 \end{equation}
 respectively.
 Weak completeness as in Definition~\ref{def:complete} in the
 introduction is equivalent to completeness of the metric
 \eqref{eq:normalized}.
 The point $p\in M$ is a singular point of the maxface
 \eqref{eq:maxface}
 if and only if $|G(p)|=1$.
\end{fact}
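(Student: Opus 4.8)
The plan is to reduce everything to two algebraic identities satisfied by $\Phi$ in \eqref{eq:weier-diff}, obtained by direct substitution. With respect to the complex-bilinear extension of $\inner{~}{~}$ one has $\inner{\Phi}{\Phi}=\bigl(-4G^2+(1+G^2)^2-(1-G^2)^2\bigr)\eta^2=0$, so $\Phi$ is null; and with respect to the Hermitian extension,
\[
  \inner{\Phi}{\overline\Phi}=\bigl(-4|G|^2+|1+G^2|^2+|1-G^2|^2\bigr)|\eta|^2
  =2\bigl(1-|G|^2\bigr)^2|\eta|^2\ge 0 ,
\]
with equality exactly where $|G|=1$. Since $(1+|G|^2)^2|\eta|^2$ is a genuine (positive definite) metric, $\eta$ has no zeros away from the poles of $G$, and a local check at the poles of $G$ shows $\Phi$ is nowhere zero on $M$; note that this positive definiteness is precisely what rules out branch points. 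The period hypothesis \eqref{eq:period-1} makes $f=\Re\int_o^p\Phi$ well defined on $M$; it is harmonic, being the real part of a holomorphic map, and the nullity of $\Phi$ makes it conformal in the (possibly degenerate) sense, with $ds^2:=\inner{df}{df}=\tfrac12\inner{\Phi}{\overline\Phi}=(1-|G|^2)^2|\eta|^2$. A harmonic conformal map with nowhere-vanishing derivative has vanishing mean curvature wherever $ds^2$ is positive definite, so $f$ is a spacelike maximal immersion off the zero set of $1-|G|^2$.

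Next I would pin down the singular set, the Gauss map, and $\secondff$. From the formula for $ds^2$, a point $p$ is singular (i.e.\ $df_p$ is degenerate) if and only if $|G(p)|=1$; at such $p$ the null vector $\Phi(p)\ne 0$ lies in the limiting tangent plane, which gives the admissibility of the singular points required by the definition of a maxface in \cite{UY}, so $f$ is genuinely a maxface. Writing $\Phi=(\Phi_1,\Phi_2,\Phi_3)$ one checks $\Phi_2-i\Phi_3=2\eta$ and $-\Phi_1=2G\eta$, whence $G=-\Phi_1/(\Phi_2-i\Phi_3)$ is the stereographic coordinate of the Lorentzian Gauss map of $f$ (into the hyperboloid, compactified as $S^2$); a direct computation of $\secondff=-\inner{df}{d\nu}$ for the corresponding unit normal $\nu$, or equivalently reading off the $(2,0)$-part of $\inner{\partial f}{\partial\nu}$, yields $\secondff=Q+\overline Q$ with $Q=\eta\,dG$. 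This establishes the first assertion together with \eqref{eq:f-forms}.

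For the converse, let $f\colon M\to\R^3_1$ be an arbitrary maxface. On the (open dense) regular set choose a local conformal coordinate $z$; then $\Phi:=2\,\partial f$ is a holomorphic null $\C^3$-valued $1$-form, and $G:=-\Phi_1/(\Phi_2-i\Phi_3)$, $\eta:=\tfrac12(\Phi_2-i\Phi_3)$ recover Weierstrass data with $f=\Re\int\Phi+\text{const}$. The step I expect to be the main obstacle is showing that $(G,\eta)$ extends meromorphically, respectively holomorphically, across the singular set and that \eqref{eq:normalized} stays positive definite there: this is exactly where the admissibility of the singular points enters, forcing $\Phi$ to extend without zeros and $G$ to acquire at worst a pole matching the order of the zero of $\Phi_2-i\Phi_3$, so that $(1+|G|^2)^2|\eta|^2$ never degenerates; and $|G|\not\equiv 1$ because $f$ is spacelike on a nonempty open set.

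Finally, for weak completeness, the null holomorphic lift of $f$ is $F=\int\Phi$ on the universal cover $\widetilde M$ of $M$, with values in $\C^3$, and its induced metric for the canonical Hermitian metric on $\C^3$ is $\eucinner{\Phi}{\overline\Phi}=\bigl(|2G|^2+|1+G^2|^2+|1-G^2|^2\bigr)|\eta|^2=2(1+|G|^2)^2|\eta|^2$. Thus, up to the constant factor $2$, the lift metric coincides with \eqref{eq:normalized}, so $f$ is weakly complete if and only if \eqref{eq:normalized} is a complete metric, which finishes the argument.
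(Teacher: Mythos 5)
This statement appears in the paper only as a quoted Fact, cited to \cite[Theorem 2.6]{UY}, so there is no in-paper proof to compare against. Your sketch is the standard Weierstrass-representation argument of the cited source and is correct in outline: the two algebraic identities for $\Phi$ (nullity and $\inner{\Phi}{\overline\Phi}=2(1-|G|^2)^2|\eta|^2$), nonvanishing of $\Phi$ forced by positive definiteness of \eqref{eq:normalized}, harmonicity plus conformality giving a spacelike maximal immersion off $\{|G|=1\}$, recovery of $(G,\eta)$ from $2\,\partial f$ in the converse, and the computation identifying the lift metric with twice \eqref{eq:normalized} for weak completeness --- and you correctly flag that the only delicate point, extending the data across the singular set in the converse, is exactly where the definition of maxface via the null holomorphic lift (admissibility of singular points) is invoked.
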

We call the pair $(G,\eta)$ the {\em Weierstrass data\/} of the maxface
$f$ in \eqref{eq:maxface}, 
$G$ the {\em Lorentzian Gauss map},
and the holomorphic $2$-differential $Q$ in \eqref{eq:f-forms} 
the {\em Hopf differential}, respectively.
\begin{fact}[\cite{UY2}]%
\label{fact:complete}
 Let 
 $M$ be a Riemann surface,
 and $f\colon{}M\to\R^3_1$ a weakly complete maxface.
 Then $f$ is complete if and only if
 there exists a compact Riemann surface $\overline M$ and a finite 
 number of points
 $p_1,\dots,p_n\in \overline M$ such that
 $M$ is conformally equivalent to 
 $\overline M\setminus\{p_1,\dots,p_n\}$,
 and the set of singular points
 \[
        \Sigma=\{p\in M\,;\,|G(p)|=1\}
 \]
 is compact.
 In this case, the Weierstrass data $(G,\eta)$ is well-defined 
 as a pair of a meromorphic function and a meromorphic one form on
 $\overline M$,
 and the compactness of the singular set $\Sigma$ is equivalent to 
 the condition $|G(p_j)|\neq 1$ for $j=1,\dots,n$.
\end{fact}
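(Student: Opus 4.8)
The plan is to prove the two implications of the equivalence separately and then extract the criterion, with Fact~\ref{fact:weier} and Definition~\ref{def:complete} as standing references. Recall that weak completeness of $f$ means exactly that $d\hat s^2:=(1+|G|^2)^2|\eta|^2$ (the metric induced on the null holomorphic lift) is a complete Riemannian metric on $M$, whereas the induced metric $ds^2=(1-|G|^2)^2|\eta|^2$ degenerates precisely along $\Sigma=\{|G|=1\}$; note also that $ds^2\le d\hat s^2$ pointwise, since $(1+t)^2\ge(1-t)^2$ for $t=|G|^2\ge0$. For the ``only if'' direction I would argue: if $f$ is complete, then $\Sigma$ is compact by Definition~\ref{def:complete}; and by \cite[Theorem 4.6 and Corollary 4.8]{UY} a complete maxface has finite total curvature outside a compact set and finite topology, so Huber's theorem produces a compact Riemann surface $\overline M$ and finitely many points $p_1,\dots,p_n$ with $M$ conformally $\overline M\setminus\{p_1,\dots,p_n\}$, while the finite-total-curvature behaviour at each end forces $G$ and $\eta$ to extend, respectively, as a meromorphic function and a meromorphic $1$-form across each $p_j$. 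This is the asserted structure, apart from the clause $|G(p_j)|\neq1$, which I postpone.

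For the ``if'' direction, assume $M\cong\overline M\setminus\{p_1,\dots,p_n\}$ with $\overline M$ compact and $\Sigma$ compact; I must produce a symmetric $2$-tensor $T$ as in Definition~\ref{def:complete}. Since $\Sigma$ is compact, $|G|\neq1$ on a punctured neighbourhood of each $p_j$, so $G$ has no essential singularity there (otherwise, by Casorati--Weierstrass, its image would be dense in $\C$ and $|G|$ would attain $1$); hence $G$ extends meromorphically to $p_j$, and I claim $|G(p_j)|\neq1$ — proved in the last step. Granting it, a short case check (according as $G(p_j)$ is finite or $\infty$) shows $ds^2$ and $d\hat s^2$ are mutually bounded on a punctured neighbourhood of each $p_j$, so $ds^2$ is complete towards every end. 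Now choose a compact $K\subset M$ containing $\Sigma$ in its interior and disjoint from those neighbourhoods, a smooth $\rho\colon M\to[0,1]$ supported in $K$ with $\rho>0$ on $\Sigma$, and set $T:=4\rho\,|G|^2|\eta|^2$. Then $T$ vanishes off the compact set $K$; the metric $ds^2+T=\bigl(\rho(1+|G|^2)^2+(1-\rho)(1-|G|^2)^2\bigr)|\eta|^2$ is positive definite everywhere (it equals $4\rho\,|\eta|^2>0$ along $\Sigma$, and off $\Sigma$ its coefficient is a nontrivial nonnegative combination of the positive numbers $(1\pm|G|^2)^2$); and $ds^2+T$ coincides with $ds^2$ off $K$, hence is complete, because a divergent path in $M$ can accumulate in $\overline M$ only at the $p_j$, near which $ds^2+T=ds^2$ is complete towards the end. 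Thus $f$ is complete.

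It remains to show, under the hypotheses of either side, that $\Sigma$ is compact if and only if $|G(p_j)|\neq1$ for all $j$. Here $G$ may be taken meromorphic on $\overline M$ (established above, or by \cite{UY}), hence continuous as a map $\overline M\to\C\cup\{\infty\}$, so $A:=\{p\in\overline M:|G(p)|=1\}$ is closed in the compact $\overline M$, hence compact. If $|G(p_j)|\neq1$ for every $j$, then $\Sigma=A\subset M$, and a compact subset of $\overline M$ contained in $M$ is compact in $M$. If instead $|G(p_j)|=1$ for some $j$, then $G(p_j)\in\C\setminus\{0\}$, so $\log|G|$ is harmonic and non-constant near $p_j$ (as $|G|\not\equiv1$) and vanishes at $p_j$; a non-constant harmonic function cannot have an isolated zero (maximum principle), so $p_j$ is a limit point of $A$, and deleting it from the compact set $A$ destroys compactness, i.e.\ $\Sigma$ is not compact.

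The genuine obstacle, in my view, is the part of the ``only if'' direction that upgrades finite topology to the statement that $M$ is \emph{conformally} a compact surface minus points and that $\eta$ — not merely $G$ — extends meromorphically at the ends; this is precisely the finite-total-curvature analysis of complete maxfaces carried out in \cite[Theorem 4.6]{UY}, which I would invoke rather than reprove. Everything else is elementary: Casorati--Weierstrass for the extension of $G$, the maximum principle for the $|G(p_j)|\neq1$ criterion, and the cut-off interpolation between $ds^2$ and $d\hat s^2$ for the construction of $T$.
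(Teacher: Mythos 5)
The paper never proves this statement: it is imported as a Fact directly from \cite{UY2}, and the only related material here is the introductory remark (citing \cite[Theorem 4.6 and Corollary 4.8]{UY}) that a complete maxface automatically has finite total curvature outside a compact set and finite topology. So there is no in-paper argument to compare line by line; judged on its own, your reconstruction is essentially sound and splits the work in the natural way: the ``only if'' direction is delegated to the structure theorems of \cite{UY} (the same citations the paper itself relies on, and this also supplies the meromorphic extension of $\eta$ claimed in the last sentence of the Fact), while the ``if'' direction and the criterion $|G(p_j)|\neq 1$ are proved directly. Your computations check out: with $T=4\rho|G|^2|\eta|^2$ the identity $ds^2+T=\bigl(\rho(1+|G|^2)^2+(1-\rho)(1-|G|^2)^2\bigr)|\eta|^2$ is correct, and $T=\rho\,|2G\eta|^2$ is a genuine smooth tensor because positive definiteness of \eqref{eq:normalized} forces $G\eta$ to be holomorphic at the poles of $G$; the mutual boundedness of $ds^2$ and \eqref{eq:normalized} near a puncture with $|G(p_j)|\neq 1$ (whether $G(p_j)$ is finite or $\infty$) is right, and combined with weak completeness it does make every divergent path of infinite length; and the non-isolation of zeros of the non-constant harmonic function $\log|G|$ correctly shows that $|G(p_j)|=1$ destroys compactness of $\Sigma$, while $|G(p_j)|\neq 1$ for all $j$ makes $\Sigma=\{|G|=1\}$ a closed subset of the compact surface $\overline M$ contained in $M$.

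One step is misstated, though easily repaired: you rule out an essential singularity of $G$ at $p_j$ by saying that, by Casorati--Weierstrass, ``its image would be dense in $\C$ and $|G|$ would attain $1$.'' Density alone does not give attainment of modulus one: $\C\setminus\{|w|=1\}$ is open and dense and misses the unit circle entirely. The one-line fix is to use that $|G|\neq 1$ on a connected punctured neighbourhood of $p_j$ forces either $|G|<1$ or $|G|>1$ throughout that neighbourhood, so $G$ omits a nonempty open disc, contradicting the density conclusion of Casorati--Weierstrass (alternatively, Picard's great theorem gives values of modulus one directly). With that repair, and granting \cite[Theorem 4.6 and Corollary 4.8]{UY} for the conformal finite-type structure and the extension of the Weierstrass data at the ends of a complete maxface, your argument establishes the Fact.
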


As shown in \cite{FSUY}, generic singular points of maxfaces 
(resp.\ \cmcone{} faces) are cuspidal edges, swallowtails and cuspidal
cross caps.
We recall criteria for generic singular points of maxfaces.
For terminology on \cmcone{} faces, see Section~\ref{sec:deform}.
\begin{fact}[{\cite[Theorem 3.1]{UY}}, {\cite[Theorem 2.4]{FSUY}}]%
\label{fact:maxface-sing}
 Let $U$ be a domain of the complex plane $(\C,z)$ and
 $f:M \to \R^3_1$ 
 a maxface, 
 $(G,\eta)$ its Weierstrass data.
 Set 
 \begin{equation}\label{eq:criteria}
   \alpha:=\frac{dG}{G^2\eta},\quad\text{and}\quad
   \beta:=G\frac{d\alpha}{dG}
 \end{equation}
 Then
 \begin{enumerate}
  \item A point $p\in M$ is a singular point of $f$ if and only if
	$|G(p)|=1$.
  \item $f$ is  right-left equivalent to a cuspidal edge at $p$ if and only if
	$\Im\alpha(p)\neq 0$,
  \item $f$ is right-left equivalent to a swallowtail at $p$ if and only if
	$\Im\alpha(p)=0$ and $\Re\beta(p)\neq 0$,
  \item and $f$ is  right-left equivalent to a cuspidal cross cap at $p$
	if and only if
	$\Re\alpha(p)=0$, $\alpha(p)\neq 0$, and $\Im\beta(p)\neq 0$.
 \end{enumerate}
\end{fact}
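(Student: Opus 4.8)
The plan is to derive (2)--(4) from the general criteria that characterize, up to right-left equivalence, cuspidal edges, swallowtails and cuspidal cross caps for wave fronts and frontals, by making the relevant data explicit for a maxface via \eqref{eq:maxface}. Assertion (1) is already contained in Fact~\ref{fact:weier}, so throughout the singular set is $\Sigma=\{|G|=1\}$. First I would fix a complex coordinate $z=u+iv$ on $U$ and write $\eta=\hat\eta\,dz$ with $\hat\eta$ holomorphic and nowhere zero (by \eqref{eq:normalized}). A maxface is a frontal: on $M\setminus\Sigma$ the unit timelike normal is
\[
 \nu=\frac{1}{1-|G|^{2}}\bigl(1+|G|^{2},\;2\Re G,\;-2\Im G\bigr),
\]
and although $\nu$ blows up along $\Sigma$, the Lorentzian Gauss map $G\colon M\to S^{2}$ extends smoothly across $\Sigma$ (where $|G|=1$), exhibiting $f$ as a frontal with $G$ as its Gauss map. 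A signed area density can be taken to be $\lambda:=1-|G|^{2}$ (up to a nowhere-zero smooth factor), so $\Sigma=\lambda^{-1}(0)$, and since $G(p)\neq 0$ on $\Sigma$ one has $d\lambda=-2\Re(\overline{G}\,dG)$, whence $d\lambda(p)\neq 0\iff dG(p)\neq 0\iff\alpha(p)\neq 0$. Thus $p$ is a \emph{nondegenerate} singular point exactly when $\alpha(p)\neq 0$, and then $\Sigma$ is a regular curve near $p$.

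Next I would identify the null direction. Writing $\Phi=\phi\,dz$ with $\phi=\bigl(-2G,1+G^{2},i(1-G^{2})\bigr)\hat\eta\in\C^{3}$, the isotropy identity $\inner{\phi}{\phi}\equiv 0$ (the reason \eqref{eq:weier-diff} integrates to a maxface) together with $\inner{\phi}{\overline{\phi}}=2(1-|G|^{2})^{2}|\hat\eta|^{2}$ shows that at $p\in\Sigma$ the real and imaginary parts of $\phi$ are lightlike and mutually orthogonal, hence parallel; so $\ker df_{p}$ is one dimensional, spanned by a real vector field $\xi$ along $\Sigma$ explicitly determined by $G$ and $\hat\eta$. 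A direct computation then gives $\xi\lambda(p)=(\text{nowhere-zero})\cdot\Im\alpha(p)$, so $\xi$ is transversal to the singular curve $\Sigma$ at $p$ if and only if $\Im\alpha(p)\neq 0$.

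Feeding this into the front criterion for cuspidal edges gives (2): a nondegenerate singular point of a front is a cuspidal edge precisely when the null direction is transversal to $\Sigma$, i.e.\ when $\Im\alpha(p)\neq 0$. For (3) one assumes $\Im\alpha(p)=0$ (so $\xi$ is tangent to $\Sigma$ at $p$) and differentiates $\xi\lambda$ along $\Sigma$; its leading coefficient turns out to be a nowhere-zero multiple of $\Re\beta(p)$, which is the remaining swallowtail condition, using $\beta=G\,d\alpha/dG$. Both (2) and (3) also require that $f$ be a front near $p$, which for a maxface one checks holds whenever $\Re\alpha(p)\neq 0$; this is the complement of the situation in (4), where $f$ is a frontal but not a front. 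For (4) one instead invokes the general criterion for cuspidal cross caps: nondegeneracy ($\alpha(p)\neq 0$) together with transversality of $\xi$ to $\Sigma$ supply the part of the picture that looks like a cuspidal edge, while the failure to be a front is measured by an invariant $\psi$ of the normal direction along $\Sigma$; computing $\psi$ and its first derivative in terms of $G$ and $\hat\eta$ should show that $\psi(p)=0$ imposes nothing new and that $\psi'(p)\neq 0\iff\Im\beta(p)\neq 0$, with $\Re\alpha(p)=0$ built into the normalization.

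The main obstacle will be this last case. Translating the normal-twisting invariant $\psi'$ into the condition $\Im\beta(p)\neq 0$ requires careful control of the second-order behavior of the degenerating normal $\nu$ along $\Sigma$, tracking exactly which terms survive once $\Re\alpha(p)=0$. Tightly linked to this is establishing cleanly that a maxface is a front at a nondegenerate singular point precisely when $\Re\alpha(p)\neq 0$: this, together with the observation that the degenerate locus $\{\alpha(p)=0\}$ is covered by none of (2)--(4), is what makes the criteria mutually exclusive and organizes the generic singular points. By comparison, the computations of $\xi$, of $\xi\lambda$, and of the derivative of $\xi\lambda$ along $\Sigma$ underlying (2) and (3) are routine once the null vector field is in hand.
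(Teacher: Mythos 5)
The paper gives no proof of this Fact at all --- it is quoted from \cite{UY} and \cite{FSUY} --- so the only meaningful comparison is with the proofs in those sources, and your sketch follows essentially their route: regard the maxface as a frontal whose normal direction is governed by $G$, take $\lambda=1-|G|^2$ as a signed area density, identify the null direction $i/(G\hat\eta)$ and the singular direction $i\overline{G'/G}$, and feed these into the Kokubu--Rossman--Saji--Umehara--Yamada criteria for cuspidal edges and swallowtails and into the cuspidal cross cap criterion of \cite{FSUY}. The identifications you assert are the correct ones (nondegeneracy $\iff\alpha(p)\neq0$, transversality of the null direction $\iff\Im\alpha(p)\neq0$, indeed $\xi\lambda$ is a nonzero multiple of $\Im\alpha$ along $\Sigma$, the derivative along $\Sigma$ produces $\Re\beta$, the \cite{FSUY} invariant produces $\Im\beta$, and the front condition is exactly $\Re\alpha(p)\neq0$), though in the sketch they are asserted rather than computed; you have flagged the genuinely delicate ones yourself. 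One computational slip: the unit normal is $\frac{1}{1-|G|^2}\bigl(1+|G|^2,\,-2\Re G,\,-2\Im G\bigr)$, not $\bigl(1+|G|^2,\,2\Re G,\,-2\Im G\bigr)$; this does not affect the structure of the argument.

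One substantive point. As you yourself note, (2) and (3) require $f$ to be a front near $p$, and for a maxface at a nondegenerate singular point this is precisely $\Re\alpha(p)\neq0$. Hence what your argument actually establishes is ``cuspidal edge at $p$ iff $\Re\alpha(p)\neq0$ and $\Im\alpha(p)\neq0$,'' which is the criterion in the cited sources; item (2) as reproduced in this paper omits $\Re\alpha(p)\neq0$, and the omission cannot be repaired by the transversality computation alone. Indeed, for the Lorentzian helicoid $(G,\eta)=(z,i\,dz/z^2)$ one has $\alpha\equiv-i$, so $\Im\alpha\neq0$ along the entire singular set, yet the singular points are folds, not cuspidal edges; moreover without the front hypothesis (2) and (4) would overlap, since the hypotheses of (4) force $\Im\alpha(p)\neq0$. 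So the front condition you built in is not optional bookkeeping but exactly what makes (2)--(4) mutually exclusive (in (3) it is automatic, since $\Im\alpha(p)=0$ together with $\alpha(p)\neq0$ gives $\Re\alpha(p)\neq0$). In short: do not aim at the literal wording of (2); carry out the asserted computations and you obtain the statement as proved in \cite{UY} and \cite{FSUY}.
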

Here, two $C^{\infty}$-maps $f_1\colon{}(U_1,p)\to N^3$ and 
$f_2\colon{}(U_2,q)\to N^3$ of domains $U_j\subset \R^2$ ($j=1,2$)
into a $3$-manifold $N^3$ are {\em right-left equivalent}
at the points $p\in U_1$ and $q\in U_2$ if there  exists a local
diffeomorphism $\varphi$ of $\R^2$ with $\varphi(p)=q$ 
and a local diffeomorphism $\Phi$ of $N^3$ with $\Phi(f_1(p))=f_2(q)$
such that $f_2=\Phi\circ f_1 \circ \varphi^{-1}$.

\section{Maxfaces with cone-like singular points}
\label{sec:cone}
In this section, we give the proof of Theorem~\ref{thm:cone} in the 
introduction.
To do this, we recall a definition and a criterion for cone-like
singular points.
\subsection{Cone-like singular points}
\label{sub:cone}
\begin{figure}
\footnotesize
\begin{center}
 \includegraphics[width=0.4\textwidth]{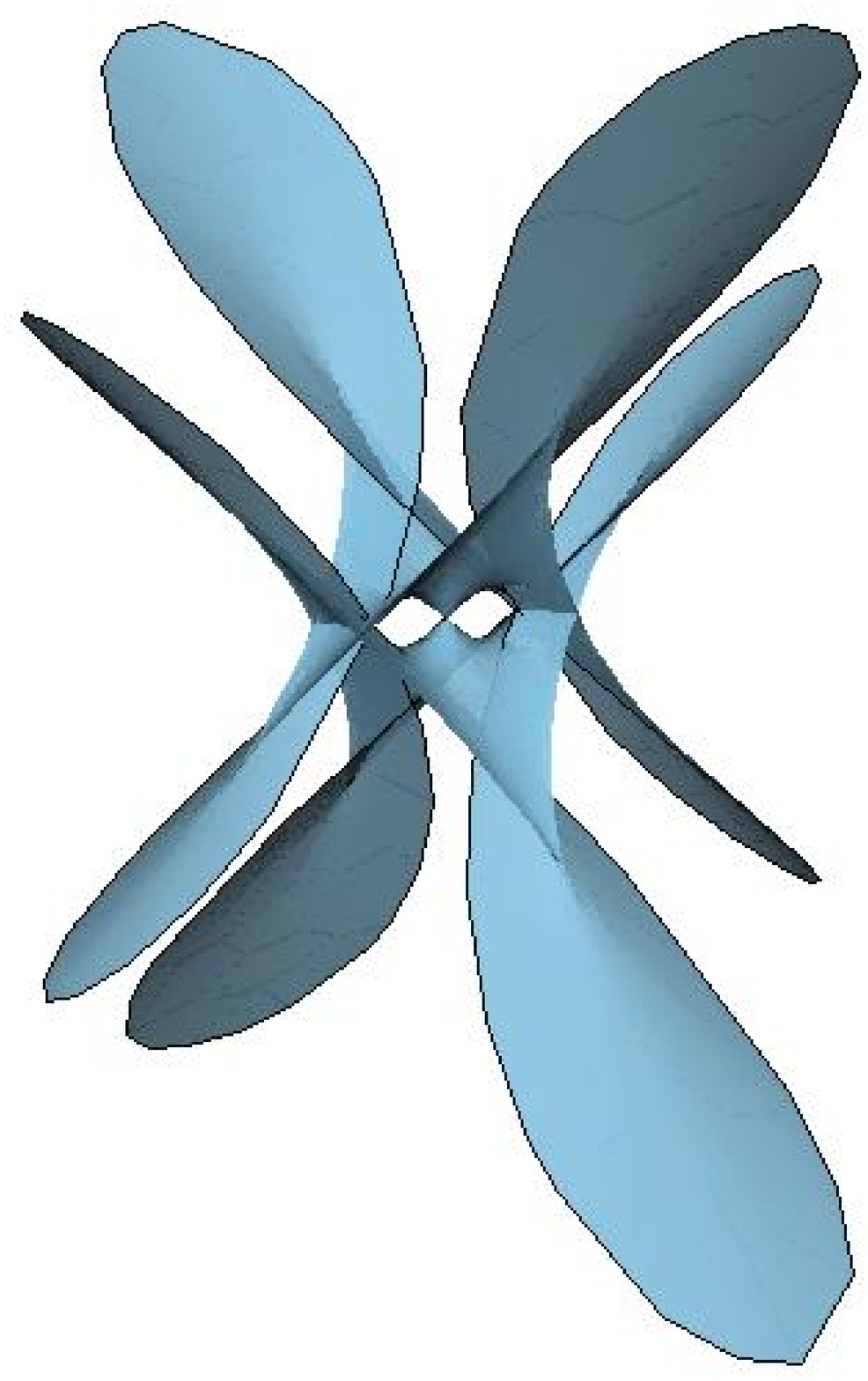} \\
  The image of the surface with \eqref{eq:cone-data} and $a=2.5$.\\
 \includegraphics[width=0.8\textwidth]{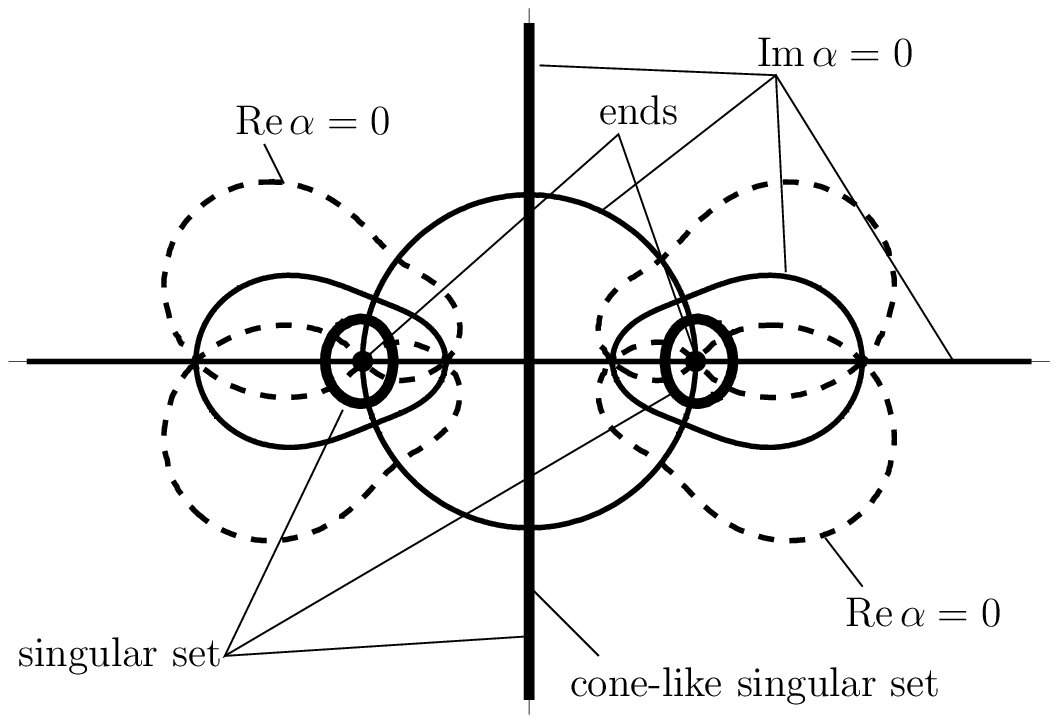}  \\
 \begin{minipage}{\textwidth}
  The singular set in the $z$-plane.
  The imaginary axis corresponds to the set of
  cone-like singular points,
  and the other connected components each consist
  of cuspidal edges, four swallowtails (the intersection points
  of the singular set with the curve $\Im\alpha=0$,
  the curves shown in thin lines),
  and four cuspidal cross caps (the intersection points
  of the singular set with the curve $\Re\alpha=0$, the dotted curves).
 \end{minipage}
\end{center}
\caption{%
 A maxface with a cone-like singularity and other singularities
}
\label{fig:conical}
\end{figure} 
\begin{definition}[Cone-like singular points]
\label{def:cone}
 Let $\Sigma_0$ be a connected component of the set of singular points
 of the maxface $f$ as in \eqref{eq:maxface} 
 which consists of non-degenerate singular points
 in the sense of \cite[Section 3]{UY} and \cite{FSUY}.
 Then each point of
 $\Sigma_0$ is called a {\em generalized cone-like singular point\/}
 if $\Sigma_0$ is compact and the image $f(\Sigma_0)$ is one point.
 Moreover, if there is a neighborhood $U$ of $\Sigma_0$ and
 $f(U\setminus \Sigma_0)$ is embedded, 
 each point of
 $\Sigma_0$ is called a {\em cone-like singular point.}
\end{definition}

\begin{remark}\label{rem:conical}
 In \cite{K2} and \cite{FLS}, the image $f(\Sigma_0)$ as a single point
 of the cone-like singular points is called a 
 {\em cone-like singular point\/}.
 However, we do not use this terminology here,
 since we treat the singular set not in $\R^3_1$ but 
 in the source manifold.
\end{remark}

\begin{lemma}\label{lem:criterion}
 A connected component $\Sigma_0$ of 
 the set of singular points of the maxface \eqref{eq:maxface}
 consists of generalized cone-like singular points 
 if and only if it is compact, and 
 \[
    \alpha \neq 0 \qquad\text{and}\qquad \Im\alpha=0
 \]
 holds, where $\alpha$ is  a function on $M$ as in \eqref{eq:criteria}.
\end{lemma}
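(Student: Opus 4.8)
The plan is to characterize when a compact connected component $\Sigma_0$ of the singular set maps to a single point, by examining the behavior of $f$ along $\Sigma_0$ in terms of the Weierstrass data. Recall from Fact~\ref{fact:weier} that singular points are exactly the points where $|G|=1$, so along $\Sigma_0$ we have $|G|\equiv 1$; away from the degenerate locus we may parametrize $\Sigma_0$ by a regular curve $\gamma(t)$. First I would compute $\frac{d}{dt}f(\gamma(t))$ using $df=\Re\,\Phi=\Re\bigl((-2G,1+G^2,i(1-G^2))\eta\bigr)$. Along $|G|=1$ one can write $G=e^{i\theta(t)}$ for a local real function $\theta$, and the key algebraic identity is that the vector $(-2G,1+G^2,i(1-G^2))$ is, up to a nonzero scalar, a \emph{null} vector whose real and imaginary parts, when $|G|=1$, span a degenerate plane containing a distinguished lightlike line; this is precisely the statement in Section~\ref{sec:prelim} that the limiting tangent plane at a singular point is lightlike. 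The upshot is that $\frac{d}{dt}f(\gamma(t))$ is a real multiple of a fixed (up to sign along the curve) lightlike vector times a real-valued factor built from $\eta$, $\dot\gamma$, and $\theta$.

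Next I would identify that scalar factor with $\alpha=dG/(G^2\eta)$. The natural computation is to differentiate $G\bar G=1$ along $\gamma$ and to use the nondegeneracy of the singular point (which, by \cite[Section 3]{UY}, \cite{FSUY}, means $dG\neq 0$ there and the singular direction is transverse to $\ker dG$ in the appropriate sense) to express $\dot\gamma$ and $\eta(\dot\gamma)$ in terms of $dG$. A short manipulation should yield that $\Phi(\dot\gamma)$ is a complex multiple of a lightlike vector with the complex coefficient proportional to $\overline{\alpha(\gamma(t))}$ (or $\alpha$ itself, depending on conventions), so that
\[
  \frac{d}{dt}f(\gamma(t)) \;=\; \Re\,\Phi(\dot\gamma) \;=\; \bigl(\Re\,c(t)\bigr)\,\vect{v}(t)\;+\;\bigl(\Im\,c(t)\bigr)\,\vect{w}(t),
\]
where $\vect v,\vect w$ span the lightlike tangent plane and $c(t)$ is a nonzero multiple of $\alpha(\gamma(t))$ (nonzero since the singular point is nondegenerate, which forces $\alpha\neq 0$). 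I expect the cleanest route is to work in a local coordinate $z$ with $\eta=dz$ after a biholomorphic change, so that $\alpha=G'/G^2$ and the tangent direction to $\Sigma_0$ is read off from the level curve $|G|=1$; then the two real functions $\Re\,\alpha$ and $\Im\,\alpha$ restricted to $\Sigma_0$ govern the two components of the velocity.

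From this the equivalence is immediate in one direction: if $\Im\alpha\equiv 0$ and $\alpha\neq 0$ on a compact $\Sigma_0$, then $\frac{d}{dt}f(\gamma(t))$ is everywhere a (nonzero multiple of a) single lightlike direction $\vect v(t)$; but one must still check that the \emph{direction} is constant, not merely that the velocity lies in a $1$-dimensional subspace at each point. Here I would use that $\vect v(t)$ is determined by $G(\gamma(t))=e^{i\theta}$, together with the fact that on $\Sigma_0$ the function $\theta$ must be monotone (again nondegeneracy), and then show by a direct computation that the real curve $t\mapsto f(\gamma(t))$ has identically vanishing derivative once one accounts for the correct $\alpha$-weighting — i.e., the statement $\Im\alpha=0$ is exactly the integrability condition making $f\circ\gamma$ constant. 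For the converse: if $f(\Sigma_0)$ is a point, then $\frac{d}{dt}f(\gamma(t))\equiv 0$, and since $\vect v,\vect w$ are linearly independent (the plane is genuinely $2$-dimensional, being the limit of spacelike tangent planes) both $\Re\,c(t)$ and $\Im\,c(t)$ vanish, forcing $c\equiv 0$, i.e. $\alpha\equiv 0$ on $\Sigma_0$ — but wait, that contradicts nondegeneracy, so one sees instead that the correct bookkeeping gives $f\circ\gamma$ constant $\iff \Im\alpha\equiv 0$, with $\alpha\neq 0$ automatic. The main obstacle, and the step to be done carefully, is precisely this bookkeeping: getting the exact relation between $\frac{d}{dt}(f\circ\gamma)$ and $\alpha$ right, including the role of the nondegeneracy hypothesis in ensuring $\alpha\neq 0$ and in letting us reduce to a regular parametrization of $\Sigma_0$, and confirming that the real part of the $\alpha$-weighted lightlike velocity is what integrates to $0$. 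Compactness of $\Sigma_0$ enters to rule out non-closed singular curves and to make ``$f(\Sigma_0)$ is one point'' a well-posed condition; the local computation is then patched over the (finitely many arcs of the) compact curve.
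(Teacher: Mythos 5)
Your plan --- parametrize $\Sigma_0$, compute $\tfrac{d}{dt}(f\circ\gamma)=\Re\Phi(\dot\gamma)$, and turn its vanishing into a condition on $\alpha$ --- is in spirit the same comparison the paper makes, but the proposal stops exactly where the lemma lives, and the geometric picture you use to organize it is wrong on the singular set. Write $G=e^{i\theta}$ and $\eta=\hat\eta\,dz$ at a point with $|G|=1$; then $(-2G,1+G^2,i(1-G^2))=2e^{i\theta}\,(-1,\cos\theta,\sin\theta)$, so the real and imaginary parts of this vector are \emph{parallel}: both are real multiples of the single lightlike vector $(-1,\cos\theta,\sin\theta)$. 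They do not span a plane, and $df$ has rank one along $\Sigma_0$: for any tangent vector, $df=\Re\bigl[2G\hat\eta\,dz\bigr]\,(-1,\cos\theta,\sin\theta)$. Hence the velocity of $f$ along $\Sigma_0$ is controlled by one real scalar, not by two coefficients $\Re c,\Im c$ attached to independent vectors $\vect{v},\vect{w}$. This is exactly why your converse collapses into the contradiction you yourself flag (``both $\Re c$ and $\Im c$ vanish, forcing $\alpha\equiv0$''): that decomposition does not exist. Your forward direction is also not repairable as stated --- if $\Im\alpha=0$ and $\alpha\neq0$, the velocity of $f\circ\gamma$ is not ``a nonzero multiple of a single lightlike direction'' that mysteriously integrates to zero; it is identically zero. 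You defer the resolution to ``the correct bookkeeping'' and ``a short manipulation should yield\dots'', but that bookkeeping \emph{is} the lemma, and it is never carried out.

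The missing computation is short and is the paper's proof: $f|_{\Sigma_0}$ is locally constant iff $\Re[G\hat\eta\,\dot z]\equiv0$ along $\Sigma_0$, i.e.\ iff the singular direction, which is $i\,\overline{(G'/G)}$ (differentiate $|G|^2=1$; here ${}'=d/dz$), is everywhere parallel over $\R$ to the null direction $i/(G\hat\eta)$ of $df$. Their ratio on $|G|=1$ is $\overline{(G'/G)}\,G\hat\eta=\overline{G'}\,G^2\hat\eta=|\hat\eta|^2\,\overline{\alpha}$ (use $\overline{G}=1/G$), so parallelism is precisely $\Im\alpha=0$; and $\alpha\neq0$ on $\Sigma_0$ is equivalent to non-degeneracy by \cite[Lemma 3.3]{UY}, which is what makes $\Sigma_0$ a regular curve in the first place (note $G\neq0$ and $\hat\eta\neq0$ on $\Sigma_0$ because the metric \eqref{eq:normalized} is positive definite, so $\alpha$ is finite and the ratio above is legitimate). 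Compactness is simply part of Definition~\ref{def:cone}. As written, your proposal asserts the conclusion of this computation without performing it, and the rank-two picture of $df$ along $\Sigma_0$ on which the rest of the argument leans is false.
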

\begin{proof}
 Take a complex coordinate $z$ around $p\in\Sigma_0$ and identify the 
 tangent plane of $M$ with $\C$.
 The point $p$ is non-degenerate if and only if $\alpha\neq 0$,
 because of \cite[Lemma 3.3]{UY}.
 The singular direction 
 (the tangential direction of the singular set) 
 and the null direction (the direction of the kernel of $df$)
 are represented as
 $i\overline{(G'/G)}$ and $i/(G\hat\eta)$,
 where ${}'={d}/{dz}$ and $\eta=\hat\eta\,dz$
 (see \cite[Proof of Theorem 3.1]{UY}).
 Here, the image $f(\Sigma_0)$ consists of one point if and only if
 these two directions are linearly dependent.
 Thus we have the conclusion.
\end{proof}
Since $dG\neq 0$ on the set $\Sigma_0$ of generalized cone-like
singular points,
$\Sigma_0$ is diffeomorphic to the circle $S^1$.

By \cite[Lemma 1]{FLS}, we have the following criterion:
\begin{lemma}\label{lem:cone-like}
 Assume a connected component $\Sigma_0$ of the singular set of a maxface
 consists of generalized cone-like singular points.
 Then it consists of cone-like singular points if and only if
 $G|_{\Sigma_0}\colon{}\Sigma_0\to S^1\subset\C$ is injective
 and $\eta$ does not vanish on $\Sigma_0$,  where $(G,\eta)$ is 
 the Weierstrass data.
\end{lemma}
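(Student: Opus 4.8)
The plan is to deduce the criterion from \cite[Lemma 1]{FLS} after making explicit the cone structure of $f$ near $\Sigma_0$ in terms of the Weierstrass data. Recall (from the paragraph preceding the statement) that $\Sigma_0$ is diffeomorphic to $S^1$ and that $dG\neq 0$ along $\Sigma_0$; fix a periodic parameter $u$ on $\Sigma_0$, a local complex coordinate $z$ with $\eta=\hat\eta\,dz$, and a vector field $\partial_v$ transverse to $\Sigma_0$, represented by a complex-valued function $\xi$. Since $|G|=1$ on $\Sigma_0$, writing $G|_{\Sigma_0}=e^{i\psi}$ one has the elementary identity
\begin{equation*}
 \bigl(-2G,\,1+G^2,\,i(1-G^2)\bigr)|_{\Sigma_0}=2e^{i\psi}\,L,\qquad
 L=L(u):=(-1,\cos\psi,\sin\psi),
\end{equation*}
where $L$ is lightlike and is (up to positive scale) the limiting lightlike direction in the limiting tangent plane at $u$. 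Hence, with $p_0:=f(\Sigma_0)$ (a single point, by the generalized cone-like hypothesis),
\begin{equation*}
 f=p_0+v\,c(u)\,L(u)+O(v^2),\qquad
 c(u):=\Re\bigl(2e^{i\psi}\hat\eta\,\xi\bigr)\in\R ,
\end{equation*}
so that to leading order $f$ sweeps out rays emanating from $p_0$ in the lightlike directions $L(u)$.

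The first key point is the dichotomy $c(u)=0\iff\hat\eta(u)=0$. Indeed the real-linear functional $\zeta\mapsto\Re(2e^{i\psi}\hat\eta\,\zeta)$ on $\C$ is either identically zero (exactly when $\hat\eta(u)=0$) or has one-dimensional kernel; and that kernel is the null direction $\R\cdot i/(G\hat\eta)$, which by the generalized cone-like hypothesis coincides with the singular direction, i.e.\ with the tangent line of $\Sigma_0$ (see the proof of Lemma~\ref{lem:criterion}). Since $\partial_v$ is transverse to $\Sigma_0$, this gives $c(u)\neq 0$ exactly when $\hat\eta(u)\neq 0$, i.e.\ exactly when $\eta$ does not vanish at $u$. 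Next I would apply \cite[Lemma 1]{FLS}: there is a neighborhood $U$ of $\Sigma_0$ with $f(U\setminus\Sigma_0)$ embedded if and only if the cone is non-degenerate, i.e.\ $c$ is nowhere zero on $\Sigma_0$, and the ``link'' map $u\mapsto L(u)$ into the circle of lightlike directions is injective. (If one prefers to argue directly: the model map $(u,v)\mapsto v\,c(u)\,L(u)$ is an embedding off $v=0$ precisely under those two conditions --- note that two lightlike vectors of the form $(-1,\ast,\ast)$ are never negatively proportional, so only positive proportionality must be excluded --- and a compactness/implicit-function argument along the compact curve $\Sigma_0$ upgrades this to an embedding of $f$ itself on a thin enough neighborhood, while a zero of $c$ produces a pinch that destroys embeddedness.)

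It then remains to rephrase the two conditions. Non-degeneracy, $c\neq 0$ on $\Sigma_0$, is exactly ``$\eta$ does not vanish on $\Sigma_0$'', by the dichotomy above. For the link condition, observe that $L(u_1)$ and $L(u_2)$ being real-proportional forces them to be equal, since their common first coordinate $-1$ fixes the scale, and that $L(u_1)=L(u_2)$ holds if and only if $e^{i\psi(u_1)}=e^{i\psi(u_2)}$, i.e.\ $G|_{\Sigma_0}(u_1)=G|_{\Sigma_0}(u_2)$. Thus ``$u\mapsto L(u)$ injective'' is equivalent to ``$G|_{\Sigma_0}\colon\Sigma_0\to S^1$ injective'', and combining with \cite[Lemma 1]{FLS} yields the lemma. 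The step I expect to require the most care is the invocation of \cite[Lemma 1]{FLS} --- matching its precise non-degeneracy and injectivity hypotheses to the two conditions isolated above --- and, should one take the direct route instead, the uniform control of the $O(v^2)$ remainder over the compact circle $\Sigma_0$ needed to pass from embeddedness of the model cone to embeddedness of $f$ on an honest neighborhood of $\Sigma_0$.
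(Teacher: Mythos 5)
Your proposal is correct and takes essentially the same route as the paper, whose entire proof is the citation of \cite[Lemma 1]{FLS}; your expansion $f=p_0+v\,c(u)L(u)+O(v^2)$ together with the identifications $c\neq 0\Leftrightarrow \eta\neq 0$ and ``$L$ injective $\Leftrightarrow G|_{\Sigma_0}$ injective'' correctly supplies the translation between the Weierstrass-data conditions and the hypotheses of that lemma, which the paper leaves implicit. (Incidentally, the non-vanishing of $\eta$ on $\Sigma_0$ is automatic for a maxface, since $(1+|G|^2)^2|\eta|^2$ is positive definite and $|G|=1$ on $\Sigma_0$, so your dichotomy step is only needed in the more general setting of \cite{FLS}.)
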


\subsection{Proof of Theorem~\ref{thm:cone}}
\label{sub:cone-maxface}
Let $M:=\C\cup\{\infty\}\setminus\{-1,1\}$
and let $a$ be a real constant such that $1<a<4$ and $a\neq 2$.
Set
\begin{equation}\label{eq:cone-data}
   G = \frac{(z-1)(z^2+az +1)}{(z+1)(z^2-az+1)},\qquad
   \eta = \frac{(z^2-a z+1)^2}{(z-1)^4(z+1)^2}\,dz.
\end{equation}
Then the data $(G,\eta)$ 
gives no real period in the representation
formula \eqref{eq:maxface}, i.e., it satisfies \eqref{eq:period-1}.
Thus it defines a maxface of genus zero with
$2$-ends:
\begin{equation}\label{eq:cone-maxface}
   f\colon{}M\longrightarrow \R^3_1.
\end{equation}
Since $a\in\R$, it holds that $|G|=1$ on the imaginary axis.
When $1<a<4$ and $a\neq 2$, 
the singular set $\Sigma=\{|G|=1\}$ consists of three disjoint
(topological) circles on the Riemann sphere, including
the imaginary axis of the $z$-plane (Figure~\ref{fig:conical}, bottom).

Set $\alpha:=dG/(G^2\eta)$ as in Section~\ref{sec:prelim}.
Then the set $\{\Im \alpha=0\}$ looks like as in
Figure~\ref{fig:conical}, bottom.
In particular, $\Im \alpha=0$ on the imaginary axis, and hence
Lemma~\ref{lem:criterion} implies that the imaginary axis consists of
generalized cone-like singular points.
Here, since $G$ is of degree $3$ and the singular set 
consists of three connected components, $G$ is injective 
on each connected component of the singular set.
Since $\eta\neq 0$ on the imaginary axis, 
Lemma~\ref{lem:cone-like} implies that the imaginary axis
consists of cone-like singular points.

On the other hand, other connected components of the singular set
contain cuspidal edges, because $\Im \alpha$ is not identically
zero on the singular set.

Since the order of the Hopf differential $Q$ at $z=\pm 1$
is $-4$, both ends are Enneper ends
(cf.\ Example~5.2 in \cite{UY}), as shown in Figure~\ref{fig:conical},
top.

\section{Proof of The first part of Theorem~\ref{thm:main}}
\label{sec:main}
In this section, we construct maxfaces $f_k$ $(k=1,2,3,\dots)$
as in the statement of Theorem~\ref{thm:main}.
To do it, we proceed as follows for each $k$:
\begin{itemize}
 \item Take a Riemann surface $M_k$ of genus $k$, 
       which is a compact Riemann surface excluding $2$ points
       (which correspond to the ends),
       see Section~\ref{sub:domain}.
 \item Construct a complete maxface $\hat f_k\colon{}M_k\to\R^3_1$
       ($k=1,2,3,\dots$),
       see Sections~\ref{sub:domain}--\ref{sub:period}.
 \item When $k=2m$ is an even number, 
       show that $M_k$ is the double cover of a Riemann surface
       $M'_k$ of genus $m$, 
       and $\hat f_k$ induces a complete maxface
       $f_k\colon{}M'_k\to \R^3_1$.
       See Section~\ref{sub:double}.
 \item When $k$ is odd, we set $f_k=\hat f_k$.
\end{itemize}
\subsection{The Riemann surface $M_k$}
\label{sub:domain}
Let
\begin{equation}\label{eq:riemann}
  \overline{M}_k = 
    \left\{ 
      (z,w) \in (\C \cup \{ \infty \})^2 \, ; 
      \, w^{k+1} = z \left( z^2-1 \right)^k  \right\} , 
\end{equation} 
where $k$ is a positive integer. 
As a submanifold of $(\C \cup \{ \infty \})^2$,
$\overline{M}_k$ has singular points at $(z,w)=(\pm 1,0)$ and
$(\infty,\infty)$.
However, one can define  on $\overline{M}_k$ the structure of a Riemann
surface using complex coordinates
$\zeta_0$, $\zeta_{\infty}$, $\zeta_1$ and $\zeta_{-1}$
around $(z,w)=(0,0)$, $(\infty,\infty)$, $(1,0)$, and $(-1,0)$,
respectively,
as follows:
\begin{equation}\label{eq:coords}
 \begin{alignedat}{4}
   z&=(\zeta_0)^{k+1}\quad &&\text{at $(0,0)$},\qquad
  &z&=(\zeta_{\infty})^{-k-1}&&\text{at $(\infty,\infty)$},\\
   z&=1+(\zeta_1)^{k+1}\quad &&\text{at $(1,0)$},\qquad
  &z&=-1+(\zeta_{-1})^{k+1}\quad &&\text{at $(-1,0)$}.
 \end{alignedat}
\end{equation}
Hence the holomorphic map $z\colon{}\overline{M}_k\to \C\cup\{\infty\}$
is of degree $k+1$ with total branching number $4k$.
Then by the Riemann-Hurwitz relation, the genus of $\overline{M}_k$
is $k$.

We shall construct maxfaces $\hat f_k\colon{}M_k\to \R^3_1$
($k=1,2,3,\dots$) with two ends corresponding to $(0,0)$ and
$(\infty,\infty)$,
where 
\begin{equation}\label{eq:m-k}
  M_k = \overline{M}_k\setminus\{(0,0),(\infty,\infty)\}.
\end{equation}
Let $\widetilde{M_k}$ be the universal cover of $M_k$.  

\subsection{Symmetries and the fundamental group of $M_k$}
\label{sub:sym}
For simplicity, we set
\begin{equation}\label{eq:theta}
   \lambda := \frac{\pi}{k+1}.
\end{equation}
Define reflections (orientation-reversing conformal diffeomorphisms)
$\mu_j\colon{}M_k\to M_k$ ($j=1,2,3,4$) as 
\begin{equation}\label{eq:reflections}
\begin{alignedat}{2}
 \mu_1(z,w) &= (\bar z,\bar w),\qquad
&\mu_2(z,w) &= (\bar z, e^{2ki\lambda}\bar w),\\
 \mu_3(z,w) &= (-\bar z,e^{-i\lambda}\bar w),\qquad
&\mu_4(z,w) &= \left(\frac{1}{\bar z},e^{ik\lambda}
		 \frac{\bar w}{\bar z^2}\right).
\end{alignedat}
\end{equation}
Using these, we define the following automorphisms of $M_k$:
\begin{equation}\label{eq:symms}
\begin{alignedat}{2}
  \kappa_1&:=\mu_2\circ\mu_1\qquad
 &\bigl(\kappa_1(z,w)&=(z,e^{2ki\lambda}w)\bigr),\\
  \kappa_2&:=\mu_3\circ\mu_1\qquad
 &\bigl(\kappa_2(z,w)&=(-z,e^{-i\lambda}w)\bigr).
\end{alignedat}
\end{equation}

Choose a base point $o\in M_k$  such that
\begin{equation}\label{eq:base-point}
  o\in \{(t,w)\,;\,1<t<\infty, \arg w=0\}\subset M_k,
\end{equation}
and take  a loop $\gamma$ on $M_k$ starting at $o$ 
as in Figure~\ref{fg:loop}.
\begin{figure}
\begin{center}
\includegraphics[width=0.95\textwidth]{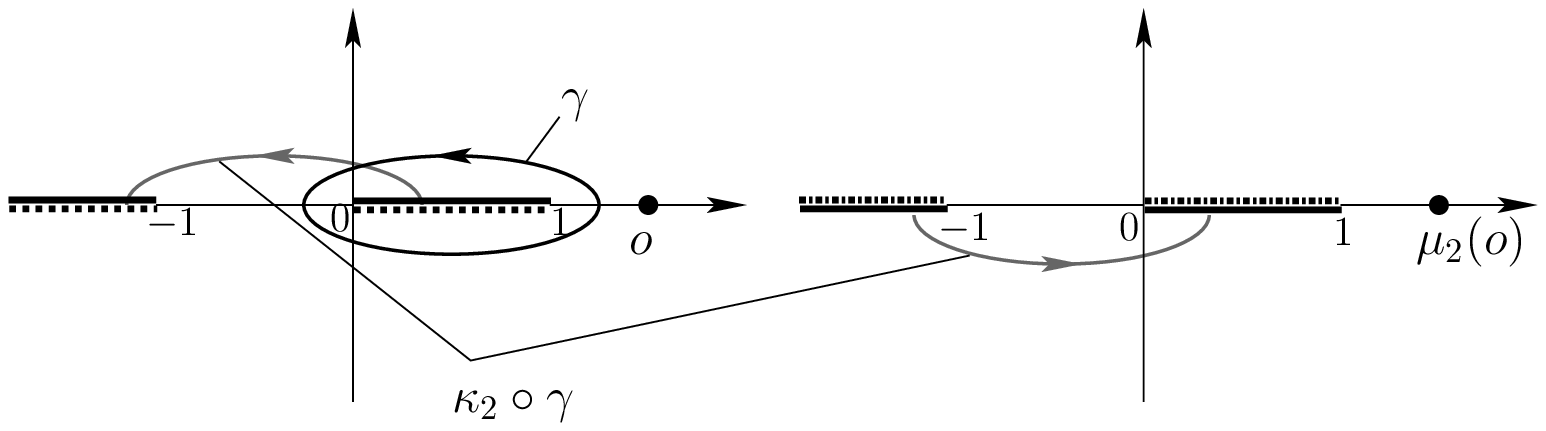}
\end{center}
\begin{quote}\footnotesize
 The projection of the loops $\gamma$ and $\kappa_2\circ\gamma$
 to the $z$-plane are shown.
 The left-hand (resp.~right-hand)
 figure shows the $z$-plane such that $\arg w=0$ 
 ($\arg w=2k\lambda$) when $z>1$.
\end{quote}
\caption{%
 The loops $\gamma$ and $\kappa_2\circ\gamma$.
}
\label{fg:loop}
\end{figure} 
Then we have the following:
\begin{lemma}\label{lem:fundamental-group}
 The fundamental group $\pi_1(M_k)$ of $M_k$ is
 generated by 
 \[
     [(\kappa_1)^j\circ\gamma]
      \quad
      \text{and}
      \quad
     [(\kappa_1)^j\circ\kappa_2\circ\gamma]
         \qquad (j=0,\dots,k).
 \]
\end{lemma}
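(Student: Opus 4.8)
The plan is to realize $M_k$ as a cyclic covering of a thrice-punctured sphere and to read off generators of $\pi_1(M_k)$ by Reidemeister--Schreier rewriting. The linchpin is the identity $\kappa_2^2=\kappa_1$: by \eqref{eq:reflections} and \eqref{eq:symms} one has $\kappa_2(z,w)=(-z,e^{-i\lambda}w)$, hence $\kappa_2^2(z,w)=(z,e^{-2i\lambda}w)$, and since $e^{2ki\lambda}e^{2i\lambda}=e^{2(k+1)i\lambda}=e^{2\pi i}=1$ by \eqref{eq:theta}, this equals $\kappa_1(z,w)=(z,e^{2ki\lambda}w)$. Therefore $\langle\kappa_2\rangle$ is cyclic of order $2(k+1)$, with $\langle\kappa_1\rangle$ the index-$2$ subgroup, and $\{\kappa_1^j:0\le j\le k\}\cup\{\kappa_1^j\kappa_2:0\le j\le k\}$ is exactly $\langle\kappa_2\rangle=\{\kappa_2^{\,i}:0\le i\le 2k+1\}$. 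Thus the claimed generating set is the single $\langle\kappa_2\rangle$-orbit $\{[\kappa_2^{\,i}\circ\gamma]\}_{i=0}^{2k+1}$, and it suffices to show this orbit generates $\pi_1(M_k)$. (Since the loops $\kappa_2^{\,i}\circ\gamma$ are based at the varying points $\kappa_2^{\,i}(o)$, one implicitly fixes paths back to $o$; the choice is immaterial for the period condition in Fact~\ref{fact:weier}. As a check on the final count: $M_k=\overline M_k\setminus\{(0,0),(\infty,\infty)\}$ with $\overline M_k$ of genus $k$, so $\pi_1(M_k)$ is free of rank $2k+1$.)

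Next I would set up the covering. The function $z^2$ is $\kappa_2$-invariant, $\kappa_2$ fixes the two ends and interchanges $(1,0)$ and $(-1,0)$, and by \eqref{eq:coords} the map $z$ is totally ramified over $z=0,\pm1,\infty$ and unramified elsewhere; one checks directly that $\langle\kappa_2\rangle$ acts freely on $\hat M_k:=M_k\setminus\{(1,0),(-1,0)\}$ (a nontrivial power of $\kappa_2$ could only fix a point with $z\in\{0,\infty\}$ or $w=0$, all of which lie outside $\hat M_k$). Hence $\zeta:=z^2$ presents $\hat M_k$ as a connected regular covering $\zeta\colon\hat M_k\to\C\setminus\{0,1\}$ of degree $2(k+1)$ with deck group $\langle\kappa_2\rangle$; here $\pi_1(\C\setminus\{0,1\})$ is free on a loop $\bar a$ about $\zeta=0$ and a loop $\bar b$ about $\zeta=1$. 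The monodromy $\rho\colon\pi_1(\C\setminus\{0,1\})\to\langle\kappa_2\rangle$ is immediate from $w^{k+1}=z(z^2-1)^k$: lifting $\bar a$ along $z=\sqrt{\zeta}$ gives a half-turn about $z=0$ along which $w$ picks up $e^{\pm i\lambda}$, so $\rho(\bar a)$ is a generator of $\langle\kappa_2\rangle$; lifting $\bar b$ gives a loop about $z=1$ along which $w$ picks up $e^{2ki\lambda}$, so $\rho(\bar b)=\kappa_1=\kappa_2^2$. From Figure~\ref{fg:loop} one reads that $\gamma$ may be taken to project under $z$ to a loop encircling $z=1$ once and $z=0$ once; equivalently, in $\hat M_k$ it is the concatenation of the lift of $\bar b$ starting at $o$ with the reverse of the lift of $\bar a^{2}$ starting at $o$.

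Now run Reidemeister--Schreier. Index the $2(k+1)$ sheets by $\kappa_2^{\,i}(o)$ ($0\le i\le 2k+1$); since $\rho(\bar a)$ generates, the lifts of $\bar a$ form one cycle through all sheets, and I take the spanning tree to be this cycle minus one edge, with $\sigma_i$ the resulting tree path $o\to\kappa_2^{\,i}(o)$. Reidemeister--Schreier then freely generates $\pi_1(\hat M_k,o)$ by the $2(k+1)$ loops $A_i:=\sigma_i\cdot(\text{lift of }\bar b\text{ over sheet }i)\cdot\sigma_{i+2}^{-1}$ together with one loop $c$ from the omitted tree edge. Filling in $(1,0)$ and $(-1,0)$ to recover $M_k$ kills the two small loops about these points; as they lie over $\zeta=1$ with ramification $k+1$ and form one $\kappa_2$-orbit, they are conjugate respectively to the lifts of $\bar b^{\,k+1}$ from $o$ and from $\kappa_2(o)$, which telescope to $A_0A_2\cdots A_{2k}\cdot c$ and $A_1A_3\cdots A_{2k+1}\cdot c$. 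The first relation eliminates $c$, so $\pi_1(M_k,o)=\langle A_0,\dots,A_{2k+1}\mid A_0A_2\cdots A_{2k}=A_1A_3\cdots A_{2k+1}\rangle$, a free group of rank $2k+1$---in particular generated by $A_0,\dots,A_{2k+1}$. Finally, since $\kappa_2$ is a deck transformation the lift of $\bar b$ over sheet $i$ is $\kappa_2^{\,i}$ applied to the lift over sheet $0$, and a one-line telescoping using the paths $\sigma_i$ and the lifts of $\bar a^{2}$ (note $\sigma_i\cdot(\kappa_2^{\,i}(\text{lift of }\bar a^2))=\sigma_{i+2}$) identifies the class of $\kappa_2^{\,i}\circ\gamma$, joined to $o$ by $\sigma_i$, with $A_i$. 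Hence $\{[\kappa_2^{\,i}\circ\gamma]\}_{i=0}^{2k+1}=\{A_i\}_{i=0}^{2k+1}$ generates $\pi_1(M_k)$, which is the claim.

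The monodromy computation and the Reidemeister--Schreier count are routine. I expect the genuine work to be bookkeeping: reading $\gamma$ off Figure~\ref{fg:loop} and verifying it projects as stated, and then checking that the Reidemeister--Schreier generators close up precisely as the $\kappa_2^{\,i}\circ\gamma$ once the two ramification points are filled back in---this is where the orientations of $\bar a$, $\bar b$ and of the period loops $\gamma_j$ in Fact~\ref{fact:weier} have to be pinned down consistently.
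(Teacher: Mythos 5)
The paper states Lemma~\ref{lem:fundamental-group} without proof, so there is nothing of the authors' to compare against; your strategy (present $\hat M_k=M_k\setminus\{(1,0),(-1,0)\}$ as the regular $\langle\kappa_2\rangle$-cover of $\C\setminus\{0,1\}$ via $\zeta=z^2$, apply Reidemeister--Schreier, then fill the two points back in) is sound, and your reductions are correct: $\kappa_2^2=\kappa_1$, the claimed generators are exactly the orbit $\{\kappa_2^i\circ\gamma\}_{i=0}^{2k+1}$, the action on $\hat M_k$ is free, and the monodromy is $\rho(\bar a)=\kappa_2^{\pm1}$, $\rho(\bar b)=\kappa_2^{\pm2}$. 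But two of your bookkeeping claims are false as stated, and they happen to compensate each other. First, with the tree equal to the $\bar a$-cycle minus one edge (say the edge from sheet $2k+1$ to sheet $0$), the lift of $\bar b^{k+1}$ from sheet $0$ uses only $\bar b$-edges, so it telescopes to $A_0A_2\cdots A_{2k}$ exactly: the interior tree paths cancel and the last one is $\sigma_{2k+2}=\sigma_0$, which is trivial, so no factor $c$ appears; likewise the other puncture loop is conjugate to $A_1A_3\cdots A_{2k+1}$ without $c$. Second, your identity $\sigma_i\cdot\bigl(\kappa_2^i(\text{lift of }\bar a^2)\bigr)=\sigma_{i+2}$ cannot hold for every $i$: telescoping it around the whole $\bar a$-cycle would force $c=1$. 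It holds for $i=0,\dots,2k-1$, but for $i=2k$ and $i=2k+1$ the two $\bar a$-edges involved include the omitted one, and one gets $[\kappa_2^i\circ\gamma]=A_i\,c^{\mp1}$ rather than $A_i$. Consequently your closing step ``the first relation eliminates $c$, so the $A_i$ generate'' does not apply as written; indeed with the correct relations (which do not involve $c$) the images of the $A_i$ alone do \emph{not} generate $\pi_1(M_k)$.

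The conclusion nevertheless survives once the last step is redone. In $\pi_1(M_k)$ one has the relations $A_0A_2\cdots A_{2k}=1$ and $A_1A_3\cdots A_{2k+1}=1$, and the classes of your loops are $g_i=A_i$ for $i\le 2k-1$, $g_{2k}=A_{2k}c^{\mp1}$, $g_{2k+1}=A_{2k+1}c^{\mp1}$. The subgroup generated by the $g_i$ contains $A_0,\dots,A_{2k-1}$; the even relation gives $A_{2k}=(A_0A_2\cdots A_{2k-2})^{-1}$, hence $c^{\pm1}=g_{2k}^{-1}A_{2k}$, hence $A_{2k+1}=g_{2k+1}c^{\pm1}$, so the subgroup contains all the $A_i$ and $c$ and therefore equals $\pi_1(M_k)$, which is the lemma. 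So the route works, but the defect sits precisely in the step you yourself flagged as ``bookkeeping'': the interaction of the telescoping with the omitted tree edge must be tracked, and it changes both the puncture relations and the identification of the last two generators. (The remaining looseness --- that $\gamma$ is pinned down only up to the orientations of $\bar a,\bar b$ and up to conjugation --- is harmless: replacing a loop by a conjugate only changes the connecting paths to $o$, and the lemma, as it is used in Lemma~\ref{lem:period}, is insensitive to that choice.)
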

\subsection{The Weierstrass data}
\label{sub:weier-fk}
We now take the Weierstrass data 
\begin{equation}\label{eq:weier-fk}
   G = c \frac{w}{z} , \qquad 
   \eta = \frac{dz}{w}  \qquad (c\in\R_+)
\end{equation}
on $M_k$, where $c$ is a positive real constant to be determined
in \eqref{eq:c-k}.
Define a holomorphic $2$-differential $Q$
\begin{equation}\label{eq:hopf-fk}
  Q := \eta\,dG = 
    \left(\frac{ck}{k+1}\right)
    \frac{z^2+1}{z^2(z^2-1)}\,dz^2.
\end{equation}
We call $Q$ the {\em Hopf differential\/} because
it will be the Hopf differential of the maxface when the construction
is completed.
The orders of $G$, $\eta$, $G\eta$, $G^2\eta$ and $Q$
are listed as in Table~\ref{tb:g-eta-q},
where $\Ord \omega=m$ (resp.\ $-m$) for a positive integer $m$
if $\omega$ has a zero (resp.\ a pole) of order $m$.
Then one can conclude that
$\deg G=2k$ for any $k\geq 1$.
\begin{table}
\[
     \begin{array}{|c||c|c|c|c|c|}
     \hline
       (z,w) & (0,0) & (\infty,\infty) & (1,0) & (-1,0) & (\pm i,*) \\
     \hline
     \hline      
       \Ord G  &    -k &  -k &    k &  k &    0 \\\hline
       \Ord\eta &  k-1 &  k-1  &    0   &   0  &    0 \\\hline
       \Ord G\eta & -1 &  -1 &    k &  k &    0 \\\hline
       \Ord G^2\eta &-(k+1) & -(k+1) &2k & 2k &  0 \\\hline
       \Ord	Q     & -2 & -2 &   k-1 & k-1 &  1 \\\hline
    \end{array}
\]
\caption{Orders of $G$, $\eta$, $G\eta$, $G^2\eta$ and $Q=\eta\,dG$.}
\label{tb:g-eta-q}
\end{table}

Let 
\begin{equation}\label{eq:f-k-c}
   \hat{f}_{k,c}:=\Re\int \Phi\colon{}\widetilde M_k\longrightarrow\R^3_1,
\end{equation}
where $\Phi$ is the $\C^3$-valued $1$-form as in \eqref{eq:weier-diff}
obtained by $(G,\eta)$ in \eqref{eq:weier-fk}.
\begin{lemma}\label{lem:complete}
 Suppose $\hat{f}_{k,c}$ as in \eqref{eq:f-k-c} is well-defined on $M_k$.
 Then it is a complete maxface of genus $k$ with $2$ ends.
 Moreover, each end is asymptotic to the $k$-fold cover of  
 the Lorentzian catenoid.
\end{lemma}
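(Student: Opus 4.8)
The plan is to verify the three assertions of Lemma~\ref{lem:complete} in turn: (i) $\hat f_{k,c}$ is a maxface, (ii) it is complete of genus $k$ with two ends, and (iii) each end is asymptotic to the $k$-fold cover of the Lorentzian catenoid.  For (i), once we assume $\hat f_{k,c}$ is well-defined on $M_k$ (which is the content of the period-closing Lemma~\ref{lem:period}, to be proved in Section~\ref{sub:period}), it suffices by Fact~\ref{fact:weier} to check that the metric $(1+|G|^2)^2|\eta|^2$ is positive definite on $M_k$ and that $|G|$ is not identically $1$.  From \eqref{eq:weier-fk} we have $G=cw/z$ and $\eta=dz/w$, so $G\eta = c\,dz/z$ and hence $(1+|G|^2)^2|\eta|^2 = |\eta|^2 + 2|G\eta|^2/|G| \cdot \dots$; more directly, the metric equals $|\eta|^2(1+|G|^2)^2$, and using Table~\ref{tb:g-eta-q} one checks that $\Ord\eta$, $\Ord G\eta$ and $\Ord G^2\eta$ are all $\geq 0$ at the points $(\pm1,0)$ and $(\pm i,*)$, while at the ends the metric blows up (completeness), so in particular the pseudometric is a genuine positive-definite metric on $M_k$ away from the ends.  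That $|G|\not\equiv 1$ is clear since $G$ is non-constant meromorphic.

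For (ii), the genus of $M_k$ is $k$ by the Riemann--Hurwitz computation already carried out in Section~\ref{sub:domain}, and $M_k$ has exactly two punctures, namely $(0,0)$ and $(\infty,\infty)$, by \eqref{eq:m-k}.  For completeness we invoke Fact~\ref{fact:complete}: $\hat f_{k,c}$ is weakly complete iff the metric \eqref{eq:normalized} is complete, and it is complete iff additionally the singular set $\Sigma=\{|G|=1\}$ is compact in $M_k$.  Compactness of $\Sigma$ follows from $|G(p_j)|\neq 1$ at the two ends, and by Table~\ref{tb:g-eta-q} we have $\Ord G = -k \neq 0$ at both $(0,0)$ and $(\infty,\infty)$, so $|G|$ tends to $\infty$ (or $0$) there and in particular $\Sigma$ stays away from the punctures.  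Weak completeness is the statement that $(1+|G|^2)^2|\eta|^2$ is a complete metric on $M_k$, i.e.\ that it has a pole (an end of infinite distance) at each puncture; from Table~\ref{tb:g-eta-q}, $\Ord G^2\eta = -(k+1) < -1$ at each of $(0,0)$ and $(\infty,\infty)$, which forces the metric to be complete at those ends.  Hence $\hat f_{k,c}$ is a complete maxface of genus $k$ with two ends.

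For (iii), I would analyze the local behavior of $\hat f_{k,c}$ near each end by reading off the leading terms of $G$ and $\eta$ in the uniformizing coordinates $\zeta_0$, $\zeta_\infty$ of \eqref{eq:coords}.  The Lorentzian catenoid has Weierstrass data $(G,\eta)=(z,dz/z^2)$ on $\C\setminus\{0\}$ (Figure~\ref{fg:cat}), so its $k$-fold cover corresponds, in a local coordinate $t$ with $z = t^k$, to $(G,\eta) = (t^k, k\,t^{-k-1}\,dt)$ up to constants.  Near $(0,0)$ we have $z = \zeta_0^{k+1}$ and $w^{k+1} = z(z^2-1)^k$, so $w \sim (-1)^{k/(k+1)} z^{1/(k+1)} = (\text{const})\,\zeta_0$, giving $G = cw/z \sim (\text{const})\,\zeta_0^{-k}$ and $\eta = dz/w \sim (\text{const})\,\zeta_0^{k-1}\,d\zeta_0$; a short computation matches $\Phi = (-2G,1+G^2,i(1-G^2))\eta$ with the corresponding expression for the $k$-fold catenoid cover up to a rigid motion, and similarly at $(\infty,\infty)$ using $z = \zeta_\infty^{-k-1}$.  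The main obstacle — and the only non-formal point — is to make this asymptotic matching precise: one must show not merely that the leading orders agree but that the difference of the two immersions is bounded (so that the ends are genuinely asymptotic, not just of the same topological type), which amounts to checking that the subleading terms in $\Phi$ are integrable near the puncture.  This reduces to the order estimates in Table~\ref{tb:g-eta-q} together with the observation that the residues of $\Phi$ at the ends are real (guaranteed by the period condition), so that $\Re\int\Phi$ has at most a logarithmic divergence matching exactly that of the catenoid cover.
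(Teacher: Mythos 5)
Your proposal is correct and follows essentially the same route as the paper: positive definiteness and completeness of the metric $(1+|G|^2)^2|\eta|^2$ read off from the orders of $\eta$ and $G^2\eta$ in Table~\ref{tb:g-eta-q}, completeness via Fact~\ref{fact:complete} using $|G|\neq 1$ at the punctures, and the catenoidal asymptotics from the pole of order $2$ of $Q$ and the ramification order $k$ of $G$ at each end. Your treatment of the last step (matching the principal part of $\Phi$ with a $k$-fold catenoid cover and checking the remainder is integrable, with real residues from the period condition) is in fact more detailed than the paper's one-line appeal to the Weierstrass representation.
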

\begin{proof}
 Observe that $\eta$ and $G^2\eta$ are holomorphic on $M_k$ and have no common
 zeros, and $G^2\eta$ has poles of order $k+1$ at $(z,w)=(0,0)$ and
 $(\infty,\infty)$.
 Thus the metric as in \eqref{eq:normalized} gives a complete Riemannian
 metric on $M_k$.
 Hence  $\hat{f}_{k,c}$ is a weakly complete maxface.
 Moreover, since $G$ has poles at $(z,w)=(0,0)$ and $(\infty,\infty)$,
 $|G|\neq 1$ at the ends.
 Then 
 $\hat f_{k,c}$ is complete because of Fact~\ref{fact:complete}.
 At each end, $Q$ has a pole of order $2$ and the ramification order 
 of $G$ is $k$.
 Then the Weierstrass representation \eqref{eq:maxface} yields that
 the end is asymptotic to the $k$-fold cover of an end of 
 the Lorentzian catenoid.
\end{proof}
\begin{remark}\label{eq:re-Q}
 The Hopf differential $Q(z)$ is real if $z\in\R$ or $z\in i\R$.  
 Also, $Q(z)$ is pure imaginary if $|z|=1$.  
 Then the image of the real and imaginary axes on the $z$-plane 
 are planar geodesics with respect to the first
 fundamental form of the surface,
 and the image of the unit circle consists of line segments joining
 singular points.
\end{remark}

\subsection{The period problem}
\label{sub:period}
In this section, we shall solve the period problem \eqref{eq:period-1}.
The following lemma can be obtained by straightforward
calculations:
\begin{lemma}\label{lem:symm-weier}
 Let $\Phi$ be as in \eqref{eq:weier-diff} for the data 
 \eqref{eq:weier-fk}.
 Then for automorphisms $\kappa_j$ in \eqref{eq:symms} $(j=1,2)$,
 it holds that
 \[
    \kappa_1^*\transpose{\Phi} =
    \begin{pmatrix}
       1 & 0 & 0\\
       0 & \cos 2k\lambda & -\sin 2k\lambda \\
       0 & \sin 2k\lambda & \hphantom{-}\cos 2k\lambda 
    \end{pmatrix}\transpose{\Phi},\quad
    \kappa_2^*\transpose{\Phi} =
    \begin{pmatrix}
       1 & 0 & 0\\
       0 &\hphantom{-} \cos k\lambda & \sin k\lambda \\
       0 &-\sin k\lambda & \cos k\lambda 
    \end{pmatrix}\transpose{\Phi},
 \]
 where $\transpose{~}$ stands for transposition.
\end{lemma}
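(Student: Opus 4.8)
The plan is to verify both matrix identities by direct substitution, exploiting the very simple form of the Weierstrass data $(G,\eta)=(cw/z,\,dz/w)$ in \eqref{eq:weier-fk} together with the explicit formulas for $\kappa_1,\kappa_2$ recorded in \eqref{eq:symms}. First I would compute the pullbacks of $G$ and of $\eta$ individually. From $\kappa_1(z,w)=(z,e^{2ki\lambda}w)$ one reads off $\kappa_1^{*}G=e^{2ki\lambda}G$ and $\kappa_1^{*}\eta=e^{-2ki\lambda}\eta$, while from $\kappa_2(z,w)=(-z,e^{-i\lambda}w)$ one gets $\kappa_2^{*}G=-e^{-i\lambda}G$ and $\kappa_2^{*}\eta=-e^{i\lambda}\eta$, the extra sign on the last coming from $d(-z)=-dz$. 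The structural point that makes the lemma work is that in both cases the product $G\eta$ is invariant, $\kappa_j^{*}(G\eta)=G\eta$; since the first component of $\Phi$ in \eqref{eq:weier-diff} is $-2G\eta$, this already accounts for the first row and column of each matrix (the timelike direction is fixed).

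For the remaining two components, write $\Phi=(\Phi_1,\Phi_2,\Phi_3)$, so that $\Phi_2=\eta+G^{2}\eta$ and $\Phi_3=i(\eta-G^{2}\eta)$, equivalently $\eta=\tfrac12(\Phi_2-i\Phi_3)$ and $G^{2}\eta=\tfrac12(\Phi_2+i\Phi_3)$. Thus $\{\eta,G^{2}\eta\}$ and $\{\Phi_2,\Phi_3\}$ differ by a fixed invertible linear change of basis. Under $\kappa_j^{*}$ each of $\eta$ and $G^{2}\eta$ is multiplied by a unimodular scalar, and the two scalars form a complex-conjugate pair (hence their product is $1$, reflecting again the invariance of $G\eta$); feeding this through the change of basis shows that $\kappa_j^{*}$ acts on the span of $\Phi_2$ and $\Phi_3$ as a rotation, and solving the resulting $2\times2$ linear system produces the stated rotation matrix, with rotation angle $2k\lambda$ for $\kappa_1$ and the angle determined by the factor $-e^{-i\lambda}$ for $\kappa_2$. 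To put the $\kappa_2$ matrix in exactly the form claimed I would finally invoke $\lambda=\pi/(k+1)$ from \eqref{eq:theta}, so that $k\lambda=\pi-\lambda$ and hence $\cos k\lambda=-\cos\lambda$, $\sin k\lambda=\sin\lambda$; this rewrites the angle coming from $-e^{-i\lambda}$ in terms of $k\lambda$ and yields the clean entries $\cos k\lambda$, $\pm\sin k\lambda$.

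As the statement indicates, this is ultimately a straightforward computation, so I do not expect a genuine obstacle; the one point that requires real care is the bookkeeping of signs and orientations. Concretely, one should first confirm that $\kappa_1,\kappa_2$ are well defined as biholomorphisms of $\overline{M}_k$ --- this is where the relation $\lambda(k+1)=\pi$ enters, since passing through the local coordinates \eqref{eq:coords} at $z=\pm1,\infty$ and matching the correct branch of $w$ forces precisely the exponents appearing in \eqref{eq:symms} --- and then keep consistent track of (i) the sign from $d(-z)=-dz$, (ii) which of $\eta,G^{2}\eta$ carries the $e^{+i\theta}$ eigenvalue, and (iii) the identities $\cos k\lambda=-\cos\lambda$ and $\sin k\lambda=\sin\lambda$. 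It is exactly the interplay of these three that pins down the direction of each rotation, i.e.\ the placement of the minus sign on the two sine entries, and that is the only spot where it is easy to slip.
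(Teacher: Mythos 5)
Your plan is exactly the ``straightforward calculation'' the paper leaves to the reader: pull back $G$ and $\eta$ separately, note that $G\eta$ (hence $\Phi_1=-2G\eta$) is invariant, and convert the conjugate unimodular factors on $\eta$ and $G^2\eta$ into a rotation of $(\Phi_2,\Phi_3)$ via $\eta=\tfrac12(\Phi_2-i\Phi_3)$, $G^2\eta=\tfrac12(\Phi_2+i\Phi_3)$; all of your pullback formulas $\kappa_1^*G=e^{2ki\lambda}G$, $\kappa_1^*\eta=e^{-2ki\lambda}\eta$, $\kappa_2^*G=-e^{-i\lambda}G$, $\kappa_2^*\eta=-e^{i\lambda}\eta$ are correct. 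One caveat, at precisely the spot you flag as delicate: carrying your computation through, $\kappa_2^*\eta=-e^{i\lambda}\eta=e^{-ik\lambda}\eta$ and $\kappa_2^*(G^2\eta)=-e^{-i\lambda}G^2\eta=e^{ik\lambda}G^2\eta$, which is the same eigenvalue pattern as for $\kappa_1$ (with $k\lambda$ in place of $2k\lambda$), so the same linear algebra gives $\kappa_2^*\Phi_2=\cos(k\lambda)\,\Phi_2-\sin(k\lambda)\,\Phi_3$ and $\kappa_2^*\Phi_3=\sin(k\lambda)\,\Phi_2+\cos(k\lambda)\,\Phi_3$, i.e.\ the \emph{transpose} of the matrix displayed in the lemma; a quick check at $k=1$, where $\kappa_2(z,w)=(-z,-iw)$, gives $\kappa_2^*\Phi_2=-\Phi_3$ rather than $+\Phi_3$. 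So your assertion that the computation ``produces the stated rotation matrix'' for $\kappa_2$ should be replaced by noting this sign discrepancy, which is evidently a misprint in the displayed matrix rather than a flaw in your method (your $\kappa_1$ matrix does come out exactly as stated): for the only use made of the lemma, namely Lemma~\ref{lem:period}, all that matters is that both matrices are rotations fixing the $x_0$-axis, so the real period of $\Phi$ vanishes along $(\kappa_1)^j\circ\gamma$ and $(\kappa_1)^j\circ\kappa_2\circ\gamma$ as soon as it vanishes along $\gamma$, and this is unaffected by the transposition.
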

Because of the matrices in Lemma~\ref{lem:symm-weier}, 
Lemma \ref{lem:fundamental-group} implies that
\begin{lemma}\label{lem:period}
 The map $\hat f_{k,c}$ defined in \eqref{eq:f-k-c} is single-valued on
 $M_k$ 
 if and only if the period condition \eqref{eq:period-1} holds for
 the single loop $\gamma$ in Figure~\ref{fg:loop}.
\end{lemma}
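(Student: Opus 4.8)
The plan is to obtain Lemma~\ref{lem:period} as a formal consequence of Lemmas~\ref{lem:fundamental-group} and \ref{lem:symm-weier} together with Fact~\ref{fact:weier}. One implication is immediate: if $\hat f_{k,c}$ is single-valued on $M_k$, then $\Re\oint_\ell\Phi=0$ for \emph{every} loop $\ell$ on $M_k$, in particular for $\gamma$. So the work is all in the converse, and the idea there is to show that the single (vector-valued) condition $\Re\oint_\gamma\Phi=0$ forces the real periods over \emph{all} of the generators of $\pi_1(M_k)$ listed in Lemma~\ref{lem:fundamental-group} to vanish; by Fact~\ref{fact:weier}, that is exactly the condition \eqref{eq:period-1} needed for $\hat f_{k,c}$ to descend to a well-defined map on $M_k$.

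The key step will be a change-of-variables computation. For a holomorphic automorphism $\psi$ of $M_k$ and any loop $\gamma$, pullback gives $\oint_{\psi\circ\gamma}\transpose{\Phi}=\oint_\gamma\psi^*\transpose{\Phi}$. I would apply this with $\psi=(\kappa_1)^j$ and with $\psi=(\kappa_1)^j\circ\kappa_2$. Since the two $3\times3$ matrices occurring in Lemma~\ref{lem:symm-weier} are \emph{constant} and real (in fact special orthogonal), they slide freely past pullbacks and past integration; writing $R_1$ and $R_2$ for those matrices one gets
\[
  \oint_{(\kappa_1)^j\circ\gamma}\transpose{\Phi}=R_1^j\oint_\gamma\transpose{\Phi},
  \qquad
  \oint_{(\kappa_1)^j\circ\kappa_2\circ\gamma}\transpose{\Phi}=R_1^jR_2\oint_\gamma\transpose{\Phi}
  \qquad(j=0,\dots,k).
\]
Because $R_1,R_2$ are real, taking real parts commutes with left multiplication by them, so each of these real periods equals $R_1^j\,\Re\oint_\gamma\transpose{\Phi}$ or $R_1^jR_2\,\Re\oint_\gamma\transpose{\Phi}$. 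Hence if $\Re\oint_\gamma\Phi=0$, then the real periods over the full generating set of Lemma~\ref{lem:fundamental-group} vanish, \eqref{eq:period-1} holds, and Fact~\ref{fact:weier} shows that $\hat f_{k,c}=\Re\int\Phi$ is well-defined on $M_k$ (hence a complete maxface of genus $k$ with two ends, by Lemma~\ref{lem:complete}). This establishes the equivalence.

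I do not expect any genuine obstacle in this lemma — the argument is essentially bookkeeping. The two points to watch are that the loops $(\kappa_1)^j\circ\gamma$ are based at the moved points $(\kappa_1)^j(o)$, which is harmless since periods depend only on the (free) homotopy class, and that the order of composition $\psi^*=\kappa_2^*\circ(\kappa_1^*)^j$ produces the product in the order $R_1^jR_2$ (again because constant matrices commute with $\kappa_2^*$). The real analytic content of the section — actually evaluating $\Re\oint_\gamma\Phi$ and choosing the constant $c$ in \eqref{eq:weier-fk} so that it vanishes — is a separate matter, handled in the remainder of Section~\ref{sub:period}.
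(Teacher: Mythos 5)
Your argument is correct and is exactly the one the paper intends: the paper derives Lemma~\ref{lem:period} directly from Lemma~\ref{lem:symm-weier} and Lemma~\ref{lem:fundamental-group} without spelling out the details, and your change-of-variables computation (pulling the constant real rotation matrices $R_1^jR_2$ out of the integrals and through $\Re$) is precisely the bookkeeping being invoked. The two caveats you note — base points and the order of pullbacks — are handled correctly, so nothing is missing.
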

Now, we determine the value of $c\in\R_+$ 
in \eqref{eq:weier-fk} such that the condition in Lemma~\ref{lem:period}
holds.

Since $G\eta=(c/z)dz$, it holds that
\[
   \Re\oint_{\gamma} G\eta = \Re\oint_{\gamma}
                  c\,d\log z =
		  \Re\left(2\pi i c\right)=0.
\]
So the condition \eqref{eq:period-1} for $\gamma$ is equivalent to
\begin{equation}\label{eq:period}
 \oint_{\gamma}\eta + \overline{\oint_{\gamma}G^2\eta} = 0 .
\end{equation}
To calculate the integrals of the left-hand side, 
we take two paths on $M_k$ as 
\begin{align*}
 \gamma_1 &:= \left\{\left. (z,w)=
                  \left(t,e^{ki\lambda}\,
                   \sqrt[k+1]{t(1-t^2)^k}\right)
                  \,\right|\,t:1\mapsto 0\right\},\\
 \gamma_2 &:=
          \left\{\left. (z,w)=
          \left(t,e^{-ki\lambda}\,\sqrt[k+1]{t(1-t^2)^k}\right)
          \,\right|\,t:0\mapsto 1\right\},
\end{align*}
where we consider $\sqrt[k+1]{t(1-t^2)^k}$ as a positive real number.
Roughly speaking, a closed loop $\gamma_1*\gamma_2$
(the definition of $\gamma_1*\gamma_2$ is given in the appendix)
is homotopic to $\gamma$ on $\overline{M}_k$.
Here, $\gamma_1$ and $\gamma_2$ are parametrized regular curves on 
$\overline{M}_k$ from $t=1$ to $t=0$ and $t=0$ to $t=1$,
respectively.
Adding an exact form to $G^2\eta$, we have 
\[
   G^2\eta + c^2\frac{k+1}{k}d\left(\frac{w}{z}\right)
      =-2c^2\frac{w}{1-z^2}dz.
\]
Since the right-hand side does not have poles
at $(z,w)=(0,0)$ and $(1,0)$, 
we have
\begin{align*}
  \oint_{\gamma}G^2\eta &=
  -2c^2\oint_{\gamma}\frac{w}{1-z^2}\,dz
  =
    -2c^2\left(
         \int_{\gamma_1}\frac{w}{1-z^2}\,dz +
         \int_{\gamma_2}\frac{w}{1-z^2}\,dz 
         \right)\\
  &= -c^2 \cdot 4i\sin k\lambda\cdot A_k\qquad
       \left(A_k:=\int_0^1\sqrt[k+1]{\frac{t}{1-t^2}}\,dt\right).
\end{align*}
On the other hand,
since $\eta$ does not have poles at $(z,w)=(0,0)$ and $(1,0)$,
we have
\[
   \oint_{\gamma}\eta =
   \int_{\gamma_1}\eta +
   \int_{\gamma_2}\eta
  =2i\sin k\lambda\cdot B_k
 \qquad \left(B_k:=\int_0^1\frac{dt}{\sqrt[k+1]{t(1-t^2)^k}}\right).
\]
Therefore, if we set 
\begin{equation}\label{eq:c-k}
     c=c_k:= \sqrt{\frac{B_k}{2A_k}}>0, 
\end{equation}
then \eqref{eq:period} holds for $\gamma$ and hence 
\begin{equation}\label{eq:fk}
   \hat f_k := \hat f_{k,c_k}
\end{equation}
is single-valued on $M_k$.
By Lemma~\ref{lem:complete}, 
$\hat{f}_k$ is a complete maxface of genus $k$ with $2$ ends.
See Figures \ref{fg:k=1}, \ref{fg:k=2} and \ref{fg:k=3}.

Now, we prove that $\hat f_1$ is embedded (in the wider sense,
as in Definition~\ref{def:embedded}).
When $k=1$, each end is asymptotic to a Lorentzian catenoid,
that is, each end has no self-intersection.
Moreover, the $x_0$-component of $\hat f_1$ is calculated as 
\[
    x_0:=-2\Re\int_o^p G\eta = -2 c_1\log |z|+\mbox{constant},
\]
where $p=(z,w)\in M_1$.
Hence $x_0\to +\infty$ (resp.\ $-\infty$) when $(z,w)\to (0,0)$
(resp.\ $(\infty,\infty)$).
This means that the two ends have no self-intersection outside 
a compact set in $M_1$.
Hence $\hat{f}_1$ is embedded in the wider sense.

\begin{figure}
\begin{center}
\begin{tabular}{cc}
 \includegraphics[width=.40\linewidth]{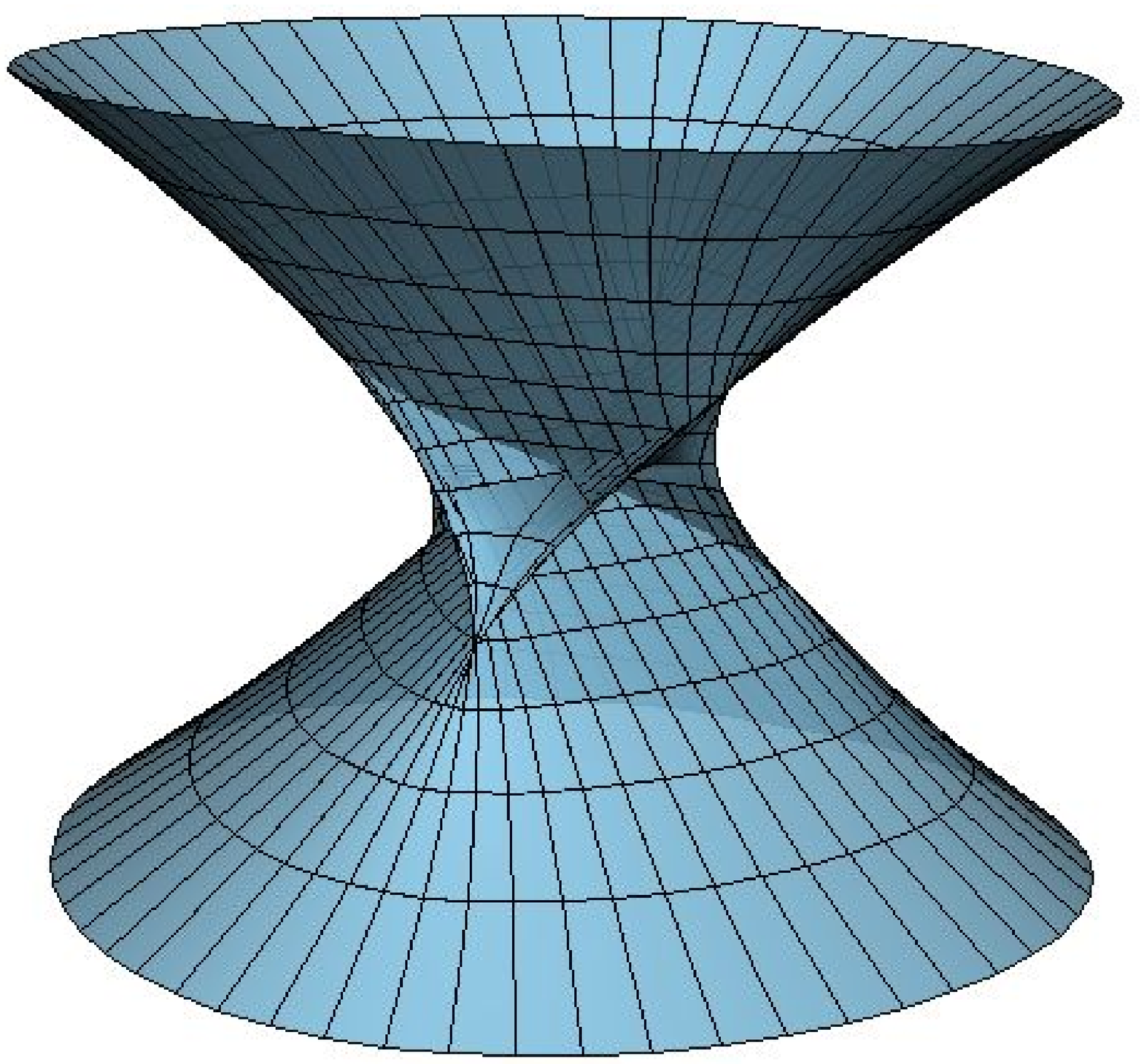} &
 \includegraphics[width=.40\linewidth]{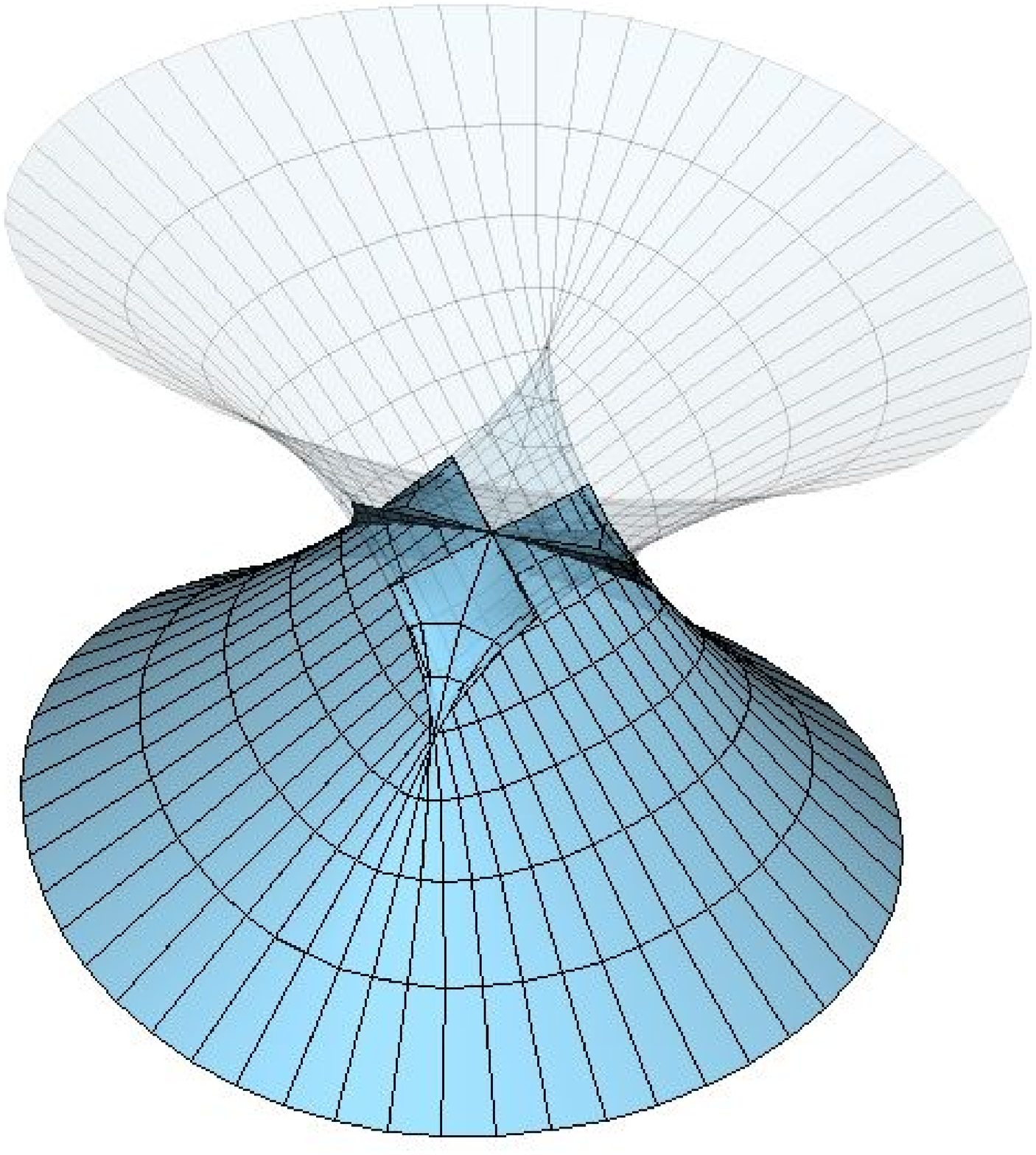} 
\end{tabular}
\caption{%
 The example for $k=2$  and half of it.}
\label{fg:k=2}
\end{center}
\end{figure}
\begin{figure}
\begin{center}
\begin{tabular}{cc}
 \includegraphics[width=.40\linewidth]{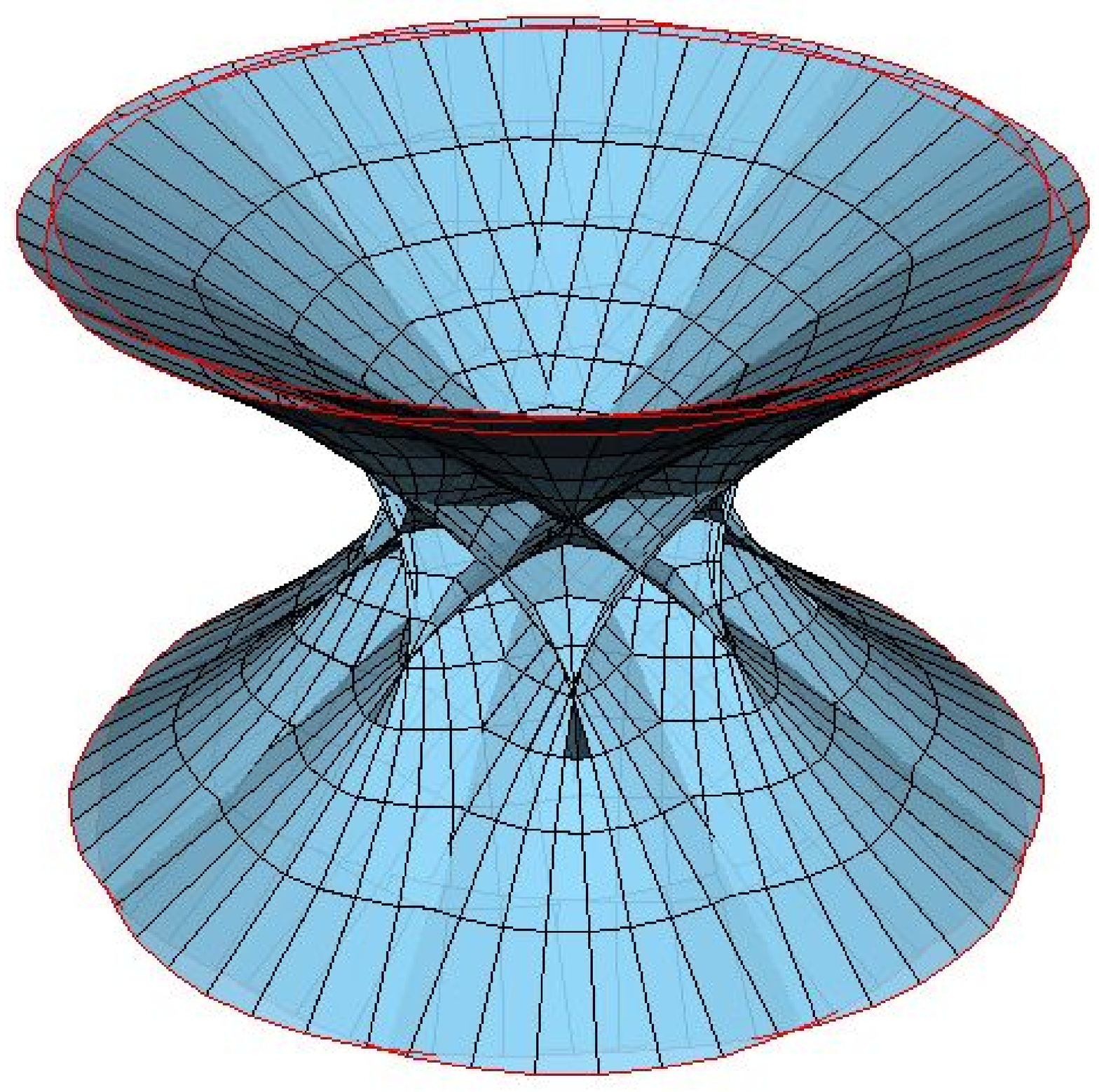} &
 \includegraphics[width=.40\linewidth]{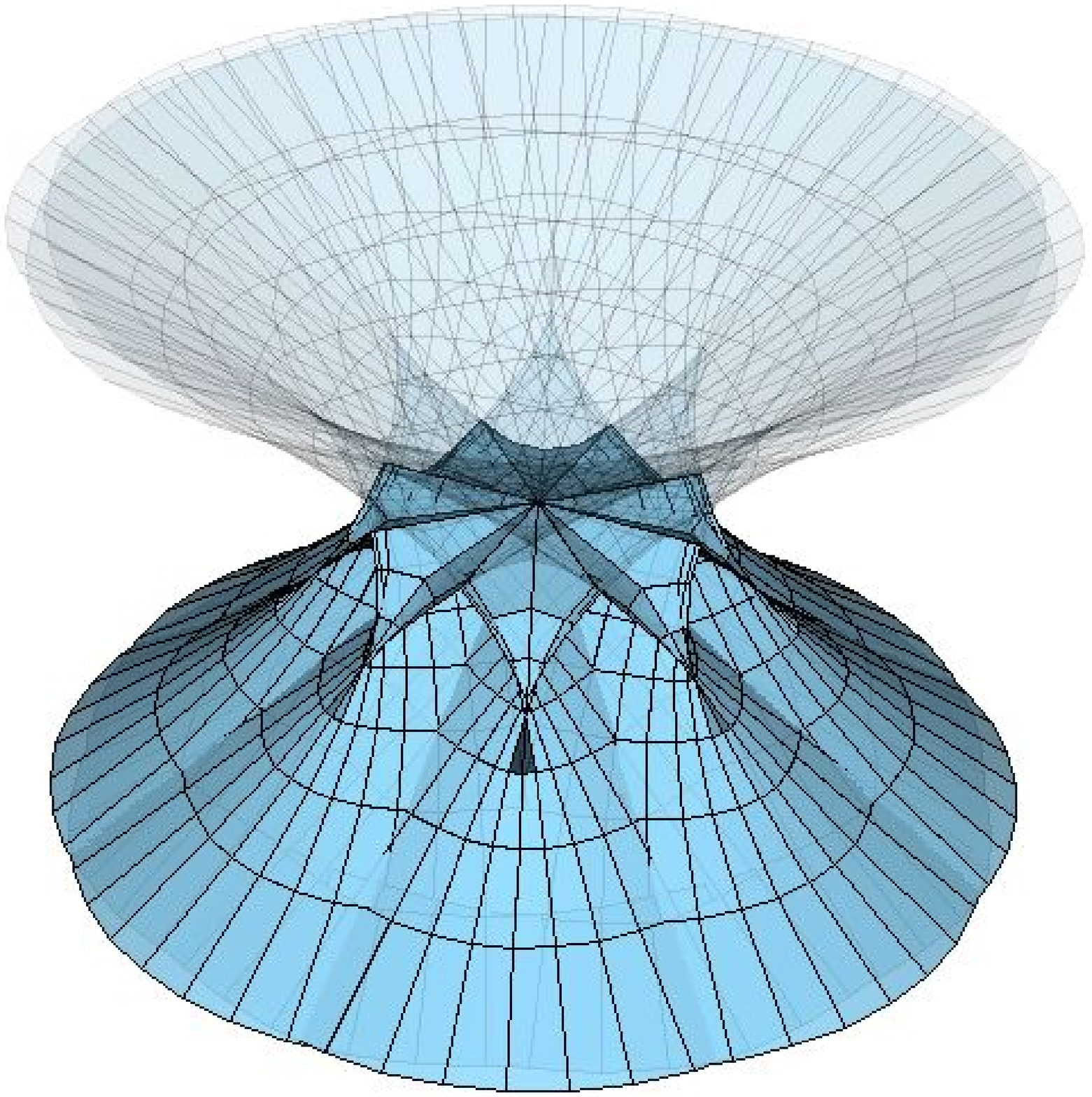} 
\end{tabular}
\caption{%
 The example for $k=3$ and half of it.}
\label{fg:k=3}
\end{center}
\end{figure}

\subsection{Reduction for the even genus case}
\label{sub:double}
When $k$ is even, the Riemann surface $\overline{M}_k$ of genus $k=2m$ is 
reduced to the Riemann surface
\[ 
  \overline{M}'_k = 
  \left\{ 
  (Z,W) \in (\C \cup \{ \infty \})^2 \, ;  
  \, W^{2m+1} = Z^{m+1} \left( Z-1 \right)^{2m}  \right\}  
\] 
of genus $m$,
where $m$ is a positive integer.  
As a submanifold of $(\C \cup \{ \infty \})^2$,
$\overline{M}'_k$ has singular points at $z=0$, $1$ and $z=\infty$.
However, in a similar way to the case of $\overline{M}_k$ (see
\eqref{eq:coords}), the structure of a Riemann surface can be introduced
to $\overline{M}'_k$.
The map
$\pi:\overline{M}'_k\ni (Z,W)\longmapsto Z\in \C\cup \{\infty\}$ 
is a meromorphic function of degree $2m+1$ with
total branching number $6m$.
Then, by the Riemann-Hurwitz relation, the genus of
$\overline{M}'_k$ is equal to $m$.
Let
\begin{equation}\label{eq:reduced-riemann-surface}
   M'_k = \overline{M}'_k \setminus \{ (0,0),(\infty,\infty) \} . 
\end{equation} 
Define a map $\varpi_k$ from $M_k$ of genus $k=2m$ 
into $M'_k$ of genus $m$ as
\begin{equation}\label{eq:double-cover}
  \varpi_k:M_k\ni (z,w)\longmapsto (Z,W)=(z^2,zw)\in M'_k.
\end{equation}
Then $\varpi_k (z,w)=\varpi_k(-z,-w)$ for any $(z,w)\in\overline{M}_k$ 
and hence $\varpi_k$ is a double cover. 

Let $(G_1,\eta_1$) be the Weierstrass data on $M'_k$ given by
\[ 
   G_1 = c \frac{W}{Z} , \qquad 
   \eta_1 = \frac{dZ}{2W} ,
\]
which satisfy
\[
    G = \varpi_k^*G_1=G_1\circ\varpi_k,\qquad 
    \eta=\varpi_k^*\eta_1.
\]
The data $(G_1,\eta_1)$ for $c=c_k$  as in \eqref{eq:c-k}
gives a maxface $f_k\colon{}M'_k\to \R^3_1$ such that
$\hat f_k=f_k\circ\varpi_k$.
Since $\deg G_1=m$,
all ends are embedded if and only if $m=1$
(cf.~\cite[Theorem 4.11]{UY}).  
Moreover, if this is the case, embeddedness of $f_2$ can be shown
in a similar way to the case of $\hat f_1$.
Compare Figures \ref{fg:k=1} (for $k=1$)
and \ref{fg:k=2} for the case $k=2$.

\subsection{Swallowtails and cuspidal cross caps of $f_k$}
\label{sub:sing}
In this section, we investigate the properties of the
singular points.
\begin{lemma}\label{lem:ck}
 The number $c_k$ in \eqref{eq:c-k} satisfies
 \[
     c_1 >  1 \qquad\text{and}\qquad
     c_k >  \frac{k^{\frac{1}{2(k+1)}}}{\sqrt{2}}
	      \left(%
	        \frac{k}{k-1}
	      \right)^{\frac{k-1}{2(k+1)}}\quad  (k\geq 2).
 \]
 In particular, for each $k=1,2,3,\dots$, it holds that
 \begin{equation}\label{eq:rho-val}
     0 < \rho^{}_k < 2\qquad \left(\rho^{}_k := c_k^{\frac{-2(k+1)}{k}}\right).
 \end{equation}
\end{lemma}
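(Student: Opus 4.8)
The plan is to argue directly from the closed form $c_k^2=B_k/(2A_k)$ of \eqref{eq:c-k}. Writing $p:=1/(k+1)\in(0,1/2]$, one has
\[
  A_k=\int_0^1 t^{p}(1-t^2)^{-p}\,dt,\qquad
  B_k=\int_0^1 t^{-p}(1-t^2)^{p-1}\,dt,
\]
so the whole statement reduces to a comparison of these two integrals. Since $A_k,B_k>0$ we get $c_k>0$ for free, hence $\rho_k>0$.

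For the main bound with $k\ge2$, set $R_k:=k^{k/(k+1)}(k-1)^{-(k-1)/(k+1)}$; the inequality asserted for $c_k$ is exactly $c_k>\sqrt{R_k/2}$, so it suffices to prove $B_k>R_kA_k$. The key step is the elementary factorization
\[
  B_k-R_kA_k=\int_0^1 t^{p}(1-t^2)^{-p}\bigl(f(t)-R_k\bigr)\,dt,
  \qquad f(t):=t^{-2p}(1-t^2)^{2p-1},
\]
which turns the problem into a one-variable minimization of $f$ on $(0,1)$. A logarithmic-derivative computation shows that $f'/f$ vanishes only where $t^2=p/(1-p)$, and that this point $t_*$ is the unique minimum; substituting $p=1/(k+1)$, one checks that $f(t_*)=\bigl((1-p)/p\bigr)^{p}\bigl((1-2p)/(1-p)\bigr)^{2p-1}$ collapses to exactly $R_k$. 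Hence $f\ge R_k$ on $(0,1)$ with equality only at $t_*$, the integrand above is nonnegative and positive off a single point, and therefore $B_k>R_kA_k$, i.e.\ $c_k>\sqrt{R_k/2}$.

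The case $k=1$ needs a slightly sharper, separate argument: there $p=1/2$, the minimizer $t_*$ degenerates to the endpoint $1$, and the scheme above yields only $c_1>1/\sqrt2$, whereas the lemma claims $c_1>1$. I would prove the equivalent inequality $B_1-2A_1=\int_0^1(1-2t)\,t^{-1/2}(1-t^2)^{-1/2}\,dt>0$ by splitting at $t=1/2$ and applying $t\mapsto1-t$ to the part over $(1/2,1)$; the two pieces combine to
\[
  B_1-2A_1=\int_0^{1/2}\frac{1-2t}{\sqrt{t(1-t)}}\left(\frac1{\sqrt{1+t}}-\frac1{\sqrt{2-t}}\right)dt,
\]
whose integrand is strictly positive on $(0,1/2)$ because $1+t<2-t$ there. (Equivalently, the substitution $t=\sin^2\theta$ turns $B_1-2A_1$ into $2\int_0^{\pi/2}\cos2\theta\,(1+\sin^2\theta)^{-1/2}\,d\theta$, which is positive since $\cos2\theta$ has mean zero on $[0,\pi/2]$ while its weight $(1+\sin^2\theta)^{-1/2}$ is strictly decreasing.)

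It remains to deduce $\rho_k<2$, i.e.\ $c_k^2>2^{-k/(k+1)}$. For $k=1$ this is $c_1^2>1>2^{-1/2}$; for $k\ge2$ it follows from $c_k^2>R_k/2$ together with the elementary estimate $R_k^{k+1}=k^k/(k-1)^{k-1}=k\,(k/(k-1))^{k-1}\ge k\ge2$, hence $R_k\ge2^{1/(k+1)}$ and $R_k/2\ge2^{-k/(k+1)}$. The only real obstacles I anticipate are bookkeeping: verifying the identity $f(t_*)=R_k$, and arranging the $k=1$ estimate cleanly, since the general argument is tight at $k=1$ and just misses the strict bound $c_1>1$.
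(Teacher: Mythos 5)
Your proposal is correct and takes essentially the same route as the paper: for $k\ge 2$ you bound the ratio of the $B_k$- and $A_k$-integrands below by its minimum value, attained at $t=1/\sqrt{k}$, which is exactly the paper's constant $s_k=R_k$, and for $k=1$ you prove $B_1-2A_1>0$ using the same symmetry about $t=1/2$ that the paper exploits via the substitution $u=1-2t$. The remaining step $\rho_k<2$ is an elementary estimate in both arguments, so the two proofs differ only in presentation.
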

\begin{proof}
 First, we consider the case $k\geq 2$.
 Set
 \[
    V(s,t):=
       t(1-t^2)^k\left(\frac{1}{t(1-t^2)^k}-\frac{s^{k+1}t}{1-t^2}\right)
     = 1-s^{k+1}t^2(1-t^2)^{k-1}
 \]
 for $s\in\R$ and $t\in[0,1]$.
 Then for each fixed value $s$, $V(s,t)$
 attains a minimum at $t=1/\sqrt{k}$, and 
 \[
    V\left(s,\frac{1}{\sqrt{k}}\right)
    = 1-\frac{s^{k+1}}{k}\left(\frac{k-1}{k}\right)^{k-1}.
 \]
 Hence if we set 
 \begin{equation}\label{eq:s-val}
     s_k := k^{\frac{1}{k+1}}\left(\frac{k}{k-1}\right)^{\frac{k-1}{k+1}},
 \end{equation}
 then
 $V(s_k,t)\geq 0$ ($t\in[0,1]$).
 Hence 
 \[
    \sqrt[k+1]{\frac{t}{1-t^2}}-s_k\frac{1}{\sqrt[k+1]{t(1-t^2)^k}}>0
 \]
 holds on $t\in[0,1]$,
 and then, we have
 \[
     c_k =\sqrt{\frac{A_k}{2B_k}}
         >\sqrt{\frac{s_k}{2}}
         = \frac{1}{\sqrt{2}}
           k^{\frac{1}{2(k+1)}}\left(\frac{k}{k-1}\right)^\frac{k-1}{2(k+1)}.
 \]
 This implies 
 \[
    0<\rho^{}_k < 2^{\frac{k+1}{k}}k^{-\frac{1}{k}}
             \left(\frac{k}{k-1}\right)^{\frac{1-k}{k}}
           = 2 \left(\frac{2}{k}\right)^{\frac{1}{k}}
             \left(\frac{k-1}{k}\right)^{\frac{k-1}{k}}<2.
 \]

 Next, consider the case $k=1$:
 \begin{align*}
  A_1-2B_1 &=
     \int_0^1\left(
              \frac{1}{\sqrt{t(1-t^2)}}-2\sqrt{\frac{t}{1-t^2}}\,dt
             \right)
     =\int_0^1
       \left(\frac{1-2t}{\sqrt{t(1-t^2)}}\right)\,dt\\
  &= \sqrt{2}\int_{-1}^2 
         \frac{u\,du}{\sqrt{(1-u^2)(3-u)}}\\
  &= \sqrt{2}\left(
         \int_0^1
         \frac{u\,du}{\sqrt{(1-u^2)(3-u)}}+
         \int_0^1
         \frac{v\,dv}{\sqrt{(1-v^2)(3+v)}}\right)\\
  &= \sqrt{2}\int_0^1
        \frac{u}{\sqrt{1-u^2}}
        \left(
          \frac{1}{3-u}-\frac{1}{3+u}
        \right)du >0,
 \end{align*}
 where we put $u=1-2t$ and $v=-u$.
 Hence $c_1 = \sqrt{A_1/(2B_1)}>1$, and then
  $\rho^{}_1<1<2$.
\end{proof}

\begin{lemma}[The singular curve]\label{lem:sing-curve}
 The set of singular points of  $\hat{f}_k$ consists of
 $2$ simple closed curves on $M_k$.
 The projection of the singular set
 onto the $z$-plane is shown in  Figure~\ref{fig:singular-set}.
\end{lemma}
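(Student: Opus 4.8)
The plan is to pull the whole problem down to the $z$-plane via the branched covering $z\colon\overline M_k\to\C\cup\{\infty\}$ and then count preimages.

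First I would use Fact~\ref{fact:weier}: the singular set of $\hat f_k$ is $\Sigma=\{\,|G|=1\,\}$. For the data \eqref{eq:weier-fk}, the relation $w^{k+1}=z(z^2-1)^k$ in \eqref{eq:riemann} gives
\[
   |G|^2=c_k^2\,\frac{|w|^2}{|z|^2}
        =c_k^2\left(\frac{|z^2-1|}{|z|}\right)^{2k/(k+1)} .
\]
Hence, with $h(z):=|z^2-1|^2/|z|^2$ and $\rho^{}_k=c_k^{-2(k+1)/k}$ from \eqref{eq:rho-val}, we have $\Sigma=z^{-1}(C_k)$, where $C_k:=\{z\,;\,h(z)=\rho^{}_k\}$. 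Since $h=1$ at $z=0$, $h=0$ at $z=\pm1$, and $C_k$ is bounded, the curve $C_k$ misses all four branch values $0,\pm1,\infty$ of $z$; so $z|_\Sigma\colon\Sigma\to C_k$ is an \emph{unbranched} $(k+1)$-fold covering.

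The next step is to understand $C_k$ in the plane. In polar coordinates $z=re^{i\theta}$ one has $h=r^2-2\cos2\theta+r^{-2}$, so $C_k$ is $\{\,r^4-(\rho^{}_k+2\cos2\theta)r^2+1=0\,\}$. Because $0<\rho^{}_k<2$ by Lemma~\ref{lem:ck}, the discriminant $(\rho^{}_k+2\cos2\theta)^2-4$ is $\geq0$ exactly when $\cos2\theta\geq1-\rho^{}_k/2$, i.e.\ for $\theta$ in the two disjoint closed arcs $|\theta|\leq\theta_0$ and $|\theta-\pi|\leq\theta_0$ with $2\theta_0:=\arccos(1-\rho^{}_k/2)\in(0,\pi/2)$. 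On the interior of each arc there are two roots $r_\pm(\theta)$ with $r_+r_-=1$, hence $r_-<1<r_+$, and $r_\pm\to1$ at the endpoints; so $C_k$ restricted to each arc is a single simple closed curve. One also checks $\partial_z h=(z^2+1)(\bar z^2-1)/(z^2\bar z)$, which vanishes only at $z=\pm1,\pm i$; none of these lies on $C_k$ (for instance $h(\pm i)=4>\rho^{}_k$), so $\rho^{}_k$ is a regular value of $h$ and both curves are smooth. Finally, since $r_-(0)<1<r_+(0)$, the oval over $|\theta|\leq\theta_0$ has $z=1$ in its bounded complementary component and contains no other branch value, and symmetrically the second oval encloses only $z=-1$. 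Thus $C_k$ is a disjoint union of two smooth circles, matching Figure~\ref{fig:singular-set}.

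The last step is to lift this count to $M_k$. A loop tracing the oval around $z=1$ is freely homotopic, inside $\C\cup\{\infty\}\setminus\{0,\pm1,\infty\}$, to a small loop about $z=1$. Near $z=1$ we have $w^{k+1}=(z-1)^k g(z)$ with $g$ holomorphic and $g(1)\neq0$, so the monodromy there is $w\mapsto e^{2k\pi i/(k+1)}w$; since $\gcd(k,k+1)=1$ this is a single $(k+1)$-cycle, so the $z$-preimage of that oval is connected, i.e.\ a single circle. The same holds for the oval around $z=-1$. Therefore $\Sigma$ consists of exactly two disjoint smooth simple closed curves projecting onto the two ovals, as claimed. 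I expect the main obstacle to be the (elementary but fiddly) sector analysis of the quartic in $r^2$ and the verification that each oval encloses exactly one branch value with generator monodromy; it is precisely here that the inequality $0<\rho^{}_k<2$ of Lemma~\ref{lem:ck} enters (to keep the two ovals disjoint and clear of the imaginary axis) and that $\gcd(k,k+1)=1$ enters (to make each lift connected), so that the answer is $2$ and not larger.
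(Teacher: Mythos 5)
Your argument is correct and is essentially the paper's own proof, only written out in more detail: both reduce $|G|=1$ to the level curve $|z-1/z|^2=\rho_k$, invoke $0<\rho_k<2$ from Lemma~\ref{lem:ck} to see that its projection is two ovals confined to the angular sectors about $z=\pm 1$, and then note that each oval encircles exactly one branch point of the degree-$(k+1)$ projection $z$, so its preimage is a single loop in $M_k$ (your monodromy remark via $\gcd(k,k+1)=1$ is precisely what the paper leaves implicit when it says the $k+1$ copies form a single loop). One harmless slip: $h\to\infty$ at $z=0$ (not $h=1$), which makes it even more immediate that the level curve avoids that branch value.
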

\begin{proof}
 The set of singular points is represented as 
 $\Sigma:=\{p\in M_k\,;\,|G(p)|=1\}$
 (cf.\ Fact~\ref{fact:weier}).
 Here, by \eqref{eq:weier-fk},
 the condition $|G|=1$ is equivalent to 
 $\left|c_k w/z\right|=1$, and then, by \eqref{eq:riemann}, 
 it is equivalent to 
 \[
       \left|z-\frac{1}{z}\right|^2 = 
        r^2 + \frac{1}{r^2}-2\cos 2\theta=\rho^{}_k
          \qquad \left(\rho^{}_k=c_k{}^{\frac{-2(k+1)}{k}}\right),
 \]
 where $z=re^{i\theta}$.
 Since $\rho^{}_k\in (0,2)$ by Lemma~\ref{lem:ck}, one can set
 \[
     \Gamma_{k} := \arcsin \frac{\sqrt{\rho^{}_k}}{2}\in
     \left(0,\frac{\pi}{4}\right).
 \]
 Thus, the projection of the singular set onto the $z$-plane
 consists of two simple closed curves in
 the $z$-plane, contained in two angular domains:
 \[
       \Delta_+:=\{-\Gamma_{k}<\arg z < \Gamma_{k}\},\quad
       \text{and}\quad
       \Delta_-:=\{\pi-\Gamma_{k}<\arg z < \pi+\Gamma_{k}\}.
 \]
 We denote two subsets of the singular set $\Sigma$ as 
 \[
     \Sigma_+ := \{p\in\Sigma\,;\,z(p)\in \Delta_+\},\quad
     \text{and}\quad
     \Sigma_- := \{p\in\Sigma\,;\,z(p)\in \Delta_-\},
 \]
 see Figure~\ref{fig:singular-set},
 where $z\colon{}M_k\ni (z,w)\mapsto z\in\C$ is the projection.

 The loop $z(\Sigma_+)$ on the $z$-plane is surrounding the
 branch point $z=1$ of the projection $z$.
 Then, the inverse image $\Sigma_+$ of this loop  
 consists of $k+1$ copies of $z(\Sigma_+)$  which forms a single
 loop in $M_k$,
 that is, $\Sigma_+$ is a loop in $M_k$.
 Similarly so is $\Sigma_-$.
\end{proof}
\begin{figure}
\begin{center}
 \includegraphics[width=0.7\textwidth]{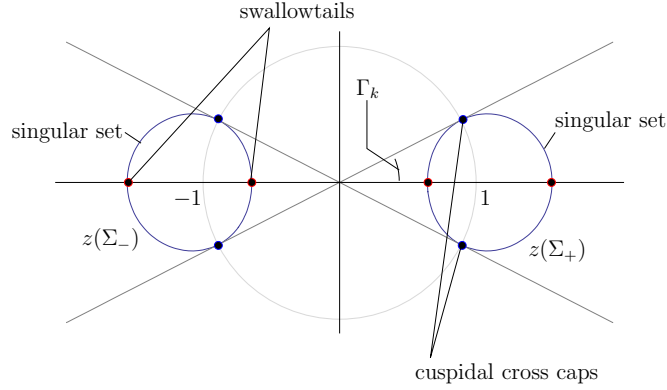}
\end{center}
\caption{The singular set of $\hat{f}_k$.}
\label{fig:singular-set}
\end{figure}
\begin{lemma}\label{lem:swallowtails}
 On each connected component of the singular set in
 Lemma~\ref{lem:sing-curve},
 there are
 $2(k+1)$ swallowtails and 
 $2(k+1)$ cuspidal cross caps.
 Singular points other than these points are cuspidal edges.
\end{lemma}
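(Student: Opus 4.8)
The plan is to apply the criteria of Fact~\ref{fact:maxface-sing}, so the first thing I would do is make the functions $\alpha=dG/(G^2\eta)$ and $\beta=G\,d\alpha/dG$ of \eqref{eq:criteria} explicit for the Weierstrass data \eqref{eq:weier-fk}. Logarithmic differentiation of $w^{k+1}=z(z^2-1)^k$ gives $dw/w=\tfrac{1}{k+1}\bigl(z^{-1}+\tfrac{2kz}{z^2-1}\bigr)dz$, and substituting this into the definitions yields, after a short computation,
\[
  \alpha=\frac{k}{c_k(k+1)}\cdot\frac{z^2+1}{z^2-1},
  \qquad
  \beta=\frac{-4z^2}{c_k(z^4-1)}.
\]
The decisive structural point is that both $\alpha$ and $\beta$ descend to functions of $z$ alone (in particular they are invariant under the deck transformation $\kappa_1$). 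Writing $u=z^2$, one reads off that $\Re\alpha$ and $\Im\alpha$ have numerators proportional to $|u|^2-1$ and $\Im u$, respectively, so that in the $z$-plane
\[
  \{\Im\alpha=0\}=\R\cup i\R,\qquad
  \{\Re\alpha=0\}=\{|z|=1\},\qquad
  \{\alpha=0\}=\{\pm i\}.
\]
Since $\rho^{}_k<2<4$ by Lemma~\ref{lem:ck}, one checks that $|G(\pm i)|\neq1$, so $z=\pm i$ are not singular points; hence $\alpha$ never vanishes on the singular set $\Sigma$, every singular point is non-degenerate, and Fact~\ref{fact:maxface-sing} applies at every point of $\Sigma$.

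Next I would intersect these loci with the singular curves. By Lemma~\ref{lem:sing-curve}, $\Sigma=\Sigma_+\cup\Sigma_-$, where $z(\Sigma_+)$ is a simple closed curve surrounding $z=1$ inside the sector $\Delta_+=\{|\arg z|<\Gamma_k\}$ with $\Gamma_k<\pi/4$, and $\Sigma_-=\kappa_2(\Sigma_+)$ plays the same role around $z=-1$. From the defining equation $r^2+r^{-2}-2\cos 2\theta=\rho^{}_k$ (with $z=re^{i\theta}$) one sees that $z(\Sigma_+)$ meets the real axis in exactly two points (one in $(1,\infty)$, one in $(0,1)$), is disjoint from the imaginary axis (where the left-hand side is $\geq4>\rho^{}_k$), and meets the unit circle in exactly the two points $e^{\pm i\Gamma_k}$, where it is tangent to the boundary rays of $\Delta_+$. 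Because $z=\pm1$ are total branch points of $z\colon M_k\to\C\cup\{\infty\}$ (see \eqref{eq:coords}), the restriction $z\colon\Sigma_+\to z(\Sigma_+)$ is a $(k+1)$-fold covering of circles, as already used in the proof of Lemma~\ref{lem:sing-curve}; hence each of those four points pulls back to $k+1$ points of $\Sigma_+$, giving $2(k+1)$ points of $\Sigma_+$ with $\Im\alpha=0$ and $2(k+1)$ points with $\Re\alpha=0$, all $4(k+1)$ of them distinct.

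It then remains to run the second-order tests of Fact~\ref{fact:maxface-sing}. Over the two real-axis points $x\in\R\setminus\{0,\pm1\}$ one has $\alpha\in\R\setminus\{0\}$ and $\beta=-4x^2/\bigl(c_k(x^4-1)\bigr)\in\R\setminus\{0\}$, so $\Im\alpha=0$ and $\Re\beta\neq0$: these $2(k+1)$ points are swallowtails. Over $e^{\pm i\Gamma_k}$ a direct evaluation shows $\alpha$ and $\beta$ are both purely imaginary and nonzero (using $0<\Gamma_k<\pi/4$), so $\Re\alpha=0$, $\alpha\neq0$, $\Im\beta\neq0$: these $2(k+1)$ points are cuspidal cross caps. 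Every remaining point of $\Sigma_+$ projects off $\R\cup i\R\cup\{|z|=1\}$, where $\Im\alpha\neq0$ and $\Re\alpha\neq0$, hence is a cuspidal edge. Finally, $\kappa_2$ is a conformal automorphism of $M_k$ carrying $\Sigma_+$ onto $\Sigma_-$ and, by Lemma~\ref{lem:symm-weier}, inducing an ambient isometry of $\hat f_k$, so the identical count holds on $\Sigma_-$. The main obstacle I anticipate is not analytic — the explicit formulas for $\alpha$ and $\beta$ make the pointwise tests routine — but combinatorial: correctly counting preimages in the passage from the $z$-plane to $M_k$, and in particular being careful about the tangency of $z(\Sigma_\pm)$ with the unit circle at $e^{\pm i\Gamma_k}$, i.e.\ the fact that there the singular curve only touches the boundary of the sector $\Delta_\pm$.
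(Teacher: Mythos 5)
Your proposal is correct and takes essentially the same route as the paper: compute $\alpha$ and $\beta$ explicitly as functions of $z$ alone, identify the loci $\{\Im\alpha=0\}=\R\cup i\R$ and $\{\Re\alpha=0\}=\{|z|=1\}$, intersect them with the projected singular curve to get two swallowtail points and two cuspidal cross cap points per component in the $z$-plane, and lift by the degree-$(k+1)$ covering to obtain the count $2(k+1)$ each. (Your $\beta=-4z^2/\bigl(c_k(z^4-1)\bigr)$ is, up to a negative real constant, the reciprocal of the paper's stated $z^2-1/z^2$, but since the vanishing loci of the real and imaginary parts coincide, both computations yield the same conclusion; your extra checks of $\alpha\neq0$ on $\Sigma$ and the symmetry argument for $\Sigma_-$ are only a more explicit rendering of what the paper leaves implicit.)
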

\begin{proof}
 Let $\alpha$ and $\beta$ be as in \eqref{eq:criteria}:
 \[
    \alpha:= \frac{dG}{G^2\eta}
           = \frac{Q}{(G\eta)^2} = \frac{z^2+1}{z^2-1}
           = 1-\frac{2}{z^2-1},\qquad
    \beta := \left(\frac{G}{dG}d\alpha\right)
           = z^2 -\frac{1}{z^2}.
 \]
 Then 
 \[
   2\Re\alpha = \frac{z^2+1}{z^2-1} + \frac{\bar z^2+1}{\bar z^2-1}
       = 2\frac{|z|^4-1}{|z^2-1|^2}=0 \qquad 
   \text{if and only if}\qquad |z|=1,
 \]
 and 
 $\Im\alpha = 0$ if and only if $z\in \R\cup i\R$.
 On the other hand, since
 $\Re \beta = \left(r^2-r^{-2}\right)\cos 2\theta$ $(z=re^{i\theta})$,
 \[
    \Re\beta=0 \qquad \text{if and only if}\qquad
    |z|=1\quad\text{or}\quad
    \arg z=\frac{\pi}{4}, \frac{3\pi}{4},\frac{5\pi}{4},\frac{7\pi}{4}.
 \]
 Finally, since 
 $\Im \beta =  \left(r^2+r^{-2}\right)\sin 2\theta$,
 $\Im\beta=0$ if and only if $z\in \R\cup i\R$.

 Then by Fact~\ref{fact:maxface-sing}, 
 there are two swallowtails
 and two cuspidal cross caps  on $\Sigma_+$, on the $z$-plane.
 Since $M_k$ is a $(k+1)$-fold branched cover of the $z$-plane,
 we have the conclusion.
\end{proof}
When $k=2m$ is even, 
both $\hat f_m$ and $f_k$ give maxfaces of genus $m$ with $2$ ends.
Moreover, each end is asymptotic to the $m$-fold cover of 
the Lorentzian (elliptic) catenoid.
\begin{corollary}
 When $k=2m$, $\hat f_m$ and $f_k$ are not congruent.
\end{corollary}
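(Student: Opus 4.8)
The plan is to distinguish $\hat f_m$ and $f_{2m}$ by a congruence invariant, namely the number of swallowtails (and, more crudely, the number of connected components of the singular set). First I would record the general fact that these numbers are congruence invariants: if maxfaces $g_1\colon N_1\to\R^3_1$ and $g_2\colon N_2\to\R^3_1$ are congruent, there is a diffeomorphism $\phi\colon N_1\to N_2$ and an isometry $T$ of $\R^3_1$ with $g_2\circ\phi=T\circ g_1$; since $T$ is a diffeomorphism of $\R^3_1$, the map $\phi$ carries the singular set of $g_1$ onto that of $g_2$ and preserves the right-left equivalence class at each singular point. Hence it suffices to compare, for $\hat f_m$ and $f_{2m}$, the number of swallowtails (or the number of connected components of the singular set).

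For $\hat f_m$ these numbers follow from what has already been proved: by Lemma~\ref{lem:sing-curve} its singular set has two connected components, and by Lemma~\ref{lem:swallowtails} each carries $2(m+1)$ swallowtails, so $\hat f_m$ has $4(m+1)$ swallowtails in total. For $f_{2m}$ I would use the double cover $\varpi_{2m}\colon M_{2m}\to M'_{2m}$, $(z,w)\mapsto(z^2,zw)$, of Section~\ref{sub:double}, with $\hat f_{2m}=f_{2m}\circ\varpi_{2m}$. Its deck transformation is $\tau\colon(z,w)\mapsto(-z,-w)$, which is well-defined on $M_{2m}$ exactly because $k=2m$ is even; its only fixed points on $\overline M_{2m}$ are the two ends $(0,0)$ and $(\infty,\infty)$, which have been removed, so $\tau$ acts freely on $M_{2m}$. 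Since $G=cw/z$ is $\tau$-invariant (indeed $G=G_1\circ\varpi_{2m}$), the involution $\tau$ preserves the singular set $\{|G|=1\}$ of $\hat f_{2m}$; and since $\tau$ sends $z\mapsto -z$, it interchanges the two angular domains $\Delta_+$ and $\Delta_-$ of Lemma~\ref{lem:sing-curve}, hence interchanges the two components $\Sigma_+$ and $\Sigma_-$ of that singular set. In particular $\tau(\Sigma_+)=\Sigma_-$ is disjoint from $\Sigma_+$, so $\varpi_{2m}$ restricts to a diffeomorphism of $\Sigma_+$ onto the full singular set of $f_{2m}$; the latter is therefore a single closed curve, carrying $2(2m+1)$ swallowtails (those lying on $\Sigma_+$).

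Finally, since $4(m+1)\ne 2(2m+1)$ for every $m\ge 1$, the maxfaces $\hat f_m$ and $f_{2m}$ have different numbers of swallowtails (equivalently, their singular sets have $2$ and $1$ connected components), so by the invariance recorded above they are not congruent. I expect the only step needing genuine care to be the analysis of $\tau$ on the singular set -- that it is fixed-point-free on $M_{2m}$ and exchanges $\Sigma_+$ with $\Sigma_-$ -- and this is immediate from the explicit formula for $\tau$ together with the description of $\Sigma_\pm$ via the domains $\Delta_\pm$ in Lemma~\ref{lem:sing-curve}.
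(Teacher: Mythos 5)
Your proposal is correct and takes essentially the same approach as the paper: both distinguish $\hat f_m$ from $f_{2m}$ by the number of swallowtails, $4m+4$ versus $4m+2$. The paper's proof simply states these two counts, whereas you additionally justify why the count is a congruence invariant and derive the count $2(2m+1)$ for $f_{2m}$ via the deck transformation $(z,w)\mapsto(-z,-w)$ exchanging $\Sigma_+$ and $\Sigma_-$, all of which is consistent with Lemmas~\ref{lem:sing-curve} and \ref{lem:swallowtails} and Section~\ref{sub:double}.
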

\begin{proof}
 The number of
 swallowtails on the image of $\hat f_m$ is
 $4m+4$. On the other hand,
 the number of
 swallowtails on the image of $f_{k}$ ($k=2m$) is
 $4m+2$, which proves the assertion.
\end{proof}
\section{Proof of the second part of Theorem~\ref{thm:main}}
\label{sec:deform}
In this section, we shall deform the maxfaces given in the previous 
section to \cmcone{} faces in de Sitter space.
The technique we use here is similar to that in \cite{RUY}.
However, we need much more technical arguments because
the maxfaces $f_k$ in the previous section do not have
{\em non-degenerate period problem\/} as in \cite[Section 5]{RUY}.
So, we accomplish the deformation by computing the derivative
of the period matrices.
\subsection{Preliminaries}
\label{sub:cmcone}
First, we recall some fundamental facts about \cmcone{} faces
in de Sitter space.
For detailed expressions, see \cite{F,F2}, or \cite{FRUYY}.
Let $\R^4_1$ be the Minkowski $4$-space
with the metric $\inner{~}{~}$ of signature $(-,+,+,+)$.
Then de Sitter $3$-space is expressed as 
\[
  S^3_1=
  \{X \in\R^4_1\, ; \, \inner{X}{X}=1\}
\]
with metric induced from $\R^4_1$, 
which is a simply-connected Lorentzian $3$-manifold with constant
sectional curvature $1$.
We identify $\R^4_1$ with the set of 
$2\times 2$ Hermitian matrices
$\Herm(2)$ by
\begin{equation}\label{eq:herm-mink}
    \R^4_1 \ni  (x_0,x_1,x_2,x_3)\leftrightarrow
    \begin{pmatrix}
      x_0+x_3           & x_1+i x_2 \\
      x_1-i x_2 & x_0-x_3
     \end{pmatrix}\in \Herm(2).
\end{equation}
Then de Sitter $3$-space is represented as 
\begin{align*}
  S^3_1&=\{X\in\Herm(2)\,;\,\det X=-1\}\\
              &=\{Fe_3F^*\,;\,F\in \SL_2\C\}=\SL_2\C/\SU_{1,1}\qquad
   \left(e_3:=\begin{pmatrix}
	       1 & \hphantom{-}0\\
	       0 & -1
	     \end{pmatrix}
	      \right).
\end{align*}
The first author \cite{F} introduced the notion of \cmcone{} faces in
$S^3_1$, which corresponds to maxfaces in $\R^3_1$.

To state the Weierstrass-type representation formula, we prepare
some notions:
\begin{definition}\label{eq:admissible}
 A pair $(G,Q)$ of a meromorphic function $G$ and a holomorphic
 $2$-differential $Q$ on $M$ is said to be 
 {\em admissible\/} if 
 \begin{equation}\label{eq:cmc-lift}
    ds^2_{\#}=\bigl(1+|G|^2\bigr)^2\left|\frac{Q}{dG}\right|^2
 \end{equation}
 is a (positive definite) Riemannian metric on $M$.
\end{definition}
\begin{proposition}\label{prop:cmc-rep}
 Let $M$ be a Riemann surface 
 and $(G,Q)$ an admissible pair on $M$.
 Let $F=(F_{ij})\colon{}\widetilde M\to \SL_{2}\C$ 
 be a holomorphic map of the universal cover $\widetilde M$ of $M$
 such that
\begin{equation}\label{eq:cmc-ode}
   dF \,F^{-1} = \Psi \qquad
    \left(
    \Psi := \begin{pmatrix}
	       G & -G^2 \\
	       1 & -G\hphantom{^2}
	     \end{pmatrix}\frac{Q}{dG}
    \right) .
\end{equation}
 Then $F$ is a null holomorphic immersion,
 that is, $F$ is a holomorphic immersion such that
 $\det (dF/dz)$ vanishes identically for each local complex coordinate  
 $z$ on $M$.
 And
\begin{equation}\label{eq:cmc-face}
    f:= Fe_3 F^*\colon{}\widetilde M\longrightarrow S^3_1
\end{equation}
 is a \cmcone{} face if 
 $|g|$ is not identically $1$, where 
 $g$ is a meromorphic function on $\widetilde M$  defined by
\begin{equation}\label{eq:second}
   g:=-\frac{dF_{12}}{dF_{11}}=-\frac{dF_{22}}{dF_{21}}. 
\end{equation}
 The induced metric $ds^2$ and the second fundamental form $\secondff$
 are expressed as
\begin{equation}\label{eq:cmc-f-forms}
    ds^2 = (1-|g|^2)^2\,\left|\frac{Q}{dg}\right|^2,\qquad
     \secondff = Q + \overline Q + ds^2,
\end{equation}
 respectively.
 Conversely, any \cmcone{} face is obtained in this manner.
\end{proposition}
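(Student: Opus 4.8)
The plan is to prove this as the de Sitter analogue of Bryant's representation of constant-mean-curvature-one surfaces in hyperbolic $3$-space, along the lines of \cite{F} (see also \cite{F2,FRUYY,RUY}); the key point is that the admissible pair $(G,Q)$ is exactly what is needed to produce a holomorphic null lift, and that the admissible singularities cause no obstruction. First I would check that $\Psi$ in \eqref{eq:cmc-ode} is a globally defined holomorphic $\sl_2\C$-valued $1$-form on $M$: trace-freeness is immediate from the matrix, and away from the zeros of $dG$ and the poles of $G$ the coefficient $Q/dG$ is a nowhere-vanishing holomorphic $1$-form precisely because admissibility forces the metric $ds^2_{\#}$ in \eqref{eq:cmc-lift} to be positive definite; near a pole of $G$ one replaces $G$ by $1/G$ and conjugates the matrix factor by $\left(\begin{smallmatrix}0&-1\\1&0\end{smallmatrix}\right)$, and a short computation shows $\Psi$ still extends holomorphically there with $(2,1)$-entry nowhere zero. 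Then \eqref{eq:cmc-ode} has a holomorphic solution $F$ on the simply connected surface $\widetilde M$, unique up to right multiplication by a constant; since $d(\det F)=(\det F)\,\trace\Psi=0$ we normalize $\det F\equiv1$, so $F\colon\widetilde M\to\SL_2\C$. Nullity is automatic, because $\det(dF/dz)=(\det F)\det\bigl(\Psi(\partial/\partial z)\bigr)=0$ as $\det\left(\begin{smallmatrix}G&-G^2\\1&-G\end{smallmatrix}\right)=0$; and $F$ is an immersion because the $(2,1)$-entry of $(dF/dz)F^{-1}$ equals the nowhere-vanishing coefficient of $Q/dG$.

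Next, $f=Fe_3F^*$ is Hermitian with $\det f=(\det F)(\det e_3)\overline{\det F}=-1$, so it takes values in $S^3_1$. Differentiating \eqref{eq:cmc-face} and using \eqref{eq:cmc-ode} gives $df=\Psi f+f\Psi^*$, from which one computes the fundamental forms directly. Writing $F^{-1}dF$ in the ``normalized'' form $\left(\begin{smallmatrix}g&-g^2\\1&-g\end{smallmatrix}\right)(Q/dg)$ identifies the $g$ of \eqref{eq:second} as the secondary Gauss map and shows that $G$ and $g$ are two meromorphic data related by the M\"obius action of $F$ and sharing the same Hopf differential $Q$; one then obtains $ds^2=(1-|g|^2)^2|Q/dg|^2$ and $\secondff=Q+\overline Q+ds^2$ as in \eqref{eq:cmc-f-forms}. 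Since $Q$ is a holomorphic $2$-differential, the traceless part of $\secondff$ with respect to $ds^2$ is $Q+\overline Q$, so the mean curvature is identically $1$ and $Q$ is indeed the Hopf differential; the hypothesis $|g|\not\equiv1$ guarantees $f$ is not everywhere degenerate, hence a genuine \cmcone{} face, with singular set exactly $\{|g|=1\}$. For the converse one reverses the argument: a \cmcone{} face admits, by the structure equations of a spacelike CMC-$1$ surface (which remain valid across the admissible singular points, by the very definition of such faces in \cite{F}), a holomorphic null lift $F\colon\widetilde M\to\SL_2\C$ with $f=Fe_3F^*$ --- the de Sitter counterpart of the fact that CMC-$1$ surfaces in hyperbolic $3$-space are projections of null holomorphic curves --- and reading off the hyperbolic Gauss map $G$ and the Hopf differential $Q$ recovers an admissible pair reproducing $f$.

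I expect the main obstacle to be the two verification steps rather than the linear algebra: confirming that $\Psi$ is holomorphic across the poles of $G$ and that admissibility is precisely what makes $F$ an immersion and $f$ a \cmcone{} face (as opposed to something more degenerate), and carrying out the metric and mean-curvature computation together with the global existence of the null holomorphic lift for an arbitrary \cmcone{} face. Both are done in \cite{F} in parallel with the hyperbolic theory used in \cite{RUY}, so in the paper we would only need to indicate the modifications forced by the presence of singular points.
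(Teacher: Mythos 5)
Your proposal is correct and follows essentially the same route as the paper: nullity of $F$ comes from $\det\Psi=0$, admissibility of $(G,Q)$ yields the immersion property, and the hypothesis $|g|\not\equiv 1$ rules out the degenerate (lightlike line) case so that $f=Fe_3F^*$ is a genuine \cmcone{} face. The only difference is one of detail: where you propose to compute the fundamental forms, the mean curvature, and the converse directly, the paper simply delegates these points to Proposition 4.7 (and Theorem 1.2) of \cite{F}, giving a three-line proof.
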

\begin{remark}
 \begin{enumerate}
  \item The equation \eqref{eq:cmc-ode} should be regarded as
	an equation on the universal cover $\widetilde M$
	(see \eqref{eq:app-ode} in the appendix).
	However, for simplicity, we use the notation here.
  \item The condition that $|g|$ is not identical to $1$
	is necessary to avoid the example all of whose points are
	singular points.
	Such an example is unique up to isometry, whose image
	is a lightlike line in $S^3_1$
	(see \cite[Remark 1.3]{FRUYY}).
 \end{enumerate}
\end{remark}
\begin{proof}[Proof of Proposition~\ref{prop:cmc-rep}]
 Though the statement of this proposition is mentioned in 
 \cite[Theorem 1.2]{F}, the proof is not given there.
 So we give a proof here.
 By \eqref{eq:cmc-ode}, $F$ is a null holomorphic map 
 from $\widetilde M$ into $\SL_2\C$,
 and admissibility implies that $F$ is an immersion.
 Moreover, if $|g|$ is not identically $1$,
 $\det[Fe_3F^*]$ does not vanish identically.
 Hence by Proposition 4.7 in \cite{F}, $f=Fe_eF^*$ 
 is a \cmcone{} face.
\end{proof}
The meromorphic function $G$, the holomorphic $2$-differential $Q$
and the (multi-valued) meromorphic function $g$ in
Proposition~\ref{prop:cmc-rep} are
called the {\em hyperbolic Gauss map}, the {\em Hopf differential},
and the {\em secondary Gauss map}, respectively.
We call $F$ the {\em holomorphic null lift\/} of the \cmcone{}
face $f$.
These holomorphic data are related by
\begin{equation}\label{eq:schwarz}
  S(g) - S(G) = 2 Q,\qquad 
 \left(
  S(h):=\left[
         \left(\frac{h''}{h'}\right)'-\frac{1}{2}
         \left(\frac{h''}{h'}\right)^2
        \right]\,dz^2,~
    ~' = \frac{d}{dz}
 \right),
\end{equation}
where $z$ is a local complex coordinate on $M$ and $S(\cdot)$ is the
Schwarzian derivative.

For an admissible pair $(G,Q)$ on $M$, there exists a representation
$\rho^{}_F\colon{}\pi_1(M)$ $\to\SL_2\C$ associated with the solution $F$ 
of \eqref{eq:cmc-ode} as in Proposition~\ref{prop:monodromy-ode}
in the appendix:
\begin{equation}\label{eq:monodromy-repr}
    F\circ\tau = F\rho^{}_F(\tau)^{-1}\qquad
    \bigl(\tau\in\pi_1(M)\bigr),
\end{equation}
where $\tau\in\pi_1(M)$ is considered as a covering transformation
of the universal cover $\widetilde M$, as in
\eqref{eq:covering-transf}.

To deform maxfaces to \cmcone{} faces,
the following facts, which are proved in \cite{RUY} for \cmcone{}
surfaces in $H^3$, play important roles:
\begin{lemma}[{cf.\ \cite[Lemma 4.8]{RUY}}]
\label{lem:infinitesimal-representation}
 Take an admissible pair $(G,Q)$ on a Riemann surface $M$.
 Then $(G,Q_t=tQ)$ is also an admissible pair for all
 $t\in\R\setminus\{0\}$.
 Let 
 $F:=F_t\colon{}\widetilde M\to\SL_2\C$ be a solution 
 of 
\begin{equation}\label{eq:ode-new}
    dF F^{-1}=t \Psi_0, 
    \qquad
    \Psi_0=\begin{pmatrix}
	     G & -G^2 \\
	     1 & -G\hphantom{^2}
	   \end{pmatrix}\frac{Q}{dG},
\end{equation}
 with the initial condition $F_t(o)=\iota(t)$, 
 where $\iota(t)$ is a smooth $\SL_2\C$-valued function in $t$ with
 $\iota(0)=\id$, 
 where 
 \begin{equation}\label{eq:identity}
       \id:=\begin{pmatrix} 1 & 0 \\ 0 & 1 \end{pmatrix}
 \end{equation}
is the identity matrix 
 and $o\in \widetilde M$ is a base point.
 Let  $\rho^{}_t=\rho^{}_{F_t}(\tau)$
 for $\tau\in\pi_1(M)$, where $\rho^{}_{F_t}$ is a representation
 as in \eqref{eq:monodromy-repr}.
 Then it holds that
 \begin{equation}\label{eq:rho-deform}
   \left.\frac{d}{d t}\right|_{t=0}
    (\rho^{}_t)^{-1} = \int_{\gamma}\Psi_0,
 \end{equation}
 and $\gamma$ is a loop in $M$ which represents $\tau$.
\end{lemma}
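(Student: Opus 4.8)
The plan is to reduce the statement to a standard fact about the first-order variation of the monodromy of a linear ODE depending on a parameter. First I would fix a loop $\gamma$ in $M$ based at $o$ representing $\tau\in\pi_1(M)$, and lift it to a path $\tilde\gamma$ in $\widetilde M$ from $o$ to $\tau\cdot o$. Since $F_t$ solves $dF_t\,F_t^{-1}=t\Psi_0$ on $\widetilde M$ with $F_t(o)=\iota(t)$, the defining relation \eqref{eq:monodromy-repr} gives $F_t(\tau\cdot o)=F_t(o)\,(\rho^{}_t)^{-1}=\iota(t)(\rho^{}_t)^{-1}$, so $(\rho^{}_t)^{-1}=\iota(t)^{-1}F_t(\tau\cdot o)$. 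Differentiating this identity in $t$ at $t=0$ and using $\iota(0)=\id$, $\iota'(0)$ arbitrary, I get
\[
  \left.\frac{d}{dt}\right|_{t=0}(\rho^{}_t)^{-1}
  = -\iota'(0) + \left.\frac{d}{dt}\right|_{t=0}F_t(\tau\cdot o).
\]
So the whole problem comes down to computing $\dot F(\tau\cdot o):=\frac{d}{dt}\big|_{t=0}F_t(\tau\cdot o)$ and checking that the $\iota'(0)$ term cancels — which it should, because when $t=0$ the ODE is $dF_0=0$, so $F_0\equiv\iota(0)=\id$ identically, and the variation of a solution starting from a $t$-dependent initial value at the identity contributes exactly $\iota'(0)$ at every point, to be subtracted off.

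Next I would carry out the variation-of-constants computation. Writing $F_t=\id+t\,F^{(1)}+o(t)$ and plugging into $dF_t=t\Psi_0 F_t$, the order-$t$ term gives $dF^{(1)}=\Psi_0$ (since $F_0=\id$), hence $F^{(1)}(p)=\iota'(0)+\int_{o}^{p}\Psi_0$ along any path in $\widetilde M$, where the integral is path-independent on the simply connected $\widetilde M$. Evaluating at $p=\tau\cdot o$ along $\tilde\gamma$ yields $\dot F(\tau\cdot o)=\iota'(0)+\int_{\tilde\gamma}\Psi_0$, and since $\Psi_0$ is (the pullback of) a form on $M$, $\int_{\tilde\gamma}\Psi_0=\int_{\gamma}\Psi_0$. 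Substituting into the displayed formula above, the $\iota'(0)$ cancels and I obtain
\[
  \left.\frac{d}{dt}\right|_{t=0}(\rho^{}_t)^{-1}=\int_{\gamma}\Psi_0,
\]
which is exactly \eqref{eq:rho-deform}.

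To make the above rigorous I would justify differentiating under the relevant operations: smooth dependence of $F_t$ on $t$ follows from smooth ($C^\infty$, in fact real-analytic) dependence of solutions of linear ODEs on parameters, applied on the universal cover as in Proposition~\ref{prop:monodromy-ode} of the appendix; admissibility of $(G,tQ)$ for $t\neq 0$ is immediate since scaling $Q$ by $t$ scales the metric \eqref{eq:cmc-lift} by $t^2$, and this guarantees $\Psi_0$ is a well-defined holomorphic (hence smooth) $\mathfrak{sl}_2\C$-valued $1$-form off the poles of $G$ — one should note the apparent poles of $Q/dG$ are cancelled by the structure of $\Psi_0$ exactly as in the well-definedness of \eqref{eq:cmc-ode}, so the integral $\int_\gamma\Psi_0$ makes sense (choosing $\gamma$ to avoid the finitely many bad points, or noting the form extends). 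The main obstacle is not any single hard estimate but bookkeeping: correctly handling that the ODE lives on $\widetilde M$ while $\Psi_0$ and $\gamma$ live on $M$, and tracking the inverse and the left-versus-right placement of the monodromy factor so that the signs and the cancellation of $\iota'(0)$ come out right. Once the $t=0$ solution is identified as the constant $\id$, everything else is the classical first-variation formula, so I expect the proof to be short; I would model the exposition on the proof of \cite[Lemma 4.8]{RUY}, indicating only the points where the Lorentzian (de Sitter) setting differs from the $H^3$ case, namely the target group decomposition $\SL_2\C/\SU_{1,1}$, which plays no role in this infinitesimal computation.
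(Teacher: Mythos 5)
Your argument is correct and is essentially the paper's own proof: both differentiate the ODE at $t=0$ using that $F_0\equiv\iota(0)=\id$, integrate $\Psi_0$ along the loop, and use the monodromy relation $F_t(\tau\cdot o)=\iota(t)(\rho^{}_t)^{-1}$ so that the $\iota'(0)$ contributions cancel; the paper merely phrases the computation as $\left.\frac{d}{dt}\right|_{t=0}\oint_\gamma dF_t=\oint_\gamma\Psi_0$ instead of your explicit first-order expansion. (Your caution about poles of $Q/dG$ is unnecessary: admissibility makes $\Psi_0$ a holomorphic $\sl_2\C$-valued one-form on all of $M$, as noted in the appendix.)
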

\begin{proof}
 By \eqref{eq:ode-new}, $F_0$ is a constant map.
 Differentiating \eqref{eq:ode-new}, we have
 $d\dot F = \Psi_0\iota(0)$, where $\dot F=(\partial/\partial t)|_{t=0}F_t$.
 Hence we have
 \[
    \left.\frac{d}{dt}\right|_{t=0}
    \oint_{\gamma} dF_t =
    \oint_{\gamma}\Psi_0.
 \]
 Here, the left-hand side is computed as
 \[
    \left.\frac{d}{dt}\right|_{t=0}
    \biggl(
      F_t(o)(\rho^{}_t)^{-1}-F_t(o)
    \biggr)=
    \left.\frac{d}{dt}\right|_{t=0}
    \biggl(
      \iota(t)(\rho^{}_t)^{-1}-\iota(t)
    \biggr)
    =
   \left.\frac{d}{dt}\right|_{t=0}(\rho^{}_t)^{-1}
   ,
 \]
 because $\iota(0)=\id$.
 Hence we have the conclusion.
\end{proof}

Similar to Definition~\ref{def:complete} in the introduction, 
we define completeness and weak completeness of \cmcone{} faces:
\begin{definition}[%
 {\cite[Definitions 1.2 and 1.3]{FRUYY}}]%
\label{def:complete-cmc}
 A \cmcone{} face $f\colon{}M\to S^3_1$
 is called {\it complete\/}
 if there exists a symmetric $2$-tensor $T$ 
 which vanishes outside a compact set in $M$,
 such that
 $ds^2+T$ is a complete Riemannian metric on $M$,
 where $ds^2$ is the induced metric by $f$ as in \eqref{eq:cmc-f-forms}.
 On the other hand, $f$ is called {\it weakly complete\/}
 if the metric $ds^2_{\#}$ in \eqref{eq:cmc-lift} is complete.
\end{definition}
Like as in the case of maxfaces, we have
\begin{fact}[\cite{UY2}] \label{fact:complete2}
 Let 
 $M$ be a Riemann surface,
 and $f\colon{}M\to S^3_1$ a weakly complete \cmcone{} face.
 Then $f$ is complete if and only if
 there exists a compact Riemann surface $\overline M$ and 
 a finite number of points
 $p_1,\dots,p_n\in \overline M$ such that
 $M$ is conformally equivalent to 
 $\overline M\setminus\{p_1,\dots,p_n\}$,
 and the set of singular points
 $\Sigma=\{p\in M\,;\,|g(p)|=1\}$ is compact.
\end{fact}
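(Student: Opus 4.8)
The above statement is the \cmcone{}-face counterpart of Fact~\ref{fact:complete}, and is proved in \cite{UY2}; the plan is to transpose that argument, whose structure I now describe. Write $ds^2$ for the induced metric in \eqref{eq:cmc-f-forms} and $ds^2_{\#}$ for the lift metric \eqref{eq:cmc-lift}, so that weak completeness of $f$ means precisely that $ds^2_{\#}$ is a complete Riemannian metric on $M$. Both implications hinge on the fact that, on any region where $|g|$ stays bounded away from $1$, the induced metric $ds^2$ and the lift metric $ds^2_{\#}$ are mutually bounded --- this is immediate for maxfaces, where the two metrics are built from the single Lorentzian Gauss map, and for \cmcone{} faces follows from the relation between the secondary and hyperbolic Gauss maps and the Hopf differential.

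For ``$\Leftarrow$'', suppose $M$ is conformally $\overline M\setminus\{p_1,\dots,p_n\}$ with $\overline M$ compact and $\Sigma=\{|g|=1\}$ compact. Choose a compact $K\subset M$ containing $\Sigma$ in its interior, together with pairwise disjoint punctured coordinate discs $D_j^{*}$ about the $p_j$, each disjoint from $K$; shrinking the $D_j^{*}$ we may assume $\{|g|=1\}$ is empty on each of them, and hence, by the comparison above, $ds^2\ge c_j\,ds^2_{\#}$ on $D_j^{*}$ for some constant $c_j>0$. Now pick any symmetric $2$-tensor $T$ supported in $K$ with $ds^2+T$ positive definite on $K$ --- possible because $ds^2$ degenerates only along $\Sigma$, which lies in the interior of $K$. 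A divergent path in $M$ must eventually leave every compact set, hence enter and remain in some $D_j^{*}$; on that portion its $(ds^2+T)$-length dominates $c_j$ times its $ds^2_{\#}$-length, which is infinite by weak completeness. So $ds^2+T$ is complete and $f$ is complete.

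For ``$\Rightarrow$'', suppose $f$ is complete, so there is a symmetric $2$-tensor $T$ vanishing off a compact $K$ with $ds^2+T$ a complete Riemannian metric. Since $ds^2+T$ is positive definite everywhere and equals $ds^2$ off $K$, the induced metric is nondegenerate on $M\setminus K$, so $\Sigma\subset K$ is compact. To produce the compactification and the punctures, enlarge $K$ so that $\Sigma$ lies in its interior; off $K$ one has $|g|\neq 1$, so $ds^2$ and $ds^2_{\#}$ are mutually bounded there, and one proves, as in \cite[Theorem~4.6]{UY} (and \cite{FRUYY} for the \cmcone{} case), that $ds^2_{\#}$ has finite total absolute curvature outside $K$. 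Here the Gaussian curvature $K_{\#}$ is nonpositive, with $-K_{\#}\,dA_{\#}=G^{*}(dA_{S^2})$ for a standard area form $dA_{S^2}$ on $S^2$, so finiteness of the total curvature amounts to $G$ sweeping out only finite spherical area near each end. Huber's theorem then forces $M$ to have finite topology and to be conformally a compact Riemann surface with finitely many punctures $p_1,\dots,p_n$, which is the desired conclusion (the Hopf differential $Q$ extends meromorphically across the punctures, though $G$ and $g$ need not).

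The step I expect to be the main obstacle is precisely this finiteness of the total curvature in ``$\Rightarrow$'': a priori $M$ is an arbitrary Riemann surface, so one must exclude ends of infinite conformal type and ends near which $G$ accumulates infinite spherical area --- and, unlike for maxfaces, $G$ may have an essential singularity at an end, so the finiteness cannot simply be read off a meromorphic extension. This asymptotic analysis --- performed for maxfaces in \cite[Theorem~4.6]{UY} and refined, with the sharp characterization in terms of compactness of $\Sigma$, in \cite{UY2} --- is where completeness and the holomorphicity of the Weierstrass data are genuinely used. Once it is available, passing from the maxface case to the \cmcone{} case requires only notational changes, since Proposition~\ref{prop:cmc-rep} supplies the same local structure as the maxface representation \eqref{eq:maxface}.
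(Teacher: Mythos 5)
You are reviewing a statement the paper itself does not prove (it is quoted from \cite{UY2}), so your sketch has to stand on its own, and it has a genuine gap: the comparison on which both of your implications rest is false for \cmcone{} faces. You claim that $ds^2$ and $ds^2_{\#}$ are mutually bounded wherever $|g|$ stays away from $1$. For maxfaces this is immediate, since both metrics are built from the single pair $(G,\eta)$; here, however,
\[
   \frac{ds^2}{ds^2_{\#}}
   =\frac{\bigl(1-|g|^2\bigr)^2}{\bigl(1+|G|^2\bigr)^2}\,
    \frac{|dG|^2}{|dg|^2}
\]
involves two different Gauss maps, and no bound follows from $|g|$ avoiding the unit circle. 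Concretely, take a catenoidal (elliptic) end: $G=z$ and $Q=\tfrac{1-\nu^2}{4}\,z^{-2}dz^2$ on a punctured disc about $z=0$, so that \eqref{eq:schwarz} gives $g=z^{\nu}$ up to an $\SU_{1,1}$ M\"obius factor. For $0<\nu<1$ one has $|g|=|z|^{\nu}\to 0$, hence $|g|$ is bounded away from $1$ near the puncture, yet $ds^2\sim\mathrm{const}\,|z|^{-2\nu-2}|dz|^2$ while $ds^2_{\#}\sim\mathrm{const}\,|z|^{-4}|dz|^2$, so $ds^2/ds^2_{\#}\sim\nu^{-2}|z|^{2-2\nu}\to0$ and the inequality $ds^2\ge c_j\,ds^2_{\#}$ that your ``$\Leftarrow$'' argument needs fails (for $\nu>1$ the opposite bound fails). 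These are exactly the ends of the surfaces constructed in Section~\ref{sec:deform}, where $g\sim\zeta^{-\nu_0}$, $G\sim\zeta^{-k}$ with $\nu_0\neq k$, so the failure is not exotic. Both metrics are in fact complete at such an end, so the Fact survives, but not by mutual boundedness; supplying the implication ``$ds^2_{\#}$ complete and $|g|\neq1$ near a puncture $\Rightarrow$ $ds^2$ complete there'' is precisely the content of the completeness lemma of \cite{UY2}, and it is not a notational transcription of the maxface case. (A smaller point: $g$ is only defined up to the $\SU_{1,1}$-monodromy, so only the sets $\{|g|=1\}$, $\{|g|<1\}$, $\{|g|>1\}$ are intrinsic, and compactness of $\Sigma$ does not by itself give a uniform bound of $|g|$ away from $1$ near a puncture.)

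Your ``$\Rightarrow$'' direction has a second problem. You propose to obtain finite conformal type from finiteness of the total absolute curvature of $ds^2_{\#}$, i.e.\ of the spherical area swept out by $G$ near the ends; but for \cmcone{} faces $G$ may have an essential singularity at an end (as the introduction of the paper stresses), in which case that area is infinite, so this finiteness is not merely the ``main obstacle'' --- it is unavailable in general, and no asymptotic analysis of $G$ will produce it. The correct repair is to bypass $ds^2_{\#}$ altogether: away from $\Sigma$ the induced metric \eqref{eq:cmc-f-forms} satisfies $K\,ds^2=4|dg|^2/(1-|g|^2)^2\ge0$, exactly as for maxfaces, so if $ds^2+T$ is complete with $T$ supported in a compact set (which must contain $\Sigma$, as you correctly note), the negative part of its curvature is integrable and Huber's theorem applied directly to $ds^2+T$ --- which is conformal to the complex structure of $M$ near the ends --- yields finite topology and punctured-disc ends with no control of $G$ whatsoever; this is the mechanism of \cite[Theorem 4.6]{UY}, with the secondary Gauss map $g$ playing the role of the Lorentzian Gauss map.
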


\subsection{The holomorphic data}
\label{sub:cmc-data}
 Let $k$ be a positive integer, and take the Riemann surface $M_k$ as in 
 \eqref{eq:m-k}.
 Take a meromorphic function $G$, a holomorphic $1$-form $\eta$
 and a holomorphic $2$-differential $Q_t$ ($t\in\R$) on $M_k$ as 
\begin{equation}\label{eq:cmc-data}
   G := c_k\frac{w}{z},\qquad 
   Q_t:=\frac{t}{c_k}Q
   \left(= \frac{tk}{k+1}\frac{z^2+1}{z^2(z^2-1)}dz^2\right),
\end{equation}
 where $(G,\eta)$ are the Weierstrass data as in \eqref{eq:weier-fk},
 $Q=\eta dG$ 
is as in \eqref{eq:hopf-fk}, and $c=c_k$ is as in \eqref{eq:c-k}.
 We set
\begin{equation}\label{eq:cmc-data-alpha}
 \Psi:=t\Psi_0,\qquad
        \Psi_0:=
	  \frac{1}{c_k}
	  \begin{pmatrix}
	    G & - G^2 \\
	    1 & - G\hphantom{^2}
	  \end{pmatrix}\frac{Q}{dG}.
\end{equation}
 Then one can easily show that
 $(G,Q)$ (and then $(G,Q_t)$ for $t\neq 0$) 
 is an admissible pair on $M_k$, and 
 the metric \eqref{eq:cmc-lift} is complete.

 Then the second part of Theorem~\ref{thm:main} is a conclusion of the
 following 
\begin{proposition}\label{prop:deform-2}
 For each positive integer $k=1,2,3\dots$, there exists a positive
 number $\varepsilon=\varepsilon(k)$
 such that for each real number $t$ with $0<|t|<\varepsilon$,
 there exists a complete \cmcone{} face
 $\hat f_{k,t}\colon{}M_k\to S^3_1$ whose
 hyperbolic Gauss map $G$ and Hopf differential $Q_t$ are given 
 as in \eqref{eq:cmc-data}.
\end{proposition}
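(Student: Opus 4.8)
The plan is to realize $\hat f_{k,t}$ as $F_t e_3 F_t^*$ for a solution $F_t$ of $dF_t\,F_t^{-1}=t\Psi_0$ on $\widetilde M_k$, and to show that for $t$ small the monodromy representation $\rho^{}_t=\rho^{}_{F_t}$ takes values in $\SU_{1,1}$, which is exactly the condition that $f$ descends to a single-valued map on $M_k$. By Proposition~\ref{prop:cmc-rep}, admissibility of $(G,Q_t)$ (already checked) together with $|g|\not\equiv 1$ then gives a \cmcone{} face, and completeness follows from Fact~\ref{fact:complete2} since the metric $ds^2_\#$ in \eqref{eq:cmc-lift} is complete and $|G|\neq 1$ at the two ends (so the singular set is compact, just as in Lemma~\ref{lem:complete}). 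So the whole problem is the period (monodromy) problem for the $\SL_2\C$-valued ODE.

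First I would set up the monodromy map carefully. For each generator $\tau$ of $\pi_1(M_k)$ listed in Lemma~\ref{lem:fundamental-group}, write $\rho^{}_t(\tau)\in\SL_2\C$ for the monodromy of $F_t$. At $t=0$ the map $F_0$ is constant, so $\rho^{}_0(\tau)=\id$ for all $\tau$; thus $t\mapsto \rho^{}_t(\tau)$ is a curve in $\SL_2\C$ through the identity. The condition ``$\rho^{}_t$ lands in $\SU_{1,1}$'' is a set of real-analytic equations in $t$ and in the auxiliary initial-condition parameter $\iota(t)$. The naive count of equations exceeds the count of free parameters, so we cannot simply invoke the implicit function theorem at $t=0$; this is exactly the ``non-degenerate period problem fails'' issue flagged in the text, and it is the main obstacle. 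The resolution is to exploit the symmetries $\kappa_1,\kappa_2$ and the reflections $\mu_j$ from \eqref{eq:reflections}--\eqref{eq:symms}: the analogues of Lemma~\ref{lem:symm-weier} for the data $(G,Q_t)$ force $\rho^{}_t$ to be equivariant under these symmetries, cutting the effective period problem down to the single loop $\gamma$ (the \cmcone{} analogue of Lemma~\ref{lem:period}), and constraining $\rho^{}_t(\gamma)$ to lie in a low-dimensional subvariety.

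The heart of the argument is then an infinitesimal computation à la Lemma~\ref{lem:infinitesimal-representation}. By \eqref{eq:rho-deform}, $\frac{d}{dt}\big|_{t=0}(\rho^{}_t(\tau))^{-1}=\int_\gamma\Psi_0$, which by \eqref{eq:cmc-data-alpha} is $(1/c_k)$ times an explicit $\sl_2\C$-valued period integral whose entries are the same abelian integrals $\oint_\gamma \eta$, $\oint_\gamma G\eta$, $\oint_\gamma G^2\eta$ that were computed (and made to cancel in the right combination) in Section~\ref{sub:period}. Since the maxface period problem was solved precisely so that the relevant real combination vanishes, the first-order term of $\rho^{}_t(\gamma)$ automatically lies in (the complexification of) $\mathfrak{su}_{1,1}$; what must be checked is that it is \emph{nonzero} and that, modulo the symmetry constraints, the remaining equations defining $\SU_{1,1}$-membership can be solved for the finitely many free parameters by the implicit function theorem with $t$ as the deformation parameter. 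Concretely I would: (i) diagonalize/normalize using the symmetry group so that $\rho^{}_t(\gamma)$ is conjugate to an element of a one-parameter subgroup; (ii) show the derivative at $t=0$ of the obstruction function is an isomorphism onto the finitely many constraints, which reduces to showing a certain $2\times2$ or $3\times3$ Jacobian built from the period integrals $A_k,B_k$ is nonsingular — this is where the estimates $0<\rho^{}_k<2$ from Lemma~\ref{lem:ck} enter; (iii) conclude by the implicit function theorem the existence of $\varepsilon(k)>0$ and a solution for $0<|t|<\varepsilon$.

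Finally I would record that for small $t$ the secondary Gauss map $g$ (obtained from $F_t$ via \eqref{eq:second}) is a small perturbation of the $G$-side data and hence satisfies $|g|\not\equiv 1$, that the hyperbolic Gauss map of the resulting face is the prescribed $G$ with Hopf differential $Q_t$ by \eqref{eq:schwarz} and Proposition~\ref{prop:cmc-rep}, and that completeness holds by Fact~\ref{fact:complete2}. The embeddedness claims for $k=1,2$ in Theorem~\ref{thm:main} would then follow by the same end-asymptotics argument as for $\hat f_1$ in Section~\ref{sec:main} together with the Osserman-type equality, but that is separate from Proposition~\ref{prop:deform-2} itself. I expect step (ii) — verifying nondegeneracy of the reduced period Jacobian — to be the technically demanding core of the proof.
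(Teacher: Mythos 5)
Your overall framework coincides with the paper's: reduce everything to making the monodromy of $F_t$ lie in $\SU_{1,1}$ (Proposition~\ref{prop:main} and Theorem~\ref{thm:su11-monodromy}), exploit the reflections $\mu_1,\mu_2,\mu_3$ to replace the full representation by the three matrices $\tilde\rho^{}_{j,t,b}$, and adjust the initial condition $\iota(t)$. But the core of your plan --- step (ii), inverting a nondegenerate first-order Jacobian built from the period integrals $A_k,B_k$ by the implicit function theorem --- is exactly what is \emph{not} available here, and this is the genuine gap. After the symmetry reduction the paper normalizes $\tilde\rho^{}_1=\id$ and $\tilde\rho^{}_2=\sigma_2$ exactly (Claims~\ref{claim:1} and \ref{claim:2}, using the relation $(\tilde\mu_2\circ\tilde\mu_1)^{k+1}=\identity_{\widetilde M_k}$ to force the eigenvalues), so the only remaining obstruction is the sign of $r_1(t)r_2(t)$ in \eqref{eq:rho-three}, equivalently whether $|q(t)|>1$. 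At $t=0$ one has $r_1(0)=r_2(0)=0$, $|q(0)|=1$, and --- crucially --- the first derivative of $|q|^2$ at $t=0$ also vanishes; the obstruction is detected only at second order, where $\frac{d^2}{dt^2}\big|_{t=0}|q|^2=\frac{4(k+1)\pi}{k}\tan\frac{\pi k}{2k+2}>0$. So no first-order implicit-function argument, in whatever reduced variables, can settle the sign. The paper's actual mechanism is different: it computes $\trace\rho^{}_{F_t}(\tau_0)$ and $\trace\rho^{}_{F_t}(\tau_\infty)$ through the end exponents $\nu_0(t),\nu_\infty(t)$ obtained from the Schwarzian \eqref{eq:schwarz} (Lemma~\ref{lem:end-monodromy}), relates them to $q$ via Cayley--Hamilton, uses the first-derivative formula \eqref{eq:rho-deform} (Lemma~\ref{lem:rho-der}) only to select the correct branch $\theta_0=\pi\nu_0/(2k+2)$, and then performs the second-order expansion; finally a diagonal conjugation $\iota_1(t)=\iota(t)\diag\bigl(s(t),1/s(t)\bigr)$ with $s=\sqrt[4]{-r_1/r_2}$ pushes $\tilde\rho^{}_3$ into $\SU_{1,1}$ without disturbing $\tilde\rho^{}_1,\tilde\rho^{}_2$. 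Two side remarks: the estimates of Lemma~\ref{lem:ck} play no role in this deformation (they are used for the maxface singular-set analysis in Lemma~\ref{lem:sing-curve}), and the observation that the first-order term lies in the complexification of $\mathfrak{su}_{1,1}$ is vacuous, since that complexification is all of $\sl_2\C$.

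A second, smaller error is in your completeness argument: for a \cmcone{} face the singular set in Fact~\ref{fact:complete2} is $\{|g|=1\}$ for the \emph{secondary} Gauss map $g$, not the hyperbolic Gauss map $G$, so ``$|G|\neq 1$ at the ends'' proves nothing; near the ends $g$ behaves like $\zeta^{\pm\nu_0}$ and its modulus is not controlled by $G$ alone. The paper instead shows (Proposition~\ref{prop:completeness}) that the ends are regular non-integral elliptic because $\nu_0(t),\nu_\infty(t)\notin\Z$ for small $t\neq 0$, and invokes Lemma~E1 of \cite{FRUYY} to conclude that the singular set does not accumulate at the ends.
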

\begin{remark}
 Consider $\hat f_{k,t}$ as a map into the 
Minkowski space $\R^4_1$, 
 and let
 \[
    \tilde f_{k,t}= \frac{1}{t}\hat f_{k,t}
     \colon{}M_k \longrightarrow S^3_1(t^2)
        =\left\{X\in \R^4_1\,;\,\inner{X}{X}=\frac{1}{t^{2}}
\right\}\subset \R^4_1,
 \]
 where $S^3_1(t^2)$ is de Sitter $3$-space of constant sectional
 curvature $t^2$.
 Then $\tilde f_{k,t}$ is a surface of mean curvature $t$ 
 in $S^3_1(t^2)$.
 Then taking the limit as $t\to 0$ in a similar way as in
 \cite{UY-deform}, $\tilde f_{k,t}$ converges 
 to the maxface $\hat f_{k}$ as in the previous section.
 In this sense, the method provided here is considered as 
 a ``deformation''.
\end{remark}
\subsection{Representation of reflections}
\label{sub:cmc-domain}
 The proof is done by a (refined version of) the reflection method,
 which was introduced in \cite{RUY} for \cmcone{} surfaces in
 the hyperbolic space.
 To do this, we first take the reflections on the universal cover
 $\widetilde M_k$ of $M_k$.

 Let $\mu_j$ ($j=1,2,3$) be the reflections on $M_k$
 as in \eqref{eq:reflections}.
 For a matrix $a=(a_{ij})_{i,j=1,2}\in \SL_2\C$ and 
 a holomorphic function $h$ on a Riemann surface $M$, we set
 \begin{equation}\label{eq:star}
     a \star h:=\frac{a_{11}h+a_{12}}{a_{21}h+a_{22}}.
 \end{equation}
\begin{figure}
\begin{center}
 \includegraphics[width=0.95\textwidth]{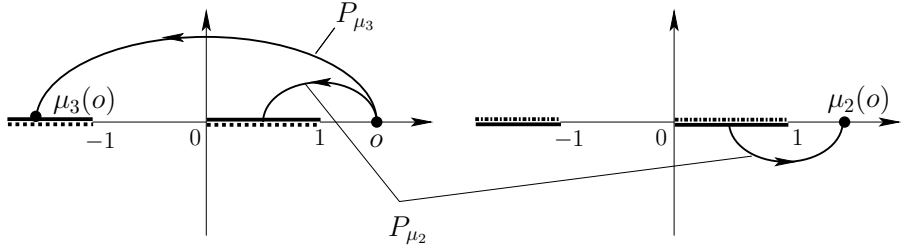}
\end{center}
\caption{%
      Paths $P_{\mu_1}$, $P_{\mu_2}$ and $P_{\mu_3}$ 
    ($P_{\mu_1}$ is the constant path at $o$).}
\label{fig:domain}
\end{figure}
The following lemma is a direct conclusion of \eqref{eq:reflections}:
\begin{lemma}\label{lem:symmetricity}
 The pair $(G,Q_t)$ as in \eqref{eq:cmc-data}
 satisfies 
 \[
   \overline{G\circ{\mu_j}} = \sigma_{j}\star G,\qquad
   \overline{Q_t\circ{\mu_j}} = Q_t \qquad (j=1,2,3),
 \]
 where
 \begin{equation}\label{eq:sigma}
     \sigma_1 =\begin{pmatrix}
		     1 & 0 \\ 0 & 1 
		    \end{pmatrix}(= \id), \quad
     \sigma_2 = \begin{pmatrix}
		   \psi^{-2} & 0 \\ 0 & \psi^{2}
                \end{pmatrix},\quad
     \sigma_3 = \begin{pmatrix}
		   \psi^{-1} & 0 \\ 0 & \psi
		\end{pmatrix},
 \end{equation}
and
 \begin{equation}\label{eq:psi}
   \psi :=e^{ki\lambda/2}=\exp\left(\frac{i\pi  k}{2(k+1)}\right)
   \qquad
   \left(\lambda=\frac{\pi}{k+1}\right).
 \end{equation}
 Namely, $(G,Q_t)$ is $\mu_j$-invariant  $(j=1,2,3)$
 in the sense of \eqref{eq:mu-inv-data} in the appendix.
\end{lemma}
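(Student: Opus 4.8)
The plan is to verify the stated identities by direct substitution of the explicit formulas \eqref{eq:cmc-data} into the defining relations $\overline{G\circ\mu_j}=\sigma_j\star G$ and $\overline{Q_t\circ\mu_j}=Q_t$, using the description of the reflections $\mu_j$ in \eqref{eq:reflections} and the defining equation $w^{k+1}=z(z^2-1)^k$ of $\overline M_k$. Since $c_k\in\R_+$, the conjugation acts trivially on the constant, so everything reduces to tracking the effect of the coordinate substitutions on $w/z$ and on the rational $2$-differential $\tfrac{z^2+1}{z^2(z^2-1)}\,dz^2$.

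First I would handle the Hopf differential, which is the easier half. Write $R(z)\,dz^2:=\tfrac{tk}{k+1}\,\tfrac{z^2+1}{z^2(z^2-1)}\,dz^2$. For $\mu_1$ we have $z\mapsto\bar z$, so $\overline{R(\bar z)\,d\bar z^2}=\overline{R(\bar z)}\,dz^2=R(z)\,dz^2$ because $R$ has real coefficients; the same computation works for $\mu_2$ since $\mu_2$ and $\mu_1$ have the same action on the $z$-coordinate. For $\mu_3$ we have $z\mapsto-\bar z$, hence $d\bar z^2\mapsto dz^2$ and $R(-z)=R(z)$ since $R$ is even in $z$; again $\overline{Q_t\circ\mu_3}=Q_t$. (One checks there is no issue at the branch points $z=\pm1,0,\infty$: the relations are identities of meromorphic $2$-differentials away from a discrete set and hence hold everywhere.)

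Next I would treat the hyperbolic Gauss map $G=c_k w/z$. For $\mu_1$, $(z,w)\mapsto(\bar z,\bar w)$ gives $\overline{G\circ\mu_1}=c_k\,\overline{\bar w/\bar z}=c_k w/z=G$, so $\sigma_1=\id$. For $\mu_2$, $(z,w)\mapsto(\bar z,e^{2ki\lambda}\bar w)$ gives $\overline{G\circ\mu_2}=c_k\,\overline{e^{2ki\lambda}\bar w/\bar z}=e^{-2ki\lambda}c_k w/z=e^{-2ki\lambda}G$; since $\psi^{-2}/\psi^2=\psi^{-4}=e^{-2ki\lambda}$ and $\sigma_2\star G=\psi^{-4}G$, this matches. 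For $\mu_3$, $(z,w)\mapsto(-\bar z,e^{-i\lambda}\bar w)$ gives $\overline{G\circ\mu_3}=c_k\,\overline{e^{-i\lambda}\bar w/(-\bar z)}=-e^{i\lambda}c_k w/z=-e^{i\lambda}G$, and $\sigma_3\star G=(\psi^{-1}/\psi)G=\psi^{-2}G=e^{-ki\lambda}G$; one then checks $-e^{i\lambda}=e^{-ki\lambda}$, i.e.\ $e^{i\lambda(k+1)}=-1$, which holds because $\lambda(k+1)=\pi$ by \eqref{eq:theta}. This last sign-bookkeeping — confirming that the phases produced by the $w$-multipliers in \eqref{eq:reflections} are exactly the phases $\psi^{\mp2},\psi^{\mp1}$ appearing in $\sigma_2,\sigma_3$ via the relation $e^{i\lambda(k+1)}=-1$ — is the only place where care is needed, and is the main (minor) obstacle; everything else is routine. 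Finally I would note that "$(G,Q_t)$ is $\mu_j$-invariant in the sense of \eqref{eq:mu-inv-data}" is, by definition in the appendix, precisely the conjunction of the two displayed identities, so nothing further is needed.
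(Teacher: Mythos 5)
Your computations are correct and this is essentially the paper's own (unwritten) argument: the paper merely calls the lemma ``a direct conclusion of \eqref{eq:reflections}'', i.e., the same direct substitution of \eqref{eq:cmc-data} into the reflections that you carry out, including the crucial phase check $e^{i\lambda(k+1)}=-1$ for $\mu_3$. One small precision: $\mu_j$-invariance in the sense of \eqref{eq:mu-inv-data} is defined by invariance of $ds^2_{\#}$ and of $Q_t$, not literally by the two displayed identities, but since each $\sigma_j$ is a unimodular diagonal matrix, the relation $\overline{G\circ\mu_j}=\sigma_j\star G$ means $G\circ\mu_j$ differs from $\overline{G}$ only by a unit-modulus constant, so $ds^2_{\#}\circ\mu_j=ds^2_{\#}$ follows at once and your conclusion stands.
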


We remark that we do not use $\mu_4$ here, because
$(G,Q)$ is not $\mu_4$-invariant in the sense of \eqref{eq:mu-inv-data} 
as in the appendix. 
In fact, $\overline{Q\circ\mu_4}=-Q$ holds.

We fix the base point $o\in M_k$ as in \eqref{eq:base-point}
and take paths $P_{\mu_j}$ on $M_k$ 
associated to $\mu_j$ as in the appendix
which join $o$ and $\mu_j(o)$
as in Figure~\ref{fig:domain} for each $j=1,2,3$.
(In Figure~\ref{fig:domain}, the left-hand $z$-plane is the 
sheet containing $o$ and the right-hand $z$-plane is the sheet
containing $\mu_2(o)$. These two sheets are connected along two intervals
$(0,1)$ and $(-\infty,-1)$ on each real axis.)

Here $P_{\mu_1}$ is the constant path at $o$.
Then one can take the lift $\tilde\mu_j$ ($j=1,2,3$)
(as orientation-reversing involutions on $\widetilde M_k$)
of $\mu_j:M_k\to M_k$ with
respect to the path $P_{\mu_j}$,
see \eqref{eq:lift-refl} in the appendix.

Then by Proposition~\ref{prop:covering}, we have
\begin{lemma}\label{lem:covering-transf}
 Let $\gamma$ be the loop as in Figure~\ref{fg:loop},
 and let $\kappa_j$ {\rm (}$j=1,2${\rm)} be the
 automorphisms of $M_k$ as in \eqref{eq:symms}.
 Then
 \begin{align*}
    [\gamma]&=\tilde\mu_3\circ\tilde\mu_2\circ\tilde\mu_3\circ\tilde\mu_1,\\
   [\kappa_2\circ\gamma] 
       &= \tilde\mu_3\circ\tilde\mu_1\circ\tilde\mu_3\circ\tilde\mu_2,\\
   [(\kappa_1)^j\circ\gamma] &=
         (\tilde\mu_2\circ\tilde\mu_1)^j\circ [\gamma] \circ 
         (\tilde\mu_1\circ\tilde\mu_2)^j,\\
  [(\kappa_1)^j\circ\kappa_2\circ\gamma] &=
         (\tilde\mu_2\circ\tilde\mu_1)^j\circ [\kappa_2\circ\gamma] \circ 
         (\tilde\mu_1\circ\tilde\mu_2)^j
 \end{align*}
 hold, where $[~]$ denotes the homotopy class and the meaning of the
 above equalities
 is explained in Remark \ref{rmk:covering}.
 On the other hand, let $\tau_0$ and $\tau_{\infty}$ be
 covering transformations corresponding to 
 counterclockwise simple closed loops $\gamma_0$ and
 $\gamma_\infty$ in $M_k$ around $(z,w)=(0,0)$ and
 $(\infty,\infty)$, respectively.
 {\rm (}The projections of $\gamma_0$ and $\gamma_\infty$ to
 the $z$-plane are both $(k+1)$-fold coverings of 
 simple closed loops in $\C$.{\rm)}

 Then
 \[
    \tau_0 = (\tilde\mu_3\circ\tilde\mu_2)^{2(k+1)},\qquad
    \tau_{\infty} = (\tilde\mu_1\circ\tilde\mu_3)^{2(k+1)}
 \]
 hold.
 Namely, $\{\tilde\mu_1,\tilde\mu_2,\tilde\mu_3\}$
 is a  generator of the fundamental group of $M_k$
 in the sense of Definition~\ref{def:generator} in the appendix.
\end{lemma}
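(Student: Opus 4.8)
The plan is to reduce the whole statement to the path-lifting correspondence recorded in Proposition~\ref{prop:covering} of the appendix. That correspondence says, roughly, that a composition $\tilde\mu_{i_1}\circ\dots\circ\tilde\mu_{i_n}$ of the lifted reflections is a covering transformation of $\widetilde M_k\to M_k$ precisely when the underlying composition $\mu_{i_1}\circ\dots\circ\mu_{i_n}$ equals the identity on $M_k$, and that in this case the covering transformation is the class of the loop $P_{\mu_{i_1}}*\mu_{i_1}(P_{\mu_{i_2}})*(\mu_{i_1}\circ\mu_{i_2})(P_{\mu_{i_3}})*\dots$, a concatenation of the connecting paths $P_{\mu_{i_\ell}}$ with their successive images under the $\mu_{i_\ell}$'s. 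So the first step is purely algebraic: from the explicit formulas \eqref{eq:reflections}, together with the identity $e^{2(k+1)i\lambda}=1$ that follows from $\lambda=\pi/(k+1)$ (see \eqref{eq:theta}), one verifies $\mu_3\circ\mu_2\circ\mu_3=\mu_1$ and $\mu_3\circ\mu_1\circ\mu_3=\mu_2$ on $M_k$; since $\mu_1^2=\mu_2^2=\mathrm{id}_{M_k}$ this gives $\mu_3\circ\mu_2\circ\mu_3\circ\mu_1=\mu_3\circ\mu_1\circ\mu_3\circ\mu_2=\mathrm{id}_{M_k}$, so the right-hand sides of the first two asserted equalities are genuine covering transformations.

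Next I would identify these covering transformations with $[\gamma]$ and $[\kappa_2\circ\gamma]$. By the correspondence above, $\tilde\mu_3\circ\tilde\mu_2\circ\tilde\mu_3\circ\tilde\mu_1$ is represented by the concatenation of $P_{\mu_3}$, $\mu_3(P_{\mu_2})$ and $(\mu_3\circ\mu_2)(P_{\mu_3})$ (the contribution of $P_{\mu_1}$ being trivial, as $o$ lies on the fixed-point set of $\mu_1$). Drawing these arcs from the data of Figure~\ref{fig:domain} and projecting to the $z$-plane, one reads off that the resulting loop is homotopic to $\gamma$ of Figure~\ref{fg:loop}; the identity for $[\kappa_2\circ\gamma]$ is obtained the same way, using $\mu_3\circ\mu_1\circ\mu_3\circ\mu_2=\mathrm{id}_{M_k}$ (equivalently, since $\kappa_2=\mu_3\circ\mu_1$, by the base-point bookkeeping explained in Remark~\ref{rmk:covering}). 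The two conjugation identities are then formal: by \eqref{eq:symms} we have $\kappa_1=\mu_2\circ\mu_1$, so the lift of $(\kappa_1)^j$ along the relevant path is $(\tilde\mu_2\circ\tilde\mu_1)^j$, and conjugating a loop by the automorphism $(\kappa_1)^j$ corresponds on the universal cover to conjugating the covering transformation by $(\tilde\mu_2\circ\tilde\mu_1)^j$ --- again part of Proposition~\ref{prop:covering}, so only bookkeeping is involved.

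For the end loops, I would argue locally. The automorphism $\mu_3\circ\mu_2$ fixes the puncture $(0,0)$, and read in the coordinate $\zeta_0$ of \eqref{eq:coords} (in which $z=\zeta_0^{k+1}$ and $w$ is, to leading order, a constant multiple of $\zeta_0$) it is a rotation about the origin; since its $z$-component is $z\mapsto-z$, its $2(k+1)$-fold iterate rotates the $z$-plane through $2(k+1)\pi$, hence winds $k+1$ times around $z=0$, which lifts to a simple loop around $(0,0)$ in $M_k$, namely $\tau_0$. The identical computation near $(\infty,\infty)$ with $\mu_1\circ\mu_3$ gives $(\tilde\mu_1\circ\tilde\mu_3)^{2(k+1)}=\tau_\infty$. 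Combining all of this with Lemma~\ref{lem:fundamental-group}, each generator $[(\kappa_1)^j\circ\gamma]$ and $[(\kappa_1)^j\circ\kappa_2\circ\gamma]$ of $\pi_1(M_k)$, as well as $\tau_0$ and $\tau_\infty$, is now written as a word in $\tilde\mu_1,\tilde\mu_2,\tilde\mu_3$, which is exactly what Definition~\ref{def:generator} requires; this yields the last assertion. The main obstacle is the geometric matching in the second step: recognizing the concatenation of the reflected paths $P_{\mu_j}$ as $\gamma$ (and, for the end loops, as $\gamma_0$ and $\gamma_\infty$) demands careful tracking of which sheet of the $(k+1)$-fold branched cover $z\colon M_k\to\C\cup\{\infty\}$ each arc lies on and of the monodromy of $w$ about $z=\pm1$ --- but once the pictures are drawn correctly it is a finite, if fiddly, check.
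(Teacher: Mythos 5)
Your overall route is the one the paper intends: the lemma is stated there with no written proof beyond the phrase ``by Proposition~\ref{prop:covering}'', so the argument really is (i) check algebraically that the displayed words in $\mu_1,\mu_2,\mu_3$ compose to $\identity_{M_k}$, and (ii) identify the explicit loop $P_{\mu_{i_1}}*(\mu_{i_1}\circ P_{\mu_{i_2}})*\cdots$ produced by Proposition~\ref{prop:covering} with the loops $\gamma$, $\kappa_2\circ\gamma$, $\gamma_0$, $\gamma_\infty$ using Figures~\ref{fg:loop} and \ref{fig:domain}. Your algebra is correct: with $e^{2(k+1)i\lambda}=1$ one indeed gets $\mu_3\circ\mu_2\circ\mu_3=\mu_1$ and $\mu_3\circ\mu_1\circ\mu_3=\mu_2$, so the right-hand sides of the first two identities are covering transformations, and the conjugation identities for $(\kappa_1)^j=(\mu_2\circ\mu_1)^j$ are the bookkeeping you describe. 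For $[\gamma]$ and $[\kappa_2\circ\gamma]$ you correctly accept that the remaining content is the sheet-by-sheet comparison of the concatenated paths with Figure~\ref{fg:loop}; that is exactly the (unwritten) check the paper relies on.

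The genuine gap is in your treatment of $\tau_0$ and $\tau_\infty$. There you replace the path-tracking by a purely local argument at the puncture: ``$\mu_3\circ\mu_2$ is a rotation in the coordinate $\zeta_0$ of \eqref{eq:coords}, so its $2(k+1)$-st power winds $k+1$ times around $z=0$, hence equals $\tau_0$.'' This does not determine which deck transformation $(\tilde\mu_3\circ\tilde\mu_2)^{2(k+1)}$ is, because the specific lift $\tilde\mu_3\circ\tilde\mu_2$ is normalized by the globally chosen paths $P_{\mu_2},P_{\mu_3}$ based at $o$, far from the puncture; different lifts of the same finite-order automorphism give, after raising to the power $2(k+1)$, different deck transformations (in general not even conjugate ones). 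A minimal illustration: on $\C^*$ with $\sigma(z)=-z$, the lifts of $\sigma$ to the universal cover are $u\mapsto u+\pi i+2\pi i n$, and their squares are the deck transformations $u\mapsto u+2\pi i(2n+1)$, i.e.\ winding number any odd integer --- the ``rotation by $\pi$, squared, winds once'' heuristic only selects the lift with $n=0$. The same ambiguity occurs here (already in the choice of whether $z\mapsto -z$ is a rotation by $+\pi$ or $-\pi$, equivalently whether the $\zeta_0$-rotation angle is $\lambda$ or $-(2k+1)\lambda$), and it also leaves the counterclockwise orientation of $\gamma_0$ unverified. So for $\tau_0$ and $\tau_\infty$ you must do the same thing you did for $[\gamma]$: write down the loop attached to $(\tilde\mu_3\circ\tilde\mu_2)^{2(k+1)}$ by Proposition~\ref{prop:covering} (a concatenation of $4(k+1)$ images of $P_{\mu_2},P_{\mu_3}$), follow it through the sheets of Figure~\ref{fig:domain}, and check it is homotopic to $\gamma_0$ (and similarly for $(\tilde\mu_1\circ\tilde\mu_3)^{2(k+1)}$ and $\gamma_\infty$); the local computation at the end can serve only as a consistency check on the winding of the projection, not as the proof.
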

For each $t\in\R$ and $b\in \SL_2\C$,
denote by 
\[
   F=F_{t,b}\colon{}\widetilde M_k\longrightarrow\SL_2\C
\]
the unique solution of the differential equation
\begin{equation}\label{eq:ode}
   dF F^{-1} = \Psi = t\Psi_0,\qquad 
    F(o)=b,
\end{equation}
where $\Psi_0$ 
is as in \eqref{eq:cmc-data-alpha}.
Then by Lemma~\ref{lem:symmetricity} and Theorem~\ref{thm:app-refl-mat}
in the appendix, 
there exists $\tilde \rho^{}_{j,t,b}\in\SL_2\C$ such that
\begin{equation}\label{eq:sym-rep}
     \overline{F_{t,b}\circ\tilde\mu_j}=
        \sigma_jF_{t,b}(\tilde \rho^{}_{j,t,b})^{-1}
        \qquad (j=1,2,3),
\end{equation}
where $\sigma_j$ ($j=1,2,3$) are the matrices 
given in \eqref{eq:sigma}. 
By Proposition \ref{prop:monodromy-refl},
the group generated by $\{\tilde \rho_{j,t,b}\}_{j=1,2,3}$
contains the subgroup $\rho_F(\pi_1(M_k))$
given in \eqref{eq:monodromy-repr}.

Then we have the following by Lemma~\ref{lem:covering-transf} and 
Theorem~\ref{thm:su11-monodromy}, which is the key to our construction:
\begin{proposition}\label{prop:main}
 If $\tilde \rho^{}_{j,t,b}\in\SU_{1,1}$ for $j=1,2,3$,
 then the \cmcone{} face 
 \[
     \hat f_{t,b}:= (F_{t,b}) e_3 (F_{t,b})^*
 \]
 corresponding to $F_{t,b}$ as in \eqref{eq:cmc-face}
 is well-defined on $M_k$.
\end{proposition}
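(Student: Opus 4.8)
The plan is to show that the CMC-1 face $\hat f_{t,b} = F_{t,b}e_3(F_{t,b})^*$ descends from $\widetilde M_k$ to $M_k$, i.e.\ that it is invariant under every covering transformation. By Lemma~\ref{lem:covering-transf}, the set $\{\tilde\mu_1,\tilde\mu_2,\tilde\mu_3\}$ generates the fundamental group of $M_k$ (in the sense of Definition~\ref{def:generator} in the appendix), so it suffices to prove invariance under a collection of products of the $\tilde\mu_j$ that generate $\pi_1(M_k)$ as covering transformations. First I would record the basic identity \eqref{eq:sym-rep}: for $j=1,2,3$ there is $\tilde\rho_{j,t,b}\in\SL_2\C$ with $\overline{F_{t,b}\circ\tilde\mu_j} = \sigma_j F_{t,b}(\tilde\rho_{j,t,b})^{-1}$, where $\sigma_j$ are the diagonal matrices of \eqref{eq:sigma}.

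The key observation is that each $\sigma_j$ lies in $\SU_{1,1}$: indeed $\sigma_j = \operatorname{diag}(\psi^{-m_j},\psi^{m_j})$ with $|\psi|=1$, so $\sigma_j e_3 \sigma_j^* = e_3$. Therefore, computing the effect of the reflection $\tilde\mu_j$ on $\hat f_{t,b}$,
\[
  \hat f_{t,b}\circ\tilde\mu_j
  = \overline{F_{t,b}\circ\tilde\mu_j}\,e_3\,\overline{F_{t,b}\circ\tilde\mu_j}^*
  = \sigma_j F_{t,b}(\tilde\rho_{j,t,b})^{-1} e_3 \bigl((\tilde\rho_{j,t,b})^{-1}\bigr)^* F_{t,b}^*\sigma_j^*,
\]
using that $X\mapsto \overline X$ fixes Hermitian matrices. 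If $\tilde\rho_{j,t,b}\in\SU_{1,1}$, then $(\tilde\rho_{j,t,b})^{-1}e_3\bigl((\tilde\rho_{j,t,b})^{-1}\bigr)^* = e_3$, and since $\sigma_j\in\SU_{1,1}$ fixes $e_3$ in the same way, $\hat f_{t,b}\circ\tilde\mu_j = F_{t,b}e_3 F_{t,b}^* = \hat f_{t,b}$; hence $\hat f_{t,b}$ is invariant under each $\tilde\mu_j$. Invariance under any composition $\tilde\mu_{i_1}\circ\cdots\circ\tilde\mu_{i_\ell}$, and in particular under all the generators of $\pi_1(M_k)$ listed in Lemma~\ref{lem:covering-transf} (the $[(\kappa_1)^j\circ\gamma]$, $[(\kappa_1)^j\circ\kappa_2\circ\gamma]$, and $\tau_0$, $\tau_\infty$), then follows immediately by iterating. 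Thus $\hat f_{t,b}$ factors through the quotient $\widetilde M_k\to M_k$, and by Proposition~\ref{prop:cmc-rep} the resulting map is a \cmcone{} face since $(G,Q_t)$ is admissible and, as one checks from \eqref{eq:cmc-data}, the secondary Gauss map $g$ is not identically of modulus $1$.

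The only subtlety — and the reason this is phrased as an implication with hypothesis $\tilde\rho_{j,t,b}\in\SU_{1,1}$ — is that we must invoke Proposition~\ref{prop:monodromy-refl} from the appendix to know that the monodromy representation $\rho_F(\pi_1(M_k))$ is contained in the group generated by the $\tilde\rho_{j,t,b}$, together with the fact (Theorem~\ref{thm:su11-monodromy} in the appendix) that $\SU_{1,1}$-valued monodromy is exactly what makes $Fe_3F^*$ well-defined on the base. So strictly speaking the argument is: the $\tilde\mu_j$-invariance of the pair $(G,Q_t)$ (Lemma~\ref{lem:symmetricity}) plus $\tilde\rho_{j,t,b}\in\SU_{1,1}$ forces all the relevant monodromy matrices into $\SU_{1,1}$, whence well-definedness on $M_k$. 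The main obstacle is not in this proposition itself — which is essentially a formal bookkeeping of how $\SU_{1,1}$ interacts with $e_3$ — but rather in the \emph{next} step of the paper, namely verifying the hypothesis $\tilde\rho_{j,t,b}\in\SU_{1,1}$ for a suitable choice of $b$ and all small $t$; that is the genuine period problem, and it is there that the derivative-of-period-matrices computation alluded to at the start of Section~\ref{sec:deform} will be needed.
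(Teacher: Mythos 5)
Your displayed computation does not work, and the step where it fails is precisely the first equality. Writing $H=F_{t,b}\circ\tilde\mu_j$, what you need is $\hat f_{t,b}\circ\tilde\mu_j=He_3H^*$, but what you compute with is $\overline H\,e_3\,\overline H^{\,*}=\overline{He_3H^*}$; complex conjugation does \emph{not} fix Hermitian matrices (for $X\in\Herm(2)$ one has $\overline X=\trans{X}$, and under \eqref{eq:herm-mink} the map $X\mapsto\overline X$ is the isometry $(x_0,x_1,x_2,x_3)\mapsto(x_0,x_1,-x_2,x_3)$, not the identity). Moreover, even after using $\tilde\rho^{}_{j,t,b}\in\SU_{1,1}$ you cannot discard the outer $\sigma_j$: since $\sigma_j$ does not commute with $F_{t,b}$, the relation \eqref{eq:sym-rep} only yields
\begin{equation*}
  \overline{\hat f_{t,b}\circ\tilde\mu_j}\;=\;\sigma_j\,\hat f_{t,b}\,\sigma_j^*,
\end{equation*}
i.e.\ $\hat f_{t,b}\circ\tilde\mu_j$ is an \emph{isometric copy} of $\hat f_{t,b}$ (a reflective symmetry of the surface), not $\hat f_{t,b}$ itself. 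This is as it must be: the $\tilde\mu_j$ are lifts of the reflections $\mu_j$ of $M_k$, not covering transformations, so invariance of $\hat f_{t,b}$ under each $\tilde\mu_j$ is neither expected nor needed for descent to $M_k$; what is needed is invariance under the deck transformations $\tau\in\pi_1(M_k)$, which are the \emph{even} products of the $\tilde\mu_j$ listed in Lemma~\ref{lem:covering-transf}. Your ``iterate over compositions'' step therefore rests on a false intermediate claim and does not by itself establish well-definedness.

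The correct mechanism is the one you relegate to your final ``strictly speaking'' paragraph, and it is in fact the paper's entire proof: by Lemma~\ref{lem:covering-transf} the lifts $\tilde\mu_1,\tilde\mu_2,\tilde\mu_3$ form a generator of $\pi_1(M_k)$ in the sense of Definition~\ref{def:generator}; for an even product representing $\tau\in\pi_1(M_k)$, Proposition~\ref{prop:monodromy-refl} together with Lemma~\ref{lem:repr-sigma} expresses $\rho^{}_{F_{t,b}}(\tau)$ as $\pm$ an alternating product $\overline{\tilde\rho^{}_{i_1}}\tilde\rho^{}_{i_2}\cdots$, which lies in $\SU_{1,1}$ because $\SU_{1,1}$ is stable under complex conjugation and contains $-\id$ (this is Theorem~\ref{thm:su11-monodromy}); then $F_{t,b}\circ\tau=F_{t,b}\rho^{}_{F_{t,b}}(\tau)^{-1}$ and $ae_3a^*=e_3$ for $a\in\SU_{1,1}$ give $\hat f_{t,b}\circ\tau=\hat f_{t,b}$, hence descent, and Proposition~\ref{prop:cmc-rep} gives the \cmcone{}-face property. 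So the repair is simply to delete the per-reflection invariance computation and argue only via the monodromy of covering transformations; your closing remark that the genuine difficulty lies in verifying the hypothesis $\tilde\rho^{}_{j,t,\iota_1(t)}\in\SU_{1,1}$ (the period problem treated in Claims~\ref{claim:1}--\ref{claim:3}) is accurate.
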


Here, we summarize properties of the matrices $\tilde\rho^{}_{j,t,b}$:
\begin{proposition}\label{prop:rho-prop}
 Suppose $a$, $b\in\SL_2\C$.
 Then the following hold{\rm :}
 \begin{enumerate}
  \item\label{item:rho-1}
       $\tilde\rho^{}_{j,t,ba}=\bar a^{-1}(\tilde \rho^{}_{j,t,b}) a$.
  \item\label{item:rho-2}
       $\tilde\rho^{}_{j,0,b}=\bar{b}^{-1}(\sigma_j) b$.
       In particular, $\tilde \rho^{}_{j,0,\id}=\sigma_j$.
 \end{enumerate}
\end{proposition}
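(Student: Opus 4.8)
The plan is to read off both identities directly from the defining relation~\eqref{eq:sym-rep} for $\tilde\rho_{j,t,b}$, using two elementary facts about the linear equation~\eqref{eq:ode}. First, since $\Psi=t\Psi_0$ enters~\eqref{eq:ode} only through the left logarithmic derivative $dF\,F^{-1}$, right multiplication of a solution by a constant matrix is again a solution; combined with uniqueness of the solution having a prescribed value at the base point $o$, this gives $F_{t,ba}=F_{t,b}\,a$ for every $a\in\SL_2\C$. Second, when $t=0$ the equation reduces to $dF\,F^{-1}=0$, so $F_{0,b}$ is the constant map on $\widetilde M_k$ with value $b$.

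To prove part~\ref{item:rho-1}, I would write~\eqref{eq:sym-rep} for the initial value $ba$, namely $\overline{F_{t,ba}\circ\tilde\mu_j}=\sigma_j F_{t,ba}(\tilde\rho_{j,t,ba})^{-1}$, and substitute $F_{t,ba}=F_{t,b}\,a$ on both sides. Since $\tilde\mu_j$ is a self-map of $\widetilde M_k$ and $a$ is constant, the left-hand side becomes $\overline{(F_{t,b}\circ\tilde\mu_j)\,a}=\overline{F_{t,b}\circ\tilde\mu_j}\,\bar a=\sigma_j F_{t,b}(\tilde\rho_{j,t,b})^{-1}\bar a$ by~\eqref{eq:sym-rep} for $b$, while the right-hand side becomes $\sigma_j F_{t,b}\,a\,(\tilde\rho_{j,t,ba})^{-1}$. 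Cancelling the invertible factors $\sigma_j$ and $F_{t,b}$ from the left and rearranging gives $\tilde\rho_{j,t,ba}=\bar a^{-1}(\tilde\rho_{j,t,b})\,a$.

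To prove part~\ref{item:rho-2}, I would set $t=0$, so that $F_{0,b}\equiv b$ on $\widetilde M_k$; then $F_{0,b}\circ\tilde\mu_j\equiv b$ and $\overline{F_{0,b}\circ\tilde\mu_j}\equiv\bar b$, independently of the choice of the connecting path $P_{\mu_j}$ and hence of the lift $\tilde\mu_j$. Feeding this into~\eqref{eq:sym-rep} gives $\bar b=\sigma_j\,b\,(\tilde\rho_{j,0,b})^{-1}$, which rearranges to $\tilde\rho_{j,0,b}=\bar b^{-1}(\sigma_j)\,b$; taking $b=\id$ and using that the identity matrix has real entries gives $\tilde\rho_{j,0,\id}=\sigma_j$. (Alternatively, once part~\ref{item:rho-1} is proved, the general case of part~\ref{item:rho-2} follows from the case $b=\id$.)

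I do not expect any real obstacle here: the proposition is essentially bookkeeping, recording how the reflection matrices $\tilde\rho_{j,t,b}$ transform under a change of initial condition in~\eqref{eq:ode}. The only two points requiring a little care are the interaction between the anti-holomorphic involutions $\tilde\mu_j$ and entrywise complex conjugation --- which is exactly what~\eqref{eq:sym-rep}, that is, Theorem~\ref{thm:app-refl-mat} in the appendix, is set up to handle --- and the appeal to uniqueness of solutions of~\eqref{eq:ode}, which underlies both $F_{t,ba}=F_{t,b}\,a$ and $F_{0,b}\equiv b$. Once these are in place, the identities follow purely by cancellation in $\SL_2\C$.
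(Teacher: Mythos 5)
Your proposal is correct and follows essentially the same route as the paper: both rest on the identities $F_{t,ba}=F_{t,b}\,a$ (from uniqueness of solutions of \eqref{eq:ode}) and $F_{0,b}\equiv b$, fed into the defining relation \eqref{eq:sym-rep} and simplified by cancellation in $\SL_2\C$. Your write-up merely spells out the substitution and cancellation steps that the paper leaves implicit.
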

\begin{proof}
 Since $F_{t,ba}= (F_{t,b})a$,
 \ref{item:rho-1} holds.
 When  $t=0$, $F_{0,b}=b$.
 Then 
 \[
   \bar{b} = \overline {F_{0,b}}=
   \overline {F_{0,b}\circ\tilde\mu_j}=
   \sigma_j F_{0,b}(\rho^{}_j)^{-1}
         =\sigma_j b(\rho^{}_j)^{-1},
 \]
 where $\rho^{}_j=\tilde\rho^{}_{j,0,b}$, which implies \ref{item:rho-2}.
\end{proof}
\subsection{Existence of $\hat f_k$}
\label{sub:cmc-f-k}
The existence part of Proposition~\ref{prop:deform-2}
is a straightforward conclusion of the following proposition,
because of Proposition~\ref{prop:main}:
\begin{proposition}\label{prop:monodromy}
 For a sufficiently small positive number $\varepsilon$,
 there exists a real analytic family $\iota(t)$ of  matrices in $\SL_2\C$
 such that $\tilde\rho^{}_{j,t,\iota(t)}\in\SU_{1,1}$ $(j=1,2,3)$ holds
 if $0<|t|<\varepsilon$.
\end{proposition}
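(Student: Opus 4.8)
The plan is to realize the three matrices $\tilde\rho^{}_{j,t,\iota(t)}$ as the values of a smooth map into $\SL_2\C$, and to force all three into $\SU_{1,1}$ simultaneously by choosing the initial condition $\iota(t)$ via an implicit-function argument. Because $\tilde\rho^{}_{1,t,b}$ corresponds to the constant path $P_{\mu_1}$ (so $\tilde\mu_1$ is the lift with no monodromy), I expect $\tilde\rho^{}_{1,t,b}$ to remain equal to $\sigma_1 = \id$ — or, more precisely, after a correct normalization of $\iota(t)$, to be handled trivially; the real content lies in $j=2$ and $j=3$. First I would record, using Proposition~\ref{prop:rho-prop}\,\ref{item:rho-2}, that at $t=0$ with $b=\id$ one has $\tilde\rho^{}_{j,0,\id}=\sigma_j$, and each $\sigma_j$ in \eqref{eq:sigma} is \emph{already} in $\SU_{1,1}$ (they are diagonal with entries $\psi^{\mp 1}$, $\psi^{\mp 2}$ of modulus $1$, hence unitary, and a quick check shows they lie in the appropriate real form since $\psi$ is a unimodular complex number). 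So the deformation starts from a point where the conclusion holds, and the task is to show the $\SU_{1,1}$-condition persists for small $t$ after adjusting $\iota(t)$.

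The key step is an infinitesimal (linearized) analysis à la Lemma~\ref{lem:infinitesimal-representation}. Writing $\tilde\rho^{}_{j,t,b}$ as a smooth function of $(t,b)$ and differentiating \eqref{eq:sym-rep} at $t=0$, $b=\id$, I would compute $\dot{\tilde\rho}_j := \frac{d}{dt}\big|_{t=0}\tilde\rho^{}_{j,t,\id}$ (equivalently $\frac{d}{dt}|_{t=0}(\tilde\rho^{}_{j,t,\id})^{-1}$) in terms of the integrals $\int \Psi_0$ along the reflection paths $P_{\mu_j}$, exactly as in \eqref{eq:rho-deform}. The Lie algebra $\sl(2,\C)$ decomposes, relative to the real form $\mathfrak{su}_{1,1}$, as $\mathfrak{su}_{1,1}\oplus i\,\mathfrak{su}_{1,1}$ (a real vector-space direct sum, since $\mathfrak{su}_{1,1}$ is a real form). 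Thus for each $j$, the obstruction to $\tilde\rho^{}_{j,t,b}$ being in $\SU_{1,1}$ to first order is the projection of the appropriate logarithmic derivative onto $i\,\mathfrak{su}_{1,1}$; call this obstruction map $\Phi_j(t,b)\in i\,\mathfrak{su}_{1,1}$. I would then set up the combined map $\Phi=(\Phi_1,\Phi_2,\Phi_3)$ and seek $b=\iota(t)$ with $\Phi(t,\iota(t))=0$. Using Proposition~\ref{prop:rho-prop}\,\ref{item:rho-1}, which says $\tilde\rho^{}_{j,t,ba}=\bar a^{-1}\tilde\rho^{}_{j,t,b}\,a$, one sees how the $b$-dependence enters by conjugation; the derivative in $b$ of $\Phi$ at $(0,\id)$ is governed by the map $X\mapsto (\sigma_j^{-1} X \sigma_j - X)$-type expressions (projected to $i\,\mathfrak{su}_{1,1}$), and I would need this derivative to be surjective onto the target so that the implicit function theorem applies and produces a real analytic family $\iota(t)$.

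The main obstacle — and the reason the paper flags that $f_k$ lacks a ``non-degenerate period problem'' — is precisely that this $b$-derivative is \emph{not} surjective: the naive implicit-function setup is underdetermined or degenerate, because conjugation by a single $b$ cannot independently correct three obstructions sitting in three copies of $i\,\mathfrak{su}_{1,1}$ (dimension count: $\dim_{\R}\SL_2\C = 6$ parameters in $b$, but three obstructions would nominally need $3\times 3 = 9$). The resolution, following the refined reflection method of \cite{RUY}, must exploit the symmetries: the three reflections $\tilde\mu_1,\tilde\mu_2,\tilde\mu_3$ are not independent — they satisfy relations (visible in Lemma~\ref{lem:covering-transf}, where $\tau_0,\tau_\infty$ and the loops $\gamma$, $\kappa_2\circ\gamma$ are expressed as words in the $\tilde\mu_j$), and the $\SU_{1,1}$-conditions on a well-chosen pair of reflections force the third, or reduce the number of genuinely free obstruction components. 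Concretely I would: (i) fix the discrete/combinatorial part by using that $\sigma_1=\id$ automatically and choosing $\iota(t)$ diagonal (or in a suitable one-parameter subgroup) to preserve the form of $\sigma_2,\sigma_3$; (ii) show that with this ansatz the conditions $\tilde\rho^{}_{2,t,\iota(t)}\in\SU_{1,1}$ and $\tilde\rho^{}_{3,t,\iota(t)}\in\SU_{1,1}$ reduce to finitely many scalar real-analytic equations; (iii) verify, by the first-order computation above together with the reality/parity properties of $Q$ (noted in Remark~\ref{eq:re-Q}, namely $Q$ real on $\R\cup i\R$ and pure imaginary on $|z|=1$, which make certain period integrals real or purely imaginary), that the relevant Jacobian is nonzero — this is the delicate ``derivative of the period matrices'' computation the paper alludes to — and then apply the implicit function theorem to obtain $\iota(t)$ real analytic in $t$ with $\iota(0)=\id$. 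The hardest part will be (iii): controlling the derivative of the monodromy/period data precisely enough to certify nondegeneracy, since unlike the $H^3$ case of \cite{RUY} the period problem here is degenerate and one must extract the nondegeneracy from a second-order or symmetry-reduced analysis rather than from the leading term alone.
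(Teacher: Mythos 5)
Your overall frame (start from $\tilde\rho^{}_{j,0,\id}=\sigma_j\in\SU_{1,1}$, differentiate the monodromy in $t$ via Lemma~\ref{lem:infinitesimal-representation}, exploit the relations among $\tilde\mu_1,\tilde\mu_2,\tilde\mu_3$) points in the right direction, but the core of your argument --- an implicit function theorem applied to obstruction maps with a ``nonzero Jacobian'' --- is exactly what fails here, and the paper's proof is structurally different. First, for $j=1,2$ no transversality argument is needed at all: $\tilde\rho^{}_{1,t,b}=\id$ for any real $b$ by evaluating \eqref{eq:sym-rep} at $o$, and for $j=2$ the finite-order relation $(\mu_2\circ\mu_1)^{k+1}=\identity_{M_k}$ forces $(\tilde\rho^{}_{2,t,\id})^{k+1}=(-1)^k\id$, so its eigenvalues are frozen at $\psi^{\pm2}$ for \emph{all} $t$; since by Theorem~\ref{thm:app-refl-mat} the matrix has the constrained form \eqref{eq:rho-form}, one can write down an explicit real matrix $\iota(t)$ conjugating it exactly to $\sigma_2$ (Claim~\ref{claim:2}). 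You never use this eigenvalue rigidity, and without it your ``finitely many scalar equations'' for $j=2$ have no reason to be solvable. Second, after this normalization the remaining condition on $\tilde\rho^{}_{3,t,\iota(t)}$ is not an equation to be solved but an \emph{open inequality}: in the form \eqref{eq:rho-three}, a further diagonal change $\iota_1(t)=\iota(t)\diag\bigl(s(t),1/s(t)\bigr)$ with $s=\sqrt[4]{-r_1/r_2}$ puts it into $\SU_{1,1}$ precisely when $r_1(t)r_2(t)<0$, i.e.\ $|q(t)|>1$ --- no implicit function theorem enters at this stage either.

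The genuine gap is in certifying that inequality. Since $|q(0)|=1$ and, as the paper computes, $\frac{d}{dt}\big|_{t=0}|q|^2=0$, the linearization of your obstruction map in $t$ vanishes: any first-order/IFT-style nondegeneracy claim is false, which is the precise meaning of the ``degenerate period problem'' you quote. The paper's replacement (Claim~\ref{claim:3}) is a trace-comparison argument you do not supply: the end monodromies are computed in two ways, analytically via the local exponents $\nu_0(t),\nu_\infty(t)$ of the secondary Gauss map obtained from the Schwarzian equation at the two ends (Lemma~\ref{lem:end-monodromy}), and algebraically via the word expressions $\rho^{}_{F_t}(\tau_0)=(-1)^k(\overline{\rho^{}_3}\rho^{}_2)^{2(k+1)}$, $\rho^{}_{F_t}(\tau_\infty)=(-1)^k(\overline{\rho^{}_1}\rho^{}_3)^{2(k+1)}$ from Lemma~\ref{lem:covering-transf}; the sign ambiguity in matching $\cos\pi\nu_0=\cos(2k+2)\theta_0$ is resolved using the derivative of the monodromy (Lemma~\ref{lem:rho-der}, which is where your Lemma~\ref{lem:infinitesimal-representation} actually gets used), and then $\theta_0(t)=\pi\nu_0(t)/(2k+2)$, $\theta_\infty(t)=\pi\nu_\infty(t)/(2k+2)$ determine $q(t)$ explicitly, yielding $\frac{d^2}{dt^2}\big|_{t=0}|q|^2=\frac{4(k+1)\pi}{k}\tan\frac{\pi k}{2k+2}>0$ and hence $|q(t)|>1$ for small $t\neq0$. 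Your closing sentence gestures at ``a second-order or symmetry-reduced analysis,'' but the proposal neither identifies the quantity whose second derivative must be computed, nor the mechanism (end exponents plus Chebyshev-type trace identities) that makes it computable, so as written the proof does not go through.
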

The proof is divided into three steps
(Claims~\ref{claim:1}--\ref{claim:3}):
\begin{claim}\label{claim:1}
 For $t\in \R$ and for any real matrix $b\in\SL_2\R$, 
 it holds that
 $\tilde\rho^{}_{1,t,b}=\id$.
\end{claim}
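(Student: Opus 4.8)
The plan is to trace through the definitions and exploit the fact that $\mu_1$ is the simplest reflection, being complex conjugation $(z,w) \mapsto (\bar z, \bar w)$, and that its associated path $P_{\mu_1}$ is the constant path at the base point $o$. First I would unwind \eqref{eq:sym-rep} for $j=1$: since $\sigma_1 = \id$ by \eqref{eq:sigma}, the defining relation reads $\overline{F_{t,b}\circ\tilde\mu_1} = F_{t,b}(\tilde\rho^{}_{1,t,b})^{-1}$. Evaluating both sides at the base point $o$ (which is fixed by $\tilde\mu_1$ since $P_{\mu_1}$ is constant) gives $\overline{F_{t,b}(o)} = F_{t,b}(o)(\tilde\rho^{}_{1,t,b})^{-1}$, i.e. $\bar b = b(\tilde\rho^{}_{1,t,b})^{-1}$, hence $\tilde\rho^{}_{1,t,b} = b^{-1}\bar b$. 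When $b \in \SL_2\R$ this is just $b^{-1}b = \id$, which is the claim.

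To make this rigorous one must check that $G\mapsto \overline{F_{t,b}\circ\tilde\mu_1}$ really does satisfy the same ODE as $G\mapsto F_{t,b}(\tilde\rho^{}_{1,t,b})^{-1}$ for a \emph{constant} matrix $\tilde\rho^{}_{1,t,b}$, so that the two agree everywhere once they agree at $o$; this is exactly the content of Theorem~\ref{thm:app-refl-mat} in the appendix applied with $j=1$, using the $\mu_1$-invariance of $(G,Q_t)$ from Lemma~\ref{lem:symmetricity} (with $\sigma_1 = \id$). So the steps are: (1) invoke Lemma~\ref{lem:symmetricity} to know $(G,Q_t)$ is $\mu_1$-invariant; (2) invoke Theorem~\ref{thm:app-refl-mat} to get \eqref{eq:sym-rep} for $j=1$ with a well-defined constant $\tilde\rho^{}_{1,t,b}\in\SL_2\C$; (3) evaluate at $o$, using that $\tilde\mu_1$ fixes $o$ because $P_{\mu_1}$ is the constant path, to extract $\tilde\rho^{}_{1,t,b} = b^{-1}\bar b$; (4) specialize to $b\in\SL_2\R$, where $\bar b = b$, to conclude $\tilde\rho^{}_{1,t,b} = \id$.

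Alternatively, and perhaps more cleanly, one can combine Proposition~\ref{prop:rho-prop}\,\ref{item:rho-1} with part \ref{item:rho-2}: taking $t$ arbitrary but using \ref{item:rho-1} with the decomposition $b = \id\cdot b$ gives $\tilde\rho^{}_{1,t,b} = \bar b^{-1}\tilde\rho^{}_{1,t,\id}\, b$, so it suffices to show $\tilde\rho^{}_{1,t,\id} = \id$ for all $t$ and then observe that for real $b$ the conjugating factors $\bar b^{-1}$ and $b$ cancel since $\bar b = b$. The vanishing $\tilde\rho^{}_{1,t,\id} = \id$ for all $t$ (not just $t=0$) is the one genuinely new input: it follows because with $b = \id \in \SL_2\R$ the relation $\tilde\rho^{}_{1,t,\id} = b^{-1}\bar b = \id$ holds identically in $t$ — the point being that the matrix $\sigma_1$ is trivial and $o$ is a fixed point of $\tilde\mu_1$, so no monodromy is picked up regardless of $t$.

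I do not expect a serious obstacle here; this is essentially a bookkeeping lemma and the ``hard part'' is merely making sure the appendix machinery ($P_{\mu_1}$ being constant, $\tilde\mu_1$ fixing $o$, and the precise form of Theorem~\ref{thm:app-refl-mat}) is cited correctly. The only subtlety worth stating explicitly in the write-up is why $\tilde\mu_1(o) = o$: this is because $o$ lies on the real-positive locus $\{(t,w): 1<t<\infty,\ \arg w = 0\}$ from \eqref{eq:base-point}, which is pointwise fixed by $\mu_1(z,w) = (\bar z,\bar w)$, and the lift is taken along the constant path. Everything else is immediate from the identities $\sigma_1 = \id$ and $\bar b = b$ for real $b$.
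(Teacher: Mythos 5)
Your proposal is correct and is essentially the paper's own proof: the paper simply substitutes the base point $o$ into \eqref{eq:sym-rep} for $j=1$, using $\sigma_1=\id$, $\tilde\mu_1(o)=o$ and $F_{t,b}(o)=b$, exactly as you do (your extra checks via Lemma~\ref{lem:symmetricity}, Theorem~\ref{thm:app-refl-mat} and the choice of base point in \eqref{eq:base-point} are just the implicit justification of \eqref{eq:sym-rep}). One trivial slip: solving $\bar b=b(\tilde\rho^{}_{1,t,b})^{-1}$ gives $\tilde\rho^{}_{1,t,b}=\bar b^{-1}b$, not $b^{-1}\bar b$, but for $b\in\SL_2\R$ both equal $\id$, so the conclusion is unaffected.
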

 In fact, 
 substituting $o$ into \eqref{eq:sym-rep} for $j=1$, we have 
 Claim~\ref{claim:1}.%
\begin{claim}\label{claim:2}
 For sufficiently small $\varepsilon>0$, there exists a real analytic
 family 
 $\{\iota(t)\,;\,|t|<\varepsilon\}$
 of matrices in $\SL_2\R$ such that $\iota(0)=\id$ and
 $\tilde\rho^{}_{2,t,\iota(t)}=\sigma_2$.
\end{claim}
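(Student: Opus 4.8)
The plan is to continue the solution $\iota(0)=\id$ of $\tilde\rho_{2,t,b}=\sigma_2$ as $t$ varies near $0$. The second part of Proposition~\ref{prop:rho-prop} gives $\tilde\rho_{2,0,\id}=\sigma_2$, while the first part gives $\tilde\rho_{2,t,b}=b^{-1}\tilde\rho_{2,t,\id}\,b$ for every $b\in\SL_2\R$. Thus it suffices to conjugate the matrix $\tilde\rho_{2,t,\id}$ into $\sigma_2$ by an element of $\SL_2\R$, and the crucial point is that $\tilde\rho_{2,t,\id}$ stays in the $\SL_2\R$-conjugacy class of $\sigma_2$ for all small $|t|$. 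Once this is known the conclusion follows by a soft argument: $\sigma_2$ has distinct eigenvalues (since $\psi^4=e^{2ki\lambda}\neq 1$ for every $k\ge 1$), so its centralizer in $\SL_2\R$ is the $1$-parameter subgroup $\{\diag(s,s^{-1})\}$, the orbit map $b\mapsto b^{-1}\sigma_2 b$ onto that conjugacy class is a submersion, and composing a real-analytic local section of it (normalized to send $\sigma_2\mapsto\id$) with the real-analytic matrix $\tilde\rho_{2,t,\id}$ (which is built from the solution of the linear system \eqref{eq:ode}, whose coefficients are analytic in $t$) yields the required family $\iota(t)\in\SL_2\R$ with $\iota(0)=\id$.

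To prove that $\tilde\rho:=\tilde\rho_{2,t,\id}$ remains $\SL_2\R$-conjugate to $\sigma_2$ I would derive two algebraic constraints. First, because $\tilde\mu_2$ is an orientation-reversing involution of $\widetilde M_k$ and $\overline{\sigma_2}=\sigma_2^{-1}$ (as $\overline\psi=\psi^{-1}$), applying the relation \eqref{eq:sym-rep} for $j=2$ twice yields $\tilde\rho\,\overline{\tilde\rho}=\id$; in particular $\trace\tilde\rho$ is real and the eigenvalues of $\tilde\rho$ are reciprocal. Second --- and here one really uses period information --- consider the automorphism $\kappa_1=\mu_2\circ\mu_1$ of \eqref{eq:symms} together with its lift $\tilde\kappa_1:=\tilde\mu_2\circ\tilde\mu_1$ to $\widetilde M_k$: a direct computation from \eqref{eq:cmc-data-alpha} gives $\kappa_1^{*}\Psi_0=\sigma_2^{-1}\Psi_0\sigma_2$, so $\sigma_2(F\circ\tilde\kappa_1)\sigma_2^{-1}$ solves the same system \eqref{eq:ode} as $F=F_{t,\id}$ and hence differs from $F$ by a right constant factor. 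Using Claim~\ref{claim:1} ($\tilde\rho_{1,t,b}=\id$ for $b\in\SL_2\R$) and $\overline{\sigma_2}=\sigma_2^{-1}$, that factor is $\overline{\tilde\rho}^{-1}$, i.e. $F\circ\tilde\kappa_1=\sigma_2^{-1}F\,\overline{\tilde\rho}^{-1}$; iterating $k+1$ times and using $\sigma_2^{k+1}=(-1)^k\id$ together with $(\tilde\mu_2\circ\tilde\mu_1)^{k+1}=\mathrm{id}$ on $\widetilde M_k$ (which holds because $\kappa_1$ has order $k+1$ and fixes the interior points $(\pm1,0)$ of $M_k$), one concludes $\tilde\rho^{\,k+1}=(-1)^k\id=\sigma_2^{k+1}$.

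These two constraints close the argument. From $\tilde\rho^{\,k+1}=(-1)^k\id$ the eigenvalues of $\tilde\rho$ lie in the finite set of $(k+1)$-th roots of $(-1)^k$; since that set is discrete and $\tilde\rho$ depends continuously on $t$ with $\tilde\rho|_{t=0}=\sigma_2$, for $|t|$ small they must be exactly $\psi^{\pm 2}$, the eigenvalues of $\sigma_2$. Finally, any $X\in\SL_2\C$ with eigenvalues $\psi^{\pm 2}$ satisfying $X\overline X=\id$ is $\SL_2\R$-conjugate to $\sigma_2$: writing $X=P\diag(\psi^2,\psi^{-2})P^{-1}$, the relation $X\overline X=\id$ forces $P^{-1}\overline P$ to be diagonal, and then a short linear-algebra computation shows that, after multiplying $P$ on the right by a suitable diagonal matrix, $P$ may be taken real, so $X$ is conjugate to $\diag(\psi^2,\psi^{-2})$, and hence to $\sigma_2$, by an element of $\SL_2\R$. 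Therefore $\tilde\rho_{2,t,\id}$ lies in the $\SL_2\R$-conjugacy class of $\sigma_2$, as required.

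The main obstacle is the second constraint: this is exactly the ``degenerate period problem'' alluded to above, since the naive transversality of \cite{RUY} fails here --- the differential of $b\mapsto b^{-1}\sigma_2 b$ at $b=\id$ has a one-dimensional kernel --- so one cannot simply apply the implicit function theorem on the full group. One must first extract the exact relation $\tilde\rho_{2,t,\id}^{\,k+1}=\sigma_2^{k+1}$ from the $\kappa_1$-symmetry and the reflection machinery of the appendix, and only then is the (section-of-the-orbit-map, essentially slice-wise) implicit function argument available.
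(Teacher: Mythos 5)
Your proposal is correct and follows essentially the same route as the paper: the heart of both arguments is the relation $\tilde\rho_{2,t,\id}^{\,k+1}=(-1)^k\id$, coming from $(\tilde\mu_2\circ\tilde\mu_1)^{k+1}=\identity_{\widetilde M_k}$ (the associated loop encircles the interior point $(1,0)$ of $M_k$), which together with $\tilde\rho_{2,t,\id}\,\overline{\tilde\rho_{2,t,\id}}=\id$ and continuity pins the eigenvalues to $\psi^{\pm2}$ for small $|t|$. The only difference is in the finishing step: where you argue abstractly that such a matrix lies in the $\SL_2\R$-conjugacy class of $\sigma_2$ and obtain $\iota(t)$ from a real-analytic local section of the orbit map, the paper simply writes the conjugating matrix $\iota(t)$ explicitly in terms of the entries of $\tilde\rho_{2,t,\id}$, which makes the real-analyticity and the normalization $\iota(0)=\id$ immediate.
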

\begin{proof}
 Since $(\mu_2\circ\mu_1)^{k+1}=\identity_{M_k}$ holds, where
 $\identity_{M_k}$ is the identity map on $M_k$,
 then by Proposition~\ref{prop:covering} in the appendix,
 $(\tilde\mu_2\circ\tilde\mu_1)^{k+1}$ is a covering transformation
 on $\widetilde M_k$.
 In fact, such a covering transformation corresponds to the 
 counterclockwise simple closed loop on $M_k$
 surrounding $(z,w)=(1,0)$, i.e., 
 $(\tilde\mu_2\circ\tilde\mu_1)^{k+1}=\identity_{\widetilde M_k}$.
 Then by Proposition~\ref{prop:monodromy-refl}, 
 \[
    e_0=\rho^{}_F(\identity_{\widetilde M_k})
      = \bigl(\overline{\sigma_2}\sigma_1\bigr)^{k+1}
         \bigl(\overline{\rho^{}_2}\rho^{}_1\bigr)^{k+1}
      = (-1)^k \bigl(\overline{\rho^{}_2}\rho^{}_1\bigr)^{k+1},
 \]
 where $\rho^{}_j = \tilde\rho^{}_{j,t,\id}$.
 Hence we have
 \begin{equation}\label{eq:rho-2-eigen}
 (\rho^{}_2)^{k+1} = (-1)^k\id.
 \end{equation}
 Since  
 $\tilde\rho^{}_{2,t,\id}$ tends to
 $\tilde\rho^{}_{2,0,\id}=\sigma_2$
 as $t\to 0$ (see \ref{item:rho-2} of Proposition \ref{prop:rho-prop}),
 the equality 
 \eqref{eq:rho-2-eigen} implies that
 the eigenvalues 
 of $\tilde\rho^{}_{2,t,\id}$ are $\{\psi^{\pm 2}\}$,
 where $\psi$ is given in \eqref{eq:psi}.
 Hence
 \[
     \trace \tilde\rho^{}_{2,t,\id} = 2 \cos (k\lambda)
    \qquad \left(\lambda=\frac{\pi}{k+1}\right).
 \]
 Then by \eqref{eq:rho-form}, one can write
 \begin{multline*}
     \tilde\rho^{}_{2,t,\id}=
     \begin{pmatrix}
        \cos (k\lambda) - i u(t) & i s_1(t) \\
         i s_2(t) & \cos (k\lambda)+ i u(t)
     \end{pmatrix}\\
       \bigl((\cos k\lambda)^2 + u(t) ^2 + s_1(t)s_2(t) =1)\bigr),
 \end{multline*}
 where $u=u(t)$, $s_j=s_j(t)$ ($j=1,2$) are real analytic functions
 in $t$. 
 Since $\tilde\rho_{2,0,\id}=\sigma_2$, we have that
 \begin{equation}\label{eq:ust-init}
     u(0) = \sin (k\lambda), \qquad 
     s_1(0)=s_2(0)=0.
 \end{equation}
 Let 
 \[
   \iota(t) :=
    \frac{1}{\sqrt{2\bigl((\sin k\lambda)^2 + u(t)\sin(k\lambda)\bigr)}}
    \begin{pmatrix}
       u(t) + \sin(k\lambda) & s_1(t) \\
       -s_2(t) & u(t)+\sin(k\lambda)
    \end{pmatrix}.
 \]
 By \eqref{eq:ust-init}, $(\sin k\lambda)^2+u(t)\sin(k\lambda)>0$
 for sufficient  small $t$, and hence $\iota(t)$ is a real matrix.
 It can be easily checked that $\iota(t)^{-1}\tilde \rho_{2,t,\id} \iota(t)=
 \sigma_2$. Then  \ref{item:rho-1}
 of Proposition \ref{prop:rho-prop} yields the assertion. 
\end{proof}
\begin{claim}\label{claim:3}
 For $\iota(t)$ in Claim~\ref{claim:2}, 
 it holds that
 \begin{equation}\label{eq:rho-three}
    \tilde\rho^{}_{3,t,\iota(t)}=
     \begin{pmatrix}
       q(t) & i r_1(t) \\
       i r_2(t) & \overline{q(t)}
     \end{pmatrix}
     \qquad (
      |q(t)|^2 + r_1(t)r_2(t)=1
     ),
 \end{equation}
 where $q(t)$ is a complex-valued real-analytic function
 and $r_j(t)$ $(j=1,2)$ are real-valued real-analytic
 functions in $t$ 
 such that
 \begin{equation}\label{eq:qbc-init}
  q(0) = \psi^{-1},\qquad r_1(0)=r_2(0)=0.
 \end{equation}
 Moreover, for $t\neq 0$ with sufficiently small absolute value,
 \begin{equation}\label{eq:negative}
  r_1(t)r_2(t)<0\qquad \mbox{or equivalently},\quad  |q(t)|>1
 \end{equation} 
 holds.
\end{claim}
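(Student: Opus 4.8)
The plan is to establish the three parts of the claim in turn: first the matrix shape of $\tilde\rho^{}_{3,t,\iota(t)}$ together with its initial values, then real-analyticity in $t$, and finally the inequality \eqref{eq:negative}; of these only the last requires real work. For the shape, since $\mu_3^2=\identity_{M_k}$ and the lift $\tilde\mu_3$ is an involution of $\widetilde M_k$, I would apply the reflection relation \eqref{eq:sym-rep} for $j=3$ twice; using $\sigma_3\overline{\sigma_3}=\id$ (which holds because $|\psi|=1$) this gives $\tilde\rho^{}_{3,t,\iota(t)}\,\overline{\tilde\rho^{}_{3,t,\iota(t)}}=\id$, and combined with $\det=1$ it forces the form \eqref{eq:rho-three} with $|q|^2+r_1r_2=1$ --- this is essentially Theorem~\ref{thm:app-refl-mat}. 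The initial values \eqref{eq:qbc-init} then follow from Proposition~\ref{prop:rho-prop}\,\ref{item:rho-2} and $\iota(0)=\id$, since $\tilde\rho^{}_{3,0,\iota(0)}=\tilde\rho^{}_{3,0,\id}=\sigma_3=\diag(\psi^{-1},\psi)$. Real-analyticity of $q,r_1,r_2$ is immediate: $F_{t,b}$ is analytic in $(t,b)$ because the coefficient $\Psi=t\Psi_0$ in \eqref{eq:ode} is linear in $t$, $\iota(t)$ is real-analytic by Claim~\ref{claim:2}, and $\tilde\rho^{}_{3,t,\iota(t)}$ is obtained by evaluating \eqref{eq:sym-rep} at $o$. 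Finally, the ``or equivalently'' in \eqref{eq:negative} is just the identity $r_1r_2=1-|q|^2$.

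To prove \eqref{eq:negative}, note that because $r_1(0)=r_2(0)=0$, Taylor expansion gives $r_1(t)r_2(t)=\dot r_1(0)\,\dot r_2(0)\,t^2+O(t^3)$ (dots denoting $d/dt|_{t=0}$), so it suffices to show that $\dot r_1(0)$ and $\dot r_2(0)$ are nonzero and of opposite sign. (Were one of them to vanish one would pass to the next nonvanishing order, but this is not expected.) I would compute them by differentiating \eqref{eq:sym-rep} for $j=3$ at $t=0$, using $F_{0,\id}\equiv\id$ (since then $\Psi=0$), $\iota(0)=\id$, and the fact --- underlying Lemma~\ref{lem:infinitesimal-representation} --- that the $t=0$ derivative of the $t\Psi_0$-holonomy along $P_{\mu_3}$ is $\int_{P_{\mu_3}}\Psi_0$; together with the conjugation identity $\tilde\rho^{}_{3,t,\iota(t)}=\iota(t)^{-1}\tilde\rho^{}_{3,t,\id}\,\iota(t)$ (Proposition~\ref{prop:rho-prop}\,\ref{item:rho-1}, using $\iota(t)\in\SL_2\R$) this yields
\[
   \frac{d}{dt}\Big|_{t=0}\tilde\rho^{}_{3,t,\iota(t)}
     = [\sigma_3,\dot\iota]-\Bigl(\,\overline{\int_{P_{\mu_3}}\Psi_0}\,\Bigr)\sigma_3 .
\]
Reading off the $(1,2)$- and $(2,1)$-entries (and using $\psi^{-1}-\psi=-2i\sin(k\lambda/2)$) expresses $\dot r_1(0),\dot r_2(0)$ through $\dot\iota_{12},\dot\iota_{21}$ and the off-diagonal entries of $\int_{P_{\mu_3}}\Psi_0$, which by \eqref{eq:cmc-data-alpha} and $Q/dG=\eta$ equal $-\tfrac1{c_k}\int_{P_{\mu_3}}G^2\eta$ and $\tfrac1{c_k}\int_{P_{\mu_3}}\eta$. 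The matrix $\dot\iota$ is off-diagonal (its two diagonal entries coincide by the formula for $\iota(t)$ in Claim~\ref{claim:2} and hence vanish, since $\iota\in\SL_2\R$ and $\iota(0)=\id$), and its off-diagonal entries are built, via Claim~\ref{claim:2}, from the first-order data of $\tilde\rho^{}_{2,t,\id}$, hence from $\int_{P_{\mu_2}}G^2\eta$ and $\int_{P_{\mu_2}}\eta$.

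Everything thus comes down to evaluating the four integrals $\int_{P_{\mu_j}}G^2\eta$ and $\int_{P_{\mu_j}}\eta$ for $j=2,3$. I would homotope $P_{\mu_2}$ and $P_{\mu_3}$ (rel endpoints) into paths assembled from the explicit curves $\gamma_1,\gamma_2$ of Section~\ref{sub:period} and their reflections, and --- exactly as there --- add the exact form $c_k^2\tfrac{k+1}{k}d(w/z)$ to $G^2\eta$; each integral then becomes a unimodular multiple (a power of $e^{i\lambda}$) of one of the real numbers $A_k,B_k$ from \eqref{eq:c-k}, plus an explicit boundary term, the phases being controlled by the symmetries of $(G,Q)$ in Lemma~\ref{lem:symmetricity} and by the reality of $Q$ on $\R\cup i\R$ and the unit circle (Remark~\ref{eq:re-Q}). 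Substituting $c_k=\sqrt{B_k/(2A_k)}$ from \eqref{eq:c-k} then turns $\dot r_1(0)$ and $\dot r_2(0)$ into explicit real multiples of combinations of $A_k$ and $B_k$, and one checks that these have opposite signs, with neither vanishing, for every $k\geq 1$. This last sign bookkeeping --- keeping track of the phases coming from $\psi=e^{ik\lambda/2}$ and from the branch choices along $P_{\mu_2}$ and $P_{\mu_3}$, and confirming $\dot r_1(0)\dot r_2(0)<0$ in the end --- is the one genuinely delicate step; everything preceding it is structural.
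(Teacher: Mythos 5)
Your first two parts are fine and coincide with the paper's: the matrix form of \eqref{eq:rho-three} comes from Theorem~\ref{thm:app-refl-mat}, the initial values \eqref{eq:qbc-init} from \ref{item:rho-2} of Proposition~\ref{prop:rho-prop} with $\iota(0)=\id$, and real-analyticity from analytic dependence of $F_{t,\iota(t)}$ on $t$. Your reduction of \eqref{eq:negative} to $\dot r_1(0)\,\dot r_2(0)<0$, the derivative formula $\frac{d}{dt}\big|_{t=0}\tilde\rho^{}_{3,t,\iota(t)}=[\sigma_3,\dot\iota\,]-\overline{\bigl(\int_{P_{\mu_3}}\Psi_0\bigr)}\,\sigma_3$, and the observation that $\dot\iota(0)$ is off-diagonal are all structurally correct. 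The genuine gap is in the step you yourself flag as delicate. The open-path integrals $\int_{P_{\mu_j}}\eta$ and $\int_{P_{\mu_j}}G^2\eta$ ($j=2,3$) do \emph{not} reduce to unimodular multiples of $A_k$, $B_k$ plus explicit boundary terms: the cancellations in Section~\ref{sub:period} that produce $A_k$, $B_k$ use that $\gamma$ is a closed loop, so the two traversals of the segment between $z=1$ and the base point occur with phases that make their contributions combine into complete integrals over $[0,1]$. For the open reflection paths $P_{\mu_2}$, $P_{\mu_3}$ one is left with incomplete integrals such as $\int_1^{t_0}t^{-1/(k+1)}(t^2-1)^{-k/(k+1)}\,dt$ (where $o$ lies over $z=t_0$), which depend on the base point and have no closed form. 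Worse, $\dot r_1(0)$ and $\dot r_2(0)$ are not separately well-defined invariants: the normalization $\rho^{}_1=\id$, $\rho^{}_2=\sigma_2$ leaves a residual conjugation by $\diag(s,1/s)$ under which $r_1\mapsto s^{-2}r_1$, $r_2\mapsto s^{2}r_2$, and changing the base point and paths changes the two factors by reciprocal positive constants; only their product is intrinsic. So the promised ``sign bookkeeping of phases against $A_k,B_k$'' cannot be a finite check --- you would have to prove that all base-point-dependent terms cancel in the product (or argue in a limit such as $t_0\to1$ or $t_0\to\infty$), which is precisely the analysis your sketch omits. Also, the parenthetical fallback ``pass to the next nonvanishing order'' does not rescue anything: if, say, $\dot r_1(0)=0$, the sign of $r_1r_2$ is simply not determined by the first-order data you propose to compute.

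For contrast, the paper avoids these open-path integrals entirely. It writes $\Re q$ and $\Re(\psi^2 q)$ as half-traces of $a_\infty=\overline{\rho^{}_1}\rho^{}_3$ and $a_0=\overline{\rho^{}_3}\rho^{}_2$, relates these by Chebyshev-type identities to the traces of the end monodromies $\rho^{}_{F_t}(\tau_0)$, $\rho^{}_{F_t}(\tau_\infty)$, whose exact $t$-dependence is known through the exponents $\nu_0(t)$, $\nu_\infty(t)$ coming from the Schwarzian equation at the ends (Lemma~\ref{lem:end-monodromy}); the closed-loop residue computation of Lemma~\ref{lem:rho-der} (which, unlike your path integrals, is exactly evaluable) is used only to resolve the branch ambiguity $\theta_0=\pi\nu_0/(2k+2)$. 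This gives $|q(t)|^2=1+\frac{2(k+1)\pi}{k}\tan\frac{k\pi}{2k+2}\,t^2+O(t^3)$, hence \eqref{eq:negative}. If you want to complete your route, the missing ingredient is some such choice-independent evaluation of the first-order data; as written, the key inequality is not established.
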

 The inequality \eqref{eq:negative} corresponds to 
 the argument in \cite[Lemma 6.10]{RUY}.
 If \eqref{eq:negative} holds, we can
 prove the existence of the desired deformation $F_{t,\iota_1(t)}$
 by modifying $\iota(t)$ by $\iota_1(t)$,
 as we shall see later.
 If $r_1(0)r_2(0)$ had been negative, Claim \ref{claim:3} 
 would be obvious.
 However, in our case, \eqref{eq:qbc-init} implies
 $r_1(0)r_2(0)=0$,
 although all of the examples in \cite{RUY} 
 satisfy $r_1(0)r_2(0)\ne 0$.
 We show \eqref{eq:negative} by examining the derivative of the
 monodromy matrix. 
 Set
\begin{equation}\label{eq:newF}
    F_t:=F_{t,\iota(t)} ,
\end{equation}
\begin{equation}\label{eq;rho}
    \rho^{}_j=\tilde\rho^{}_{j,t,\iota(t)} \qquad (j=1,2,3).
\end{equation}
Then by Claims \ref{claim:1} and \ref{claim:2}, we have
\begin{equation}\label{eq:rho-12}
    \rho^{}_1=e_0,\qquad \rho^{}_2 = \sigma_2.
\end{equation}
Moreover, by Theorem~\ref{thm:app-refl-mat}, $\rho_3$ is written
as in \eqref{eq:rho-three},
and by \ref{item:rho-2} in Proposition~\ref{prop:rho-prop}
and the fact that $\iota(0)=\id$, we have $\rho_3\to\sigma_3$ as
$t\to 0$.
Hence $q$, $r_1$ and $r_2$ in \eqref{eq:rho-three} satisfy
\eqref{eq:qbc-init}.

 Let $\tau_0$ and $\tau_{\infty}$ be 
 the covering transformations on $\widetilde M_k$
 given in Lemma \ref{lem:covering-transf}.
 Since the initial value $\iota(t)=F_t(o)$ is real analytic in $t$,
 so are $\rho^{}_{F_t}(\tau_0)$ and $\rho^{}_{F_t}(\tau_\infty)$. 

\begin{lemma}\label{lem:end-monodromy}
  $F_t:=F_{t,\iota(t)}$ satisfies
  \begin{equation}\label{eq:rho-trace}
    \trace \rho^{}_{F_t}({\tau_0}) =
     (-1)^k 2 \cos(\pi\nu_0),\quad
    \trace \rho^{}_{F_t}({\tau_\infty}) =
     (-1)^k 2 \cos(\pi\nu_\infty), 
  \end{equation}
 where
 \begin{align}\label{eq:g-order}
   \nu_0 =\nu_0(t)&:=k\sqrt{1+4t\frac{k+1}{k}}, \\
   \nu_\infty=\nu_{\infty}(t)&:=k\sqrt{1-4t\frac{k+1}{k}}.
 \label{eq:g-order2}
 \end{align}
\end{lemma}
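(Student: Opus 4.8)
The plan is to read off each of the two traces from the local structure of the holomorphic data at the corresponding end, via the second order scalar ODE attached to $dF\,F^{-1}=t\Psi_0$. First note that replacing the initial value $F_t(o)=\iota(t)$ by $F_t(o)\,a$ with a constant $a\in\SL_2\C$ conjugates the whole monodromy representation; hence the conjugacy class, and in particular the trace, of $\rho^{}_{F_t}(\tau_0)$ and of $\rho^{}_{F_t}(\tau_\infty)$ is independent of $\iota(t)$, and I may work in any convenient gauge near each puncture. I would realize $\tau_0$ by a small positively oriented loop about the end $(z,w)=(0,0)$, which in the local coordinate $\zeta_0$ with $z=(\zeta_0)^{k+1}$ (see \eqref{eq:coords}) is a small circle around $\zeta_0=0$. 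Writing $Q_t/dG=\omega\,d\zeta_0$ and $Q_t=\widehat{Q_t}\,d\zeta_0^2$, we have $\Psi=t\Psi_0=\begin{pmatrix} G & -G^2 \\ 1 & -G \end{pmatrix}\omega\,d\zeta_0$ (see \eqref{eq:cmc-data-alpha}), and an elementary elimination from $dF=\Psi F$ shows that the bottom-row entries $F_{21}$ and $F_{22}$ both satisfy
\[
   C''-\frac{\omega'}{\omega}\,C'+\widehat{Q_t}\,C=0
\]
(primes denote $d/d\zeta_0$, and one uses $\widehat{Q_t}=\omega\,dG/d\zeta_0$). Their Wronskian equals $-\omega$, a nonzero multiple of the coefficient of $\eta$, hence is not identically zero; so $F_{21},F_{22}$ are independent solutions and the monodromy of this scalar equation around $\zeta_0=0$ represents the conjugacy class of $\rho^{}_{F_t}(\tau_0)^{-1}$, which has the same trace as $\rho^{}_{F_t}(\tau_0)$.

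The main computation is the local expansion of the coefficients. From $G=c_kw/z$ and $w^{k+1}=z(z^2-1)^k$ one sees that $G$ equals $(\zeta_0)^{-k}$ times a holomorphic unit near $\zeta_0=0$, so $\omega'/\omega=(k-1)/\zeta_0+(\text{holomorphic})$; and from \eqref{eq:cmc-data}, using $\tfrac{z^2+1}{z^2-1}=-1+O((\zeta_0)^{2(k+1)})$ there, one gets $\widehat{Q_t}=-tk(k+1)/(\zeta_0)^2+(\text{holomorphic})$. Thus $\zeta_0=0$ is a regular singular point with indicial equation $r^2-kr-tk(k+1)=0$; its roots sum to $k$ and differ by $\nu_0=k\sqrt{1+4t(k+1)/k}$, exactly \eqref{eq:g-order}. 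By the Frobenius theory the monodromy eigenvalues are $e^{\pi i(k\pm\nu_0)}$ — this holds even in the resonant case $\nu_0\in\Z$, where the two eigenvalues coincide whether or not a logarithm appears — so
\[
   \trace\rho^{}_{F_t}(\tau_0)=e^{\pi i(k+\nu_0)}+e^{\pi i(k-\nu_0)}=(-1)^k\,2\cos(\pi\nu_0),
\]
which is the first identity in \eqref{eq:rho-trace}.

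For $\tau_\infty$ I would run the argument verbatim in the coordinate $\zeta_\infty$ with $z=(\zeta_\infty)^{-(k+1)}$. The only change is that now $\tfrac{z^2+1}{z^2-1}=1+O((\zeta_\infty)^{2(k+1)})$ near $\zeta_\infty=0$, which reverses the sign of the double pole coefficient of $\widehat{Q_t}$; the indicial equation becomes $r^2-kr+tk(k+1)=0$, with exponent difference $\nu_\infty=k\sqrt{1-4t(k+1)/k}$, giving $\trace\rho^{}_{F_t}(\tau_\infty)=(-1)^k\,2\cos(\pi\nu_\infty)$. As a consistency check, at $t=0$ the map $F_0$ is constant, so both traces equal $2=(-1)^k\,2\cos(\pi k)$.

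The step I expect to be delicate is precisely this local expansion: one must use the correct local coordinate — it is exactly the $(k+1)$-fold branching of $z$ at the two ends that converts the naive order-$(k+1)$ pole of $\Psi$ into a genuine regular singular point of the scalar equation — and then read off the indicial equation carefully enough to pin down both the exponent difference and the sign $(-1)^k$; everything else is routine. One could instead extract $\nu_0$ and $\nu_\infty$ from the Schwarzian relation $S(g)-S(G)=2Q$ for the secondary Gauss map (which is why $\nu_0$ and $\nu_\infty$ deserve to be called ``orders of $g$'' at the ends), but keeping track of the precise $\SL_2\C$-trace, with its sign, is cleanest through the scalar equation for $F_{21}$ and $F_{22}$.
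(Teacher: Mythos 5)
Your proposal is correct, but it proves the lemma by a genuinely different route than the paper. The paper works with the secondary Gauss map $g$: from the Schwarzian relation $S(g)=S(G)+2Q_t$ it extracts the local behavior $a\star g=\zeta^{\nu_0}\bigl(1+o(1)\bigr)$ at the end, which determines the eigenvalues of $\rho^{}_{F_t}(\tau_0)$ only up to an overall sign ($\pm\{e^{\pm i\pi\nu_0}\}$); the ambiguity is then removed by real analyticity in $t$ together with the limit $\rho^{}_{F_t}(\tau_0)\to\id$ as $t\to 0$, which in turn uses the reflection decomposition $\rho^{}_{F_t}(\tau_0)=(-1)^k(\overline{\rho^{}_3}\rho^{}_2)^{2(k+1)}$ and $\rho^{}_j\to\sigma_j$. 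You instead work directly with the linear system $dF=t\Psi_0F$, derive the scalar second-order equation satisfied by $F_{21},F_{22}$ in the branched coordinate $\zeta_0$, and read off the full indicial equation $r^2-kr-tk(k+1)=0$; since the indicial roots sum to $k$ (forced by $\Ord(Q_t/dG)=k-1$, i.e.\ by $\Ord\eta$ at the ends) and differ by $\nu_0$, the trace $(-1)^k2\cos(\pi\nu_0)$ comes out with its sign pinned down purely locally — no limiting argument, no use of the $\sigma_j$, and the resonant case $\nu_0\in\Z$ is handled by noting the eigenvalues coincide there. Your computations check out: the expansions $\omega'/\omega=(k-1)/\zeta_0+O(1)$ and $\widehat{Q_t}=-tk(k+1)\zeta_0^{-2}+O(\zeta_0^{2k})$ agree with the paper's $Q_t=-tk(k+1)\zeta^{-2}(1+o(1))\,d\zeta^2$, the Wronskian of $(F_{21},F_{22})$ is indeed $-\omega\det F=-\omega\neq0$ on the punctured neighborhood (for $t\neq0$; the case $t=0$ is the trivial identity $2=(-1)^k2\cos\pi k$, as you note), and conjugation-invariance of the trace disposes of both the dependence on $\iota(t)$ and the $\rho$ versus $\rho^{-1}$ issue. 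What each approach buys: the paper's route recycles objects ($g$, the Schwarzian, the reflection matrices) that are needed anyway in the subsequent proof of Claim~3, while yours is more self-contained, determines the sign $(-1)^k$ without the somewhat delicate sign-resolution step, and would apply verbatim at any regular singular end of such a system.
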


\begin{proof}
 Let $g$ be the secondary Gauss map of $F_t$.
 Setting $z=\zeta^{k+1}$, we can take a complex coordinate $\zeta$
 around $(0,0)$.
 Since $G$ is a meromorphic function and $Q_t$ has a pole of 
 order $2$ at $(0,0)$, 
 \eqref{eq:schwarz} implies that the Schwarzian derivative
 $S(g)$ has a pole of order $2$ at $(0,0)$.
 Hence there exist $a\in\SL_2\C$ and a constant $\nu_0$ 
 such that
 \begin{equation}\label{eq:nu}
     a\star g = \zeta^{\nu_0}\bigl(1+o(1)\bigr),
 \end{equation}
 where $o(\cdot)$ denotes a higher order term.
 Here, 
 since $G$ has a pole of order $k$ at $(0,0)$, 
 and
 \[
    Q_t=\frac{tk}{k+1}\frac{z^2+1}{z^2(z^2-1)}dz^2
           =-\frac{tk(k+1)}{\zeta^2}\bigl(1+o(1)\bigr)d\zeta^2
 \]
 \eqref{eq:schwarz} implies that (see \cite[page 233]{UY5})
 \[
    S(g)=S(G)+2Q_t 
     =\frac{1}{2\zeta^2}
      \biggl((1-k^2)-4tk(k+1)+o(1)\biggr)d\zeta^2.
 \]
 Similarly,
 \eqref{eq:nu} implies that
 \[
    S(g) = \frac{1}{2\zeta^2}(1-\nu_0^2)d\zeta^2.
 \]
 Thus, $\nu_0$ coincides with \eqref{eq:g-order}.
 Note that $\nu_0$ is a real number 
 for $t$ with sufficiently small absolute value.

 Comparing the relation $g\circ\tau_0=\rho^{}_{F_t}({\tau_0})\star g$ 
 with \eqref{eq:nu},
 we can conclude that the eigenvalues of 
 $\rho^{}_{F_t}({\tau_0})$ are equal to those of
 the monodromy matrix of the function $\zeta\mapsto \zeta^{\nu_0}$
 up to sign.
 Then  the eigenvalues of $\rho^{}_{F_t}({\tau_0})$ are
 \begin{equation}\label{eq:eigen-rho-tau-0}
   \bigl\{e^{i\pi \nu_0},e^{-i\pi \nu_0} \bigr\}
   \quad\text{or}\quad
   \bigl\{-e^{i\pi \nu_0},-e^{-i\pi \nu_0} \bigr\}.
    \end{equation}
 Similarly,  
 setting $1/z=\zeta^{k+1}$, we take
 a complex coordinate $\zeta$ around $(\infty,\infty)$.
 Then we have \eqref{eq:g-order2}, and
 the eigenvalues of $\rho^{}_{F_t}({\tau_{\infty}})$ are
 \begin{equation}\label{eq:eigen-rho-tau-infty}
   \bigl\{e^{i\pi \nu_\infty},e^{-i\pi \nu_\infty} \bigr\}
   \quad\text{or}\quad
   \bigl\{-e^{i\pi \nu_\infty},-e^{-i\pi \nu_\infty} \bigr\}.
    \end{equation}
 On the other hand, by Lemma~\ref{lem:covering-transf} and 
 Proposition~\ref{prop:monodromy-refl} in the appendix,
 we have
 \begin{align}\label{eq:rho-tau}
    \rho^{}_{F_t}(\tau_0) &=
    \left[\bigl(\overline{\sigma_3}\sigma_2\bigr)^{2(k+1)}\right]
    \bigl(\overline{\rho^{}_3}\rho^{}_2\bigr)^{2(k+1)}
    =(-1)^k  \bigl(\overline{\rho^{}_3}\rho^{}_2\bigr)^{2(k+1)},\\
  \label{eq:rho-tau2}
    \rho^{}_{F_t}(\tau_{\infty}) &= (-1)^k
   \bigl(\overline{\rho^{}_1}\rho^{}_3\bigr)^{2(k+1)}.
 \end{align}
 Here, by \ref{item:rho-2} in Proposition~\ref{prop:rho-prop} and 
 Claim~\ref{claim:2},
 $\rho^{}_j\to \sigma_j$ ($t\to 0$) holds
 for $j=1,2,3$.
Since $\sigma_j$ ($j=1,2,3$) are given explicitly in
\eqref{eq:sigma}, we have that, as $t\to 0$,
 \begin{equation}\label{eq:rho-tau-limit}
   \rho^{}_{F_t}({\tau_0})\to(-1)^k
   \bigl(\sigma_2\overline{\sigma_3}\bigr)^{2(k+1)}
   =(-1)^{2k}\id=\id,\quad\text{and}\quad
   \rho^{}_{F_t}({\tau_{\infty}})\to\id.
 \end{equation}
 Then the eigenvalues of $\rho^{}_{F_t}({\tau_0})$ 
 and $\rho^{}_{F_t}(\tau_{\infty})$
 tend to $1$ as $t\to 0$.
 Hence by real analyticity,
 the right-hand possibilities for eigenvalues
 in \eqref{eq:eigen-rho-tau-0} and \eqref{eq:eigen-rho-tau-infty}
 never occur, which implies the conclusion.
\end{proof} 

\begin{lemma}\label{lem:rho-der}
 $F_t:=F_{t,\iota(t)}$ satisfies
 \begin{align*}
    \left.\frac{d}{dt}\right|_{t=0}
      \rho^{}_{F_t}({\tau_0})^{-1} &= 
     2(k+1)\pi i
     \begin{pmatrix} 1 & \hphantom{-}0 \\
                     0 & -1 
     \end{pmatrix},\,\,\, \\
    \left.\frac{d}{dt}\right|_{t=0}
      \rho^{}_{F_t}({\tau_\infty})^{-1} &= 
     -2(k+1)\pi i
     \begin{pmatrix} 1 & \hphantom{-}0 \\
                     0 & -1 
     \end{pmatrix}.
 \end{align*}
\end{lemma}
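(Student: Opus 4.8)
The plan is to apply Lemma~\ref{lem:infinitesimal-representation} to the family $F_t=F_{t,\iota(t)}$, where $\iota(t)$ is the real-analytic family from Claim~\ref{claim:2}, which satisfies $\iota(0)=\id$. Regarding the Hopf differential $Q_t=(t/c_k)Q$ in \eqref{eq:cmc-data} as $t$ times the fixed admissible $2$-differential $(1/c_k)Q$, the lemma applies with $\Psi_0$ as in \eqref{eq:cmc-data-alpha} and gives
\[
   \left.\frac{d}{dt}\right|_{t=0}\rho^{}_{F_t}(\tau)^{-1}=\int_{\gamma}\Psi_0
\]
for every loop $\gamma$ representing $\tau\in\pi_1(M_k)$. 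Taking for $\gamma$ the simple closed loops $\gamma_0$ and $\gamma_\infty$ around $(z,w)=(0,0)$ and $(\infty,\infty)$ introduced in Lemma~\ref{lem:covering-transf} (which represent $\tau_0$ and $\tau_\infty$), the proof reduces to computing the two residue matrices $\int_{\gamma_0}\Psi_0$ and $\int_{\gamma_\infty}\Psi_0$.

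Next I would make $\Psi_0$ explicit: since $Q/dG=\eta$, the data \eqref{eq:weier-fk} give $G\eta=c_k\,dz/z$, $G^2\eta=c_k^2(w/z^2)\,dz$ and $\eta=dz/w$, so that
\[
   \Psi_0=
   \begin{pmatrix}
     \dfrac{dz}{z} & -c_k\,\dfrac{w}{z^2}\,dz \\[2mm]
     \dfrac{1}{c_k}\,\dfrac{dz}{w} & -\dfrac{dz}{z}
   \end{pmatrix}.
\]
Near $(0,0)$ I would use the coordinate $\zeta_0$ with $z=(\zeta_0)^{k+1}$ from \eqref{eq:coords}, in which $\gamma_0$ is a single positively oriented circle about $\zeta_0=0$. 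From $w^{k+1}=z(z^2-1)^k$ one has $w=w_0\,\zeta_0\bigl(1+O((\zeta_0)^{2(k+1)})\bigr)$ with $w_0^{k+1}=(-1)^k$, so $w_0\neq0$. The diagonal entries contribute $\oint_{\gamma_0}dz/z=(k+1)\oint d\zeta_0/\zeta_0=2\pi i(k+1)$ immediately. For the off-diagonal entries, expanding $w$, hence $w/z^2$ and $1/w$, as Laurent series in $\zeta_0$, the exponents of $\zeta_0$ occurring in $-c_k(w/z^2)\,dz$ form the set $\{-(k+1)+2(k+1)j:j\geq0\}$ and those in $(1/c_k)(dz/w)$ form the set $\{(k-1)+2(k+1)j:j\geq0\}$; for $k\geq1$ neither set contains $-1$, so both residues vanish. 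Hence $\int_{\gamma_0}\Psi_0=2(k+1)\pi i\,\diag(1,-1)$, which is the first formula.

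The computation at $(\infty,\infty)$ is entirely parallel: with the coordinate $\zeta_\infty$ given by $z=(\zeta_\infty)^{-k-1}$ one gets $w=w_\infty(\zeta_\infty)^{-(2k+1)}\bigl(1+O((\zeta_\infty)^{2(k+1)})\bigr)$ with $w_\infty\neq0$, the off-diagonal residues vanish by the same divisibility observation, and the only change is the sign in $dz/z=-(k+1)\,d\zeta_\infty/\zeta_\infty$, giving $\int_{\gamma_\infty}\Psi_0=-2(k+1)\pi i\,\diag(1,-1)$. The one point that requires care is the vanishing of the off-diagonal residues; this is exactly where the total ramification of the projection $z$ at both ends (i.e.\ $z=\zeta^{k+1}$) is used, and it is the reason the derivative is diagonal. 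Everything else is a direct residue calculation, and the outcome is consistent with the eigenvalue data of Lemma~\ref{lem:end-monodromy} (one checks $\nu_0'(0)=\nu_\infty'(0)\cdot(-1)=2(k+1)$).
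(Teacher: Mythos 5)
Your proposal is correct and follows essentially the same route as the paper: apply \eqref{eq:rho-deform} (Lemma~\ref{lem:infinitesimal-representation}) to $F_t=F_{t,\iota(t)}$ with $\iota(0)=\id$, and evaluate $\oint_{\gamma_0}\Psi_0$ and $\oint_{\gamma_\infty}\Psi_0$ as residues in the coordinates $z=\zeta^{\pm(k+1)}$ at the two ends. The paper simply states the residue value, whereas you spell out the Laurent expansions showing the off-diagonal residues vanish; this is the same argument with the computation made explicit.
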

\begin{proof}
 As in the proof of the previous lemma, 
 we can take
 the complex coordinate $\zeta$ around $(0,0)$
 such that $z=\zeta^{k+1}$.
 Then $G$ and $Q$ are expressed in terms of $\zeta$,
 and by \eqref{eq:rho-deform} and \eqref{eq:cmc-data-alpha}, we have
 \begin{align*}
    \left.\frac{d}{dt}\right|_{t=0}\rho^{}_{F_t}({\tau_0})^{-1}
      &=\oint_{\gamma_0} \frac{1}{c_k}
            \begin{pmatrix}
	       G & - G^2 \\
	       1 & -G\hphantom{^2}
	    \end{pmatrix}\frac{Q}{dG} \\
    &=2\pi i \Res_{\zeta=0} \frac{1}{c_k}
            \begin{pmatrix}
	       G & - G^2 \\
	       1 & -G\hphantom{^2}
	    \end{pmatrix}\frac{Q}{dG}
        =2\pi i \begin{pmatrix} k+1 & \hphantom{-}0\\
	                     0 & -(k+1) \end{pmatrix},
 \end{align*}
 where $\gamma_0$ is the loop surrounding $(0,0)$ given in Lemma 
\ref{lem:covering-transf}.
 Hence we have the conclusion for $\rho^{}_{F_t}({\tau_0})$.
 The derivative of $\rho^{}_{F_t}(\tau_\infty)$ is obtained in a similar way.
\end{proof}
Now, we shall prove Claim~\ref{claim:3}:
\begin{proof}[Proof of Claim~\ref{claim:3}]
 We have already shown \eqref{eq:rho-three} and \eqref{eq:qbc-init}.
 Then it is sufficient to show that $|q(t)|>1$ for $t\neq 0$
 with sufficiently small $|t|$.
 We set $a_0(t)=\overline{\rho^{}_3}\rho^{}_2$, then 
 \eqref{eq:rho-tau} can be rewritten as
 \begin{equation}\label{eq:a0}
    \rho^{}_{F_t}(\tau_0)=(-1)^k(a_0(t))^{2(k+1)}.
 \end{equation}
 Set 
 \[
   A_0(t):=\frac{1}{2}\trace(a_0(t)).
 \]
 By Claim \ref{claim:2} and \eqref{eq:rho-three}, we have that
 \begin{equation}\label{eq:A0}
    A_0(t)= 
      \frac{1}{2}(\psi^{-2} \overline{ q(t)} + \psi^2  q(t)).
 \end{equation}
 Letting $t\to 0$,  we have
 $A_0(0)=\cos\{k\pi/(2(k+1))\}$ by \eqref{eq:qbc-init}.
 Then there exists a real analytic function $\theta_0=\theta_0(t)$ such
 that
 \begin{equation}\label{eq:theta-0}
   \cos\theta_0(t) = A_0(t)=\frac{1}{2}\trace(a_0(t)),\qquad
     \theta_0(0) =\frac{k\pi}{2(k+1)}.
 \end{equation}
 Then the Cayley-Hamilton identity yields that
 $a_0(t)^2=2 a_0(t)\cos\theta_0(t)-\id$.
 Then by induction, one can prove 
 the identity (purely algebraically)
 \[
    (a_0)^m
      =\frac{\sin(m\theta_0)}{\sin \theta_0}a_0
        -\frac{\sin((m-1)\theta_0)}{\sin \theta_0}\id\qquad
       (m=1,2,3,\dots).
 \]
 By \eqref{eq:a0}, we have
 \[
     \rho^{}_{F_t}({\tau_0}) = (-1)^k
     \left(\frac{\sin
     (2k+2)\theta_0}{\sin\theta_0}a_0
     -\frac{\sin (2k+1)\theta_0}{\sin\theta_0}\id\right).
 \]
 Taking the trace of this, Lemma~\ref{lem:end-monodromy} yields
 \[
     \cos \pi\nu_0= 
      \frac{\sin (2k+2)\theta_0}{\sin\theta_0}\cos\theta_0
                  -\frac{\sin (2k+1)\theta_0}{\sin\theta_0}\\
            = \cos (2k+2)\theta_0.
 \]
 Hence 
 \[
   \pi \nu_0(t)   \equiv
   \pm 2(k+1)\theta_0(t)\qquad\pmod{ 2\pi}.
 \]
 Comparing both sides of this equation at $t=0$,  \eqref{eq:theta-0}
 implies
 \begin{equation}\label{eq:theta-trace}
    \pi \nu_0(t) 
       = 2(k+1) \theta_0(t)\qquad \text{or} \qquad 
       \pi \nu_0(t)=2(k+1)(\pi-\theta_0(t)),
 \end{equation}
 by real analyticity.

 On the other hand, 
 by Lemma~\ref{lem:rho-der}, it holds that
 \allowdisplaybreaks{
 \begin{align*}
    -2&(k+1)\pi i\begin{pmatrix}
	   1 & \hphantom{-}0 \\
	   0 & -1
	  \end{pmatrix}
   = -\left.\frac{d \rho^{}_{F_t}({\tau_0})^{-1}}{dt}\right|_{t=0}
   = \left.\frac{d\rho^{}_{F_t}({\tau_0})}{dt}\right|_{t=0}
   \\
   &=(-1)^k
     \left.\frac{d}{dt}\right|_{t=0}
     \left(\frac{\sin (2k+2)\theta_0}{\sin\theta_0}a_0-
           \frac{\sin (2k+1)\theta_0}{\sin\theta_0}\id\right),
 \end{align*}
 where we used the fact that $\rho^{}_{F_0}(\tau_0)=\id$.
 Since 
 \[
   \theta_0(0)=\frac{k\pi}{2k+2},\quad \sin((2k+2)\theta_0(0))=0,\quad
       \cos((2k+2)\theta_0(0))=(-1)^k,
 \]
 by setting $\dot \theta_0(0)=d\theta_0/dt|_{t=0}$, we have that 
{\small
 \begin{align*}
    -2&(k+1)\pi i\begin{pmatrix}
	   1 & \hphantom{-}0 \\
	   0 & -1
	  \end{pmatrix} \\
  &=(-1)^k
     \left.\frac{d}{dt}\right|_{t=0}
     \left(\frac{\sin (2k+2)\theta_0}{\sin\theta_0}a_0-
           \frac{\sin \bigl((2k+2)\theta_0-\theta_0\bigr)
                 }{\sin\theta_0}\id\right)\\
  &=(-1)^k
     \left.\frac{d}{dt}\right|_{t=0}
     \left(\frac{\sin (2k+2)\theta_0}{\sin\theta_0}a_0-
           \left(\frac{\sin (2k+2)\theta_0}{\sin\theta_0}\cos\theta_0
                 -\cos (2k+2)\theta_0\right)\id\right)\\
   & = \frac{2k+2}{\sin \theta_0(0)}
          \left(a_0(0)-\cos\frac{k\pi}{2k+2}\id\right)
        \dot \theta_0(0)\\
   &= \frac{2k+2}{\sin \frac{k\pi}{2k+2}}
      \left(\overline{\sigma_3}\sigma_2-\cos\frac{k\pi}{2k+2}\id
      \right)\dot \theta_0(0)
   =  -2(k+1)i
        \begin{pmatrix} 1 & \hphantom{-}0 \\ 0 & -1 \end{pmatrix}
        \dot \theta_0(0).
 \end{align*}}
 }
 Thus, we have 
 \begin{equation}\label{eq:theta-der}
    \dot \theta_0(0) = \id.
 \end{equation}
 Then by \eqref{eq:theta-trace}, we have
 \begin{equation}\label{eq:theta-val}
    \theta_0(t) = \frac{\pi\nu_0(t)}{2k+2}. 
 \end{equation}

 Similarly, if we set $a_\infty=\overline{\rho^{}_1}\rho^{}_3$,
 then \eqref{eq:rho-tau2} can be rewritten as
 \begin{equation}\label{eq:a-infty}
    \rho^{}_{F_t}(\tau_\infty)=(-1)^k(a_\infty(t))^{2(k+1)}.
 \end{equation}
 Set 
 \[
    A_\infty(t):=\frac{1}{2}\trace(a_\infty(t)).
 \]
 By Claim \ref{claim:2} and \eqref{eq:rho-three}, we have that
 \begin{equation}\label{eq:Ainfty}
    A_\infty(t)= 
        \frac{1}{2}(\overline{q(t)} + q(t)).
 \end{equation}
 Letting $t\to 0$,  we have
 $A_\infty(0)=\cos\{k\pi/(2k+2)\}$ by \eqref{eq:qbc-init}.
 Then there exists a real analytic 
 function $\theta_\infty=\theta_\infty(t)$ such that
 \begin{equation}\label{eq:theta-infty}
     \cos\theta_\infty(t) = A_\infty(t)=\frac{1}{2}\trace(a_\infty(t)),\qquad
     \theta_\infty(0) =\frac{k\pi}{2k+2}.
 \end{equation}
 Like as in the computation of $\theta_0(t)$, 
 we have
 \begin{equation}\label{eq:varphi}
  \dot\theta_\infty(0)=-\pi,\qquad
   \theta_\infty(t) = \frac{\pi\nu_\infty(t)}{2k+2}.
 \end{equation}
 By \eqref{eq:A0}, \eqref{eq:theta-0}, \eqref{eq:Ainfty}
 and  \eqref{eq:theta-infty}, we have that
 \[
     \psi^{2}q+\psi^{-2}\bar q = 2 \cos\theta_0,\qquad
        q +\bar q =2\cos\theta_\infty.
 \]
 Hence 
 \[
     q(t)=\frac{-i}{\sin\frac{k\pi}{k+1}}\biggl(
       \cos\theta_0(t)-\psi^{-2}\cos \theta_\infty(t)
       \biggr)
 \]
 and
 \[
     q \bar q =
       \frac{1}{\sin^2\frac{k\pi}{k+1}}
        \left(
           (\cos\theta_0)^2 + (\cos \theta_\infty)^2 - 2
             \left(\cos\frac{k\pi}{k+1}\right)
                    \cos\theta_0\cos\theta_\infty
             \right).       
 \]
 Differentiating this twice
 using \eqref{eq:theta-der}, \eqref{eq:theta-val} and
 \eqref{eq:varphi},
 we have
 \[
   \left.q\bar q\right|_{t=0}=1,\quad
   \left.\frac{d}{dt}\right|_{t= 0}
       q\bar q = 0 ,\quad
   \left.\frac{d^2}{dt^2}\right|_{t= 0}
      q\bar q = \frac{4(k+1)\pi}{k}\tan\frac{\pi k}{2k+2}>0.
 \]
 Thus, $(|q|^2=)q\bar q>1$ 
 for $t$ with sufficiently small absolute value,
 and by \eqref{eq:rho-three}, 
 $r_1r_2<0$ holds.
\end{proof}
\begin{proof}[Proof of Proposition~\ref{prop:monodromy}]
 Take $\iota(t)$ as in Claim~\ref{claim:2}.  
 We set
 \[
    \iota_1(t) = \iota(t) \begin{pmatrix}
	     s(t) & 0\\
	     0 & 1/s(t)
	    \end{pmatrix},
         \qquad s(t) = \sqrt[4]{-r_1(t)/r_2(t)}.
 \]
 Then by \ref{item:rho-1} of Proposition~\ref{prop:rho-prop} and 
 Claims~\ref{claim:1} and \ref{claim:2}, 
 we have
 $\tilde\rho^{}_{1,t,\iota_1(t)}=\id\in\SU_{1,1}$,
 $\tilde\rho^{}_{2,t,\iota_1(t)}=\sigma_2\in\SU_{1,1}$ and
 \[
    \tilde\rho^{}_{3,t,\iota_1(t)}=
    \begin{pmatrix}
     q(t) & \varepsilon i\sqrt{-r_1(t)r_2(t)} \\[6pt]
     -\varepsilon i\sqrt{-r_1(t)r_2(t)} & \overline{q(t)}
    \end{pmatrix}\in\SU_{1,1},
 \]
 where $\varepsilon=1$ (resp.\ $-1$) if $r_1(t)>0$ (resp.\ $r_1(t)<0$).
 By replacing $\iota(t)$ by $\iota_1(t)$,
 we have the conclusion. 
\end{proof}

Thus, we obtain a one parameter family of \cmcone{} faces
$\{\hat f_{k,t}\}$ defined on $M_k$.
When $k$ is even, it induces $f_k\colon{}M'_k\to S^3_1$, where
$M'_k$ is the Riemann surface of genus $k/2$ as in
Section~\ref{sub:double}, because
$\{\tilde\mu_1,\tilde\mu_2,\tilde\mu_3\}$
generates the fundamental group of $M'_k$.

\subsection{Completeness and embeddedness}
Now, we have weakly complete \cmcone{} faces $f_{k,t}$ 
for positive integers  $k>0$ and 
for $t\neq 0$ with sufficiently small absolute value.
In this subsection, we shall prove completeness of $f_{k,t}$
and embeddedness of $f_{1,t}$ and $f_{2,t}$ for sufficiently
small $t$, which shows the second part of Theorem~\ref{thm:main}.
\begin{proposition}[Completeness]\label{prop:completeness}
 For each positive integer $k$ and 
 for $t\neq 0$ with sufficiently small absolute value,
 the \cmcone{} face $f_{k,t}\colon{}M_k\to S^3_1$ is complete, 
 and each end is a regular elliptic end 
 in the sense of \cite{FRUYY}.
\end{proposition}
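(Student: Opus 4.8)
The plan is to read off completeness from the criterion of Fact~\ref{fact:complete2} together with the monodromy computations already made in Section~\ref{sub:cmc-f-k}. We have $M_k=\overline M_k\setminus\{(0,0),(\infty,\infty)\}$ with $\overline M_k$ compact, and the metric $ds^2_\#$ of \eqref{eq:cmc-lift} is complete (Section~\ref{sub:cmc-data}), so $f_{k,t}$ is weakly complete. Hence by Fact~\ref{fact:complete2}, completeness of $f_{k,t}$ is equivalent to compactness of the singular set $\Sigma=\{p\in M_k\,;\,|g(p)|=1\}$ of the secondary Gauss map $g$, i.e.\ to the statement that $\Sigma$ does not accumulate at either end. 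Along the way we will identify the ends as regular elliptic ends in the sense of \cite{FRUYY}.

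First I would determine the type of each end. By Proposition~\ref{prop:monodromy} (via Proposition~\ref{prop:main}) the end monodromies $\rho^{}_{F_t}(\tau_0)$ and $\rho^{}_{F_t}(\tau_\infty)$ lie in $\SU_{1,1}$, and by \eqref{eq:rho-trace}, \eqref{eq:g-order} and \eqref{eq:g-order2} their traces are $(-1)^k2\cos(\pi\nu_0)$ and $(-1)^k2\cos(\pi\nu_\infty)$, where $\nu_0=k\sqrt{1+4t(k+1)/k}$ and $\nu_\infty=k\sqrt{1-4t(k+1)/k}$. Since $\nu_0(0)=\nu_\infty(0)=k$ while the squares $\nu_0^2$, $\nu_\infty^2$ have $t$-derivatives $\pm4k(k+1)\neq0$ at $t=0$, for $t\neq0$ with $|t|$ small enough we have $\nu_0,\nu_\infty\in(k-1,k+1)\setminus\{k\}$, so $\nu_0,\nu_\infty\notin\Z$ and both traces have absolute value strictly less than $2$; thus the end monodromies are \emph{elliptic} elements of $\SU_{1,1}$. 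Since the hyperbolic Gauss map $G=c_k\,w/z$ is meromorphic across both ends and (for $t\neq0$) $Q_t$ has a pole of order $2$ there by Table~\ref{tb:g-eta-q}, each end is also \emph{regular}; hence each end is a regular elliptic end.

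Next I would show $\Sigma$ stays away from the ends. Near $(0,0)$ use the coordinate $\zeta$ with $z=\zeta^{k+1}$ (and near $(\infty,\infty)$ the coordinate with $1/z=\zeta^{k+1}$). By \eqref{eq:nu} in the proof of Lemma~\ref{lem:end-monodromy}, there is $a=a(t)\in\SL_2\C$ with $a\star g=\zeta^{\nu_0}\bigl(1+o(1)\bigr)$ as $\zeta\to0$; since $\nu_0>0$ is real, $|\zeta^{\nu_0}|=|\zeta|^{\nu_0}\to0$ along every branch, so $a\star g\to0$ and $g$ extends continuously to $(0,0)$ with a single limiting value $g_0:=a(t)^{-1}\star0$, independent of the branch and of the path to the puncture. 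As one loop around the puncture fixes this limit, $g_0$ is a fixed point of $\rho^{}_{F_t}(\tau_0)$; an elliptic element of $\SU_{1,1}$ has no fixed point on the unit circle $\{|w|=1\}$ (its two fixed points lie in $\{|w|<1\}$ and $\{|w|>1\}$), so $|g_0|\neq1$. Thus $|g|\neq1$ on a punctured neighborhood of $(0,0)$, and likewise at $(\infty,\infty)$, so $\Sigma$ is compact. By Fact~\ref{fact:complete2}, $f_{k,t}$ is then complete, with ends of the type identified above, which is the assertion.

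The step I expect to be the main obstacle is the last one: controlling $g$ near each end, i.e.\ justifying the normal form in \eqref{eq:nu} for a regular end whose exponent $\nu_0$ is not an integer, and identifying its limit with a fixed point of the end monodromy. Alternatively this can be replaced by directly invoking the end classification for \cmcone{} faces of \cite{FRUYY}, according to which a regular elliptic end is a complete end, in which case the remaining work is just the routine verification of the second paragraph that $\nu_0,\nu_\infty\notin\Z$ for $t\neq0$ small. Note also that, since the claim is only asserted for $|t|$ small, it suffices to shrink the bound on $|t|$ so that $\nu_0$ and $\nu_\infty$ stay in $(k-1,k+1)$.
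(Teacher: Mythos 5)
Your proof is correct and follows essentially the same route as the paper: establish that for $t\neq 0$ small each end is regular and (non-integral) elliptic because $\nu_0,\nu_\infty\notin\Z$, deduce that the singular set $\{|g|=1\}$ does not accumulate at the ends, and conclude completeness. The only real difference is that where the paper simply cites Lemma E1 of \cite{FRUYY} for the non-accumulation, you derive it directly from the expansion \eqref{eq:nu} together with the fact that an elliptic element of $\SU_{1,1}$ (here guaranteed by $|\trace|<2$) has no fixed point on the unit circle, which is a valid self-contained substitute.
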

\begin{proof}
 Since $G$ is meromorphic at the ends, they are regular ends.
 Moreover, by \eqref{eq:nu} and \eqref{eq:g-order},
 the end $(0,0)$ is $g$-regular non-integral elliptic
 in the sense of \cite[Definition 3.3]{FRUYY}
 because $\nu_0\not\in\pi\Z$.
 Then by Lemma E1 in \cite{FRUYY},
 the singular set does not
 accumulate at the end, and hence the end $(0,0)$  is complete.
 Similarly, the end $(\infty,\infty)$ is also complete.
\end{proof}
To show embeddedness, we shall look at the asymptotic behavior 
of the two ends of $f_{k,t}$.
The ideal boundary of $S^3_1$ consists of two connected components:
\[
   \partial_+S^3_1 = LC_+/\R_+\qquad \text{and}\qquad
   \partial_-S^3_1 = LC_-/\R_+,
\]
where $LC_+$ (resp.\ $LC_-$) denotes the positive (resp.\ negative)
light cone in $\R^4_1$:
\[
    LC_{\pm} = \{(v_0,v_1,v_2,v_3)\in\R^4_1\,; \,\pm v_0>0\},
\]
see \cite[Section 4]{FRUYY}.
\begin{lemma}\label{lem:asymp}
 For $t\neq 0$ with sufficiently small absolute value,
 the two ends of $f_{k,t}$ are asymptotic to the same point 
 of the same connected component of the ideal boundary.
\end{lemma}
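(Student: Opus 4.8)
The plan is to identify the limit point of each end directly from the Weierstrass-type data. By Proposition~\ref{prop:completeness}, for $t\neq 0$ with $|t|$ small the two punctures $(0,0)$ and $(\infty,\infty)$ of $M_k$ are complete \emph{regular elliptic ends} of $f_{k,t}$ in the sense of \cite{FRUYY}, and the asymptotic analysis of \cite[Section~4]{FRUYY} applies to each: the image of the end converges to a single point of the ideal boundary $\partial_+S^3_1\sqcup\partial_-S^3_1$ of $S^3_1$, namely to the value at the puncture of the meromorphically extended hyperbolic Gauss map $G$, viewed as a point of the $S^2$ that parametrizes each boundary component. Thus it suffices to check (i) that the two punctures yield the \emph{same} boundary point, and (ii) that they yield points of the \emph{same} component. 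Part (i) is immediate: by Table~\ref{tb:g-eta-q}, the hyperbolic Gauss map $G=c_k w/z$ of $f_{k,t}$ satisfies $\Ord G=-k$ at both $(0,0)$ and $(\infty,\infty)$, i.e.\ it has a pole of order $k$ at each, so it takes the value $\infty\in\C\cup\{\infty\}$ at both punctures; hence the two ends are asymptotic to one and the same point of $S^2$.

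For part (ii) I would argue that the component is forced to be the same at both ends by a global co-orientation argument. A connected \cmcone{} face carries a globally defined timelike normal direction, induced from its holomorphic null lift $F$; this direction is consistent over all of $M_k$ because the timelike vectors of $\R^4_1$ fall into two disjoint cones. (This is one way to see the orientability of \cmcone{} faces recalled in the introduction.) Consequently the \emph{holomorphic} hyperbolic Gauss map $G$ of $f_{k,t}$ takes its values in a single, fixed component of the ideal boundary --- say $\partial_+S^3_1$, without loss of generality. Since both ends are regular elliptic, each is therefore asymptotic to the point of $\partial_+S^3_1$ corresponding to $G=\infty$; together with (i), this is exactly the assertion of the lemma.

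If one prefers not to quote the global co-orientation, one can pin down the component end by end through a local computation with $F_t=F_{t,\iota(t)}$. Near $(0,0)$, take the coordinate $\zeta$ with $z=\zeta^{k+1}$; by \eqref{eq:nu} the secondary Gauss map satisfies $g\sim\zeta^{\nu_0}\bigl(1+o(1)\bigr)$ with $\nu_0\to k$ as $t\to0$, and the residue of $\Psi_0$ at $\zeta=0$ --- computed in the proof of Lemma~\ref{lem:rho-der} to be $\diag(k+1,-(k+1))$ --- determines a normal form for $F_t$, and hence the limiting null direction of $[\,F_t e_3 F_t^*\,]$ in $\partial_+S^3_1\sqcup\partial_-S^3_1$. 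The analogous computation near $(\infty,\infty)$ uses $1/z=\zeta^{k+1}$, where the corresponding residue is $\diag(-(k+1),k+1)$. The only delicate point, and the one I expect to be the main obstacle, is the sign bookkeeping: one must verify that the sign reversal of the residue between the two ends is exactly compensated by the interchange of the two columns of $F_t$, together with the conjugation by $e_3=\diag(1,-1)$ in $F_t e_3 F_t^*$, so that both limiting null directions lie in the \emph{same} cone ($LC_+$ or $LC_-$); and one must keep the normalizations of $F_t$ near the two punctures consistent with the global solution $F_{t,\iota(t)}$, whose initial value $\iota(t)$ is real and close to $\id$. Everything else reduces to a routine residue computation, and the co-orientation argument above seems to me the most economical way to organize the proof.
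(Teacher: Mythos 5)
Your part (i) (both ends limit to the point determined by $G$, and $G=\infty$ at both punctures) agrees with what the paper does, but your main argument for part (ii) --- that a global co-orientation of the timelike normal forces $G$, and hence both ends, into a single fixed boundary component --- is not correct. Which component an elliptic end converges to is \emph{not} read off from $G$ or from a co-orientation; by \cite[Proposition 4.2]{FRUYY} it is governed by the secondary Gauss map $g$, namely by whether $|g|>1$ or $|g|<1$ near the puncture ($G$ only determines \emph{which point} of that component). Your argument would prove that every connected \cmcone{} face has all of its ends asymptotic to the same component, and this is false: for the elliptic catenoid cousins (e.g.\ $g=z^{\nu}$ on $\C\setminus\{0\}$, cf.\ \cite{F}) one has $|g|<1$ near $z=0$ and $|g|>1$ near $z=\infty$, so the two ends of a single connected \cmcone{} face converge to $\partial_+S^3_1$ and $\partial_-S^3_1$ respectively. (Note also that at singular points the limiting tangent plane contains a lightlike direction, so the ``globally consistent timelike normal'' is itself delicate across the singular set; but even granting it, the inference to the asymptotic component of the ends is the step that breaks.)

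The content of Lemma~\ref{lem:asymp} is precisely the assertion that for the surfaces $f_{k,t}$ this splitting of ends between the two components does \emph{not} happen, and that requires a genuinely global input linking $g$ at the two punctures. The paper gets it from the deformation parameter: since $Q_t\to 0$ as $t\to 0$, relation \eqref{eq:schwarz} gives $S(g)\to S(G)$, so $g$ is close to a single M\"obius image of $G$ on all of $M_k$; because $G(0,0)=G(\infty,\infty)=\infty$ (Table~\ref{tb:g-eta-q}), $|g|-1$ then has the same sign at both punctures for small $t\neq 0$, whence \cite[Proposition 4.2]{FRUYY} gives the same component, and the equality $G(0,0)=G(\infty,\infty)$ gives the same point. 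Your fallback ``local residue'' computation does not supply this input: \eqref{eq:nu} only says $a\star g=\zeta^{\nu_0}(1+o(1))$ for \emph{some} $a\in\SL_2\C$, and since $a$ need not preserve the unit circle, the local exponent (or the residue $\diag(k+1,-(k+1))$ of $\Psi_0$) cannot decide on which side of $\{|g|=1\}$ the end lies; relating the normalizations at the two punctures through the single global solution $F_{t,\iota(t)}$ is exactly the ``sign bookkeeping'' you defer, and it is the whole difficulty, essentially equivalent to the Schwarzian-limit argument the paper uses.
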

\begin{proof}
 Noticing $Q_t\to 0$ as $t\to 0$, \eqref{eq:schwarz}
 for $(G,Q_t)$ implies that $S(g)\to S(G)$ as $t\to 0$.
 Since $G(0,0)=G(\infty,\infty)=\infty$
 (see Table~\ref{tb:g-eta-q} in Section~\ref{sec:main}),
 this implies that  $|g|-1$ has the same sign at 
 $(0,0)$ and $(\infty,\infty)$.
 In particular, one can choose $g$ such that
 $|g|>1$ on neighborhoods of $(0,0)$ and $(\infty,\infty)$.
 Then by Proposition 4.2 in \cite{FRUYY},
 $f_{k,t}$ converges to $\partial_-S^3_1$ 
 at the two ends.
 Moreover, since $G(0,0)=G(\infty,\infty)$,
 the proof of \cite[Proposition 4.2]{FRUYY} implies that
 both of the  ends converge to the same point of $\partial_-S^3_1$.
\end{proof}
\begin{proposition}
 For $t\neq 0$ with sufficiently small absolute value,
 the \cmcone{} face $f_{k,t}$ is embedded if and only if $k=1$
 or $2$.
\end{proposition}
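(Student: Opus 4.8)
The plan is to extract both implications from the Osserman-type inequality \eqref{eq:oss} for the hyperbolic Gauss map of complete \cmcone{} faces, proved in \cite{FRUYY}, together with its equality characterization (the analogue of \cite[Theorem~4.11]{UY}: equality holds if and only if every end is properly embedded), and then, in the equality case, to separate the two ends by a function mimicking the timelike component $x_0=-2c_1\log|z|$ of $\hat f_1$ used in Section~\ref{sec:main}. Note that the hyperbolic Gauss map $G=c_k w/z$ of $f_{k,t}$ and the underlying Riemann surface are independent of $t$, so the two sides of \eqref{eq:oss} do not move as $t$ varies.

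For the ``only if'' direction, suppose $k\ge 3$. If $k$ is odd, $f_{k,t}$ is defined on $M_k$, of genus $k$ with two ends, and $\deg G=2k$, so $2\deg G=4k>2k+2=-\chi(M_k)+2$; hence \eqref{eq:oss} is strict. If $k=2m$ with $m\ge 2$, $f_{k,t}$ descends to $M'_k$ of genus $m$ with two ends (Section~\ref{sub:double}), and the analogous count there is strict as well. In either case the equality characterization shows that at least one end of $f_{k,t}$ is not properly embedded, hence is non-injective on every punctured neighborhood of its puncture; therefore $f_{k,t}$ is non-injective outside any compact set and is not embedded in the sense of Definition~\ref{def:embedded}. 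This holds uniformly in $t$.

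For the ``if'' direction, take $k=1$ (on $M_1$) or $k=2$ (on $M'_2$, via the reduction of Section~\ref{sub:double}). In both cases the domain has genus one with two ends and \eqref{eq:oss} holds with equality, so the equality characterization of \cite{FRUYY} gives that both ends of $f_{k,t}$ are properly embedded. It then remains to see that the two embedded ends are disjoint outside some compact set of $S^3_1$. By Proposition~\ref{prop:completeness} the singular set does not accumulate at either end, and by Lemma~\ref{lem:asymp} the two ends are asymptotic to one and the same ideal point $p_\infty\in\partial_-S^3_1$ --- which is exactly why a naive separation fails and an analogue of $x_0$ is needed. I would produce one from the rescaled surfaces $\tilde f_{k,t}=t^{-1}\hat f_{k,t}\colon M_k\to S^3_1(t^2)\subset\R^4_1$ of the remark following Proposition~\ref{prop:deform-2}: as $t\to 0$ one has $\tilde f_{k,t}\to\hat f_k$, and since (for $k\le 2$) the two ends are regular elliptic ends whose orders tend to $1$ as $t\to 0$, this convergence is controlled up to both punctures, the ends converging to the catenoid ends of $\hat f_k$. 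Since the timelike component of $\hat f_k$ tends to $+\infty$ at the end $(0,0)$ and to $-\infty$ at the end $(\infty,\infty)$, the timelike $\R^4_1$-component of $\tilde f_{k,t}$ does the same for $t$ with sufficiently small absolute value; combined with the per-end embeddedness, this shows that for a sufficiently large slab $K=\{|x_0|\le R\}$ the set $f_{k,t}^{-1}(S^3_1\setminus K)$ splits into two pieces with disjoint images, on each of which $f_{k,t}$ is injective. Hence $f_{k,t}$ is embedded in the wider sense; for $k=2$ one first descends to $M'_2$ and argues there, paralleling the treatment of $f_2$ in Section~\ref{sub:double}.

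The hard part is the final step of the ``if'' direction: establishing the end separation with the uniformity in $t$ that the proposition requires. One must control the asymptotics of $f_{k,t}$ --- equivalently of its holomorphic null lift --- near \emph{both} punctures, not merely on compact subsets of $M_k$, and check that the separating component persists throughout the deformation; because $f_k$ has a degenerate period problem (the very reason Section~\ref{sec:deform} must work with derivatives of the monodromy), this bookkeeping is the delicate point, and a self-contained route would instead extract the leading term of the null lift at the punctures directly. By contrast, the per-end embeddedness and the whole ``only if'' direction are immediate from \eqref{eq:oss}, and the underlying sign computation --- that the relevant component behaves like $x_0=-2c_1\log|z|$ to leading order --- is routine given the orders of $G$, $\eta$ and $Q_t$ at $(0,0)$ and $(\infty,\infty)$ recorded in Table~\ref{tb:g-eta-q}.
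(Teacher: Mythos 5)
Your ``only if'' direction (strictness of \eqref{eq:oss} for $k\ge 3$ via the equality characterization in \cite{FRUYY}) is a reasonable alternative to what the paper does, but your ``if'' direction breaks down exactly at the step you flag as hard, and the mechanism you propose for it is false. For every fixed $t\neq 0$ the two ends of $f_{k,t}$ are asymptotic to one and the same point of $\partial_-S^3_1$ (Lemma~\ref{lem:asymp}); writing out the end expansions (this is what the paper does, via Small's formula \cite{KUY}, \cite{FRUYY} applied to $g=\zeta^{-\nu}(\alpha+o(1))$ as in \eqref{eq:nu}), one finds at \emph{both} punctures $x_0\to-\infty$ and $x_3=x_0(1+o(1))$. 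Multiplying by $1/t$ does not change this picture: the timelike component of $\tilde f_{k,t}$ tends to the same infinity at both ends, so no slab $\{|x_0|\le R\}$ can separate them. The convergence $\tilde f_{k,t}\to\hat f_k$ is only locally uniform on compact subsets of $M_k$ and cannot be ``controlled up to both punctures'' as you assert --- indeed it cannot be, since the maxface's two ends escape to opposite timelike infinities while for every $t\neq0$ the \cmcone{} ends collapse to a single ideal point; moreover the elliptic orders satisfy $\nu_0(t),\nu_\infty(t)\to k$ as $t\to 0$ (see \eqref{eq:g-order}--\eqref{eq:g-order2}), not $1$, when $k=2$. So the analogue of the $x_0=-2c_1\log|z|$ argument you hope to transplant simply does not exist for the deformed surfaces.

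The correct separation mechanism --- and the reason the quantities $\nu_0(t)\neq\nu_\infty(t)$ matter --- is radial rather than timelike: the Small-formula expansion gives $x_1+ix_2=C\,e^{ik\theta}\,x_0^{\nu/(k+\nu)}\bigl(1+o(1)\bigr)$ with $\nu=\nu_0(t)$ at $(0,0)$ and $\nu=\nu_\infty(t)$ at $(\infty,\infty)$, and since these exponents differ for $t\neq0$, the two ends are disjoint near the common ideal point. The same expansion settles per-end embeddedness without any appeal to \eqref{eq:oss}: the winding factor $e^{ik\theta}$ shows each end is injective precisely when $k=1$, or $k=2$ after descending to $M'_2$ (Section~\ref{sub:double}), which simultaneously yields your ``only if'' direction. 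If you wish to keep the Osserman route for $k\ge3$, you must quote the precise statement of \cite{FRUYY} (the inequality and its equality case are formulated for complete \cmcone{} faces with elliptic ends, which is supplied by Proposition~\ref{prop:completeness}) and argue that a non-properly-embedded end obstructs embeddedness outside every compact set of $S^3_1$; but in any case the leading-term computation at the punctures is unavoidable for $k=1,2$, and once it is done the inequality argument is redundant.
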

\begin{proof}
 By Table~\ref{tb:g-eta-q} in Section~\ref{sec:main},
 $G$ has poles of order $k$ at $(0,0)$ and  $(\infty,\infty)$ in
 $\overline{M}_k$.
 Then there exists a complex coordinate $\zeta$ of $\overline{M}_k$
 around $(0,0)$ such that $G=\zeta^{-k}$.
 On the other hand,
 by \eqref{eq:nu}, $g$ is represented as
 \[
    g = \zeta^{-\nu_0}\bigl(\alpha + o(1)\bigr)\qquad 
    \alpha\in\R\setminus\{0\},
 \]
 where $\nu_0>0$ is as in \eqref{eq:g-order},
 since we can replace the secondary Gauss map $g$ by $1/g$.
 Then by Small's formula \cite[Equation (1.10)]{FRUYY} 
 (see also \cite{KUY}),
 the holomorphic lift $F$ is expressed as
 \[
     F =
         \frac{1}{2\sqrt{k\nu_0}}
         \begin{pmatrix}
	  \zeta^{\frac{-k+\nu_0}{2}}(k+\nu_0) 
	  \left(\dfrac{-1}{\sqrt{\alpha}}+o(1)\right)&
	  \zeta^{\frac{-k-\nu_0}{2}}(k-\nu_0) 
	  \left(\dfrac{1}{\sqrt{\alpha}}+o(1)\right)\\[8pt]
	  \zeta^{\frac{\hphantom{-}k+\nu_0}{2}}
	     (k-\nu_0)\left(\sqrt{\alpha}+o(1)\right)&
	  \zeta^{\frac{\hphantom{-}k-\nu_0}{2}}
	     (k+\nu_0)\left(-\sqrt{\alpha}+o(1)\right)
	 \end{pmatrix},
 \]
 where $o(\cdot)$ denotes a higher order term.
 Hence the coordinate functions of 
 $\hat f_{k,t} = Fe_3 F^*$
 as in \eqref{eq:herm-mink} are expressed as
 \begin{align*}
    x_0 &= -r^{-(k+\nu_0)}\bigl(\alpha_0+o(1)\bigr), \\
    x_3 &= -r^{-(k+\nu_0)}\bigl(\alpha_0+o(1)\bigr), \quad
    x_1 + i x_2 = r^{-\nu_0}e^{ik\theta}\alpha_1 \bigl(1+o(1)\bigr),
 \end{align*}
 where $\alpha_0$, $\alpha_1$ 
 are positive real numbers and $\zeta=re^{i\theta}$.
 Hence $x_0\to -\infty$ as $r\to 0$ and 
 \[
    x_1 + i x_2 = C_1 e^{ik\theta}x_0^{\frac{\nu_0}{k+\nu_0}}
                  \bigl(1+o(1)\bigr),\qquad
    x_3 = x_0 \bigl(1+o(1)\bigr),
 \]
  where $C_1$ is a non-zero constant.
 Similarly, the end $(\infty,\infty)$ is expressed as 
 \[
    x_1 + i x_2 = C_2 e^{ik\theta}x_0^{\frac{\nu_{\infty}}{k+\nu_{\infty}}}
                  \bigl(1+o(1)\bigr),\qquad
    x_3 = x_0 \bigl(1+o(1)\bigr),
 \]
 where $\nu_{\infty}$ is as in \eqref {eq:g-order2} and 
 $C_2$ is a non-zero constant.
 Since $\nu_0\neq \nu_{\infty}$, 
 these two ends do not intersect each other
 in sufficiently small neighborhood of the ends.
 Moreover, each end has no self intersection if and only if $k=1$, or 
 $k=2$ (notice that $M_2$ is a double cover of $M'_2$).
\end{proof}

\appendix
\section{Reflections and fundamental groups}
\label{sec:ref}
In this appendix, we review the properties of the fundamental
group and reflections on Riemann surfaces,
which were used in Section~\ref{sec:deform}.
\subsection{Properties of reflections}
\label{sub:reflections}
Let $M$ be a connected Riemann surface and fix a base point $o\in M$.
We denote  the set of continuous paths starting at $o$ by
\[
    C_o(M):=\{\gamma\colon{}[0,1]\to M;\, 
          \text{$\gamma$ is continuous and $\gamma(0)=o$}\}.
\]
We denote by $[\gamma]$ the homotopy class containing $\gamma\in C_o(M)$.
Then the universal covering space $\widetilde M$ can be canonically
identified with the quotient space $\{[\gamma]\,;\, \gamma\in C_o(M)\}$,
and the covering projection is given by
\[
     \pi:\widetilde M\ni [\gamma]\longmapsto \gamma(1)\in M.
\]

When two paths $\gamma_j\colon{}[0,1]\to M$ $(j=1,2)$ satisfy
$\gamma_1(1)=\gamma_2(0)$,
we denote by $\gamma_1*\gamma_2$ the path obtained by joining 
$\gamma_1$ and $\gamma_2$ as follows:
\begin{equation}\label{eq:join}
   \gamma_1*\gamma_2(u):=
    \begin{cases} 
     \gamma_1(2u)   & \mbox{if $u\in [0, 1/2]$}, \\
     \gamma_2(2u-1) & \mbox{if $u\in [1/2, 1]$}. 
   \end{cases}
\end{equation}
We denote the set of loops at $o$ by 
$L_o(M):=\{\gamma\in C_o(M)\,;\,\gamma(1)=o\}$.
The {\em fundamental group\/} $\pi_1(M)$ is the set of homotopy classes
of $L_o(M)$ with group multiplication induced from \eqref{eq:join},
which acts on $\widetilde M$ as {\em covering transformations},
as follows:
\begin{equation}\label{eq:covering-transf}
     \tau\colon{}\widetilde M\ni [\gamma]\longrightarrow
             [\gamma_1*\gamma]\in\widetilde M
	     \qquad \bigl(\tau=[\gamma_1]\in\pi_1(M)\bigr),
\end{equation}
where $\gamma_1\in L_o(M)$ and $\gamma\in C_o(M)$.

An orientation-reversing conformal involution $\mu:M\to M$ is called a
{\it reflection\/} of $M$.
In this section, 
we want to define a reflection of $\widetilde{M}$
as a lift of $\mu$.
Let $P_{\mu}\colon{}[0,1]\to M$ be a path 
starting from $o$ and ending at $\mu(o)$.
We suppose that  $P_{\mu}$ is $\mu$-invariant, that is,
\begin{equation}\label{eq:mu-inv}
    \mu\circ P_{\mu} (u) = P_{\mu}(1-u)=P_{\mu}{}^{-1}(u)\qquad
     (u\in [0,1]).
\end{equation}
Now we define a map
$\tilde \mu:\widetilde M\longrightarrow \widetilde M$ by
\begin{equation}\label{eq:lift-refl}
     \tilde \mu([\gamma]):=
            [P_{\mu}*(\mu\circ \gamma)],
\end{equation}
which is called the {\it lift\/} of $\mu$ with respect to $P_{\mu}$.
Then the following assertion holds:
\begin{lemma}\label{lem:involution}
 The lift $\tilde \mu$ is an involution of $\widetilde M$.
\end{lemma}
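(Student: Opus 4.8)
The plan is to work directly with the explicit description of $\widetilde M$ as homotopy classes of paths from $o$, on which $\tilde\mu$ is defined by \eqref{eq:lift-refl}. First I would record that $\tilde\mu$ really is a well-defined self-map of $\widetilde M$: since $\mu$ is continuous, homotopic paths $\gamma_0\simeq\gamma_1$ (rel the starting point $o$) are carried to homotopic paths $\mu\circ\gamma_0\simeq\mu\circ\gamma_1$, hence $P_\mu*(\mu\circ\gamma_0)\simeq P_\mu*(\mu\circ\gamma_1)$, so the formula $\tilde\mu([\gamma])=[P_\mu*(\mu\circ\gamma)]$ does not depend on the representative; and $P_\mu*(\mu\circ\gamma)$ starts at $P_\mu(0)=o$, so it lies in $C_o(M)$. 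One also checks $\pi\circ\tilde\mu=\mu\circ\pi$ (evaluate both sides at $[\gamma]$: the left side is $(\mu\circ\gamma)(1)=\mu(\gamma(1))$), so $\tilde\mu$ is a continuous lift of $\mu\circ\pi$.

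The core of the argument is to compute $\tilde\mu\circ\tilde\mu$. Applying \eqref{eq:lift-refl} twice,
\[
  \tilde\mu\bigl(\tilde\mu([\gamma])\bigr)
   = \bigl[\,P_\mu*\bigl(\mu\circ(P_\mu*(\mu\circ\gamma))\bigr)\,\bigr].
\]
Since $\mu$ commutes with concatenation, i.e.\ $\mu\circ(\alpha*\beta)=(\mu\circ\alpha)*(\mu\circ\beta)$ (compare \eqref{eq:join}), and since $\mu$ is an involution of $M$, the inner path equals $(\mu\circ P_\mu)*(\mu\circ\mu\circ\gamma)=(\mu\circ P_\mu)*\gamma$. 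Now the $\mu$-invariance \eqref{eq:mu-inv} of the path $P_\mu$ gives $\mu\circ P_\mu=P_\mu^{-1}$, so
\[
  \tilde\mu\bigl(\tilde\mu([\gamma])\bigr)
   = \bigl[\,P_\mu*(P_\mu^{-1}*\gamma)\,\bigr].
\]
Finally, concatenation of paths is associative up to homotopy and $P_\mu*P_\mu^{-1}$ is homotopic rel endpoints to the constant path $c_o$ at $o$; hence $P_\mu*(P_\mu^{-1}*\gamma)\simeq (P_\mu*P_\mu^{-1})*\gamma\simeq c_o*\gamma\simeq\gamma$. Therefore $\tilde\mu\circ\tilde\mu([\gamma])=[\gamma]$ for every $[\gamma]\in\widetilde M$, i.e.\ $\tilde\mu^2=\identity_{\widetilde M}$. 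In particular $\tilde\mu$ is a bijection, and being a continuous lift of an orientation-reversing conformal map it is itself an orientation-reversing conformal homeomorphism, so it is an involution of $\widetilde M$.

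There is no serious obstacle here; the only point that needs care is bookkeeping with reparametrizations — the equalities above hold only up to homotopy, so one must phrase everything at the level of homotopy classes — and making sure the hypothesis \eqref{eq:mu-inv} on $P_\mu$ is invoked exactly where $\mu\circ P_\mu$ must be replaced by $P_\mu^{-1}$. (If one wanted strict equality rather than homotopy in intermediate steps, one could instead fix a convention making $*$ strictly associative, but this is not needed for the statement.)
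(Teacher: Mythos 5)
Your argument is correct and is essentially the paper's own proof: apply \eqref{eq:lift-refl} twice, use the $\mu$-invariance \eqref{eq:mu-inv} to replace $\mu\circ P_{\mu}$ by $P_{\mu}^{-1}$, and cancel $P_{\mu}*P_{\mu}^{-1}$ up to homotopy together with $\mu\circ\mu=\identity_M$. The extra remarks on well-definedness of $\tilde\mu$ and the relation $\pi\circ\tilde\mu=\mu\circ\pi$ are harmless additions that the paper leaves implicit.
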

\begin{proof}
 By \eqref{eq:mu-inv}, 
 $P(\mu)^{-1}* P(\mu)$
 is homotopic to the constant map $[0,1]\ni u \mapsto o\in M$.
 Then 
 $\tilde \mu\circ \tilde \mu([\gamma])=
          \tilde \mu([P_{\mu}*(\mu\circ \gamma)])
          =[P_{\mu}^{-1}* P_{\mu}*(\mu\circ \mu\circ \gamma)]
         =[\gamma]$.
\end{proof}

We now prove the following:
\begin{proposition}\label{prop:covering}
 Let $\mu_1$,\dots, $\mu_{2r}$
 be a sequence of reflections on $M$, and 
 take the lift $\tilde\mu_j$ of $\mu_j$ with 
 respect to the curve $P_{\mu_j}$ for each $j=1,\dots,2r$.
 Suppose that
 $\mu_1\circ \cdots \circ \mu_{2r}$ is the identity map $\identity_M$
 on $M$.
 Then
 \[
   \tilde \mu_1\circ \cdots \circ \tilde\mu_{2r}:
                   \widetilde M\longrightarrow \widetilde M
 \]
 gives a covering transformation on the universal covering 
 $\widetilde M$ of $M$ which corresponds to the loop
 \[
     P_{\mu_1}*(\mu_1\circ P_{\mu_2})*
     (\mu_1\circ\mu_2\circ P_{\mu_3})*
     \dots *
     (\mu_1\circ\mu_2\circ\dots\circ \mu_{2r-1}\circ P_{\mu_{2r}}).
 \]
\end{proposition}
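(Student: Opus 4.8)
The plan is to compute the composite $\tilde\mu_1\circ\cdots\circ\tilde\mu_{2r}$ directly from the definition \eqref{eq:lift-refl} of the lifts, to read off an explicit formula for its action on an arbitrary class $[\gamma]\in\widetilde M$, and then to recognize that formula as the action of a covering transformation via \eqref{eq:covering-transf}.

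First I would prove, by induction on $m$, the identity
\[
 \tilde\mu_1\circ\cdots\circ\tilde\mu_m([\gamma])
 =\bigl[P_{\mu_1}*(\mu_1\circ P_{\mu_2})*\cdots*(\mu_1\circ\cdots\circ\mu_{m-1}\circ P_{\mu_m})*(\mu_1\circ\cdots\circ\mu_m\circ\gamma)\bigr]
\]
for every $\gamma\in C_o(M)$. The base case $m=1$ is exactly \eqref{eq:lift-refl}. For the inductive step one applies $\tilde\mu_{m+1}$ first, replacing $[\gamma]$ by $[P_{\mu_{m+1}}*(\mu_{m+1}\circ\gamma)]$, and then invokes the induction hypothesis with this new path; the last factor $\mu_1\circ\cdots\circ\mu_m\circ(P_{\mu_{m+1}}*(\mu_{m+1}\circ\gamma))$ splits as $(\mu_1\circ\cdots\circ\mu_m\circ P_{\mu_{m+1}})*(\mu_1\circ\cdots\circ\mu_{m+1}\circ\gamma)$, since $\nu\circ(\alpha*\beta)=(\nu\circ\alpha)*(\nu\circ\beta)$ for any continuous map $\nu$, and this is precisely the asserted formula at level $m+1$. (Throughout, homotopy classes are taken with endpoints fixed, and each $\tilde\mu_j$ is already known to be a well-defined map on $\widetilde M$.)

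Then I would set $m=2r$ and use the hypothesis $\mu_1\circ\cdots\circ\mu_{2r}=\identity_M$: the final factor $\mu_1\circ\cdots\circ\mu_{2r}\circ\gamma$ is just $\gamma$, so $\tilde\mu_1\circ\cdots\circ\tilde\mu_{2r}([\gamma])=[L*\gamma]$, where $L:=P_{\mu_1}*(\mu_1\circ P_{\mu_2})*\cdots*(\mu_1\circ\cdots\circ\mu_{2r-1}\circ P_{\mu_{2r}})$ is the loop in the statement. To see that $L$ really is a loop based at $o$: since $P_{\mu_j}$ runs from $o$ to $\mu_j(o)$, the factor $\mu_1\circ\cdots\circ\mu_{j-1}\circ P_{\mu_j}$ runs from $\mu_1\circ\cdots\circ\mu_{j-1}(o)$ to $\mu_1\circ\cdots\circ\mu_j(o)$, so the concatenation telescopes — it starts at $o$ and ends at $\mu_1\circ\cdots\circ\mu_{2r}(o)=o$ — and hence $L\in L_o(M)$. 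Comparing the map $[\gamma]\mapsto[L*\gamma]$ with \eqref{eq:covering-transf} shows that $\tilde\mu_1\circ\cdots\circ\tilde\mu_{2r}$ is exactly the covering transformation $\tau=[L]\in\pi_1(M)$, which is the assertion.

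There is no delicate analysis here; the only thing demanding care — and thus the main place an error could creep in — is the bookkeeping: keeping the order of composition straight, conjugating each $P_{\mu_j}$ by the correct partial composite $\mu_1\circ\cdots\circ\mu_{j-1}$, and verifying the telescoping of base points so that $L$ genuinely closes up. I note that the $\mu$-invariance condition \eqref{eq:mu-inv} on the paths $P_{\mu_j}$ is not used in this proposition (it was needed only for the involution property in Lemma~\ref{lem:involution}); all that is used is that each $P_{\mu_j}$ joins $o$ to $\mu_j(o)$.
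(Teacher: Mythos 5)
Your proposal is correct and follows essentially the same route as the paper: the paper's proof simply asserts the composite formula $\tilde\mu_1\circ\cdots\circ\tilde\mu_{2r}([\gamma])=[P_{\mu_1}*(\mu_1\circ P_{\mu_2})*\cdots*(\mu_1\circ\cdots\circ\mu_{2r-1}\circ P_{\mu_{2r}})*(\mu_1\circ\cdots\circ\mu_{2r}\circ\gamma)]$ and then invokes $\mu_1\circ\cdots\circ\mu_{2r}=\identity_M$, while you supply the same formula via an explicit induction on $m$ together with the endpoint bookkeeping. Your added details (the splitting $\nu\circ(\alpha*\beta)=(\nu\circ\alpha)*(\nu\circ\beta)$, the telescoping showing $L\in L_o(M)$, and the remark that \eqref{eq:mu-inv} is not needed here) are all accurate and consistent with the paper's argument.
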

We call
$\tilde \mu_1\circ \dots \circ \tilde\mu_{2r}$
the covering transformation associated with $\mu_1$,\dots, $\mu_{2r}$.
\begin{proof}[Proof of Proposition~\ref{prop:covering}]
 For the sake of simplicity, we write
 $P_j:=P_{\mu_j}$.
 Then for each $\gamma\in C_o(M)$, it holds that
 \[
    \tilde \mu_1\circ \dots \circ \tilde\mu_r([\gamma])
    =[P_1* (\mu_1\circ P_2) * \dots *
      (\mu_1\circ\dots\circ \mu_{2r-1}\circ P_{2r})*
     (\mu_1\circ\dots\circ\mu_{2r}\circ\gamma)].
 \]
 Since  $\mu_1\circ\dots\circ\mu_{2r}=\identity_M$, 
 we have the conclusion.
\end{proof}

\begin{remark}\label{rmk:covering}
 Proposition \ref{prop:covering} gives  
 a method to explicitly write down the isomorphism between 
 the covering transformation group
 and the fundamental group $\pi_1(M)$.
\end{remark}

\subsection{A certain analytic property of reflections}
Let $G$ be a meromorphic function and
$Q$ a holomorphic 2-differential on the Riemann surface $M$.
Such a pair $(G,Q)$ is called {\em admissible\/}
if 
\begin{equation}\label{eq:app-lift-metric}
   ds^2_{\#}:=\bigl(1+|G|^2\bigr)^2\left|\frac{Q}{dG}\right|^2
\end{equation}
gives a (positive definite) Riemannian metric on $M$.

Let $\pi\colon{}\widetilde M\to M$ be the universal covering as in 
Appendix \ref{sub:reflections}, and let $\tilde o\in\widetilde M$ be
the point
corresponding to the constant path at $o\in M$ 
(then $\pi(\tilde o)=o$ holds).
For an admissible pair $(G,Q)$ on $M$, we define
\[
    \widetilde G := G\circ\pi,\qquad
    \widetilde Q := Q\circ\pi.
\]
Then $(\widetilde G,\widetilde Q)$ is an admissible pair on 
$\widetilde M$ which is invariant under the covering transformations.
Consider the following ordinary differential equation
\begin{equation}\label{eq:app-ode}
   dF F^{-1}=\Psi,\qquad 
    \Psi:=
    \begin{pmatrix}
     \widetilde G & -\widetilde G^2 \\ 1 & -\widetilde G\hphantom{^{2}}
    \end{pmatrix}\frac{\widetilde Q}{d\widetilde G}
\end{equation}
with the initial condition
\begin{equation}\label{eq:app-ode-init}
   F(\tilde o)=a \in\SL_2\C.
\end{equation}
\begin{proposition}\label{prop:monodromy-ode}
 For each $a\in\SL_2\C$, there exists a unique holomorphic null
 immersion $F\colon{}\widetilde M\to \SL_2\C$ satisfying 
 \eqref{eq:app-ode} and \eqref{eq:app-ode-init}.
 For such an immersion $F$, there exists a representation
 $\rho^{}_F\colon{}\pi_1(M)\to \SL_2\C$ such that
 \[
     F\circ\tau = F\rho^{}_F(\tau)^{-1}\qquad
     \bigl(\tau\in\pi_1(M)\bigr),
 \]
 where $\tau$ is considered as a covering transformation.
 Moreover, we set a holomorphic function $g$ on $\widetilde M$
 as $g=-dF_{12}/dF_{11}=-dF_{22}/dF_{21}$  
 {\rm(}that is, $g$ 
 is the secondary Gauss map{\rm)}
 where $F=(F_{ij})$.
 Then it satisfies $g\circ\tau = \rho^{}_F(\tau)\star g$,
 where $\star$ denotes the M\"obius transformation{\rm:}
\begin{equation}\label{eq:app-moebius}
  a\star g := \frac{a_{11}g+a_{12}}{a_{21}g+a_{22}}\qquad
  \bigl(a=(a_{ij})\in\SL_2\C\bigr).
\end{equation} 
\end{proposition}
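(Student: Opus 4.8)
The plan is to recognise this as the Bryant-type representation for \cmcone{} faces (the de~Sitter analogue of the classical one for $H^3$, cf.\ \cite{RUY,F}): existence and uniqueness of $F$ is linear-ODE theory on a simply connected surface, the null-immersion property is a one-line matrix identity, and the monodromy statements follow from the invariance of the data under deck transformations together with the fact that two solutions of a given left-invariant linear system differ by right multiplication by a constant.

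First I would check that the right-hand side of \eqref{eq:app-ode} is a globally defined, nowhere-vanishing, trace-free holomorphic $\sl_2\C$-valued $1$-form $\Psi$ on $\widetilde M$. Put $\omega:=\widetilde Q/d\widetilde G$. Admissibility of $(G,Q)$ --- that \eqref{eq:app-lift-metric} is a genuine Riemannian metric --- means precisely that $\omega$ is holomorphic and $(1+|\widetilde G|^2)|\omega|$ is everywhere finite and nonzero; hence near a pole of $\widetilde G$ the form $\widetilde G^{2}\omega$ is holomorphic and nonzero, so $\omega=\widetilde G^{-2}(\widetilde G^{2}\omega)$ and $\widetilde G\omega=\widetilde G^{-1}(\widetilde G^{2}\omega)$ are holomorphic there, while away from the poles $\omega$ itself is holomorphic and nonzero. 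Thus every entry of $\Psi$ is holomorphic and $\Psi$ never vanishes (its lower-left entry is $\omega$, and the only place $\omega$ can vanish is at a pole of $\widetilde G$, where the upper-right entry $-\widetilde G^{2}\omega$ is nonzero). Its trace is $\widetilde G-\widetilde G=0$. Since $\Psi$ has type $(1,0)$ on a Riemann surface, $d\Psi=0$ and $\Psi\wedge\Psi=0$ hold automatically, so the integrability condition for $dF\,F^{-1}=\Psi$ is satisfied; as $\widetilde M$ is simply connected, the initial value problem \eqref{eq:app-ode}--\eqref{eq:app-ode-init} has a unique holomorphic solution $F\colon\widetilde M\to\M_2(\C)$ (solve the linear system along paths issuing from $\tilde o$; homotopy invariance of solutions of a holomorphic linear ODE makes it single-valued). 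Since $d(\det F)=(\det F)\trace\Psi=0$ and $\det F(\tilde o)=\det a=1$, we get $F\in\SL_2\C$; as $F$ is invertible and $\Psi\ne0$, $dF=\Psi F\ne0$, so $F$ is an immersion; and for a local coordinate $z$, $\det(dF/dz)=(\det F)\det\bigl(\begin{smallmatrix}\widetilde G&-\widetilde G^{2}\\1&-\widetilde G\end{smallmatrix}\bigr)(\omega/dz)^{2}=0$, so $F$ is null. Uniqueness is clear from the uniqueness of solutions of the initial value problem.

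Next I would build the representation. For a covering transformation $\tau$ of $\widetilde M$ one has $\widetilde G\circ\tau=\widetilde G$ and $\widetilde Q\circ\tau=\widetilde Q$ (they are pulled back from $M$), hence $\tau^{*}\Psi=\Psi$, so $F\circ\tau$ also satisfies $d(F\circ\tau)=\Psi\,(F\circ\tau)$. Two solutions of this system differ by right multiplication by a constant matrix, since $d\bigl(F^{-1}(F\circ\tau)\bigr)=-F^{-1}\Psi(F\circ\tau)+F^{-1}\Psi(F\circ\tau)=0$; therefore there is a unique $\rho^{}_F(\tau)\in\SL_2\C$ with $F\circ\tau=F\,\rho^{}_F(\tau)^{-1}$. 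Using the action \eqref{eq:covering-transf}, under which $\tau_1\tau_2$ acts as $\tau_1\circ\tau_2$, and applying this relation twice gives $\rho^{}_F(\tau_1\tau_2)=\rho^{}_F(\tau_1)\rho^{}_F(\tau_2)$, so $\rho^{}_F$ is a representation.

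Finally I would read off the secondary Gauss map. The rows of $\Psi$ are $\widetilde G\,(1,-\widetilde G)\,\omega$ and $(1,-\widetilde G)\,\omega$, so $dF=\Psi F$ gives $(dF_{11},dF_{12})=\widetilde G\,(P,R)\,\omega$ and $(dF_{21},dF_{22})=(P,R)\,\omega$, where $P:=F_{11}-\widetilde GF_{21}$ and $R:=F_{12}-\widetilde GF_{22}$. Hence $-dF_{12}/dF_{11}=-dF_{22}/dF_{21}=-R/P=:g$, a well-defined meromorphic function on $\widetilde M$ since $P$ and $R$ cannot vanish simultaneously (that would force $dF=0$). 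For $\tau$ as above, $F\circ\tau=F\,\rho^{}_F(\tau)^{-1}$ with a constant matrix gives $d(F\circ\tau)=(dF)\,\rho^{}_F(\tau)^{-1}$; writing $\rho^{}_F(\tau)^{-1}=\bigl(\begin{smallmatrix}a&b\\c&d\end{smallmatrix}\bigr)$, so that $\rho^{}_F(\tau)=\bigl(\begin{smallmatrix}d&-b\\-c&a\end{smallmatrix}\bigr)$, the first row of $d(F\circ\tau)$ is proportional to $(Pa+Rc,\,Pb+Rd)$, whence, using $R/P=-g$, $g\circ\tau=-(Pb+Rd)/(Pa+Rc)=(dg-b)/(a-cg)=\rho^{}_F(\tau)\star g$ in the notation of \eqref{eq:app-moebius}. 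I do not expect a genuine obstacle here: the only spots calling for care are the preliminary assertion that $\Psi$ is holomorphic and nowhere zero across the poles of $\widetilde G$ (exactly what admissibility delivers) and the fact that $g$ is globally meromorphic despite the zeros of $dF_{11}$ (handled by using whichever of the two quotients has nonzero denominator, their equality being the null condition $dF_{11}dF_{22}=dF_{12}dF_{21}$); everything else is routine ODE theory and $2\times2$ matrix algebra, and I expect the author's proof to follow essentially this outline.
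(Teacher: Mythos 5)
Your proof is correct and takes essentially the same route as the paper's (very terse) proof: admissibility makes $\Psi$ a holomorphic $\sl_2\C$-valued one-form so existence and uniqueness of $F$ follow from linear ODE theory on the simply connected $\widetilde M$, the $\tau$-invariance of $\Psi$ yields the representation $\rho^{}_F$, and the M\"obius transformation law for $g$ is a direct matrix computation. You merely fill in the details that the paper compresses into ``the existence and uniqueness of $F$ follows'' and ``the final assertion can be shown directly.''
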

We call $\rho^{}_F$ the {\em representation associated to $F$}.
\begin{proof}[Proof of Proposition~\ref{prop:monodromy-ode}]
 By admissibility, $\Psi$ is an $\sl_2\!\C$-valued holomorphic one-form.
 Then the existence and uniqueness of $F$ follows.
 Since $\Psi$ is $\tau$-invariant, $F\circ\tau$ is also a solution 
 of \eqref{eq:app-ode}.
 Hence the existence of $\rho^{}_F$ follows.
 The final assertion can be shown directly.
\end{proof}

Let $\mu$ be a reflection on $M$.
Then an admissible pair $(G,Q)$ is said to be
{\em $\mu$-invariant\/}
if 
\begin{equation}\label{eq:mu-inv-data}
   ds^2_{\#}\circ\mu=ds^2_{\#}\qquad
    \text{and}\qquad
    \overline{Q\circ\mu}=Q
\end{equation}
hold, where $ds^2_{\#}$ is the metric as in \eqref{eq:app-lift-metric}.
\begin{lemma}[{See \cite[Lemma 4.2]{RUY}}]\label{lem:change-G}
 Let $(G,Q)$ be an $\mu$-invariant admissible pair on $M$,
 where $\mu$ is a reflection of $M$ whose fixed point set is not empty.
 Then there exists a matrix $\sigma(\mu)$ such that
 \begin{equation}\label{eq:change-G}
   \overline{G\circ \mu}=\sigma(\mu)\star G,
 \end{equation}
 where $\star$ denotes the M\"obius transformation in
 \eqref{eq:app-moebius}.
 Moreover, such a matrix $\sigma(\mu)$
 is unique up to $\pm$-ambiguity, and 
 is written in the following form{\rm:}
 \[
   \sigma(\mu)=\begin{pmatrix}
		q_\mu & i s_\mu\\
		is_\mu & \overline{q_\mu}
		\end{pmatrix},\quad
    \left(q_\mu\in\C,~s_\mu\in\R,~|q_\mu|^2+(s_\mu)^2=1\right).
 \]
 In particular $\overline{\sigma(\mu)}\sigma(\mu)=\id$ holds,
 where $\id$ is the identity matrix.
\end{lemma}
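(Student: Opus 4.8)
The plan is to compare the given meromorphic function $G$ with $\hat{G}:=\overline{G\circ\mu}$, which is again meromorphic on $M$ since $\mu$, being an orientation-reversing conformal map, is anti-holomorphic. First I would show that $G$ and $\hat{G}$ induce the same metric on $M$ when pulling back the round metric $d\Omega^2:=4(1+|w|^2)^{-2}|dw|^2$ of $S^2=\C\cup\{\infty\}$; second, deduce from this that $\hat{G}=A\star G$ for some $A\in\SU(2)$, uniquely determined up to sign; third, use that $\mu$ is an involution whose fixed point set is nonempty to show that $A$ has the claimed form, and set $\sigma(\mu):=A$.

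For the metric identity: writing $Q=q\,dz^2$ and $dG=G'\,dz$ in a local holomorphic coordinate $z$, formula \eqref{eq:app-lift-metric} reads $ds^2_{\#}=(1+|G|^2)^2|q|^2|G'|^{-2}|dz|^2$, so $G^*(d\Omega^2)=4|G'|^2(1+|G|^2)^{-2}|dz|^2=4|Q|^2/ds^2_{\#}$, where $|Q|^2:=|q|^2|dz|^4$ is the natural density attached to $Q$ (as a quotient of a $(2,2)$-density by the metric $ds^2_{\#}$ this is again a metric). Since $d\Omega^2$ is invariant under $w\mapsto\bar w$, we have $\hat{G}^*(d\Omega^2)=(G\circ\mu)^*(d\Omega^2)=\mu^*\bigl(G^*(d\Omega^2)\bigr)$; and $ds^2_{\#}$ is $\mu$-invariant by \eqref{eq:mu-inv-data}, while $|Q|^2$ is $\mu$-invariant because $\overline{Q\circ\mu}=Q$ forces $|Q\circ\mu|=|Q|$. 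Hence $\hat{G}^*(d\Omega^2)=\mu^*\bigl(4|Q|^2/ds^2_{\#}\bigr)=4|Q|^2/ds^2_{\#}=G^*(d\Omega^2)$.

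Next, $G$ is non-constant (otherwise $ds^2_{\#}$ degenerates), so near a point $p_0$ with $dG(p_0)\neq 0$ the germ $\Phi:=\hat{G}\circ G^{-1}$ is a well-defined holomorphic local isometry of $(S^2,d\Omega^2)$, hence the germ of a global isometry; being holomorphic it lies in $\PSU(2)$, so $\Phi=A\star(\,\cdot\,)$ for some $A\in\SU(2)$, unique up to sign. The meromorphic functions $A\star G$ and $\hat{G}$ agree near $p_0$, hence on all of the connected surface $M$ by the identity theorem, so $\hat{G}=A\star G$. Now apply $\overline{(\,\cdot\,)\circ\mu}$ to this identity: using $\mu\circ\mu=\identity_M$ one gets $G=\bigl(\overline{A}\,A\bigr)\star G$, and since $\overline{A}\,A$ fixes the open set $G(M)$ pointwise, $\overline{A}\,A=\pm\id$. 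If the sign were $-$, then any $A=\begin{pmatrix}a&b\\-\bar b&\bar a\end{pmatrix}\in\SU(2)$ with $\overline{A}\,A=-\id$ must have $a=0$ and $|b|=1$, so $G\circ\mu=r\circ G$ with $r(w)=-1/\bar w$; but a fixed point $p_0$ of $\mu$ would force $r$ to fix $G(p_0)$, which is impossible since $-1/\bar w=w$ has no solution in $\C\cup\{\infty\}$. Hence $\overline{A}\,A=\id$, and for $A=\begin{pmatrix}a&b\\-\bar b&\bar a\end{pmatrix}\in\SU(2)$ this forces $\bar b=-b$; writing $q_\mu:=a$ and $b=:is_\mu$ with $s_\mu\in\R$ gives $\sigma(\mu):=A=\begin{pmatrix}q_\mu&is_\mu\\is_\mu&\bar q_\mu\end{pmatrix}$ with $|q_\mu|^2+s_\mu^2=\det A=1$, and $\overline{\sigma(\mu)}\,\sigma(\mu)=\id$ is precisely the relation just established. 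Uniqueness up to $\pm$ is clear, since $A\star G\equiv A'\star G$ forces $A^{-1}A'$ to fix $G(M)$.

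The main obstacle is the passage from pointwise equality of the induced metrics to the global algebraic statement $\hat{G}=A\star G$ with a single constant $A$: because $G$ is generally not injective and has branch points, $G^{-1}$ is only locally defined, so $\Phi$ cannot be formed globally. The way around this is to read off $A$ from the germ of $\Phi$ at one regular point and then invoke the identity theorem on the connected $M$, using that $\hat{G}=\overline{G\circ\mu}$ is automatically meromorphic (in particular extends across the branch points of $G$). Everything else is routine linear algebra in $\SU(2)$ together with the elementary fact that $w\mapsto -1/\bar w$ is fixed-point-free.
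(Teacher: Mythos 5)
Your proof is correct and follows essentially the same route as the paper: you show that $G$ and $\overline{G\circ\mu}$ pull back the same spherical metric (the paper phrases this as $\mu$-invariance of $ds^2_{FS}=4|dG|^2/(1+|G|^2)^2$, so both are developing maps and differ by an element of $\SU_2$), then use $\mu\circ\mu=\identity_M$ to get $\overline{\sigma}\sigma=\pm\id$ and a fixed point of $\mu$ to exclude the sign $-\id$, exactly as in the paper. The only difference is that you spell out the developing-map uniqueness step via a local inverse of $G$ at a regular point plus the identity theorem, which the paper leaves implicit.
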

\begin{proof}
 By \eqref{eq:mu-inv-data}, the pull-back
 $ds^2_{FS}:=4|dG|^2/(1+|G|^2)^2$ of the Fubini-Study metric 
 of $\C\cup\{\infty\}$ by $G$ is $\mu$-invariant.
 Hence $\overline{G\circ\mu}$ is an orientation-preserving 
 developing map of $ds^2_{FS}$, as well as $G$.
 Then there exists $\sigma\in\SU_2$ such that
 $\overline{G\circ\mu}=\sigma\star G$.
 Since 
 $G = G\circ\mu\circ\mu=
     \overline{\sigma}\sigma\star G$,
 $\bar\sigma\sigma=\pm\id$ holds.
 If $\bar\sigma\sigma=-\id$ holds, 
 \[
     \sigma = \pm \begin{pmatrix} 0 & -1 \\
		                  1 & \hphantom{-}0
		  \end{pmatrix}
 \]
 because $\sigma\in\SU_2$.
 In this case, for a fixed point $z$ of $\mu$, 
 \[
    \overline{G(z)}=\overline{G\circ\mu(z)}=\sigma\star G(z) =
   -\frac{1}{G(z)},\qquad\text{that is,}\quad
   |G(z)|^2=-1
 \]
 holds, which is impossible.
 Then $\overline{\sigma}\sigma=\id$, and by a direct calculation
 we have the conclusion.
\end{proof}

\begin{theorem}\label{thm:app-refl-mat}
 Let $(G,Q)$ be a $\mu$-invariant admissible pair
 satisfying \eqref{eq:change-G}, and
 take a lift $\tilde\mu\colon{}\widetilde M\to\widetilde M$
 as in \eqref{eq:lift-refl}.
 Assume  $F:\widetilde M\to \SL_{2}\C$ satisfies \eqref{eq:app-ode}.
 Then there exists $\rho^{}(\tilde\mu)\in \SL_{2}\C$ such that
 \begin{equation}\label{eq:refl-mono}
   \overline{F\circ \tilde\mu}=\sigma(\mu) F \rho^{}(\tilde\mu)^{-1},  
 \end{equation}
 where $\sigma(\mu)$ is as in Lemma~\ref{lem:change-G}.
 Moreover, $\rho^{}(\tilde\mu)$ is written as 
 \begin{equation}\label{eq:rho-form}
   \rho^{}(\tilde\mu)=
   \begin{pmatrix}
      q & is_{1}\\
       is_{2} & \overline{q}
   \end{pmatrix}\qquad
   \left(
      q\in\C,~
      s_{j}\in\R~ (j=1,2),~
      |q|^2+s_{1}s_{2}=1
   \right).
 \end{equation}
\end{theorem}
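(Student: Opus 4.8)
The plan is to recognize $\widehat F:=\overline{F\circ\tilde\mu}$ as, up to a constant gauge, another solution of the linear system \eqref{eq:app-ode}, and then to pin down the resulting constant by using that $\tilde\mu$ is an involution.

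First I would check that $\widehat F\colon\widetilde M\to\SL_2\C$ is holomorphic (an anti-holomorphic involution composed with the holomorphic $F$ and then with the orientation-reversing entrywise conjugation $A\mapsto\overline A$) and that $\det\widehat F=1$, and compute its logarithmic derivative. In a local coordinate $z$ near $\tilde p$ and $w$ near $\tilde\mu(\tilde p)$ with $\tilde\mu\colon z\mapsto w=\bar z$, one has $\widehat F(z)=\overline{F(\bar z)}$, so $\frac{d\widehat F}{dz}\widehat F^{-1}=\overline{(dF\,F^{-1})|_{w=\bar z}}$; globally $d\widehat F\,\widehat F^{-1}=\overline{\tilde\mu^{*}\Psi}$ (conjugate the coefficient matrix and replace $d\bar z$ by $dz$). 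Writing $\Psi=\begin{pmatrix}\widetilde G & -\widetilde G^{2}\\ 1 & -\widetilde G\end{pmatrix}\frac{\widetilde Q}{d\widetilde G}$ and pulling back by $\tilde\mu$ and conjugating, the $\mu$-invariance \eqref{eq:mu-inv-data} lifted to $\widetilde M$ gives $\overline{\tilde\mu^{*}\widetilde Q}=\widetilde Q$, while Lemma~\ref{lem:change-G} lifted via $\pi$ gives $\widehat G:=\overline{\widetilde G\circ\tilde\mu}=\sigma(\mu)\star\widetilde G$; since $\overline{d(\widetilde G\circ\tilde\mu)}=d\widehat G$, it follows that $\overline{\tilde\mu^{*}\Psi}$ is exactly the $\Psi$-form built from the admissible pair $(\widehat G,\widetilde Q)$.

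Next I would perform the elementary gauge computation showing that the $\Psi$-form of $(\widehat G,\widetilde Q)=(\sigma(\mu)\star\widetilde G,\widetilde Q)$ equals $\sigma(\mu)\,\Psi\,\sigma(\mu)^{-1}$: for $\sigma=\begin{pmatrix}a & b\\ c & d\end{pmatrix}\in\SL_2\C$ one has $d(\sigma\star\widetilde G)=(c\widetilde G+d)^{-2}d\widetilde G$, so $\widetilde Q/d(\sigma\star\widetilde G)=(c\widetilde G+d)^{2}\,\widetilde Q/d\widetilde G$, and writing $\begin{pmatrix}\widetilde G & -\widetilde G^{2}\\ 1 & -\widetilde G\end{pmatrix}=\begin{pmatrix}\widetilde G\\ 1\end{pmatrix}\begin{pmatrix}1 & -\widetilde G\end{pmatrix}$ one checks, using $\sigma\begin{pmatrix}\widetilde G\\ 1\end{pmatrix}=\begin{pmatrix}a\widetilde G+b\\ c\widetilde G+d\end{pmatrix}$ and $\begin{pmatrix}1 & -\widetilde G\end{pmatrix}\sigma^{-1}=\begin{pmatrix}c\widetilde G+d & -(a\widetilde G+b)\end{pmatrix}$, that the factor $(c\widetilde G+d)^{2}$ cancels and the matrix conjugates correctly. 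Hence $d\widehat F\,\widehat F^{-1}=\sigma(\mu)\,\Psi\,\sigma(\mu)^{-1}$, so $\sigma(\mu)^{-1}\widehat F$ and $F$ both solve $d(\,\cdot\,)(\,\cdot\,)^{-1}=\Psi$ on the simply connected surface $\widetilde M$; therefore $F^{-1}\sigma(\mu)^{-1}\widehat F$ is a constant in $\SL_2\C$, which I name $\rho(\tilde\mu)^{-1}$, and this is \eqref{eq:refl-mono}. To obtain the shape \eqref{eq:rho-form}, precompose \eqref{eq:refl-mono} with $\tilde\mu$ and use $\tilde\mu\circ\tilde\mu=\identity_{\widetilde M}$ (Lemma~\ref{lem:involution}) together with $(\overline h)\circ\tilde\mu=\overline{h\circ\tilde\mu}$ to get $\overline F=\sigma(\mu)(F\circ\tilde\mu)\rho(\tilde\mu)^{-1}$; conjugating and substituting \eqref{eq:refl-mono} back in gives $F=\overline{\sigma(\mu)}\,\sigma(\mu)\,F\,\rho(\tilde\mu)^{-1}\,\overline{\rho(\tilde\mu)}^{-1}$, and since $\overline{\sigma(\mu)}\,\sigma(\mu)=\id$ (Lemma~\ref{lem:change-G}) and $F$ is invertible this forces $\overline{\rho(\tilde\mu)}=\rho(\tilde\mu)^{-1}$; writing $\rho(\tilde\mu)=\begin{pmatrix}a & b\\ c & d\end{pmatrix}$ with $ad-bc=1$ and comparing with $\begin{pmatrix}d & -b\\ -c & a\end{pmatrix}$ yields $d=\bar a$ and $b,c\in i\R$, which with $q:=a$, $b=is_{1}$, $c=is_{2}$ gives $|q|^{2}+s_{1}s_{2}=1$, as claimed.

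I expect the main obstacle to be the bookkeeping in the second paragraph: correctly tracking how the anti-holomorphic involution $\tilde\mu$ followed by entrywise conjugation acts on the matrix $1$-form $\Psi$, and in particular verifying that $\overline{\tilde\mu^{*}\Psi}$ is \emph{precisely} the $\Psi$-form of $(\widehat G,\widetilde Q)$ with no additional twist; this is where sign and ordering errors are easy to make. Everything after that is either the standard fact that two solutions of the same left-invariant linear ODE on a simply connected surface differ by a constant on the right, or elementary $2\times 2$ matrix algebra. This argument is the de Sitter, $\SL_2\C$-valued analogue of the corresponding statement for \cmcone{} surfaces in hyperbolic space in \cite{RUY}.
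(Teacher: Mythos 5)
Your proof is correct and follows essentially the same route as the paper: identify $\overline{F\circ\tilde\mu}$ and $\sigma(\mu)F$ as solutions of the same ODE on the simply connected $\widetilde M$ (via $\overline{\Psi\circ\mu}=\sigma(\mu)\Psi\sigma(\mu)^{-1}$), deduce a constant right factor $\rho(\tilde\mu)^{-1}$, and then use the involution property together with $\overline{\sigma(\mu)}\sigma(\mu)=\id$ to get $\overline{\rho}\rho=\id$ and hence the form \eqref{eq:rho-form}. The only difference is that you spell out the gauge computation for the M\"obius action and the final matrix algebra, which the paper leaves implicit.
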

\begin{proof}
 By \eqref{eq:mu-inv-data} and Lemma~\ref{lem:change-G}, 
 $\overline{\Psi\circ\mu}=\sigma\Psi\sigma^{-1}$ holds, where
 $\sigma=\sigma(\mu)$.
 Then
 \[
   d(\sigma F)(\sigma F)^{-1}=
       \sigma\Psi\sigma^{-1} = \overline{\Psi\circ\mu}
       = d\overline{(F\circ\tilde\mu)}(\overline{F\circ\tilde\mu})^{-1},
 \]
 which implies that
 $\sigma^{-1} F$ and $\overline{F\circ\tilde\mu}$ satisfy the same 
 equation.
 Thus, there exists $\rho^{}=\rho^{}(\tilde\mu)\in \SL_2\C$ such that
 $\overline{F\circ\tilde\mu}=\sigma F \rho^{-1}$.
 Since $\tilde\mu$ is an involution and $\overline{\sigma}\sigma=\id$,
 $\overline{\rho^{}}\rho^{}=\id$ holds.
 Noticing $\rho^{}\in\SL_2\C$, we have \eqref{eq:rho-form}.
\end{proof}

Finally, we write a representation $\rho^{}_F$ as in 
Proposition~\ref{prop:monodromy-ode} of the fundamental group
in terms of reflections.
\begin{definition}\label{def:generator}
 Let $\mu_1$,\dots, $\mu_N$ be mutually distinct reflections on $M$
 and take the lift $\tilde\mu_j$ of $\mu_j$ for $j=1,\dots,N$
 as in \eqref{eq:lift-refl}.
 If each covering transformation $\tau\in \pi_1(M)$
 has an expression
\begin{equation}\label{eq:refl-repr}
 \tau=\tilde{\mu}_{i_1}\circ\cdots \circ \tilde{\mu}_{i_{2r}},
\end{equation}
 then $\{\tilde\mu_1,\dots,\tilde\mu_N\}$
 is called a {\it generator\/} of $\pi_1(M)$.
\end{definition}
We now fix a generator
$\{\tilde\mu_1,\dots,\tilde\mu_N\}$ and take an admissible pair $(G,Q)$
on $M$ which is $\mu_j$-invariant for each $j=1,\dots,N$.
Choose $\sigma(\mu_j)$ as in Lemma~\ref{lem:change-G} 
for each $j=1,\dots,N$.
\begin{lemma}\label{lem:repr-sigma}
 If a covering transformation $\tau\in\pi_1(M)$ is written
 as in \eqref{eq:refl-repr} in terms of a generator,
 \[
   \overline{\sigma(\mu_{i_1})}\sigma(\mu_{i_2})\dots
    \overline{\sigma(\mu_{i_{2r-1}})}
    \sigma(\mu_{i_{2r}})
 \]
 is equal to $\id$ or $-\id$.
\end{lemma}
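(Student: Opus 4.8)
The plan is to feed the reflection identity \eqref{eq:refl-mono} of Theorem~\ref{thm:app-refl-mat} into the word $\tau=\tilde\mu_{i_1}\circ\dots\circ\tilde\mu_{i_{2r}}$ and then compare the result with the monodromy relation for covering transformations supplied by Proposition~\ref{prop:monodromy-ode}. Abbreviate $\sigma_j:=\sigma(\mu_{i_j})$ and $\rho_j:=\rho(\tilde\mu_{i_j})$. First I would record the two-reflection version: applying \eqref{eq:refl-mono} for $\mu_{i_1}$, precomposing with $\tilde\mu_{i_2}$ (here one must use that entrywise conjugation commutes with precomposition, so $\overline{F\circ\tilde\mu_{i_1}\circ\tilde\mu_{i_2}}=\bigl(\overline{F\circ\tilde\mu_{i_1}}\bigr)\circ\tilde\mu_{i_2}$), then using \eqref{eq:refl-mono} again for $\mu_{i_2}$ and taking one more conjugate, gives
\[
  F\circ(\tilde\mu_{i_1}\circ\tilde\mu_{i_2})
   =\overline{\sigma_1}\,\sigma_2\,F\,\rho_2^{-1}\overline{\rho_1}^{-1}.
\]
The decisive point is that the conjugation bars on the $\sigma$'s alternate, so grouping $\tau$ into $r$ consecutive pairs and peeling them off from the left, an easy induction on $r$ yields
\[
  F\circ\tau=\Sigma\,F\,M,\qquad
  \Sigma:=\overline{\sigma_1}\,\sigma_2\cdots\overline{\sigma_{2r-1}}\,\sigma_{2r},
\]
for some constant matrix $M\in\SL_2\C$ (a product of the $\rho_j^{\pm1}$ and $\overline{\rho_j}^{\pm1}$); here $\Sigma$ is precisely the matrix in the statement.

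Next, since $\tau$ is a covering transformation, Proposition~\ref{prop:monodromy-ode} gives $F\circ\tau=F\,\rho^{}_F(\tau)^{-1}$. Comparing the two expressions for $F\circ\tau$ shows $\Sigma=F\,C\,F^{-1}$ on $\widetilde M$, where $C:=\rho^{}_F(\tau)^{-1}M^{-1}\in\SL_2\C$ is constant. Thus $F^{-1}\Sigma F\equiv C$ is constant, and differentiating (using $dF=\Psi F$) gives $0=d(F^{-1}\Sigma F)=F^{-1}[\Sigma,\Psi]F$, i.e.\ $\Sigma$ commutes with $\Psi=\left(\begin{smallmatrix}\widetilde G & -\widetilde G^2\\ 1 & -\widetilde G\end{smallmatrix}\right)\frac{\widetilde Q}{d\widetilde G}$ at every point. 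Since admissibility forces $\widetilde G$ to be non-constant and $\widetilde Q/d\widetilde G$ to be nowhere zero, the coefficient matrix $N(\widetilde G)=\left(\begin{smallmatrix}\widetilde G & -\widetilde G^2\\ 1 & -\widetilde G\end{smallmatrix}\right)$ takes at least two distinct (hence non-proportional, rank-one nilpotent) values; a short linear-algebra check shows that a matrix commuting with two such $N$'s must be a scalar, so $\Sigma=\lambda\,\id$. Finally, each $\sigma_j$ has determinant $1$ by Lemma~\ref{lem:change-G}, hence so does each $\overline{\sigma_j}$ and so does $\Sigma$; therefore $\lambda^2=1$ and $\Sigma=\pm\id$, which is the assertion.

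I expect the only genuine obstacle to be the careful bookkeeping in the first step: one has to track, through the iterated substitutions, exactly which factors get conjugated, since precomposition with the antiholomorphic lifts $\tilde\mu_j$ and entrywise complex conjugation must be interchanged at each stage, and getting the alternation of the bars right is the whole content. The closing ``$\Sigma$ commutes with $\Psi$ implies $\Sigma$ scalar'' step is routine once one notes that admissibility makes $\widetilde G$ non-constant, so that $N(\widetilde G)$ genuinely assumes two essentially different values.
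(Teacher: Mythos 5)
Your argument is correct, but it takes a genuinely different route from the paper's. The paper proves this in one line at the level of the hyperbolic Gauss map: since $\tau$ is a covering transformation, the projected composition satisfies $\mu_{i_1}\circ\dots\circ\mu_{i_{2r}}=\identity_M$, so iterating the relation $\overline{G\circ\mu_{i_j}}=\sigma(\mu_{i_j})\star G$ of Lemma~\ref{lem:change-G} (and using $\overline{a\star g}=\bar a\star\bar g$, which produces exactly the alternating bars) gives $G=\bigl(\overline{\sigma(\mu_{i_1})}\sigma(\mu_{i_2})\cdots\overline{\sigma(\mu_{i_{2r-1}})}\sigma(\mu_{i_{2r}})\bigr)\star G$; since $G$ is a non-constant meromorphic function, the M\"obius transformation fixing it must be trivial, and a matrix of $\SL_2\C$ acting trivially is $\pm\id$. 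You instead work at the level of the frame $F$: you iterate \eqref{eq:refl-mono} to get $F\circ\tau=\Sigma F M$, compare with $F\circ\tau=F\rho^{}_F(\tau)^{-1}$ from Proposition~\ref{prop:monodromy-ode}, and deduce $[\Sigma,\Psi]=0$, finishing with the (correct) observation that a matrix commuting with $N(\widetilde G)$ at two points with distinct values of $\widetilde G$ is scalar. Your bookkeeping of the alternating conjugations and the commutant computation both check out, and there is no circularity since you only invoke Proposition~\ref{prop:monodromy-ode} and Theorem~\ref{thm:app-refl-mat}. What your approach buys is essentially a re-derivation of the mechanism behind Proposition~\ref{prop:monodromy-refl} (the full relation $F\circ\tau=\Sigma F M$), at the cost of the ODE/commutator machinery; what the paper's approach buys is brevity, since everything needed is already encoded in the M\"obius action on $G$ alone --- indeed, from your own identity $F\circ\tau=\Sigma FM$ you could shortcut the commutator step by passing to the secondary Gauss map or simply to $G$ and quoting the same ``non-constant function fixed by a M\"obius transformation'' fact.
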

\begin{proof}
 Since $\mu_{i_1}\circ\dots\circ\mu_{i_{2r}}=\identity_M$,
 we have
 \[
   G = G\circ\mu_{i_1}\circ\dots\circ\mu_{i_{2r}}
     = 
    \bigl(
    \overline{\sigma(\mu_{i_1})}
    \sigma(\mu_{i_2})\dots
    \overline{\sigma(\mu_{i_{2r-1}})}
    \sigma(\mu_{i_{2r}})
    \bigr)\star G.
 \]
 Thus, we have the conclusion.
\end{proof}
Then, by the definition of the representation $\rho^{}_F$
in Proposition~\ref{prop:monodromy-ode}, we have
\begin{proposition}\label{prop:monodromy-refl}
 Under the situations above, we have
 \[
  \rho^{}_F(\tau) = 
    \biggl(
    \overline{\sigma(\mu_{i_1})}
    \sigma(\mu_{i_2})\dots
    \overline{\sigma(\mu_{i_{2r-1}})}
    \sigma(\mu_{i_{2r}})
    \biggr)
    \biggl(
    \overline{\rho^{}(\tilde\mu_{i_1})}
    \rho^{}(\tilde\mu_{i_2})\dots
    \overline{\rho^{}(\tilde\mu_{i_{2r-1}})}
    \rho^{}(\tilde\mu_{i_{2r}})
    \biggr).
 \]
\end{proposition}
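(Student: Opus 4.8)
The plan is to iterate the single-reflection identity of Theorem~\ref{thm:app-refl-mat} along the word $\tau=\tilde\mu_{i_1}\circ\cdots\circ\tilde\mu_{i_{2r}}$, and then to read off $\rho^{}_F(\tau)$ from its defining property $F\circ\tau=F\,\rho^{}_F(\tau)^{-1}$ in Proposition~\ref{prop:monodromy-ode}. First I would rewrite \eqref{eq:refl-mono} by complex conjugation in the form
\[
   F\circ\tilde\mu=\overline{\sigma(\mu)}\;\overline{F}\;\overline{\rho^{}(\tilde\mu)}^{-1},
\]
and record the two elementary facts that drive the induction: conjugation commutes with precomposition, i.e.\ $\overline{F}\circ\psi=\overline{F\circ\psi}$ for any self-map $\psi$ of $\widetilde M$; and, since each $\tilde\mu$ is orientation-reversing, composing with an odd number of the $\tilde\mu_{i_j}$ turns $F$ into $\overline{F}$, while an even number returns $F$ — this is exactly where the evenness of the word length $2r$ will be used.

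Writing $\sigma_j:=\sigma(\mu_{i_j})$ and $\rho_j:=\rho^{}(\tilde\mu_{i_j})$, I would next prove by induction on the word length, peeling off the leftmost reflection at each step, that
\[
   F\circ(\tilde\mu_{i_1}\circ\cdots\circ\tilde\mu_{i_{2r}})
   =\bigl(\overline{\sigma_1}\,\sigma_2\cdots\overline{\sigma_{2r-1}}\,\sigma_{2r}\bigr)\;F\;
    \bigl(\rho_{2r}^{-1}\,\overline{\rho_{2r-1}}^{-1}\cdots\rho_2^{-1}\,\overline{\rho_1}^{-1}\bigr),
\]
together with the analogous statement, carrying $\overline{F}$ in the middle, for words of odd length. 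The inductive step substitutes the displayed form of \eqref{eq:refl-mono} for $F\circ\tilde\mu_{i_1}$, moves the constant matrices $\overline{\sigma_1}$ and $\overline{\rho_1}^{-1}$ past the remaining composition, conjugates the induction hypothesis applied to the shorter word $\tilde\mu_{i_2}\circ\cdots\circ\tilde\mu_{i_{2r}}$, and observes that this conjugation both turns the inner $\overline{F}$ back into $F$ and flips the conjugation on every factor of the two flanking products, thereby producing the alternating pattern $\overline{\sigma_1}\sigma_2\overline{\sigma_3}\cdots$ and the reversed, alternately conjugated product of the $\rho_j^{-1}$.

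Finally I would invoke Lemma~\ref{lem:repr-sigma}: the product $\overline{\sigma_1}\sigma_2\cdots\overline{\sigma_{2r-1}}\sigma_{2r}$ equals $\id$ or $-\id$, hence is central. Comparing the displayed identity with $F\circ\tau=F\,\rho^{}_F(\tau)^{-1}$ and using that $F$ is $\SL_2\C$-valued (so $F(p)$ is invertible at each point of $\widetilde M$), this gives $\rho^{}_F(\tau)^{-1}=\pm\,\rho_{2r}^{-1}\overline{\rho_{2r-1}}^{-1}\cdots\rho_2^{-1}\overline{\rho_1}^{-1}$, and inverting yields $\rho^{}_F(\tau)=\pm\,\overline{\rho_1}\,\rho_2\cdots\overline{\rho_{2r-1}}\,\rho_{2r}$; reabsorbing the scalar $\pm1$ as the central matrix $\overline{\sigma_1}\sigma_2\cdots\overline{\sigma_{2r-1}}\sigma_{2r}$ gives exactly the asserted formula. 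I do not expect a genuine obstacle here: the content is entirely bookkeeping, and the only points needing care are tracking which factors carry a complex conjugation and checking the parity so that $F$, rather than $\overline{F}$, survives in the middle of a word of even length. One might also remark that although each $\sigma_j$ and each $\rho_j$ is defined only up to sign, flipping the sign of some $\sigma_j$ forces the same flip on $\rho_j$ through \eqref{eq:refl-mono}, so both flanking products, and hence the stated identity, are independent of these choices.
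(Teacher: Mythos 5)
Your argument is correct and is exactly the computation the paper intends: the paper states the proposition as an immediate consequence of the definition $F\circ\tau=F\rho^{}_F(\tau)^{-1}$ and iterated use of \eqref{eq:refl-mono}, and your induction (with the odd-length case carrying $\overline{F}$, the conjugation flip at each step, and the final appeal to Lemma~\ref{lem:repr-sigma} to make the $\sigma$-product central so it can be pulled past $F$) is precisely that bookkeeping carried out in detail. No gaps; your closing remark on the simultaneous sign ambiguity of $\sigma(\mu_j)$ and $\rho^{}(\tilde\mu_j)$ matches the paper's own comment after the proposition.
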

Note that the $\pm$-ambiguity of $\sigma(\mu_j)$ 
does not affect this expression, because
if one were to choose $-\sigma(\mu_j)$ instead of $\sigma(\mu_j)$,
$\rho^{}(\tilde\mu_j)$ changes to $-\rho^{}(\tilde\mu_j)$.

Hence we have
\begin{theorem}\label{thm:su11-monodromy}
 Let $M$ be a Riemann surface with reflections $\{\mu_1,\dots,\mu_N\}$,
 and assume its lift $\{\tilde\mu_1,\dots,\tilde\mu_N\}$ is a generator
 of $\pi_1(M)$.
 Take an admissible pair $(G,Q)$ on $M$ which is $\mu_j$-invariant
 for each $j=1,\dots, N$, and 
 let $F$ be a solution of \eqref{eq:app-ode}, $\rho^{}_F$ a
 representation
 as in Proposition~\ref{prop:monodromy-ode},
 and $\rho^{}(\tilde\mu_j)$ $(j=1,\dots,N)$ as in 
 \eqref{eq:refl-mono}.
 Then,  
 if $\rho^{}(\tilde\mu_j)\in\SU_{1,1}$ holds for all $j=1,\dots,N$,
 the image $\rho^{}_F(\pi_1(M))$ lies in $\SU_{1,1}$.
\end{theorem}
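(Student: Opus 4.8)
The plan is to reduce the statement to the explicit formula for $\rho^{}_F(\tau)$ provided by Proposition~\ref{prop:monodromy-refl}, combined with two elementary closure properties of the group $\SU_{1,1}$: that $\pm\id\in\SU_{1,1}$, and that $\SU_{1,1}$ is stable under entrywise complex conjugation $a\mapsto\bar a$. The latter holds because $a\in\SU_{1,1}$ is characterized by $\det a=1$ and $a^*Ja=J$ with $J=\diag(1,-1)$ real, and conjugating this relation gives $\det\bar a=1$ and $(\bar a)^*J\bar a=J$; equivalently, among matrices of the shape \eqref{eq:rho-form} exactly those with $s_{2}=-s_{1}$ lie in $\SU_{1,1}$, and this condition is preserved under $(q,s_1,s_2)\mapsto(\bar q,-s_1,-s_2)$.

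First I would fix an arbitrary covering transformation $\tau\in\pi_1(M)$. Since $\{\tilde\mu_1,\dots,\tilde\mu_N\}$ is a generator of $\pi_1(M)$ in the sense of Definition~\ref{def:generator}, $\tau$ admits an expression $\tau=\tilde\mu_{i_1}\circ\cdots\circ\tilde\mu_{i_{2r}}$ as in \eqref{eq:refl-repr}. Applying Proposition~\ref{prop:monodromy-refl} to this expression presents $\rho^{}_F(\tau)$ as the product of a ``$\sigma$-part'' $\overline{\sigma(\mu_{i_1})}\sigma(\mu_{i_2})\cdots\overline{\sigma(\mu_{i_{2r-1}})}\sigma(\mu_{i_{2r}})$ and a ``$\rho$-part'' $\overline{\rho^{}(\tilde\mu_{i_1})}\rho^{}(\tilde\mu_{i_2})\cdots\overline{\rho^{}(\tilde\mu_{i_{2r-1}})}\rho^{}(\tilde\mu_{i_{2r}})$.

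Next I would treat the two parts separately. By Lemma~\ref{lem:repr-sigma} the $\sigma$-part equals $\id$ or $-\id$, hence lies in $\SU_{1,1}$. For the $\rho$-part, the hypothesis gives $\rho^{}(\tilde\mu_j)\in\SU_{1,1}$ for every $j$; by the conjugation-stability noted above, each $\overline{\rho^{}(\tilde\mu_j)}$ also lies in $\SU_{1,1}$, and since $\SU_{1,1}$ is a group the entire $\rho$-part lies in $\SU_{1,1}$. Therefore $\rho^{}_F(\tau)$, a product of two elements of $\SU_{1,1}$, lies in $\SU_{1,1}$. As $\tau\in\pi_1(M)$ was arbitrary, this gives $\rho^{}_F(\pi_1(M))\subset\SU_{1,1}$, which is the assertion.

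There is no genuine difficulty to overcome here, since the substantive work has already been carried out in Proposition~\ref{prop:monodromy-refl} and Lemma~\ref{lem:repr-sigma}; the argument is pure bookkeeping. The only points worth a remark are that the expression \eqref{eq:refl-repr} for $\tau$ need not be unique but the conclusion is independent of the choice, and that the $\pm$-ambiguity in each $\sigma(\mu_j)$ (which flips the sign of the corresponding $\rho^{}(\tilde\mu_j)$) is harmless, being absorbed into the $\pm\id$ produced by Lemma~\ref{lem:repr-sigma}.
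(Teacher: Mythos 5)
Your proposal is correct and follows exactly the route the paper intends: the theorem is stated as an immediate consequence of Proposition~\ref{prop:monodromy-refl} together with Lemma~\ref{lem:repr-sigma}, and your argument merely makes explicit the two closure facts (that $\pm\id\in\SU_{1,1}$ and that $\SU_{1,1}$ is stable under entrywise conjugation) that the paper leaves unstated. Your remarks on the non-uniqueness of the word in the $\tilde\mu_j$ and on the $\pm$-ambiguity of $\sigma(\mu_j)$ match the paper's own comment following Proposition~\ref{prop:monodromy-refl}.
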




\begin{thebibliography}{20}
\bibitem{ACM}
 L. Al\'\i{}as, R. M. B. Chaves and P. Mira, 
 {\itshape
	Bj\"orling problem for maximal surfaces in Lorentz-Minkowski
	space},
	Math.\ Proc.\ Cambridge Philos.\ Soc., 
	{\bfseries 134} (2003), no. 2, 289--316.
\bibitem{C}
 E. Calabi, 
 {\itshape
	Examples of Bernstein problems for some nonlinear equations}, 
	Proc.\ Symp.\ Pure Math., {\bfseries 15} (1970), 223--230. 

\bibitem{CY}
 S.-Y. Cheng and S.-T. Yau, 
 {\itshape
	Maximal space-like hypersurfaces in the Lorentz-Minkowski
	spaces}, 
	Ann.\ of Math., {\bfseries 104} (1976), 407--419.
\bibitem{ER}
 F. J. M. Estudillo and A. Romero,
 {\itshape 
	Generalized maximal surfaces in Lorentz-
	Minkowski space $L^3$}, 
	Math.\ Proc.\ Camb.\ Phil.\ Soc., 
	{\bfseries 111} (1992), 515--524.

\bibitem{Fer}
 I. Fern\'andez,
 {\itshape 
	The number of conformally equivalent maximal graphs},
	preprint,
	arXiv:0903.2950.

\bibitem{FL}
 I. Fern\'andez and F. J.  L\'opez, 
 {\itshape 
	Periodic maximal surfaces in the Lorentz-Minkowski space
	${\mathbb L}^3$},
	Math.\ Z., {\bfseries 256} (2007), 573--601.

\bibitem{FLS}
 I. Fern\'andez, F. J. L\'opez, and R. Souam,
 {\itshape 
	The space of complete embedded maximal
	surfaces with isolated singularities in $3$-dimensional 
	Lorentz-Minkowski space $L^3$}, 
	Math.\ Ann., {\bfseries 332} (2005), 605--643.

\bibitem{F}
 S. Fujimori,
 {\itshape 
	Spacelike CMC $1$ surfaces with
	elliptic ends in de Sitter $3$-Space},
	Hokkaido Math.\ J., 
	{\bfseries 35} (2006), 289--320.

\bibitem{F2}
  S. Fujimori,
  {\itshape 
	Spacelike CMC $1$ surfaces of genus $1$ with 
	two ends in de Sitter $3$-space},
	Kyushu J.\ Math., {\bfseries 61} (2007), 1--20.

\bibitem{FuL}
  S. Fujimori and F. J. L\'opez, 
  {\itshape 
	Nonorientable maximal surfaces in the Lorentz-Minkowski
	$3$-space}, 
	preprint, arXiv:0905.2113.

\bibitem{FR}
  S. Fujimori and W. Rossman,
  {\itshape 
	Higher genus mean curvature $1$ catenoids in 
	hyperbolic and de Sitter $3$-spaces},
	preprint,
	arXiv:0904.3988.

\bibitem{FRUYY}
 S. Fujimori, W. Rossman, M. Umehara, K. Yamada and S.-D. Yang,
 {\itshape
	Spacelike mean curvature one surfaces 
	in de Sitter $3$-space}, 
	Comm.\ Anal.\ Geom., 
	{\bfseries 17} (2009), 383--427.

\bibitem{FRUYY3}
 S. Fujimori, W. Rossman, M. Umehara, K. Yamada and S.-D. Yang, 
 {\itshape   
	Triply periodic maximal surfaces
        in Minkowski $3$-space}, 
	in preparation.

\bibitem{FSUY}
 S. Fujimori, K. Saji, M. Umehara and K. Yamada,
 {\itshape 
	Singularities of maximal surfaces},
	Math.\ Z., {\bfseries 259} (2008), 827--848.

\bibitem{Imaizumi}
 T. Imaizumi,
 {\itshape
	Maximal surfaces with simple ends},
	Kyushu J.\ Math., {\bfseries 58} (2004), 59--70. 

\bibitem{IK}
 T. Imaizumi and S. Kato,
 {\itshape
	Flux of simple ends of maximal surfaces in $\R^{2,1}$}
	Hokkaido Math.\ J., 
	{\bfseries 37} (2008), 561--610.


\bibitem{Ka}
 S. Kato, personal communication.

\bibitem{K}
 O. Kobayashi,
 {\itshape 
	Maximal surfaces in the $3$-dimensional
        Minkowski space $\mathbb{L}^3$},
	Tokyo J.\ Math., {\bfseries 6} (1983), 297--309.

\bibitem{K2}
 O. Kobayashi,    
 {\itshape 
	Maximal surfaces with conelike singularities},
	J.\ Math.\ Soc.\ Japan, 
	{\bfseries 36} (1984), 609--617.

\bibitem{KU}
 M. Kokubu and M. Umehara, 
 {\itshape 	
	Orientability of linear Weingarten surfaces, spacelike CMC-$1$
	surfaces and maximal surfaces}.
	preprint, arXiv:0907.2284.

\bibitem{KUY}
 M. Kokubu, M. Umehara and K. Yamada,
 {\itshape
	An elementary proof of Small's formula
	for null curves in $\PSL(2,\C)$ and an analogue
	for Legendrian curves in $\PSL(2,\C)$},
	Osaka J.\ of Math., {\bfseries 40} (2003), 697--715.

\bibitem{KY}
 Y. W. Kim and S.-D. Yang,
 {\itshape
	A family of maximal surfaces in Lorentz-Minkowski three-space},
	Proc.\ Amer.\ Math.\ Soc.,
	{\bfseries 134} (2006), 3379--3390.

\bibitem{KY2}
 Y. W. Kim and S.-D. Yang,
 {\itshape
	Prescribing singularities of maximal surfaces via a singular
	Bj\"orling representation formula},
	J.\ Geom.\ Phys., 
	{\bfseries 57} (2007), no. 11, 2167--2177. 



\bibitem{LLS}
 F. J. L\'opez F.J., R. L\'opez  and R. Souam, 
 {\itshape  
	Maximal surfaces of Riemann type in Lorentz-Minkowski 
	space $L^3$}, 
	Michigan J.\ Math., 
	{\bfseries 47} (2000), 469--497.

\bibitem{MUY1}
 F. Mart\'\i{}n, M. Umehara and K. Yamada,
 {\itshape 
	Complete bounded null curves immersed in 
	$\C^3$ and $\PSL(2,\C)$}, 
	Calc.\ Var.\ Partial Differential Equations, 
	{\bfseries 36} (2009), 119--139. 

\bibitem{MUY2}
 F. Mart\'\i{}n, Umehara and K. Yamada,
 {\itshape 
	Complete bounded holomorphic curves 
	immersed in $\C^2$ with arbitrary genus},
	Proc.\ Amer.\ Math.\ Soc.,
        {\bfseries 137}  (2009),  no. 10, 3437--3450.
\bibitem{RUY}
 W.~Rossman, M.~Umehara and K.~Yamada,
 {\itshape 
	Irreducible constant mean curvature $1$ surfaces in
	hyperbolic space with positive genus},
	T\^ohoku Math.~J., 
	{\bfseries 49}  (1997), 449--484.

\bibitem{Sa}
 K.~Sato,
 {\itshape 
	Construction of higher genus minimal surfaces with one end
	and finite total curvature},
	T\^ohoku Math.~J., 
	{\bfseries 48}  (1996), 229--446.

\bibitem{S}
 R. Schoen, 
 {\itshape
	Uniqueness, symmetry and embeddedness of minimal surfaces},
	J.\ Differential Geometry, 
	{\bfseries 18} (1983), 791--809.

\bibitem{UY-deform}
 M. Umehara and K. Yamada,
 {\itshape 
	A parametrization of the Weierstrass formulae and perturbation
	of some complete minimal surfaces of $\mathbf R^3$ into the
	hyperbolic $3$-space},
	J. f\"ur reine u.\ angew.\ Math., 
	{\bfseries 432} (1992),	93--116.

\bibitem{UY5}
 M. Umehara and K. Yamada,
 {\itshape
	Duality on CMC-$1$ surfaces in hyperbolic $3$-space, 
	and hyperbolic analogue of the Osserman inequality},
	Tsukuba J.\ Math., 
	{\bfseries 21} (1997), 229--237.

 	
\bibitem{UY}
 M. Umehara and K. Yamada,
 {\itshape
	Maximal surfaces with singularities in
	Minkowski space},
	Hokkaido Math.\ J., {\bfseries 35} (2006), 13--40.


\bibitem{UY2}
 M. Umehara and K. Yamada,
 {\itshape
	Applications of a completeness lemma in minimal surface theory
	to various classes of surfaces},
	preprint, arXiv:0909.1128.

\end{thebibliography}
\end{document}